\documentclass[a4paper]{amsart}
\usepackage[dvipsnames]{xcolor}
\usepackage{amsmath,amssymb,amsthm,mathrsfs,mathtools}
\usepackage{graphicx}
\usepackage{enumerate,enumitem} 
\usepackage{hyperref}
 \hypersetup{
     citecolor=CadetBlue,
     colorlinks=true,
     linkcolor=Periwinkle,
     filecolor=Periwinkle,      
     urlcolor=black,
     }
\usepackage{fancyhdr}
\usepackage{overpic}
\usepackage{orcidlink}
\usepackage{faktor}
\usepackage{xfrac}

\DeclareRobustCommand{\SkipTocEntry}[5]{}
\newtheorem{theorem}{Theorem}[section]
\newtheorem{proposition}[theorem]{Proposition}
\newtheorem{lemma}[theorem]{Lemma}
\newtheorem*{lemma*}{Lemma}
\newtheorem*{claim*}{Claim}
\newtheorem*{definition*}{Definition}
\newtheorem{cor}[theorem]{Corollary}
\theoremstyle{definition}

\newtheorem{claim}[theorem]{Claim}
\newtheorem{definition}[theorem]{Definition}
\newtheorem{remark}[theorem]{Remark}

\newtheorem*{theorem**}{Theorem\theoremnum}

\newenvironment{theorem*}[1][]{%
  \if\relax\detokenize{#1}\relax
    \begin{theorem}
  \else
    \renewcommand{\thetheorem}{#1}
    \begin{theorem}%
    \addtocounter{theorem}{-1}
  \fi
}{%
  \end{theorem}
}

\newenvironment{cor*}[1][]{%
  \if\relax\detokenize{#1}\relax
    \begin{cor}
  \else
    \renewcommand{\thetheorem}{#1}
    \begin{cor}%
    \addtocounter{theorem}{-1}
  \fi
}{%
  \end{cor}
}

\newtheorem*{proposition**}{Proposition\theoremnum}

\newenvironment{proposition*}[1][]{%
  \if\relax\detokenize{#1}\relax
    \begin{proposition}%
  \else
    \renewcommand{\thetheorem}{#1}
    \begin{proposition}%
    \addtocounter{theorem}{-1}
  \fi
}{%
  \end{proposition}
}

\newcommand{\addQEDstyle}[2]{\AtBeginEnvironment{#1}{\pushQED{\qed}\renewcommand{\qedsymbol}{#2}}\AtEndEnvironment{#1}{\popQED}}

\addQEDstyle{example}{$\triangle$}

\renewcommand{\mod}{\;\mathrm{mod}\,}
\DeclareMathOperator{\std}{std}

\DeclareMathOperator{\ima}{Im}

\DeclareMathOperator{\h}{\mathfrak{h}}

\DeclareMathOperator{\Mod}{Mod}

\DeclareMathOperator{\Schw}{Schw}

\DeclareMathOperator{\sech}{sech}

\DeclareMathOperator{\arccosh}{arccosh}
\DeclareMathOperator{\arcsinh}{arcsinh}

\DeclareMathOperator{\area}{area}

\DeclareMathOperator{\Isom}{Isom}

\DeclareMathOperator{\sys}{sys}
\DeclareMathOperator{\gr}{gr}
\DeclareMathOperator{\Grr}{Gr}
\DeclareMathOperator{\dwp}{d_{WP}}
\DeclareMathOperator{\dhex}{d_{\mathscr{H}}}
\DeclareMathOperator{\dflip}{d_{\mathscr{F}}}
\DeclareMathOperator{\WP}{WP}

\DeclareMathOperator{\dteich}{d_{T}}
\DeclareMathOperator{\Th}{Th}
\DeclareMathOperator{\dhyp}{d_{\mathbb{H}}}
\DeclareMathOperator{\stret}{stretch}
\DeclareMathOperator{\shear}{shear}
\DeclareMathOperator{\teich}{Teich}
\DeclareMathOperator{\inj}{inj}
\DeclareMathOperator{\PSL}{PSL}
\DeclareMathOperator{\FN}{FN}
\DeclareMathOperator{\Hex}{Hex}

\DeclareMathOperator{\Twist}{Tw}
\DeclareMathOperator{\spine}{spine}
\DeclareMathOperator{\interior}{int}

\newcommand{\dteichpath}{\widehat{\mathrm{d}}^{\;\epsilon}_{\mathrm{T}}}

\newcommand{\C}{\mathbb{C}}
\newcommand{\CP}{\mathbb{CP}}

\newcommand{\N}{\mathbb{N}}
\newcommand{\R}{\mathbb{R}}
\newcommand{\Hh}{\mathbb{H}}
\newcommand{\Z}{\mathbb{Z}}

\newcommand{\Sph}{\mathbb{S}}
\newcommand{\D}{\mathbb{D}}
\newcommand{\As}{\mathcal{A}}

\newcommand{\Cs}{\mathcal{C}}

\newcommand{\Fs}{\mathscr{F}}

\newcommand{\Hs}{\mathscr{H}}

\newcommand{\Ps}{\mathscr{P}}

\newcommand{\restr}[1]{|_{#1}}

\newcommand{\Zero}{\mathbf{0}}
\newcommand{\wabstract}{w}
\newcommand{\wstd}{w^{\std}}
\newcommand{\wtrunc}{w^{T}}

\newcommand{\coloneq}{\mathrel{\resizebox{\widthof{$\mathord{=}$}}{\height}{ $\!\!\resizebox{1.2\width}{0.8\height}{\raisebox{0.23ex}{$\mathop{:}$}}\!\!=\!\!$ }}}

\newenvironment{proofclaim}{%
  \begin{proof}[Proof of the claim]
  \renewcommand{\qedsymbol}{$\diamond$}%
}{%
  \end{proof}
}

\title{Hexagon decompositions and the Weil--Petersson metric}
\author{Marie Abadie \orcidlink{0000-0002-8951-4643}}
\email{marie.abadie@uni.lu}
\date{\today}

\begin{document}

\begin{abstract}
    We give an explicit quasi-isometry from the flip graph of triangulations $\Fs_{g,n}$ to the $\epsilon$-thick part of Teichmüller space equipped with the Teichmüller metric, by mapping each ideal triangulation to the hyperbolic surface whose shearing coordinates along that triangulation are all equal to zero. 
    Extending this construction, we obtain a quasi-isometry $Q$ from the hexagon graph $\Hs_{g,n}$ to the augmented Teichmüller space $\overline{\teich}_{g,n}$ equipped with the Weil--Petersson metric, and we estimate its width, that is, the Hausdorff
    distance between $Q(\Hs_{g,n})$ and $\overline{\teich}_{g,n}$.

    By bounding the Weil--Petersson distance along grafting rays and the Teichmüller distance along shearing deformations, we show that the width is at most $\sqrt{g+n}\log(g+n)$. 
    We also provide an explicit projection from any point in $\overline{\teich}_{g,n}$ to the thick part of boundary strata, maintaining simultaneous control over the Weil--Petersson distance and the shearing coordinates.
\end{abstract}
  
\maketitle

\tableofcontents 

\section{Introduction}
We consider two metrics on the Teichmüller space $\teich_{g,n}$ of an oriented surface $\Sigma$ of genus $g$ with $n$ punctures and negative Euler characteristic. 
The first is the \emph{Weil--Petersson (WP) metric} $\dwp$, a negatively curved~\cite{Royden1975,Tromba1986,Wolpert1986a} Kähler metric on Teichmüller space~\cite{Ahlfors1961}. It is geodesically convex~\cite{Wolpert1987} but not complete~\cite{Wolpert1975,Chu1976}. Its completion, the \emph{augmented Teichmüller space} $\overline{\teich}_{g,n}$, is obtained by adding strata corresponding to noded surfaces which arise as limits of sequences of hyperbolic structures where the hyperbolic lengths of some collection of disjoint simple closed geodesics go to zero~\cite{Bers1974,Abikoff1977,EarleMarden2012}. 
The \emph{Teichmüller metric} $\dteich$ between two hyperbolic metrics is the infimum of the conformal distortion among all quasiconformal homeomorphisms homotopic to the identity. The two metrics are related: the Weil--Petersson distance is bounded above by a multiple of the Teichmüller distance~\cite{Linch1974}. 

These metrics remain difficult to study directly and to compute explicitly. A standard approach is therefore to model them using combinatorial distances on graphs~\cite{MasurMinsky1999, Brock2003,CavendishParlier2012, RafiTao2013}. For the Weil--Petersson metric, this strategy was pioneered by Brock, who proved that the pants graph is quasi-isometric to $\bigl(\overline{\teich}_{g,n},\dwp\bigr)$~\cite{Brock2003}. In this paper, we focus on two other combinatorial models: the \emph{flip graph of triangulations} $\Fs_{g,n}$ and the \emph{hexagon graph} $\Hs_{g,n}$. 

\addtocontents{toc}{\SkipTocEntry}
\subsection{Two combinatorial models}
The flip graph of triangulations, defined when $n\geq 1$ and denoted by $\Fs_{g,n}$, has vertices given by isotopy classes of ideal triangulations of $\Sigma$. Two ideal triangulations are related by an edge if they differ by one flip move~\cite{KorkmazPapadopoulos2012}. The mapping class group $\Mod(\Sigma)$ acts on $\Fs_{g,n}$ properly discontinuously and cocompactly, hence it follows from the Schwarz--Milnor lemma that $\Fs_{g,n}$ is quasi-isometric to the mapping class group with the word metric~\cite{Mosher1995,KorkmazPapadopoulos2012,DisarloParlier2018,DisarloParlier2019}. In fact, the same argument applies to the \emph{$\epsilon$-thick part} $\teich^{\epsilon}_{g,n}$ of Teichmüller space, consisting of those hyperbolic structures with no essential closed geodesic of length less than $\epsilon$, equipped with the path metric $\dteichpath$ induced by $\dteich$~\cite{LackenbyPurcell2024}. Here $0<\epsilon\leq\epsilon_0$, where $\epsilon_0=2\arcsinh(1)$ is the Margulis constant. The flip and pants graphs capture different geometric properties, hence combining them in a single model provides a refined combinatorial approach for understanding the geometry of Teichmüller space.

To combine the flip and pants models, we consider not only ideal triangulations whose arcs end in cusps, but also triangulations admitting bi-infinite arcs spiralling around simple closed curves. Such spiralling triangulations are naturally associated with hexagon decompositions. The \emph{hexagon graph} $\Hs_{g,n}$, introduced by Gültepe and Parlier~\cite{GultepeParlier2025}, has vertices given by hexagon decompositions: pairs $(\Gamma, \mathcal{A})$ where $\Gamma$ is a simple multicurve on $\Sigma$ and $\As$ is a maximal disjoint family of orthogeodesic arcs (in particular $\Sigma\smallsetminus(\Gamma\cup\As)$ is a disjoint union of generalized right-angled hexagons).
Gültepe--Parlier showed that $\Hs_{g,n}$ is quasi-isometric to the pants graph, hence, by Brock's theorem, to $\bigl(\overline{\teich}_{g,n},\dwp\bigr)$~\cite{GultepeParlier2025}.

In fact, the hexagon graph interpolates between the two previous models: taking $\Gamma$ maximal recovers pants decompositions, and taking $\Gamma=\emptyset$ recovers ideal triangulations. This suggests studying the relation between the hexagon graph and Teichmüller space directly, rather than through the intermediate quasi-isometry with the pants graph. In this paper we describe explicit maps realising this quasi-isometry, together with upper bounds on the width of each model. Along the way we prove explicit distance estimates for the Teichmüller and Weil--Petersson metrics.

\addtocontents{toc}{\SkipTocEntry}
\subsection{Explicit quasi-isometries}The geometry of a hyperbolic surface with an ideal triangulation is captured by the way the triangles are glued together. The gluing along an ideal arc occurs with a shear called the \emph{shear parameter}. To a hexagon decomposition $(\Gamma,\As)$, we associate the noded surface $Q(\Gamma,\As)$ obtained by pinching every curve of $\Gamma$ and setting to zero the shearing coordinates of the induced spiralling triangulation $\As_\infty$ on the complement. When $\Gamma=\emptyset$ this is the hyperbolic surface whose shearing coordinates along the ideal triangulation $\As$ all vanish, so $Q$ restricts to a map on the flip graph.
 
\begin{theorem*}[\ref{MainTheorem3}]
    Let $g,n$ be non-negative integers with $2g-2+n>0$. Then
    \[
        Q\colon(\Hs_{g,n}, \dhex) \longrightarrow\bigl(\overline{\teich}_{g,n},\dwp\bigr)
    \]
    is a quasi-isometry, where $\dhex$ denotes the combinatorial metric of the hexagon graph.
\end{theorem*}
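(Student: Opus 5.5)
The plan is to compare $Q$ with Brock's quasi-isometry through the pants graph, so that most of the coarse geometry is inherited, and to verify directly only that $Q$ is coarsely compatible with these maps. Concretely, let $P\colon\Hs_{g,n}\to\Ps_{g,n}$ be the Gültepe--Parlier quasi-isometry, which sends a hexagon decomposition $(\Gamma,\As)$ to a pants decomposition extending $\Gamma$. It can be chosen with curves of bounded intersection with $\Gamma\cup\As$, for instance by taking boundaries of regular neighbourhoods of $\Gamma\cup\As$ and completing them. Let $B\colon\Ps_{g,n}\to\overline{\teich}_{g,n}$ be Brock's map, which sends a pants decomposition $\mu$ to any point where the curves of $\mu$ have length at most the Bers constant $L_{g,n}$. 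Equivalently, one may take the noded surface in the stratum where all curves of $\mu$ are pinched; by Wolpert's estimate this point lies at uniformly bounded $\dwp$-distance. Since $B\circ P$ is a quasi-isometry, it suffices to show that $\dwp\bigl(Q(\Gamma,\As),B(P(\Gamma,\As))\bigr)\le C$ with $C=C(g,n)$.

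The key step is this bounded-distance estimate. By $\Mod(\Sigma)$-equivariance of $Q$, $P$ and $B$, which can be arranged because $Q$ is canonical, and because there are only finitely many $\Mod(\Sigma)$-orbits of hexagon decompositions, it is enough to check that each value is finite. First, $Q(\Gamma,\As)$ is a well-defined point of $\overline{\teich}_{g,n}$. Each component of $\Sigma\smallsetminus\Gamma$, with the spiralling triangulation $\As_\infty$ and zero shears, gives a complete finite-area hyperbolic surface with cusps at the pinched curves, provided the zero shears satisfy the completeness condition. That condition holds because the sum of shears around each cusp, including the new ones, vanishes. Second, in the surface $Q(\Gamma,\As)$ the curves of $P(\Gamma,\As)\smallsetminus\Gamma$ have finite length. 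Third, the distance in $\overline{\teich}_{g,n}$ between any two points of the closure of the same stratum is finite, since the completion is a geodesic space (Wolpert). Together these give finiteness in each orbit, and taking the maximum over orbit representatives yields $C$.

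For the quasi-isometry we also need coarse surjectivity. This follows because $B\circ P$ is coarsely surjective and $Q$ is at bounded distance from $B\circ P$. The same argument restricted to $\Gamma=\emptyset$ would also give the flip-graph statement.

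The main obstacle is making equivariance and finiteness genuinely uniform. $P$ is only coarsely defined, so we must fix one choice per orbit and extend it equivariantly. We must also check that $\Mod(\Sigma)$ acts on $\Hs_{g,n}$ with finitely many vertex orbits, which holds because the combinatorial types of decompositions are finite. If one instead wants explicit constants, the hardest part is bounding the $\dwp$-distance from the zero-shear point to a short-pants point. For that I would first use the bound $\dwp\le c\,\dteich$ (Linch), then pinch along grafting/plumbing rays, estimating the length of each such ray by the $\sqrt{\ell}$-type behaviour of WP distance to the stratum.
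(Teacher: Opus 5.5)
Your argument is correct, but it takes a genuinely different route from the paper.

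\textbf{What your argument does.} You use Brock's theorem and the G\"ultepe--Parlier theorem as black boxes and add only a soft principle. The facts you combine are:
\begin{itemize}
\item $Q$ is exactly $\Mod(\Sigma)$-equivariant, because spiralling directions disappear in $S(\Gamma)$;
\item $B(\mu)=N_\mu$ is exactly equivariant;
\item $\Mod(\Sigma)$ acts by $\dwp$-isometries;
\item $\Hs_{g,n}$ has finitely many vertex orbits.
\end{itemize}
Set $P(\varphi\mathcal{H}_0)=\varphi P(\mathcal{H}_0)$ on orbit representatives. Then $\mathcal{H}\mapsto\dwp\bigl(Q(\mathcal{H}),B(P(\mathcal{H}))\bigr)$ is constant on orbits, hence bounded. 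The choice of $\varphi$ is irrelevant even though stabilizers contain $\Twist(\Gamma)$.

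Finiteness of each individual value is automatic, since any two points of a metric completion are at finite distance. Your ``second'' and ``third'' checks are therefore superfluous. The one input that really needs justification is that this equivariantly chosen $P$ is still a quasi-isometry. That holds as soon as the G\"ultepe--Parlier map is coarsely equivariant, by the same finite-orbit argument.

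\textbf{What the paper does.} It verifies the three quasi-isometry conditions directly.
\begin{itemize}
\item Coarse surjectivity: grafting to $S(\Gamma)$ with shear control, then bi-Lipschitz hexagon maps and Linch (Proposition~\ref{WidthHexagonModel}).
\item Lipschitz bound: the Weil--Petersson cost of flips via Kahn--Markovi\'c shear paths, and of curve additions via grafting (Proposition~\ref{QLipschitz}).
\item Lower bound: Brock's covering argument rerun with the regions $V_{L',L_A'}(\mathcal{H})$ (Lemmas~\ref{CountSHortHExagonDecomp}, \ref{QILemma1}, \ref{QILemma2}).
\end{itemize}

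\textbf{What each buys.} The paper's route avoids the detour through the pants graph, which is its stated aim. It also yields explicit bounds $\lesssim\sqrt{g+n}\log(g+n)$ for the width and the Lipschitz constant; only the lower constant stays non-explicit. Your proof is much shorter, but the width and all constants are an unquantified maximum over orbit representatives. Your final paragraph on making them explicit is essentially the paper's method.

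\textbf{A caveat on your flip-graph remark.} That statement does not follow by restricting to $\Gamma=\emptyset$. The inclusion $\Fs_{g,n}\subset\Hs_{g,n}$ is not a quasi-isometric embedding, and the target there is $(\teich^{\epsilon}_{g,n},\dteichpath)$ (Remark~\ref{rem1}). What works is the analogous equivariance argument using the proper cocompact action and Schwarz--Milnor.
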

Let $\dflip$ denote the combinatorial metric of the flip graph of triangulations and let $0<\epsilon \leq \log\sqrt{3}$. Then the quasi-isometry (Proposition~\ref{MainTheorem4})
    \[
        Q\colon (\Fs_{g,n}, \dflip) \longrightarrow\bigl(\teich^{\epsilon}_{g,n},\dteichpath\bigr)
    \]
does not follow from Theorem~\ref{MainTheorem3} even though $\Fs_{g,n}$ is the subgraph of $\Hs_{g,n}$ on the vertices with $\Gamma=\emptyset$. Indeed, that subgraph inclusion is not a quasi-isometric embedding, see Remark~\ref{rem1}.

\addtocontents{toc}{\SkipTocEntry}
\subsection{Bounding the width}
Coarse surjectivity of $Q$ is the statement that the \emph{width} of the combinatorial model, that is the Hausdorff distance between the image by $Q$ of the graph and the target space, is uniformly bounded for a fixed topological type $(g,n)$. For the hexagon graph, we obtain in Proposition~\ref{WidthHexagonModel} the following bound on the width,
\[
    \sup_{X\in\overline{\teich}_{g,n}}\ \inf_{(\Gamma,\As)}\ \dwp\left(X,Q(\Gamma,\As)\right)
    \;\lesssim\;\sqrt{g+n}\,\log(g+n).
\]
We write $A \lesssim B$ to mean that there exist universal constants $c_1, c_2>0$ independent of $g,n,\epsilon$, such that $A \leq c_1\cdot B+c_2$. The upper bound on the width is of the same order as Cavendish and Parlier's bound for the Weil--Petersson diameter of the compactified moduli space~\cite{CavendishParlier2012} obtained with different methods. For the flip graph, letting $0<\epsilon \leq \log\sqrt3$, we obtain the following bound in Lemma~\ref{TeichshearingIDEAL},
\[
    \sup_{X\in\teich^{\epsilon}_{g,n}}\ \inf_{\mathcal{T}\in\Fs_{g,n}}\ \dteich\left(X,Q(\mathcal{T})\right)
    \;\lesssim\;\frac{g+n}{\epsilon}.
\]
Rafi and Tao obtained matching bounds of order $\log\bigl((g+n)/\epsilon\bigr)$ for the Teichmüller diameter of the $\epsilon$-thick part of moduli space~\cite{RafiTao2013}.  Our bound on the width is linear rather than logarithmic in $g+n$: the loss occurs in the shear bound of Proposition~\ref{IdealBershear} and is avoided when triangulations spiralling around curves are allowed into the combinatorics, that is, once the flip graph is enlarged to the hexagon graph, as seen in the result presented in the following section.
 
\addtocontents{toc}{\SkipTocEntry}
\subsection{Projecting to the thick part of a stratum}

For $0<\epsilon<\epsilon_0$ we define the \emph{$\epsilon$-thick part of a stratum} as the product of the $\epsilon$-thick parts of its factors (which are smaller dimensional Teichmüller spaces).
We show that any point in the augmented Teichmüller space can be projected to a combinatorial local ``center point'', lying in the $\epsilon_T'$-thick part of a boundary stratum, by controlling the Weil--Petersson distance and the geometry of the arrival point. The resulting bounds on the distances are of the same order as the upper bounds for the Weil--Petersson and Teichmüller diameters established by Cavendish--Parlier~\cite{CavendishParlier2012} and Rafi--Tao~\cite{RafiTao2013}, respectively.
 
\begin{theorem*}[\ref{bigCOR}]
    Let $X\in\overline{\teich}_{g,n}$. Then there are a hexagon decomposition $(\Gamma,\As)$ and a noded surface $X_\infty$ such that
    \begin{enumerate}
        \item $X_\infty$ lies in the $\epsilon_T'$-thick part of the stratum $S(\Gamma)$, where $\epsilon_T'\approx0.0191$ is a universal constant, independent of the topology;
        \item the shearing coordinates of $X_\infty$ along $\As_\infty$ satisfy
        \[
            \shear_{\As_\infty}(X_\infty)\;\leq\;28\log\bigl(8\pi(2g-2+n)\bigr)+247;
        \]
        \item $\dwp(X,X_\infty)\leq5\sqrt{2\pi(2g-2+n)\log\bigl(8\pi(2g-2+n)\bigr)}$;
        \item $\dteich\bigl(X_\infty,Q(\Gamma,\As)\bigr)\lesssim\log(g+n)$.
    \end{enumerate}
\end{theorem*}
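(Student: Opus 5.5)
The plan is to build $X_\infty$ in two stages, a pinching stage and a shear-normalisation stage, and then read off (1)–(4).

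\emph{Choice of $\Gamma$ and pinching.} Let $X\in\overline{\teich}_{g,n}$. I would let $\Gamma$ consist of the nodes of $X$ together with every simple closed geodesic of $X$ of length below a threshold $\epsilon_1$. The threshold is chosen by pigeonhole so that no curve of $X$ has length in a window $[\epsilon_1,\epsilon_1']$; this keeps the complementary pieces uniformly thick. Since there are at most $3g-3+n$ disjoint short curves, $\epsilon_1$ can be taken universal. I then pinch each $\gamma\in\Gamma$ along the Weil--Petersson gradient of $\ell_\gamma^{1/2}$. Wolpert's estimate $\|\nabla \ell^{1/2}\|\approx(2\pi)^{-1/2}$ says each curve costs $\lesssim\sqrt{\ell_\gamma}$. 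Because the lengths are bounded, Pythagoras over at most $3g-3+n$ curves gives a cost of order $\sqrt{g+n}$. The resulting point $X'$ lies in the stratum $S(\Gamma)$, and the Collar Lemma keeps every other curve of length at least a fixed fraction of $\epsilon_1'$.

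\emph{Choice of $\As$ and the shear bound.} On each component of $X'\smallsetminus\Gamma$, which is now a thick surface with cusps, I choose $\As$ as a maximal family of short orthogeodesics. Concretely, I would take a Bers-type family of arcs of length $\lesssim\log(g+n)$, obtained from area considerations since the area is $2\pi(2g-2+n)$. This accounts for the factor $\log(8\pi(2g-2+n))$. Completing these arcs to $\As_\infty$, I would use the earlier shear estimate from bounded-length arcs, in the spirit of Proposition~\ref{IdealBershear} but spiralling, to bound $|\shear_{\As_\infty}(X')|\lesssim\log(g+n)$. This is where the spiralling version avoids the linear loss. At this point $X'$ already satisfies a version of (2).

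\emph{Thickening and the estimates.} The curves not in $\Gamma$ may still be only moderately short, but thickness $\epsilon_T'$ follows because they exceed $\epsilon_1'$. If needed, I would replace $X'$ by the point reached by grafting rays, using the bound on Weil--Petersson distance along grafting rays to move the remaining boundary lengths. This keeps the total cost at $\sqrt{(g+n)\log(g+n)}$, since each of the $O(g+n)$ pieces contributes $\lesssim\log(g+n)$ in squared WP-length. That gives (3) with $X_\infty$ the endpoint. For (4), I would deform $X_\infty$ to $Q(\Gamma,\As)$ by the linear shearing path on each factor. The Teichmüller distance along shearing deformations is bounded by the sup of the shears (a Lipschitz bound from the shear-to-Teichmüller estimate), which by (2) is $\lesssim\log(g+n)$. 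Teichmüller distance on a stratum is the max over factors, so no dimension factor appears.

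\emph{Main obstacle.} The hardest step is getting the explicit constants in (2) and (3) simultaneously. Pinching and grafting move the shears, so I must track how the shear coordinates change under WP pinching of curves in $\Gamma$, which is tricky near the spiralling ends. I would first try to choose $\As$ \emph{after} pinching, so that shears are measured only on the final surface. The pinching distance is then bounded independently of $\As$, and the only coupled estimate is the length-to-shear bound on $X_\infty$ itself.
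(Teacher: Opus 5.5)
\textbf{The multicurve $\Gamma$ is too small.} There is a genuine gap in the choice of $\Gamma$. You put only the \emph{short} curves into $\Gamma$, so $\As$ must live on the components of $X\smallsetminus\Gamma$, with endpoints only at cusps and nodes. This breaks down in two ways.

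First, a component may have no cusps or nodes at all. If $n=0$ and $\sys(X)\geq\epsilon_1$, then $\Gamma=\emptyset$ and no hexagon decomposition $(\emptyset,\As)$ exists.

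Second, even when cusps are present, $\As_\infty$ is an ideal triangulation of a thick punctured surface. No area argument gives truncated lengths $\lesssim\log(g+n)$ there. Take a thick surface shaped like a long chain with a single cusp at one end: some points lie at distance $\asymp g$ from the cusp. Every point of an ideal triangle is within bounded distance of its sides, so some arc has truncated length $\gtrsim g$. This is exactly the linear loss of Proposition~\ref{IdealBershear}. Spiralling does not help, because with your $\Gamma$ nothing spirals around a curve of moderate length.

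The missing idea is to take $\Gamma$ from a short hexagon decomposition (Corollary~\ref{ShortHexagonDecompositionExistence}), whose curves have length up to $L=2\log(8\pi(2g-2+n))$, and then pinch those curves. It is these moderately long boundary curves that make logarithmic arcs possible. The paper proceeds in three steps:
\begin{enumerate}
\item Graft the curves of length $>\epsilon_T$ down to length $\le\epsilon_T$. By Proposition~\ref{WPGraftingRay} this costs WP distance at most $2^{5/4}\sqrt{\pi}\sqrt{(3g-3+n)L}$, and this, not the short curves, is the source of the $\sqrt{(g+n)\log(g+n)}$ in (3). Lemma~\ref{DiazKimLengthGr2} keeps the truncated arc lengths logarithmic during this step.
\item Enlarge to $\Gamma_1\supseteq\Gamma$ containing every curve of length $\le\epsilon_T$ (Lemma~\ref{LemmaNiceHexDecomp}).
\item Graft along $\Gamma_1$ to infinity.
\end{enumerate}
Your ``if needed'' grafting step is therefore the essential one. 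It must come \emph{before} the arcs are chosen and thickness is checked.

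\textbf{Thickness and item (4).} The thickness argument also fails as written. A pigeonhole over at most $3g-3+n$ lengths only yields a free window at a scale depending on $g+n$: of order $\epsilon_0/(g+n)$, or $\epsilon_0 2^{-(3g-3+n)}$ for multiplicative windows. So $\epsilon_1$ is not universal. Moreover, the Collar Lemma concerns a single surface and gives no control on how the lengths of curves disjoint from $\Gamma$ change along a WP gradient pinching flow.

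The paper needs no window. The threshold is fixed at $\epsilon_T$, and Lemma~\ref{LemmaStayThick} uses Schwarzian-derivative bounds on long projective tubes to show that, along the grafting ray, every curve disjoint from $\Gamma_1$ keeps length $\ge\epsilon_T/21.5$. This is the real content of (1).

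Finally, for (4), a bound on $\max_\lambda|S_\lambda|$ alone does not give a Teichmüller bound with universal constants. By the computation in the proof of Lemma~\ref{Shear0-HorocyclicSegments}, consecutive horocyclic segments at a cusp differ by a factor $e^{\pm S_\lambda}$. Their sizes are therefore governed by partial sums of shears (the oscillation of Section~\ref{sec:wpshear}), which can be as large as the valence times the maximum. The paper instead uses the bound~(\ref{boundArcTruncatedCVStratum}) on truncated arc lengths on $X_\infty$, together with $\shear_{\As_\infty}(Q(\Gamma,\As))=0$, in Proposition~\ref{PropTeichShear}.
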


Along the way, we obtain other explicit results. For instance, in Proposition~\ref{MainTheorem},  we show that a point of $\overline{\teich}_{g,n}$ and the ``associated local center'' $Q(\Gamma,\As)$ from the previous result can each be approximated, in the
Weil--Petersson metric, by noded surfaces of $S(\Gamma)$ lying on a single Thurston stretch path, of explicit length. This gives a way of combining the Weil--Petersson projection with Thurston's stretch geometry, and may be of independent interest.
 
\addtocontents{toc}{\SkipTocEntry}
\subsection{Outline of the proofs}
The proof Theorem~\ref{MainTheorem3} loosely follows Brock's strategy but differs in the geometric tools needed. More precisely, we develop and use distance estimates along two types of hyperbolic metric deformations, \emph{shearing} and \emph{grafting}, built upon works of~\cite{Wolpert1975,Tanigawa1997,McMullen1998,DumasWolf2008,KahnMarkovic2008,hensel2008,ChoiDumasRafi2012,DiazKim2012,SaricWangWolfram2024}.

\noindent\emph{Shearing and the Teichmüller metric.} Give an hyperbolic surface with an ideal triangulation, truncating the cusps turns the triangulation into a decomposition into right-angled hexagons. If two hyperbolic surfaces carry the same ideal triangulations with bounded shears, the corresponding hexagons have uniformly comparable sides. Mapping them to one another, hexagon by hexagon, and correcting along the shared sides so that the local maps agree, yields a bi-Lipschitz homeomorphism with controlled constant and hence a bound on the Teichmüller distance (Proposition~\ref{PropTeichShear}). Combined with the fact that every surface carries an ideal triangulation with controlled shears~\cite[Proposition 3.5]{Bershear}, this gives the width of the flip model (Lemma~\ref{TeichshearingIDEAL}) and the cost of a single flip (Lemma~\ref{Flip-Lipschitz}).
 
\noindent\emph{Grafting and the Weil--Petersson metric.} Grafting inserts Euclidean cylinders along a simple multicurve. As the grafting parameter tends to infinity the surface degenerates to the noded surface obtained by pinching that multicurve. Bounding the Weil--Petersson length of a grafting ray gives, in Proposition~\ref{WPGraftingRay},
\[
    \dwp\bigl(X,\gr_{\infty\Gamma}(X)\bigr)\leq2^{5/4}\sqrt{\pi}\,\sqrt{\textstyle\sum_{\gamma\in\Gamma}\ell_X(\gamma)},
\]
an explicit way of travelling from a surface to a stratum. Refining it so as to control the shears of the arrival point as well (Proposition~\ref{TravellingToStratum}) is what produces the width of the hexagon model (Proposition~\ref{WidthHexagonModel}) and, together with the shearing estimates, Theorem~\ref{bigCOR}.

\addtocontents{toc}{\SkipTocEntry}
\subsection{Organisation}
Section~\ref{sec:background} fixes notation and recalls the material we use.
Section~\ref{sec:hexcoord} shows that truncated lengths vary continuously as the
surface degenerates (Lemma~\ref{lem:truncated_length_continuous}) to build the hexagonal coordinates associated with a hexagon decomposition
(Proposition~\ref{HexagonalCoordinates}). 

Section~\ref{sec:teich} bounds the Teichmüller distance between two surfaces carrying the same ideal triangulation. We construct a bi-Lipschitz map between truncated hexagons with comparable sides, first between their boundaries (Lemma~\ref{LemmaBiLipBoundaryHex}), then between the hexagons themselves (Lemma~\ref{LemmaBiLipHex}) and then correct and glue these local maps (Proposition~\ref{PropTeichShear}).

In Section~\ref{sec:wpgraft} we bound the
Weil--Petersson length of a grafting ray
(Proposition~\ref{WPGraftingRay}) and then refine this so as to control the
shear parameters of the arrival point (Proposition~\ref{TravellingToStratum}).

Section~\ref{sec:project} refines the grafting estimates of Section~\ref{sec:wpgraft} into a projection to the thick part of a stratum. We first enlarge a short hexagon decomposition so that its multicurve contains every curve of length at most $\epsilon_T$ (Lemma~\ref{LemmaNiceHexDecomp}). The effect of grafting along it, in the change of metric, is absorbed inside the long collars of these short curves (Lemma~\ref{LemmaStayThick}), so the limiting noded surface is thick in the stratum (Proposition~\ref{PropProjectThickStartum}).

Section~\ref{sec:wpshear} gives a second, more local estimate for a shearing
deformation. Proposition~\ref{PropTeichShear} bounds the Teichmüller distance, but only under a bound on all the truncated lengths. When just a few shear parameters change, as for a flip move, the cost is too expensive. Thus using the work of Kahn--Markovi\'c
\cite{KahnMarkovic2008} and Šarić--Wang--Wolfram \cite{SaricWangWolfram2024}, we bound the Weil--Petersson distance in terms of the oscillation norm of the shearing coordinates (Lemma~\ref{WPshearingpath}).

Section~\ref{sec:hexwidth} gives an upper bound for the width of the hexagon model, that is, the coarse surjectivity constant of $Q$ (Proposition~\ref{WidthHexagonModel}).
Section~\ref{sec:hexproof} shows that $Q$ is Lipschitz (Proposition~\ref{QLipschitz}), by bounding the Weil--Petersson cost of a flip move (Lemma~\ref{FlipMoveWPDist}) and of an adding curve move (Lemma~\ref{LIpschADdingcurve}), and then that it is a quasi-isometry (Theorem~\ref{MainTheorem3}), using the discreteness of the truncated length spectrum (Lemma~\ref{TruncOrthoSpectrum}).

Section~\ref{sec:flipqi} considers the flip graph. A surface whose shearing coordinates along an ideal triangulation vanish has systole bounded from below (Lemma~\ref{SysShearZero}), so the estimates of Section~\ref{sec:teich} give the width of the flip model and the cost of a single flip, hence Proposition~\ref{MainTheorem4}. Combining all the previous estimates then yields Theorem~\ref{bigCOR}.

Section~\ref{sec:thurston} shows that a point of augmented Teichmüller space and the image under $Q$ of a suitable hexagon decomposition each lie at controlled Weil--Petersson distance from a noded surface and that these two noded surfaces are joined by a single Thurston stretch path, of explicit length (Proposition~\ref{MainTheorem}).

\addtocontents{toc}{\SkipTocEntry}
\subsection*{Acknowledgements}
I am thankful to my supervisor Hugo Parlier for many discussions, for reading earlier versions of this paper, and for his support. I also thank Tommaso Cremaschi, Viola Giovannini and Dídac Martínez-Granado for helpful conversations. This work was partially supported by the ANR--SNSF grant 200021E\_\allowbreak 238147 (SUGAR). Part of this work was carried out during a research stay at the University of Fribourg (February--June 2026), funded by one of its Research Scholarships.

\addtocontents{toc}{\SkipTocEntry}
\subsection*{AI use}
I used generative language models (Claude Opus~5 and ChatGPT 5.6) to review the final draft, to improve the writing, and to help check the clarity and consistency of notation, statements and proofs. All mathematical content, including the results and their proofs, is my own.

\section{Background}\label{sec:background}
This section contains the background material and fixes notation we use. Most of it is standard, and a reader familiar with Teichmüller theory may prefer to start at the sub-section~\ref{sec-Trunc-length}.

\addtocontents{toc}{\SkipTocEntry}
\subsection{The Weil--Petersson metric and its comparison with the Teichmüller metric}
Let $\Sigma$ be an oriented surface of genus $g$ with $n$ punctures and $\chi(\Sigma)=2-2g-n<0$. Its \emph{Teichmüller space}, denoted $\teich_{g,n}$ or $\teich(\Sigma)$, is the space of isotopy classes of complete finite-area hyperbolic metrics on $\Sigma$.
Let $X \in \teich_{g,n}$ equipped with the hyperbolic metric $\rho^2\,|dz|^2$. 
The cotangent space of $\teich_{g,n}$ at $X$ is identified with the space of holomorphic quadratic differentials on $X$, denoted by $\Omega(X)$. Let $\varphi \in \Omega(X)$ be written in local coordinates as $\varphi = \varphi(z)\, dz^2$, the space $\Omega(X)$ carries the $L^p$-norm, denoted by $ \left\lVert \varphi \right\rVert_p$, in particular 
\[
    \left\lVert \varphi \right\rVert_1 = \int_X |\varphi| 
    \qquad \text{and} \qquad 
    \left\lVert \varphi \right\rVert_2^2 = \int_X \frac{|\varphi|^2}{\rho^2}.
\]
For $\varphi,\psi \in \Omega(X)$ we have the $L^2$ Hermitian inner product on $\Omega(X)$,
\[
    \left(\psi, \varphi \right) \coloneq 
    \int_X \frac{\psi \, \overline{\varphi}}{\rho^2}.
\]
Let $B(X)$ be the space of Beltrami differentials on $X$. Let $\mu \in B(X)$ be written in local coordinates $\mu = \mu(z)\, \frac{d\bar z}{dz}$. If $\|\mu\|_{\infty}<1$, then $\frac{\partial f}{\partial \bar{z}}=\mu \frac{\partial f}{\partial z}$ for some quasiconformal map $f:X\longrightarrow \C$. Let $\phi \in \Omega(X)$, there is a bilinear pairing between $B(X)$ and $\Omega(X)$ given by
\[
    \langle \mu, \phi \rangle = \int_X \mu \phi.
\]
Denote by $N(X) \subset B(X)$ the subset 
\[
    N(X) = \{ \mu \in B(X) \mid \langle \mu, \phi \rangle = 0 \text{ for all } \phi \in \Omega(X) \}.
\] 
The tangent space of $\teich_{g,n}$ at $X$ is identified with the quotient $B(X)/N(X)$. Every class in $B(X)/N(X)$ contains a unique harmonic Beltrami differential which is of the form $\mu = \rho^{-2} \overline{\psi}$ for some $\psi \in \Omega(X)$. 
The Weil--Petersson (WP) distance is induced by the norm 
\[
    \left\lVert [\mu] \right\rVert_{\text{WP}} =\sup_{\phi \in \Omega(X)\smallsetminus \{0\}} \frac{|\langle \mu, \phi \rangle|}{ \left\|\phi \right\|_2 }.
\] 
The Teichmüller metric $\dteich$ is defined by
\[
    \dteich(X,Y) = \frac{1}{2} \inf_{f \sim \mathrm{id}} \log K(f),
\]
where the infimum is taken over all quasiconformal maps $f : X \to Y$ isotopic to the identity, and $K(f)$ denotes the maximal quasiconformal dilatation of $f$. It is also induced by the norm dual to the $L^1$ norm on $\Omega(X)$: 
\[
    \left\lVert [\mu] \right\rVert_{1} = \sup_{\varphi \in \Omega(X)\setminus\{0\}} \frac{|\langle \mu, \varphi \rangle|}{ \left\lVert \varphi \right\rVert_1 }.
\]

By a result of Linch~\cite{Linch1974}, the WP distance is bounded from above by the Teichmüller distance times the square root of the area, namely
\[
    \dwp(X,Y) \leq \sqrt{\area(X)} \, \dteich(X,Y).
\]
where, by the Gauss--Bonnet theorem, $\area(X) = 2\pi(2g-2+n)$.

Let $\epsilon \leq \epsilon_0$, the \emph{$\epsilon$-thick part} of Teichmüller space $\teich^{\epsilon}_{g,n}$ is the subset consisting of complete hyperbolic metrics that do not admit essential closed geodesics of length $<\epsilon$. The \emph{$\epsilon$-thin part} is its complement.

\addtocontents{toc}{\SkipTocEntry}
\subsection{Collar and cusp neighborhoods} We recall the definitions and basic properties of collars and cusp neighborhoods, which will be used in later arguments.
Let $\wstd(\ell)$ be defined for $\ell > 0$ as
\[
    \wstd(\ell) =  \arcsinh\left(\frac{1}{\sinh\left( \frac{\ell}{2}\right)}\right).
\]
By~\cite[Theorem 4.1.1]{Buser}, the \emph{standard collar} of a simple closed geodesic $\gamma$ on $X$ consists of all the points in $X$ at distance $\leq \wstd(\ell_X(\gamma))$ from $\gamma$. It is an embedded annulus,
isometric to the cylinder $[-\wstd,\wstd]\times \Sph^1$
with the Riemannian metric $d\rho^{2}+\ell_X(\gamma)^2\cosh^{2}(\rho)dt^{2}$ and we denote it by $\mathcal{C}_{\wstd(\ell_X(\gamma))}(\gamma)$. When there is no ambiguity, we write $\mathcal{C}_{\wstd}(\gamma)$ and $\wstd(\gamma)$. For $\delta \leq \wstd(\gamma)$, the subset of points at distance at most $\delta$ from $\gamma$ is the \emph{$\delta$-collar}. It is denoted by $\mathcal{C}_\delta(\gamma)$ and we say that the collar has \emph{width} $\delta$.

If $X$ has punctures, each puncture corresponds to a \emph{cusp}, isometric to
\[
(-\infty, \log 2] \times \mathbb{S}^1, \qquad d\rho^2 + e^{2\rho}\, dt^2,
\]
and bounded by a horocycle of length $2$~\cite{Keen1974}. For $\delta \le 2$, the region bounded by a horocycle of length $\delta$ is the \emph{$\delta$-cusp neighborhood} $N_\delta(c)$. The case $\delta=2$ gives the \emph{standard cusp neighborhood} denoted by $N(c)$.

\begin{theorem}[Collar Theorem in~\cite{Keen1974}, Theorem 4.4.6 in~\cite{Buser}]\label{CollarThm}
Let $X$ be a hyperbolic surface and let $\gamma_1$ and $\gamma_2$ be two distinct disjoint simple closed geodesics. Then their standard collars $\mathcal{C}_{\wstd(\gamma_1)}(\gamma_1)$ and $\mathcal{C}_{\wstd(\gamma_2)}(\gamma_2)$ are disjoint embedded cylinders. Moreover, if $X$ has punctures, these collars and the standard cusp neighborhoods are pairwise disjoint.
\end{theorem}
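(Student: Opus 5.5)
The statement is the classical Collar Theorem, and I would follow Buser's route. There are two parts: each standard collar $\mathcal{C}_{\wstd(\gamma)}(\gamma)$ is an embedded annulus, and the collars of disjoint geodesics, together with the standard cusp neighborhoods, are pairwise disjoint.

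\emph{Reduction to pants.} First complete $\{\gamma_1,\gamma_2\}$ (or a single $\gamma$) to a maximal multicurve. This cuts $X$ into hyperbolic pairs of pants, possibly with cusps as degenerate boundary components. It then suffices to prove the following local statement on one pair of pants $P$ with boundary geodesics (or cusps) $\beta_1,\beta_2,\beta_3$:
\begin{enumerate}
    \item the half-collars $\{p\in P : d(p,\beta_i)\le \wstd(\ell(\beta_i))\}$ are embedded;
    \item these half-collars are pairwise disjoint;
    \item they are disjoint from the standard cusp neighborhoods of $P$.
\end{enumerate}
Gluing the two half-collars on either side of a curve gives the full embedded collar. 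Disjointness on $X$ follows because two collars can only meet inside a common pair of pants.

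\emph{Hexagon computation.} Cut $P$ along its three seams, the common perpendiculars between boundary components, into two isometric right-angled hexagons with alternate sides $\ell_i/2$. Let $h_{ij}$ be the seam between $\beta_i$ and $\beta_j$. The hexagon formula gives
\[
\cosh h_{12}=\frac{\cosh(\ell_3/2)+\cosh(\ell_1/2)\cosh(\ell_2/2)}{\sinh(\ell_1/2)\sinh(\ell_2/2)}.
\]
Since $\cosh(\ell_3/2)\ge 1$, this is strictly greater than $\coth(\ell_1/2)\coth(\ell_2/2)+\operatorname{csch}(\ell_1/2)\operatorname{csch}(\ell_2/2)$.

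Now set $w_i=\wstd(\ell_i)$, so that $\sinh w_i\sinh(\ell_i/2)=1$ and $\cosh w_i=\coth(\ell_i/2)$. Then
\[
\cosh(w_1+w_2)=\cosh w_1\cosh w_2+\sinh w_1\sinh w_2=\coth(\ell_1/2)\coth(\ell_2/2)+\operatorname{csch}(\ell_1/2)\operatorname{csch}(\ell_2/2).
\]
Hence $h_{12}>w_1+w_2$. The distance between $\beta_1$ and $\beta_2$ equals $h_{12}$, because the seam realizes the shortest arc between them. So the two half-collars are disjoint.

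\emph{Embeddedness.} For a single half-collar, I would use the orthogonal projection to $\beta_1$ inside each hexagon. A point $p$ at distance $r<w_1$ from $\beta_1$ has a unique perpendicular foot unless two perpendiculars from $p$ to $\beta_1$ meet. Such a meeting would produce a geodesic arc from $\beta_1$ to itself of length less than $2w_1$. The shortest such arc is orthogonal to $\beta_1$ at both ends and separates $\beta_2$ from $\beta_3$. Splitting the hexagon along it gives right-angled pentagons, in which the pentagon relation gives length $>2w_1$. This contradicts the assumption, so the exponential map from the normal bundle is injective up to distance $w_1$.

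\emph{Cusps.} For cusp neighborhoods, work in the upper half-plane with the cusp at $\infty$ and the horocycle of length $2$ at height $1/2$ (after normalizing the parabolic generator to $z\mapsto z+1$). Then:
\begin{enumerate}
    \item disjointness of the horoball from its own translates is Shimizu's lemma;
    \item disjointness from a half-collar of $\beta_i$ follows from the limit $\ell_j\to0$ of the hexagon formula, computing the distance from $\beta_i$ to a horocycle of length $2$ in a pants with a cusp;
    \item disjointness of two standard cusp neighborhoods is the further degenerate case.
\end{enumerate}

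The main obstacle is the embeddedness step, specifically identifying the shortest self-return arc in each case, including the degenerate cusp cases. This is why I would rely on the explicit hexagon and pentagon trigonometry of Buser, Chapter~2.
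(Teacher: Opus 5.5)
The paper does not prove this statement; it quotes it from Keen and from Buser. Your outline is Buser's pants-and-hexagon argument, so that is the relevant comparison.

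There is, however, a genuine gap at the step ``Since $\cosh(\ell_3/2)\ge 1$, this is strictly greater than \dots''. A non-strict bound only yields $\cosh h_{12}\ge\cosh(w_1+w_2)$. When $\beta_3$ is a geodesic, $\cosh(\ell_3/2)>1$ and the step is fine. But you explicitly allow $\beta_3$ to be a cusp. Then $\cosh(\ell_3/2)=1$, and the degenerate hexagon (a right-angled pentagon with one ideal vertex) gives exactly
\[
\cosh h_{12}=\frac{1+\cosh(\ell_1/2)\cosh(\ell_2/2)}{\sinh(\ell_1/2)\sinh(\ell_2/2)}=\cosh(w_1+w_2).
\]
So $h_{12}=w_1+w_2$, and the point of the seam at distance $w_1$ from $\beta_1$ lies in both closed half-collars.

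No sharper estimate can fix this, because the closed standard collars really do touch in that configuration. Examples:
\begin{itemize}
\item the two curves of a pants decomposition of a twice-punctured torus, when both are non-separating;
\item two curves on a five-times-punctured sphere that cobound a pair of pants with the middle puncture.
\end{itemize}
This is the same equality that makes the collar lemma sharp on the once-punctured torus. There the seam joining the two sides of $\gamma$ satisfies $\sinh(h/2)\sinh(\ell/2)=1$, i.e.\ $h=2\wstd(\ell)$.

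What your argument does establish is that the open collars $\{p : d(p,\gamma_i)<\wstd(\ell_X(\gamma_i))\}$ are pairwise disjoint, i.e.\ the closed collars have disjoint interiors. The closed collars are genuinely disjoint unless the two curves cobound a pair of pants with a puncture. In particular this holds on closed surfaces, which is the setting of Buser's Theorem 4.1.1. You should state and prove that version, or work with widths strictly below $\wstd$, as the truncation $\wtrunc$ used later in the paper does.

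A smaller inaccuracy concerns the cusp step. Shimizu's lemma by itself only makes the horoball bounded by the horocycle of length $1$ precisely invariant. For length $2$ you need torsion-freeness: if $T(z)=z+1$ and $g$ has lower-left entry $c\neq 0$, choosing $k$ with $|\operatorname{tr}(gT^k)|\le|c|/2$ forces $|c|\ge 4$, since otherwise $gT^k$ would be elliptic. Alternatively, use the explicit position of the cusp inside its pair of pants.
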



\addtocontents{toc}{\SkipTocEntry}
\subsection{Pants decompositions, ideal spiralling triangulations and hexagon decompositions}\label{whereDefHexIs} We recall the definitions from~\cite{Bershear}.

A \emph{pants decomposition} of a hyperbolic surface $\Sigma$ with genus $g$ and $n$ punctures is a maximal collection $\mathcal{P}$ of disjoint essential simple closed geodesics such that each connected component of $\Sigma \smallsetminus \mathcal{P}$ is topologically a three–holed sphere, also called a pair of pants. Any pants decomposition consists of $3g-3+n$ curves and decomposes $\Sigma$ into $2g-2+n$ pairs of pants. 

A \emph{hexagon decomposition} of $\Sigma$ is a collection $\Gamma$ of disjoint simple closed geodesics together with a maximal collection $\mathcal{A}$ of disjoint orthogeodesic arcs so that $\Sigma\smallsetminus (\Gamma \cup \mathcal{A})$ is a disjoint union of generalized right-angled hexagons: that is either a right-angled hexagon, or a right-angled pentagon with one ideal vertex, or a quadrilateral with two right angles and two adjacent ideal vertices, or an ideal triangle. In this case, the surface is decomposed into $4g-4+2n$ geodesic generalized right–angled hexagons, and the maximal collection $\mathcal{A}$ contains $6g-6+3n$ orthogeodesic arcs. We will sometimes abuse notation by referring to a generalized right-angled hexagon simply as a right-angled hexagon.

A \emph{spiralling triangulation} $\mathcal{T}$ of $\Sigma$ is a maximal collection of disjoint complete geodesics, whose components are either closed curves or bi-infinite arcs ending in cusps or spiralling around closed components. By maximality, the complement of $\mathcal{T}$ consists of $(4g-4+2n)$ ideal triangles, whose edges are the $6g-6+3n$ bi-infinite components of $\mathcal{T}$, and $\mathcal{T}$ may contain $0$ to $(3g-3+n)$ closed components, determined by the spiralling bi-infinite arcs. In the case where no spiralling occurs, i.e. all bi-infinite arcs end in cusps, $\mathcal{T}$ is called an \emph{ideal triangulation}.

\begin{figure}[h]
    \centering
    \includegraphics[scale=0.16]{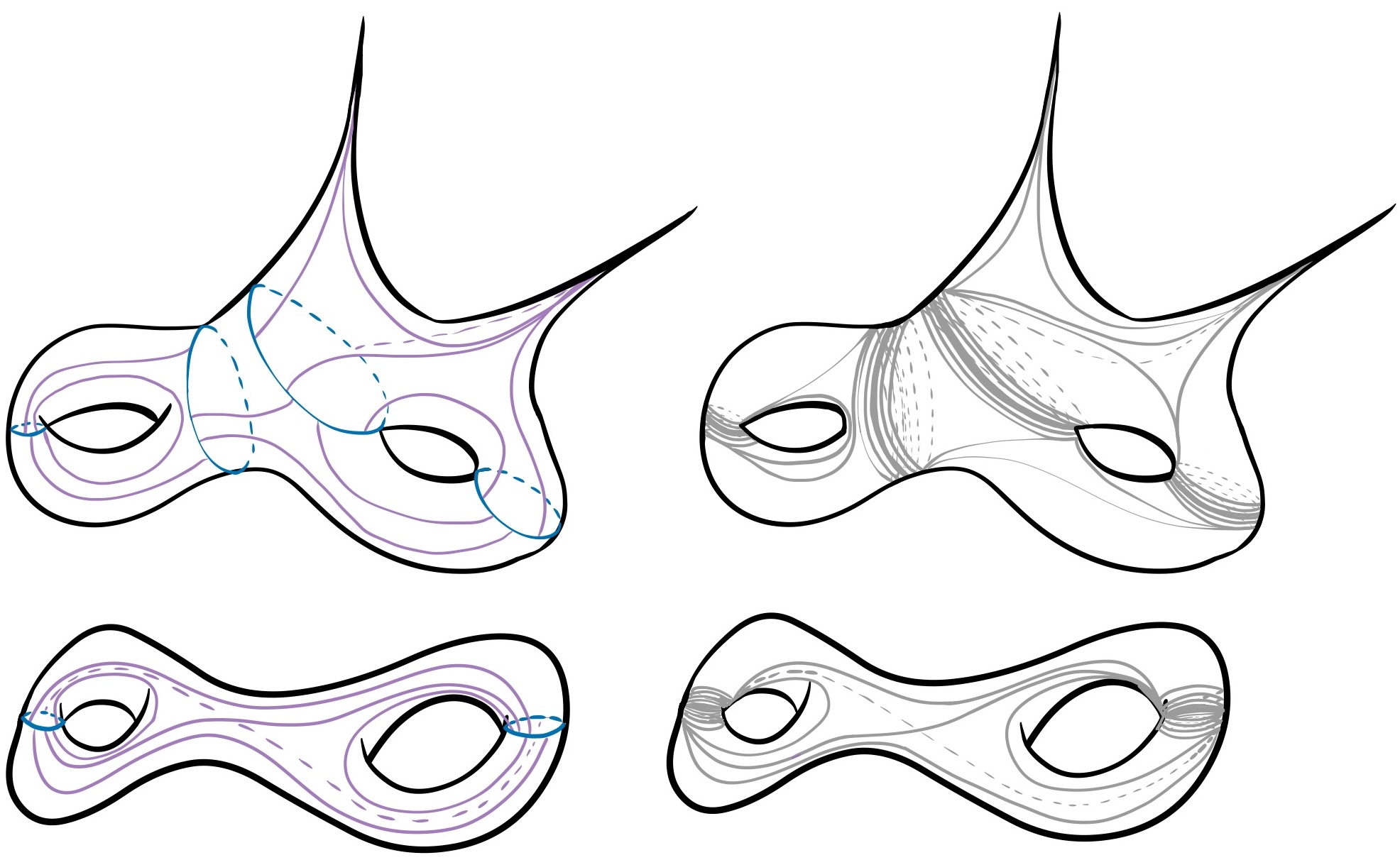}
    \caption{A hexagon decomposition and the associated spiralling triangulation, adapted from~\cite{Bershear}.}~\label{Spiralling_tirangulation}
    \vspace{-0.5cm}
\end{figure}

From a hexagon decomposition $(\Gamma, \mathcal{A})$ we can construct a spiralling triangulation $\mathcal{T}_{(\Gamma, \mathcal{A})}$ as follows. For any $a \in \mathcal{A}$ we define a bi-infinite geodesic $\lambda_a$ as follows.
\begin{itemize} 
    \item[-] If $a$ has both ends in a cusp, set $\lambda_a=a$.
    \item[-] If $a$ has endpoints on $\gamma_1$ and $\gamma_2$ in $\Gamma$, let $\lambda_a$ be the geodesic arc homotopic to the arc which follows $a$ and spirals infinitely many times around $\gamma_1$ and $\gamma_2$. Possibly $\gamma_1=\gamma_2$.
    \item[-] If $a$ has one endpoint in a cusp and the other on a simple closed curve $\gamma \in \Gamma$, then let $\lambda_a$ be the geodesic arc homotopic to the arc which follows $a$ from the cusp to $\gamma$ and spiralling infinitely many times around $\gamma$.
\end{itemize} 
There are different possible choices of spiralling. Pick one, once and for all: for each component of $\Gamma$, we choose an orientation and define the spiralling direction accordingly.
Then, we define the \emph{spiralling triangulation} $\mathcal{T}_{(\Gamma, \mathcal{A})}$ associated with $(\Gamma, \mathcal{A})$ to be the collection of orthogeodesic arcs $(\lambda_a)_{a \in \mathcal{A}}$ together with the curves in $\Gamma$. See Figure~\ref{Spiralling_tirangulation}.

\addtocontents{toc}{\SkipTocEntry}
\subsection{Augmented Teichmüller space and extended Fenchel--Nielsen coordinates}\label{sec:AugmentedTeichmuller}
The Teichmüller space is homeomorphic to $\R^{6g-6+2n}$ via \emph{Fenchel--Nielsen coordinates}. Given a pants decomposition $\mathcal{P}$, these coordinates associate to a point in the Teichmüller space the lengths and twist parameters along each curve in $\mathcal{P}$,
\begin{equation*}
    \FN_{\Ps}\colon\teich_{g,n}\to(\R_{>0}\times\R)^{|\Ps|}\colon X\mapsto (\ell_X(\gamma),\tau_X(\gamma))_{\gamma\in\Ps}.
\end{equation*}
As mentioned earlier, the Weil--Petersson metric is non-complete. Wolpert showed that pinching lines---paths where all Fenchel--Nielsen coordinates remain fixed except for one or several length parameters that go to zero---define Cauchy sequences that do not converge inside Teichmüller space~\cite{Wolpert1975}. The Weil--Petersson completion, called the \emph{augmented Teichmüller space} and denoted by $\overline{\teich}_{g,n}$, is obtained by adding strata corresponding to noded surfaces which arise as limits of sequences of hyperbolic structures where the hyperbolic lengths of some collection $\Gamma$ of disjoint simple closed geodesics go to zero~\cite{Abikoff1977,Abikoff1980,Bers1974},
\[
    \overline{\teich}_{g,n}=\teich_{g,n} \cup \bigcup_{\Gamma\neq\emptyset} S(\Gamma).
\]
For each $\Gamma$, the stratum $S(\Gamma)$ is the Teichmüller space of the possibly disconnected surface $\Sigma \smallsetminus \Gamma$, which equals a product of lower-dimensional Teichmüller spaces
\[
    S(\Gamma)= \teich(\Sigma \smallsetminus \Gamma) \cong \prod_{i} \teich_{g_i,n_i},
\]
for some tuples $(g_i,n_i)$. In particular, $S(\emptyset)=\teich_{g,n}$.

In a neighborhood of a stratum, the Weil--Petersson metric is close to the product of the Weil--Petersson metrics on its components~\cite{Masur1976, Yamada2004, DaskalopoulosWentworth2003, Wolpert2003}. The Fenchel--Nielsen coordinates extend continuously to the union of strata with pinched locus contained in $\Ps$ by allowing curves in $\Ps$ to have length zero~\cite{Abikoff1980}. When a curve has length zero, there is no twist around it, hence the \emph{extended} length-twist parameters take value in the space
\begin{equation*}
    \frac{\R_{\geq 0}\times\R}{\{0\}\times\R},
\end{equation*}
where the topology around $\Zero\coloneq[\{0\}\times\R]$ is defined as follows: $(\ell_n,\tau_n)\to\Zero$ if and only if $\ell_n\to 0$. Note that this is \emph{distinct} from the quotient topology on that space.\footnote{In the quotient topology, we would have $(\ell_n,\tau_n)\to\Zero$ if and only if $\ell_n\to 0$ and $(\tau_n)_{n\in\N}$ is bounded. In the augmented Teichmüller space, the twists can take arbitrary values when $\ell_n\to 0$.}

The \emph{extended Fenchel--Nielsen coordinates} with respect to $\Ps$ are given by the homeomorphism
\begin{equation*}
    \FN_{\Ps}\colon\bigcup_{\Gamma\subseteq\Ps} S(\Gamma)\to\left(\frac{\R_{\geq 0}\times\R}{\{0\}\times\R}\right)^{|\Ps|}\colon X\mapsto ([\ell_X(\gamma),\tau_X(\gamma)])_{\gamma\in\Ps},
\end{equation*}
for which we keep the notation $\FN_{\Ps}$.

\addtocontents{toc}{\SkipTocEntry}
\subsection{Shearing coordinates} 
Another coordinate system on $\teich_{g,n}$ is given by  the shearing coordinates~\cite{Bonahon1996,fock1998,Thurston1998}. 

All hyperbolic ideal triangles are isometric one to another and each has an inscribed circle tangent to its sides, the tangency points are called \emph{shear points}. Write $2\varrho=\log(3)/2$ for the radius of the inscribed circle. Thus, two ideal triangles $\Delta$ and $\Delta'$ sharing an edge $\lambda$ determine two shear points on $\lambda$. The signed distance between these shear points defines the \emph{shear} along $\lambda$, denoted by $S_\lambda(\Delta,\Delta')$, see Figure~\ref{shear2}. 
\begin{figure}[h]
    \centering
    \begin{overpic}[width=.55\linewidth,keepaspectratio]{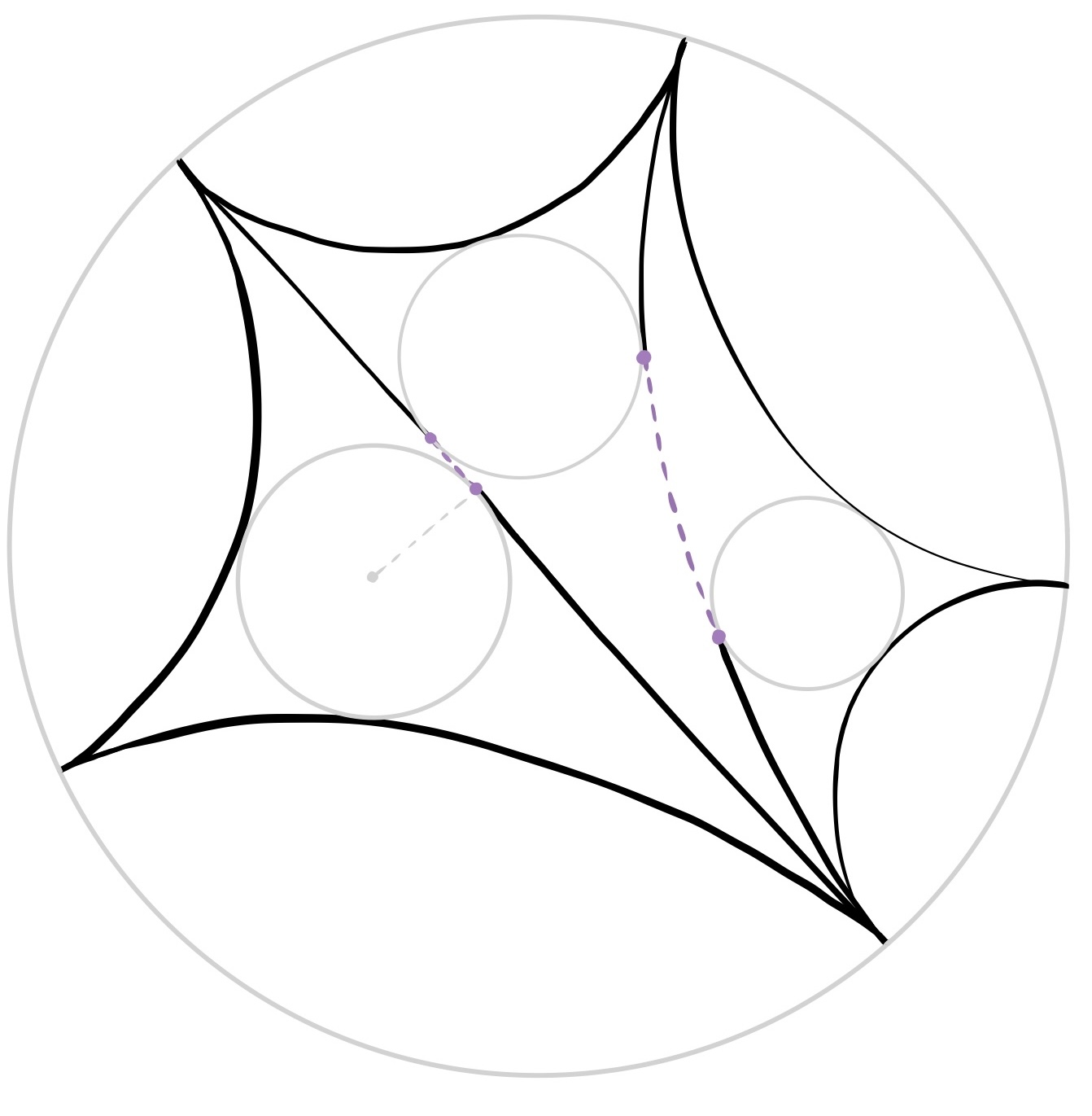}
    \put(38,49){\small\color{Gray}$2\varrho$}
    \put(27,80){\Large$\Delta$}
    \put(65,73){\Large$\Delta'$}
    \put(54,78){$\lambda$}
    \put(60,58){\small\color{Orchid}$S_{\lambda}(\Delta, \Delta')$\color{black}}
\end{overpic}\vspace*{-0.25cm}
    \caption{Shear along the common edge of two ideal triangles.}~\label{shear2}
\end{figure}

The shear parameters of a hyperbolic surface $X$, with respect to a spiralling triangulation $\mathcal{T}$, are defined by lifting $\mathcal{T}$ to $\widetilde{\mathcal{T}} \subset \Hh^2$. The lift decomposes $\Hh^2$ into ideal triangles. For each bi-infinite edge $\lambda_i$ of $\mathcal{T}$, choose a lift $\widetilde{\lambda}_i$ adjacent to two ideal triangles $\widetilde{\Delta}$, $\widetilde{\Delta}'$, and set 
\[
    S^{\mathcal{T}}_{\lambda_i} \coloneq S_{\widetilde{\lambda}_i}(\widetilde{\Delta},\widetilde{\Delta}').
\]
The shear parameters depend on a choice of orientation for each $\lambda_i$, fix one once and for all. We consider the maximum absolute value of the shear parameters of a surface $X$ along the bi-infinite components of $\mathcal{T}$: 
\[ 
    \shear_{\mathcal{T}}(X) \coloneq \max_{\lambda \in \mathcal{T}} |S^{\mathcal{T}}_{\lambda}|. 
\]  

\begin{remark}
    The shear parameters are not independent: the sum of shears along arcs ending at the same cusp is zero, and the sum of shears along arcs spiraling around a closed curve equals its length. However, if all triangles have vertices in cusps, there are $6g-6+3n-n=6g-6+2n$ independent shear parameters, giving a global parametrization of Teichmüller space. When $\mathcal{T}$ has closed components, the Fenchel-Nielsen twists around them must be added to obtain a full global parametrization, called the \emph{shearing coordinates}~\cite{Bonahon1996}. We will often abuse notation and let $\mathcal{T}$ refer to the set of bi-infinite components.
\end{remark}

\addtocontents{toc}{\SkipTocEntry}
\subsection{Truncation length function}\label{sec-Trunc-length} The definitions and notations of this subsection follow~\cite[Section~2.5]{Bershear}.

\begin{definition}[Definition 2.3~\cite{Bershear}]\label{def:truncation_function}
    A \emph{truncation function} is a continuous function $ \wabstract \colon \R_{> 0}\to\R_{\geq 0}$ satisfying the following properties:
    \begin{enumerate}
        \item For all $\ell\in\R_{>0}$, we have $0\leq \wabstract(\ell)\leq \wstd(\ell)$.
        \item The limit
        \begin{equation*}
            h_0(\wabstract)\coloneq \lim\limits_{\ell\to 0}\ell\cosh\left(\wabstract(\ell)\right)
        \end{equation*}
        exists and is strictly positive.
        \item There exists a uniform constant $\Delta_{\wabstract}\geq 0$ such that for all $\ell>0$,
        \begin{equation*}
            \wstd(\ell) - \wabstract(\ell) \leq \Delta_{\wabstract}.
        \end{equation*}
    \end{enumerate}
\end{definition}

Observe that the condition $w\leq w^{\std}$ implies that $h_0(w)\leq h_0(w^{\std})=2$.

\begin{definition}[Definition 2.4~\cite{Bershear}]
    Let $\Gamma\subset\Sigma$ be a simple multicurve and $\wabstract$ a truncation function.
    \begin{enumerate}
        \item For a point $X\in\overline{\teich}(\Sigma)$ belonging to a stratum $S(\Gamma^0)$ with $\Gamma^0\subseteq\Gamma$, its $(\Gamma,\wabstract)$\emph{-truncation} is the subsurface $X_{\Gamma}^{\wabstract}\coloneq \overline{X-N^{\wabstract}_{\Gamma}}$ where
        \begin{equation*}
            N^{\wabstract}_{\Gamma}\coloneq \bigcup_{i=1}^n N_{h_0(\wabstract)}(c_i) \cup \bigcup_{\gamma\in\Gamma-\Gamma^0} \Cs_{\wabstract(\ell_X(\gamma))}(\gamma)\cup\bigcup_{\gamma\in\Gamma^0} N_{h_0(\wabstract)}(\gamma)
        \end{equation*}
        is the union of the closed $h_0(w)$-horocyclic neighborhoods of the cusps and nodes in $X$ and the $w(\ell_X(\gamma))$-collar neighborhoods of the curves in $\Gamma \smallsetminus \Gamma^0$.
        \item The $(\Gamma,\wabstract)$\emph{-truncation} of an orthogeodesic arc $a$ on $X-\Gamma$ is the subarc
        \begin{equation*}
            a^{\wabstract}_{\Gamma}\coloneq a\cap X^{\wabstract}_{\Gamma}
        \end{equation*}
        and its $(\Gamma,\wabstract)$\emph{-truncated length} is defined as
        \begin{equation*}
            \ell^{\wabstract}_{X,\Gamma}(a)\coloneq\ell_X(a^{\wabstract}_{\Gamma}).
        \end{equation*}
    \end{enumerate}
\end{definition}

The truncated length $\ell_{X,\Gamma}^w(a)$ is always strictly positive since $w\leq w^{\std}$ and by the collar Theorem~\ref{CollarThm}.

Given a truncation function $\wabstract$, the injectivity radius at points of $\partial N_{h_0(\wabstract)}$ and at points of $\partial\Cs_{\wabstract(\ell)}$ equals, respectively,
\begin{align*}
    r_0^{\wabstract}&\coloneq\arcsinh(\tfrac{1}{2}h_0(\wabstract)),\\
    r_{\ell}^{\wabstract}&\coloneq\arcsinh(\sinh(\tfrac{\ell}{2})\cosh(\wabstract(\ell))).
\end{align*}
Note that $r_{\ell}^{\wabstract}\to r_{0}^{\wabstract}$ as $\ell\to 0$. More generally, for $\ell>0$ and $0<v\leq \wstd(\ell)$, the injectivity radius at the boundary of a width-$v$ collar about a curve of length $\ell$ equals
\begin{equation*}
    r_{\ell}(v)\coloneq\arcsinh(\sinh(\tfrac{\ell}{2})\cosh(v)).
\end{equation*}
Similarly, for $h\leq h_0(\wstd)$, the injectivity radius at a horocycle of length $h$ about a cusp or node equals
\begin{equation*}
    r(h)\coloneq\arcsinh(\tfrac{1}{2}h).
\end{equation*}

\begin{definition}
    Let $\epsilon_T = \frac{3\varrho}{2} = \frac{3\log(3)}{8}$. Define the \emph{shear truncation function} by
    \begin{equation*}
        \wtrunc \colon \R_{> 0} \to \R_{\geq 0}, \quad 
        \wtrunc(\ell) = \begin{cases}
            
            0 & \text{if } \ell \geq \epsilon_T, \\
            \arccosh\left(\frac{\epsilon_T}{\ell}\right) & \text{if } \ell < \epsilon_T.
        \end{cases}
    \end{equation*}
    By construction, the difference between the width of the standard collar $\wstd$ and the shear truncation satisfies 
    \[ \Delta_{\wtrunc} = \wstd(\epsilon_T) < 2.28, \]
    since the supremum is attained at $\ell=\epsilon_T$. Moreover $\wtrunc$ satisfies $h_0(\wtrunc) = \epsilon_T$.
\end{definition}

\addtocontents{toc}{\SkipTocEntry}
\subsection{Bers constant theorem and Wolpert's estimate}

Bers proved that for any $g,n$ satisfying $2g-2+n>0$ there exists a constant $B_{g,n}$, called the Bers constant~\cite{Bers1974}, such that for every $X \in \overline{\teich}_{g,n}$ there exists a pants decomposition of $X$ whose curves have lengths bounded above by $B_{g,n}$. 
Parlier has shown that the Bers constant is bounded above by $\area(X)=2\pi(2g-2+n)$~\cite{Parlier2023}. By the Bers constant theorem, the sets  
\[
    V_{B_{g,n}}(P)=\left\{ X \in \overline{\teich}_{g,n} \;|\; \forall \gamma \in P, \;\ell_X(\gamma) \leq 2B_{g,n} \right\},
\] 
as $P$ ranges over all pants decompositions, form a cover of the augmented Teichmüller space~\cite{Brock2003}. 

The Weil--Petersson distance between any two points is difficult to compute. However, Wolpert~\cite{Wolpert2003} showed that the Weil--Petersson distance from a point $X$ in Teichmüller space to the stratum obtained by pinching a multicurve $\Gamma$ is bounded by:
\[ 
    \dwp(X,S(\Gamma)) \leq \sqrt{2\pi \sum_{\gamma \in \Gamma} \ell_X(\gamma)}.
\]
In particular, if $\Gamma$ is a pants decomposition $P$, the corresponding stratum consists of only one point, which is the maximally noded surface $N_P$ obtained by pinching all the curves in $P$ to nodes. Since every $X \in \overline{\teich}_{g,n}$ lies in a region $V_{B_{g,n}}(P)$ for some $P$, Wolpert's estimate gives \[\dwp(X,N_P)\leq \sqrt{ 2\pi \cdot 2B_{g,n}\cdot(3g-3+n)}.\]
Thus every point $X$ in $\overline{\teich}_{g,n}$ lies at uniformly bounded Weil--Petersson distance from some maximally noded surface. Cavendish and Parlier~\cite{CavendishParlier2012} refined this upper bound to \[\dwp(X,N_P)\leq \sqrt{2\pi(2g-2+n)} \log(2\pi(2g-2+n)).\]

\addtocontents{toc}{\SkipTocEntry}
\subsection{Short hexagon decompositions and ideal triangulations with bounded shear parameters} 

\begin{definition}\label{ShortHexagonDecomposition}
    Let $\wabstract \colon \R_{> 0} \to \R_{\geq 0}$ be a truncation function and let $L,L_A>0$. A hexagon decomposition $(\Gamma,\mathcal{A})$ on $X \in \overline{\teich}_{g,n}$ is said to be $(L,L_A, \wabstract)$-\emph{short} if:
    \begin{enumerate}
        \item $\ell_X(\gamma) \le L$ for every $\gamma \in \Gamma$, and
        \item $\ell_{X, \Gamma}^{\wabstract}(a) \le L_A$ for every arc $a \in \mathcal{A}$.
    \end{enumerate}
    If $L=L_A$, to simplify we say that $(\Gamma,\mathcal{A})$ is $(L,\wabstract)$-\emph{short}.
\end{definition}

The choice of truncation function is inessential by~\cite[Proposition 2.10]{Bershear}: given two truncation functions $w_1,w_2$, if a hexagon decomposition is $(L,L_A,w_1)$-short, then it is $(L,L_A+2(\Delta_{w_1}+\Delta_{w_2}),w_2)$-short.

Parlier proved that every closed hyperbolic surface admits a ``short'' hexagon decomposition~\cite{Parlier2016}. The same argument extends to arbitrary truncation functions and to surfaces with punctures. More precisely, we have the following result:

\begin{proposition}[Proposition 2.8 in~\cite{Bershear}]
    Let $\wabstract \colon \R_{> 0} \to \R_{\geq 0}$ be a truncation function, and define $R_{g,n}$ by
    \[ R_{g,n} \coloneq\operatorname{arccosh}\Biggl( \frac{1}{2 \, \sin\bigl(\frac{\pi}{12g - 6 + 6n}\bigr)} \Biggr) \leq \log\bigl(8\pi(2g-2+n)\bigr). \]
    Then, for any $X \in \teich_{g,n}$ with $2g-2+n > 0$, there exists a $(2R_{g,n},\;R_{g,n} + 2\Delta_{\wabstract},\wabstract)$-short hexagon decomposition $(\Gamma, \mathcal{A})$ on $X$.
\end{proposition}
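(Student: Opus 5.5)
The plan follows Parlier's construction for closed surfaces: build the hexagon decomposition greedily from a maximal embedded disk and measure the arcs in the truncated metric. First choose the multicurve $\Gamma$: it consists of all simple closed geodesics of $X$ of length at most $2R_{g,n}$ that appear in a maximal disjoint family of such curves. Cut $X$ along $\Gamma$ and remove the $w$-collars of $\Gamma$ and the $h_0(w)$-cusp neighbourhoods. This gives the truncated surface $X^w_\Gamma$, which has area at most $\area(X)=2\pi(2g-2+n)$.

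Next choose the arcs $\mathcal{A}$ greedily. At each stage take a shortest orthogeodesic arc between the boundary components (collar boundaries and horocycles) that is disjoint from the arcs already chosen, and stop once the complement consists of hexagons. The count of $6g-6+3n$ arcs and $4g-4+2n$ hexagons is Euler-characteristic bookkeeping.

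The key step is to bound the length of each chosen arc. The argument is by area and injectivity radius. Let $a$ be the arc chosen at a given stage and suppose its truncated length exceeds $R_{g,n}+2\Delta_w$. Then some point of the current complementary region lies at distance more than $R_{g,n}/2$ from the boundary. Because every curve shorter than $2R_{g,n}$ was placed in $\Gamma$, the injectivity radius at that point is at least of order $R_{g,n}$. Hence an embedded disk of radius $r$ with $\cosh r \approx R$-scale sits in a region built from at most $4g-4+2n$ hexagon pieces. The constant $1/(2\sin(\pi/(12g-6+6n)))$ is exactly the one from the standard area/angle argument: a point at distance $r$ from all sides of a union of $k$ right-angled polygons forces $\cosh r\le 1/(2\sin(\pi/(6k')))$, via the angle sum at the center of a disk meeting $12g-6+6n$ boundary segments. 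This contradiction bounds each arc.

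The $2\Delta_w$ term is a correction, one $\Delta_w$ per endpoint. It converts the arc estimate for $w^{\std}$-collars, where the standard collar theorem (Theorem~\ref{CollarThm}) gives disjointness, into an estimate for an arbitrary truncation function $w$, using $w^{\std}-w\le\Delta_w$. The bound $R_{g,n}\le\log(8\pi(2g-2+n))$ is elementary: $\sin x\ge 2x/\pi$ and $\operatorname{arccosh} y\le\log 2y$.

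I expect the main obstacle to be the punctured and very-short-curve cases, where collars are long and the regions may be degenerate pentagons, quadrilaterals or ideal triangles. There I would apply the argument in the truncated surface, treating horocycles of length $h_0(w)$ as boundary, and check that the angle count still holds.
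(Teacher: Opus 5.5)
Your set-up is sound as far as it goes: taking $\Gamma$ to be a maximal disjoint family of simple closed geodesics of length at most $2R_{g,n}$, paying $\Delta_{\wabstract}$ per endpoint to pass from $\wstd$ to $\wabstract$, and the elementary estimate $R_{g,n}\le\log(8\pi(2g-2+n))$ are all fine. The gap is the step that bounds the arcs, which is where all the content lies.

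\textbf{The contradiction step does not work.} The only input carrying the constant $R_{g,n}$ is a Bavard-type bound $\inj_X(x)\le R_{g,n}$ for every $x\in X$. Here $R_{g,n}$ is the inradius of the regular $(12g-6+6n)$-gon with angles $2\pi/3$, whose area is exactly $2\pi(2g-2+n)$. It bounds the radius of an embedded disk via the Dirichlet polygon of its centre. Your inequality about points far from the sides of a union of right-angled polygons is not where this constant comes from.

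Maximality of $\Gamma$ also does not enter as ``every curve shorter than $2R_{g,n}$ lies in $\Gamma$'', which is false. It enters as follows. Let $x$ lie in the standard truncation at distance $D$ from its boundary, and suppose $\inj_X(x)<D$. Then the shortest geodesic loop at $x$ stays in the truncated surface and is simple. It is not peripheral, because a geodesic loop based outside the standard collar of $\gamma$ and homotopic to $\gamma$ enters that collar, and similarly for cusps. So its geodesic representative contradicts maximality. This yields only
$d(x,\partial X^{\wstd}_\Gamma)\le\inj_X(x)\le R_{g,n}$.

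Consequently, a point at distance $R_{g,n}/2$ from the boundary contradicts nothing. To contradict Bavard's bound you need an embedded disk of radius larger than $R_{g,n}$, and nothing in your argument produces one. The claimed contradiction is off by a factor of two.

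\textbf{The greedy choice of arcs is not linked to a far point.} Once some arcs are chosen, the boundary of the current complementary region contains those arcs. A long shortest available orthogeodesic then does not give a point of $X^{\wabstract}_\Gamma$ far from $\partial X^{\wabstract}_\Gamma$.

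The construction the paper points to (the proof of Lemma~\ref{LemmaNiceHexDecomp}, following Parlier and the cited Proposition~2.8) avoids this:
\begin{enumerate}
\item First bound $d(x,\partial X^{\wabstract}_\Gamma)$ uniformly over the truncated surface.
\item Then take the orthogeodesic arcs dual to the edges of the cut locus of $X^{\wabstract}_\Gamma$ with respect to its boundary, adding diagonals at vertices of valence larger than three.
\end{enumerate}
This is automatically a hexagon decomposition, and each arc has truncated length at most twice the distance bound.

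Keep track of that factor of two. Combined with the injectivity-radius argument above, which gives distance at most $R_{g,n}+\Delta_{\wabstract}$, this route yields truncated arcs of length at most $2R_{g,n}+2\Delta_{\wabstract}$. Nothing in your proposal supplies the extra halving needed to reach the stated $R_{g,n}+2\Delta_{\wabstract}$.
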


More specifically, we fix the following bounds:
\[
    L(g,n) \coloneq2\log\bigl(8\pi(2g-2+n)\bigr)
\]
and
\[    
    L_A(g,n) \coloneq \max\left\{ \log\bigl(8\pi(2g-2+n)\bigr) + 2\Delta_{\wtrunc}, \; L(g,n) \right\}.
\]
When the topological type $(g,n)$ of the surface is fixed, we simply write $L$ and $L_A$.
A hexagon decomposition $(\Gamma,\mathcal{A})$ is called \emph{short} if it is $(L,L_A,\wtrunc)$-short. 
Since each stratum is a product of lower dimensional Teichmüller spaces, we have:

\begin{cor}\label{ShortHexagonDecompositionExistence}
    For every hyperbolic surface $X \in \overline{\teich}_{g,n}$, there exists a short hexagon decomposition of $X$.
\end{cor}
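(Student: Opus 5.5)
Since $X\in\overline{\teich}_{g,n}$, the plan is to reduce to the smooth case on each factor of the stratum containing $X$. First, suppose $X$ lies in the stratum $S(\Gamma^0)$ for some (possibly empty) simple multicurve $\Gamma^0$. Then $S(\Gamma^0)\cong\prod_i\teich_{g_i,n_i}$, so $X$ corresponds to a tuple $(X_i)_i$ of complete finite-area hyperbolic surfaces. Each $X_i$ has $2g_i-2+n_i>0$, and its punctures are either punctures of $\Sigma$ or the two sides of nodes of $\Gamma^0$.

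Next, we apply Proposition~2.8 of~\cite{Bershear} to each $X_i$ with the truncation function $\wtrunc$. This yields a $(2R_{g_i,n_i},R_{g_i,n_i}+2\Delta_{\wtrunc},\wtrunc)$-short hexagon decomposition $(\Gamma_i,\As_i)$ of $X_i$. We then set
\[
\Gamma=\Gamma^0\cup\bigcup_i\Gamma_i,\qquad \As=\bigcup_i\As_i.
\]
On $X$, the curves of $\Gamma^0$ are nodes and are treated as cusps of the factors. Consequently, the arcs of $\As_i$ ending in such a cusp are exactly orthogeodesics ending in the corresponding node, as in the definition of truncation with $\Gamma^0\subseteq\Gamma$. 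The complement of $\Gamma\cup\As$ is the disjoint union of the generalized hexagons of the factors, so $(\Gamma,\As)$ is a hexagon decomposition. Maximality of $\As$ holds because any further disjoint orthogeodesic would lie in a single factor, contradicting maximality of $\As_i$ there.

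The step needing care is the comparison of constants. Each factor has $2g_i-2+n_i\le 2g-2+n$, since Euler characteristics add: $\sum_i(2g_i-2+n_i)=2g-2+n$. The function $R_{g,n}$ is increasing in $2g-2+n$, because $12g-6+6n=6(2g-2+n)+6$. Hence for every $i$,
\[
2R_{g_i,n_i}\le 2\log\bigl(8\pi(2g-2+n)\bigr)=L(g,n),
\qquad
R_{g_i,n_i}+2\Delta_{\wtrunc}\le L_A(g,n).
\]
This bounds the curves in $\Gamma\smallsetminus\Gamma^0$ and the truncated lengths of the arcs.

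The curves of $\Gamma^0$ have length $0\le L$. The truncated lengths computed in $X_i$ agree with $\ell^{\wtrunc}_{X,\Gamma}$, since both remove $h_0(\wtrunc)$-horocyclic neighborhoods of the cusps and nodes and $\wtrunc$-collars of the curves of $\Gamma\smallsetminus\Gamma^0$. Therefore $(\Gamma,\As)$ is $(L,L_A,\wtrunc)$-short, which proves the corollary. I expect no real obstacle here beyond the bookkeeping of this identification.
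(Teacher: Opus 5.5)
Your proposal is correct and uses the same argument the paper has in mind: apply Proposition~2.8 of~\cite{Bershear} to each factor of the stratum $S(\Gamma^0)\cong\prod_i\teich_{g_i,n_i}$ containing $X$, then take the union of the resulting decompositions together with $\Gamma^0$. The paper states this in one line, and your check of the constants (via $\sum_i(2g_i-2+n_i)=2g-2+n$) and of the compatibility between the $(\Gamma,\wtrunc)$-truncation and the truncation on each factor is exactly the bookkeeping the paper leaves implicit.
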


The following theorem provides a shearing-coordinate analogue of Bers' theorem. It replaces bounded lengths in a pants decomposition with uniformly bounded shear parameters along the non-closed components of a spiralling triangulation. The key idea in the proof is that shear points cannot lie in cusp neighborhoods or within collars around short curves (Proposition~3.4 in~\cite{Bershear}). Consequently, given a short hexagon decomposition, the shear along an edge of the induced spiralling triangulation is supported on a subarc of uniformly bounded length. This yields explicit bounds on the shear parameters in terms of bounds for the lengths of the hexagon sides.

\begin{theorem}[Proposition 3.6 in~\cite{Bershear}]\label{BershearRmk2}
    Let $X\in \teich_{g,n}$ with $2g-2+n>0$. Let $L', L_A'>0$ and let $(\Gamma, \As)$ be a $(L',L_A',\wtrunc)$-short hexagon decomposition of $X$. Then the shear parameter along any arc component $\lambda$ of $\mathcal{T}_{(\Gamma, \As)}$ is bounded by
    \[
        |S^{\mathcal{T}_{(\Gamma, \mathcal{A})}}_{\lambda}(X)|\leq 4L' + 4\max(L_A',\epsilon_T)+ 3\epsilon_T + 8\rho.
    \]
    where $\varrho=\frac{\log(3)}{4}$ and $\epsilon_T = \frac{3\varrho}{2}$.
\end{theorem}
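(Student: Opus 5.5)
The plan is to localise the shear along $\lambda$ to a bounded subarc of $\lambda$ and then bound the length of that subarc using the side lengths of the short hexagon decomposition.

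\textbf{Setup.} Fix a bi-infinite component $\lambda=\lambda_a$ of $\mathcal{T}_{(\Gamma,\As)}$, with $a\in\As$. Let $\Delta,\Delta'$ be the two ideal triangles adjacent to $\lambda$, and let $p,p'$ be their shear points on $\lambda$. Then $|S_\lambda|=d_\lambda(p,p')$, the distance measured along $\lambda$.

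\textbf{Step 1: shear points lie in the truncated part.} Cut $X$ along the horocycles of length $h_0(\wtrunc)=\epsilon_T$ around the cusps and along the $\wtrunc$-collars of the curves of $\Gamma$. Inside a cusp neighbourhood of length at most $\epsilon_T$, or a collar whose boundary has injectivity radius $r^{\wtrunc}_\ell\approx \arcsinh(\epsilon_T/2)$, the ideal triangles are thinner than the inscribed radius $2\varrho$. The choice $\epsilon_T=3\varrho/2$ is made exactly so that the inscribed disc of radius $2\varrho$ cannot fit there. Hence no shear point lies in these regions; this is the cited Proposition~3.4 of \cite{Bershear}. So $p,p'\in X^{\wtrunc}_\Gamma$.

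\textbf{Step 2: bound the truncated portion of $\lambda$.} Now $\lambda\cap X^{\wtrunc}_\Gamma$ has to be described. The geodesic $\lambda_a$ follows $a$ and then spirals around the curves $\gamma_i$ at its ends. Its portion in the truncated surface therefore fellow-travels:
\begin{itemize}
\item the truncated arc $a^{\wtrunc}_\Gamma$, of length at most $L_A'$, or at least $\epsilon_T$-scale pieces when $a$ is short;
\item near each end, at most a bounded number of turns along the boundary of the $\wtrunc$-collar of $\gamma_i$ before $\lambda$ enters the collar for good. Each turn has length comparable to $\ell(\gamma_i)\cosh(\wtrunc)\le\max(L',\epsilon_T)$.
\end{itemize}
Comparing the piecewise-geodesic path with $\lambda$ in $\Hh^2$ by hyperbolic trigonometry of the right-angled pieces should give segment lengths of size $2L'+2\max(L_A',\epsilon_T)$ on each side. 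Adding the additive corrections from the horocycle/collar boundary pieces (of order $\epsilon_T$) and the two offsets of the shear points from the entry points (at most $4\varrho$ each) yields
\[
4L'+4\max(L_A',\epsilon_T)+3\epsilon_T+8\varrho.
\]

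\textbf{Step 3: combine.} Both shear points lie on the bounded subarc found in Step~2, so their distance along $\lambda$ is at most its length, which gives the claimed bound on $|S_\lambda|$.

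\textbf{Main obstacle.} The hard part is Step~2: getting explicit constants for how many times $\lambda$ winds along the collar boundary before it leaves the truncated region, while tracking which adjacent hexagon each triangle's shear point sits in. I would handle this with the right-angled pentagon/quadrilateral formulas applied in the universal cover.
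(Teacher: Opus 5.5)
The paper does not reprove this statement. It cites \cite[Proposition~3.6]{Bershear} and only summarises the idea: shear points avoid cusp neighbourhoods and collars of short curves, so the shear is carried by a bounded subarc. Your Steps~1 and~3 follow that summary, but Step~2, which you correctly flag as the crux, rests on a false description of $\lambda\cap X^{\wtrunc}_\Gamma$.

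Suppose $a$ ends on a curve $\gamma\in\Gamma$ with $\ell_X(\gamma)\geq\epsilon_T$. Then $\wtrunc(\ell_X(\gamma))=0$, so only $\gamma$ itself is removed. Since $\lambda_a$ accumulates on $\gamma$ without ever meeting it, $\lambda\cap X^{\wtrunc}_\Gamma$ contains a whole spiralling ray. It makes infinitely many turns and never ``enters the collar for good''.

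The same failure occurs quantitatively for $\ell_X(\gamma)$ just below $\epsilon_T$. Lift $\gamma$ to $i\R_{>0}$ with the spiralling end of $\lambda$ at $\infty$, so that $\lambda$ lifts to a vertical line $\{x=u\}$. The $w$-collar is $\{|x|\leq y\sinh w\}$, and $\lambda$ enters it only at height $u/\sinh w$. This tends to $\infty$ as $w=\wtrunc(\ell)\to 0$, even though $\ell^{\wtrunc}_{X,\Gamma}(a)$ stays bounded.

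So the fact that the shear points lie in $X^{\wtrunc}_\Gamma$ cannot, by itself, confine them to a subarc whose length is controlled by $L'$, $L_A'$ and $\epsilon_T$. Curves of $\Gamma$ may be as long as $L'\gg\epsilon_T$, so this is the typical case, and it is exactly the one responsible for the $4L'$ term. In addition, the constants in your Step~2 are read off from the target rather than derived.

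What is missing is a confinement of the shear points along a spiralling end that comes from the triangulation itself, not from the truncation. In the normalisation above, the deck transformation $z\mapsto e^{\ell}z$, with $\ell=\ell_X(\gamma)$, preserves the lifted triangulation. Hence the edges ending at $\infty$ on that side of $i\R_{>0}$ are the vertical lines $\{x=e^{k\ell}c_j\}$, $k\in\Z$, and consecutive ones have ratio at most $e^{\ell}$.

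The shear point of the triangle $(u,v,\infty)$ on $\{x=u\}$ sits at height $v-u\leq(e^{\ell}-1)u$. The shear point of the triangle on the other side of $\{x=u\}$ sits at height at most $(1-e^{-\ell})u$. Thus both shear points on $\lambda$ lie at distance at least $\arcsinh\bigl(1/(e^{\ell}-1)\bigr)$ from the lift of $\gamma$. They cannot climb more than about one turn up the spiral, which costs a length of order $\ell_X(\gamma)\leq L'$ at each end. For cusp ends the analogous fact is that the horocyclic arc through a shear point across its triangle has length $1>\epsilon_T$.

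With this confinement in hand, you can bound the subarc between the two shear points using the hexagon side lengths. Without it, the proposal does not establish any bound in the presence of curves of length near or above $\epsilon_T$.
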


\begin{cor}[Proposition 3.5 in~\cite{Bershear}]\label{Bershear}
    For every surface $X \in \overline{\teich}_{g,n}$ and every short hexagon decomposition $(\Gamma,\mathcal{A})$ on $X$, the induced spiralling triangulation $\mathcal{T}_{(\Gamma,\mathcal{A})}$ satisfies
    \[
        \shear_{\mathcal{T}_{(\Gamma,\mathcal{A})}}(X) \leq 12\log\bigl(8\pi(2g-2+n)\bigr) + 25.
    \]
\end{cor}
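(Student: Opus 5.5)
The plan is to obtain Corollary~\ref{Bershear} directly from Theorem~\ref{BershearRmk2}, by substituting the explicit bounds $L'=L(g,n)$ and $L_A'=L_A(g,n)$ that define a short hexagon decomposition.

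First, I reduce to the case $X\in\teich_{g,n}$. If $X$ lies in a stratum $S(\Gamma^0)$ with $\Gamma^0\subseteq\Gamma$, then $X$ is a product of hyperbolic surfaces, one for each component of $\Sigma\smallsetminus\Gamma^0$. Each arc of $\As$ lies in one of these components, and the ideal triangles of $\mathcal{T}_{(\Gamma,\As)}$ adjacent to it lie in the same component. Hence every shear parameter is computed inside a single factor $\teich_{g_i,n_i}$, where the nodes are treated as cusps. By the definition of truncation, $N_{h_0(w)}$ is used around nodes exactly as around cusps, so the restricted decomposition is still $(L,L_A,\wtrunc)$-short in the factor. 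One must also check that the constants do not grow under this restriction. This holds because $2g_i-2+n_i\le 2g-2+n$, and $L$, $L_A$ are the fixed numbers coming from $(g,n)$. Theorem~\ref{BershearRmk2} takes arbitrary $L',L_A'$, so it applies in each factor with these same values.

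Next, I apply Theorem~\ref{BershearRmk2} with $L'=2\log(8\pi(2g-2+n))$ and $L_A'=\max\{\log(8\pi(2g-2+n))+2\Delta_{\wtrunc},\,L'\}$. Write $\Lambda=\log(8\pi(2g-2+n))$. Since $2g-2+n\ge1$, we have $\Lambda\ge\log(8\pi)>3$. Using $\Delta_{\wtrunc}<2.28$, this gives $L_A'\le\max\{\Lambda+4.56,\,2\Lambda\}\le 2\Lambda+4.56$. In addition, $\epsilon_T=3\log3/8\approx0.41$ is smaller than $L_A'$, so $\max(L_A',\epsilon_T)=L_A'$. The bound of the theorem then becomes
\[
4L'+4L_A'+3\epsilon_T+8\varrho\le 8\Lambda+8\Lambda+18.24+1.24+2.2 .
\]

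Comparing this with the claimed $12\Lambda+25$, the coefficient $16$ seems too large. So the estimate on $L_A'$ needs to be sharper in the relevant range. When $\Lambda\ge 4.56$ the maximum is $2\Lambda$, and the sum is $16\Lambda+\text{const}$, which is still too big. I therefore suspect that the intended input is the sharper shortness $(2R_{g,n},R_{g,n}+2\Delta_w,w)$ from Proposition~2.8 in~\cite{Bershear}, which gives $L_A'\le\Lambda+4.56$. With $L'=2\Lambda$ this yields $8\Lambda+4\Lambda+18.24+3\epsilon_T+8\varrho\le 12\Lambda+22$. That matches the claimed form with room to spare: $3\epsilon_T\approx1.24$ and $8\varrho=2\log3\approx2.2$.

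The main obstacle is exactly this bookkeeping, namely which arc-length bound enters Theorem~\ref{BershearRmk2}. I would first check whether Theorem~\ref{BershearRmk2} really uses $L'$ only through the closed curves and $L_A'$ only through the arcs. If so, I would verify that the short decompositions produced by Corollary~\ref{ShortHexagonDecompositionExistence} satisfy the arc bound $\Lambda+2\Delta_{\wtrunc}$ rather than $2\Lambda$. Failing that, I would accept the weaker constant $16\Lambda+22$ as the honest conclusion for general short decompositions.
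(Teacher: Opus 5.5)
Your route is the intended one, and the discrepancy you found is genuine. It is not a defect of your argument. The paper gives no proof beyond citing \cite[Proposition~3.5]{Bershear}, which amounts to applying Theorem~\ref{BershearRmk2} to the decompositions of \cite[Proposition~2.8]{Bershear}.

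Both of the checks you proposed go through. Theorem~\ref{BershearRmk2} uses $L'$ only for the curves and $L_A'$ only for the truncated arcs; this is just the definition of $(L',L_A',\wtrunc)$-short. The decompositions behind Corollary~\ref{ShortHexagonDecompositionExistence} come from \cite[Proposition~2.8]{Bershear}, so they are $(2R_{g,n},R_{g,n}+2\Delta_{\wtrunc},\wtrunc)$-short. With your $\Lambda=\log(8\pi(2g-2+n))\geq R_{g,n}$ this gives $12\Lambda+8\Delta_{\wtrunc}+3\epsilon_T+8\varrho\leq 12\Lambda+22$, which is exactly where the constant $12\Lambda+25$ comes from. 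On a noded surface, argue factor by factor, using $R_{g_i,n_i}\leq R_{g,n}$ because $2g_i-2+n_i\leq 2g-2+n$.

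With the paper's definition $L_A=\max\{\Lambda+2\Delta_{\wtrunc},L\}$, the same theorem only gives $\max\{12\Lambda+8\Delta_{\wtrunc},\,16\Lambda\}+3\epsilon_T+8\varrho$. This exceeds $12\Lambda+25$ as soon as $2g-2+n\geq 9$, so your crude $16\Lambda+22$ is sharp in coefficient but the claimed bound still holds for $2g-2+n\leq 8$. Hence, for an arbitrary short decomposition in the paper's sense, the stated constant does not follow from Theorem~\ref{BershearRmk2}. The corollary should be read as a statement about the decompositions supplied by Corollary~\ref{ShortHexagonDecompositionExistence}, that is, as the existence statement it is in \cite{Bershear}.

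The paper's later constant $7\log(8\pi(2g-2+n))+80$ in Proposition~\ref{GraftingShear} is likewise only consistent with $L_A=\Lambda+2\Delta_{\wtrunc}$, which supports your reading.
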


\begin{remark}\label{RmkBersIdealTriangulation}
    If a restrict ourselves to consider only ideal triangulations (where all ideal arcs end in cusps) of a punctured hyperbolic surface $X$ of type $(g,n)$ with $n\geq 1$. Then Section 3.2 in~\cite{Bershear} shows that the shear points of an ideal triangulation on $X$ cannot lie within cusp neighborhoods whose area is smaller than $\delta_1'=0.27$. This value comes from a strict lower approximation for the area of the region delimited by an ideal triangle's inscribed circle and its contact triangle. Hence, there exists an ideal triangulation whose shear parameters admit an upper bound depending on the systole $\sys(X)$, that is the length of the shortest essential closed geodesic on $X$. This is the content of the following result.
\end{remark}

\begin{proposition}[Section 3.2 of~\cite{Bershear}]\label{IdealBershear}
    For every $\epsilon \leq \epsilon_0$ and $X \in \teich_{g,n}^{\epsilon}$ in the $\epsilon$-thick part of Teichmüller space, there exists an ideal triangulation $\mathcal{T}$ such that
    \[
        \shear_{\mathcal{T}}(X) \lesssim \frac{g+n}{\epsilon}.
    \]
\end{proposition}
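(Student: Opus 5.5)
Fix $0<\epsilon\leq\epsilon_0$ and $X\in\teich^{\epsilon}_{g,n}$ (so $n\geq1$). The plan is to adapt the proof of Theorem~\ref{BershearRmk2} to $\Gamma=\emptyset$. First we build an ideal triangulation whose arcs, truncated at the $\delta$-cusp neighbourhoods, are short. Then we bound the shears using the fact, recalled in Remark~\ref{RmkBersIdealTriangulation}, that shear points avoid small cusp neighbourhoods.

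\textbf{Step 1: short ideal arcs.} Take the cusp neighbourhoods $N_{h}(c_i)$ with a fixed small horocycle length $h$ (say $h\leq\delta_1'$, so they are embedded and disjoint). Since $X$ is $\epsilon$-thick, the injectivity radius on the truncated surface $X^{h}=X\smallsetminus\bigcup_i N_h(c_i)$ is at least $\min(\epsilon/2,r(h))$. We greedily choose a maximal family of disjoint simple orthogeodesic arcs between horocycles, each time taking a shortest admissible one. This is the same procedure as in Parlier's short hexagon construction, with $\Gamma=\emptyset$. We then bound the truncated length of each arc by a diameter/area argument. Every complementary region of the partial family that still needs cutting has area at most $\area(X)=2\pi(2g-2+n)$. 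It also contains an embedded disc of radius about $\epsilon$, the injectivity radius lower bound. A packing argument, covering a path by disjoint $\epsilon/2$-balls each of area $\gtrsim\epsilon^2$, then gives diameter $\lesssim (g+n)/\epsilon$ for the truncated pieces. Hence each new arc has truncated length $\lesssim (g+n)/\epsilon$. The crude linear bound (rather than logarithmic) comes exactly from this step. We do not try to improve it, since the statement only asks for $(g+n)/\epsilon$. Straightening the maximal family yields an ideal triangulation $\mathcal{T}$.

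\textbf{Step 2: shear points stay in the truncated region.} By Remark~\ref{RmkBersIdealTriangulation}, shear points of $\mathcal{T}$ cannot lie in cusp neighbourhoods of area below $\delta_1'$. So for each edge $\lambda$, both shear points determined by the two adjacent triangles lie on the truncated subarc $\lambda\cap X^{h}$ (after fixing $h\le\delta_1'$). The shear $S_\lambda$ is the signed distance between these two points along $\lambda$. Hence $|S_\lambda|$ is at most the length of $\lambda\cap X^h$, plus a universal constant accounting for the change from the $\delta_1'$ horocycle to the $h$ horocycle. That length is $\lesssim(g+n)/\epsilon$ by Step~1. Taking the maximum over edges gives $\shear_{\mathcal{T}}(X)\lesssim (g+n)/\epsilon$.

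\textbf{Main obstacle.} The hard step is Step~1. We must make sure that the greedy arcs really stay of controlled truncated length until the family is maximal. The danger is that a late complementary region is a thin, long piece that an arc must cross through a thin collar. Thickness rules this out, since no closed geodesic is shorter than $\epsilon$. The cleanest way to organise the bound is probably a Dirichlet/Voronoi-type argument: take the cut locus of the horocycles on $X^h$, and use arcs dual to its edges. Each such arc has length at most twice the maximal distance to $\partial N_h$, which the area–injectivity-radius packing bounds by $\lesssim(g+n)/\epsilon$.
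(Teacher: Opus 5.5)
Your proposal is correct and follows the route the paper relies on from \cite[Section~3.2]{Bershear}: produce an ideal triangulation whose arcs, truncated at small horocycles, have length $\lesssim \area(X)/\min(\epsilon,\delta_1')$ by an area-over-injectivity-radius argument (this is the shape of the bound $\frac{2\pi(2g-2+n)}{\min(\epsilon,0.27)}+\log\frac{2}{0.27}$ used in Lemma~\ref{TeichshearingIDEAL}), then use Remark~\ref{RmkBersIdealTriangulation} to place both shear points of each edge on the truncated subarc. Of your two versions of Step~1, use the cut-locus one, as in the proof of Lemma~\ref{LemmaNiceHexDecomp}: there you pack balls along a path from a point to the horocycles that is minimising in $X$, so the balls are genuinely disjoint; the greedy version fails because packing cannot bound the intrinsic diameter of a complementary piece when two arcs of the family fellow-travel and leave a thin strip between them, and thickness of $X$ does not rule this out.
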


\addtocontents{toc}{\SkipTocEntry}
\subsection{The pants, flip and hexagons graphs} Let $\Sigma$ be a hyperbolic surface of genus $g$ with $n$ punctures. Several combinatorial graphs have been introduced to model the geometry of Teichmüller space.

\addtocontents{toc}{\SkipTocEntry}
\subsubsection{The pants graph}
The vertices of the \emph{pants graph} $\mathscr{P}_{g,n}$ are given by isotopy classes of pants decompositions of $\Sigma$. Two vertices are connected by an edge if the corresponding pants decompositions differ by an elementary move, namely replacing a single curve by another curve
intersecting it minimally. 

The pants graph was introduced by Hatcher and Thurston~\cite{HatcherThurston1980}. It is a connected and infinite graph and it is not Gromov hyperbolic in general~\cite{BrockFarb2006}. The mapping class group acts on $\mathscr{P}_{g,n}$, and the quotient is the modular pants graph, which is finite. The mapping class group is the automorphism group of the pants graph~\cite{Margalit2004, ParlierPetri2018}. Brock proved that $\mathscr{P}_{g,n}$, equipped with its combinatorial metric, is quasi-isometric to the Teichmüller space endowed with the Weil--Petersson metric \cite[Theorem~1.1]{Brock2003}. Vertices in the pants graph correspond to maximally noded surfaces in the boudary strata.

\addtocontents{toc}{\SkipTocEntry}
\subsubsection{The flip graph of triangulations}
In the case where $\Sigma$ has at least one cusp, another model to consider is the \emph{flip graph}, whose vertices are isotopy classes of ideal triangulations. Two triangulations are connected by an edge if they differ by a flip, that is, a diagonal exchange in one ideal quadrilateral.

The flip graph is infinite and connected~\cite{Hatcher1991}. The diameter of the flip graph of polygons with $m$ sides is finite and equals $2m-10$ for $m>12$~\cite{SleatorTarjanThurston1988, Pournin2014}. The quotient by the mapping class group action is the \emph{modular flip graph} whose diameter has order $g\log(g)+n\log(n)$~\cite{DisarloParlier2019}. In the \emph{simultaneous flip graph} several disjoint flips may be performed at once. The diameter of the \emph{modular simultaneous flip graph}, where we consider triangulations up to homeomorphism, is of order at most $\log^2(g+n)$ and at least $\log(g+n)$~\cite{DisarloParlier2018}. In general, the automorphism group of the flip graph is the mapping class group~\cite{KorkmazPapadopoulos2012}. The mapping class group acts properly discontinuously and cocompactly on the flip graph, thus the flip graph is quasi-isometric to the mapping class group and therefore to the thick part of Teichmüller space~\cite{DisarloParlier2019, LackenbyPurcell2024, Penner2012}. 

\addtocontents{toc}{\SkipTocEntry}
\subsubsection{The hexagon graph}\label{definition_hexagon_graph}
Introduced by Gültepe and Parlier~\cite{GultepeParlier2025}, the \emph{hexagon graph} $\mathscr{H}_{g,n}$ has its vertices given by hexagon decompositions of $\Sigma$. Two vertices $H = (\Gamma, \mathcal{A})$ and $H' = (\Gamma', \mathcal{A}')$ are connected by an edge if they are related by one of two types of \emph{elementary moves} (see Figure~\ref{Hexagon_Graph}):
\begin{itemize}
    \item[-] \textit{Flip:} The multicurves are identical ($\Gamma = \Gamma'$), and the multiarcs $\mathcal{A}$ and $\mathcal{A}'$ differ by a single arc flip.
    \item[-] \textit{Curve addition/removal:} $H'$ is obtained from $H$ by adding a \emph{compatible} curve $\alpha$. A curve is compatible if it is disjoint from $\Gamma$ and intersects each arc in $\mathcal{A}$ at most once. The new set of orthogeodesic arcs $\mathcal{A}'$ is then obtained by is obtained by cutting each arc of $\mathcal{A}$ at its intersection with $\alpha$ and then taking the resulting orthogeodesic arcs up to isotopy in $\Sigma \smallsetminus \Gamma'$, duplicate are identified.
\end{itemize}
\vspace{-0.2cm}
\begin{figure}[h]
    \centering
    \includegraphics[scale=0.14]{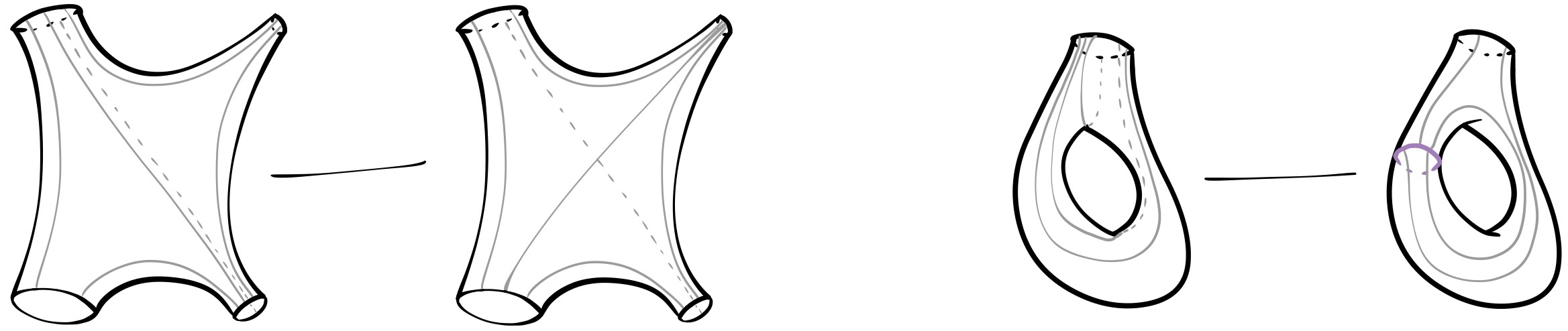}
    \caption{Elementary moves in the hexagon graph}~\label{Hexagon_Graph}
    \vspace{-0.5cm}
\end{figure} 
The hexagon graph $\mathscr{H}_{g,n}$ is quasi-isometric to the standard pants graph $\mathscr{P}_{g,n}$~\cite[Theorem 1.1]{GultepeParlier2025} and so by Brock it is quasi-isometric to the Teichmüller space with the Weil--Petersson metric. 
In the next parts of this work, we study the relation between the hexagon graph and Teichmüller space directly, rather than through the intermediate quasi-isometry with the pants graph.

\section{Hexagonal coordinates and truncated lengths}\label{sec:hexcoord}
The truncated length of an orthogeodesic arc was defined in the sub-section~\ref{sec-Trunc-length}. Here we prove that it varies continuously as the surface degenerates (Lemma~\ref{lem:truncated_length_continuous}), and use it to build a system of coordinates on the union of strata $\bigcup_{\Gamma^0\subseteq\Gamma}S(\Gamma^0)$ adapted to a hexagon decomposition (Proposition~\ref{HexagonalCoordinates}).

\begin{lemma}\label{lem:truncated_length_continuous}
    Let $\Gamma\subset\Sigma$ be a simple multicurve and let $\wabstract$ be a truncation function. Let $a$ be an arc whose interior is contained in $\Sigma\setminus\Gamma$ and whose endpoints lie in cusps or on $\Gamma$. Then the $(\Gamma,\wabstract)$-truncated length function
    \[
        \begin{aligned}
            \ell^{\wabstract}_{\bullet,\Gamma}(a)\colon\bigcup_{\Gamma^0 \subseteq \Gamma} S\left(\Gamma^0\right) &\longrightarrow \R_{>0} \\
            X &\longmapsto \ell^{\wabstract}_{X,\Gamma}(a),
        \end{aligned}
    \]
    where $\ell^{\wabstract}_{X,\Gamma}$ is the $(\Gamma,\wabstract)$-truncated length of the orthogeodesic representative of $a$ on $X$, is continuous.
\end{lemma}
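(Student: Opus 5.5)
Continuity is local, so I will fix $X_0 \in S(\Gamma^0_0)$ with $\Gamma^0_0\subseteq\Gamma$ and a sequence $X_k \to X_0$ in $\bigcup_{\Gamma^0\subseteq\Gamma}S(\Gamma^0)$. Passing to subsequences, I may assume every $X_k$ lies in a single stratum $S(\Gamma^0_1)$. Convergence in the augmented topology forces $\Gamma^0_1\subseteq\Gamma^0_0$. I complete $\Gamma$ to a pants decomposition $\Ps\supseteq\Gamma$ and work in the extended Fenchel--Nielsen coordinates $\FN_{\Ps}$ of Section~\ref{sec:AugmentedTeichmuller}. In these coordinates the lengths $\ell_{X_k}(\gamma)$ converge to $\ell_{X_0}(\gamma)$ for every $\gamma\in\Ps$, with limit $0$ exactly for $\gamma\in\Gamma^0_0$. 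The twists about curves of $\Gamma^0_0$ may diverge. This is harmless, because $a$ avoids the interior of the collars of $\Gamma$ except where it enters them near its endpoints. The truncated part $a^{\wabstract}_{\Gamma}$ only sees the truncated surface $X^{\wabstract}_{\Gamma}$.

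\emph{Step 1: interior case.} I first treat $\Gamma^0_0=\Gamma^0_1$, i.e.\ $X_k$ and $X_0$ in the same stratum. On each component of $\Sigma\smallsetminus\Gamma^0_0$ the Teichmüller space is a genuine Teichmüller space. The orthogeodesic representative of $a$ depends real-analytically on the point, for instance via the right-angled hexagon/pentagon trigonometry of the pieces containing $a$. The collar widths $\wabstract(\ell_X(\gamma))$ for $\gamma\in\Gamma\smallsetminus\Gamma^0_0$ vary continuously because $\wabstract$ is continuous. The horocyclic truncations at nodes and cusps are at the fixed value $h_0(\wabstract)$. The entry and exit points of $a$ into these neighbourhoods therefore move continuously, and hence so does $\ell^{\wabstract}_{X,\Gamma}(a)$.

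\emph{Step 2: degenerating case.} This is the main obstacle: some $\gamma\in\Gamma^0_0\smallsetminus\Gamma^0_1$ has $\ell_k\coloneq\ell_{X_k}(\gamma)\to0$ while $X_0$ has a node there. I will cut $a$ into three kinds of pieces. The first kind are the subarcs lying in the $\wabstract(\ell_k)$-collars of such $\gamma$; these are removed. The second kind are the subarcs inside the standard collar but outside the $\wabstract$-collar. The third is the part in the thick complement. Inside the collar model $d\rho^2+\ell_k^2\cosh^2\rho\,dt^2$, an arc orthogonal to $\gamma$ runs along $t=\mathrm{const}$. Its truncated length between $\rho=\wabstract(\ell_k)$ and $\rho=\wstd(\ell_k)$ is exactly $\wstd(\ell_k)-\wabstract(\ell_k)$. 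I will compare this to the cusp model $d\rho^2+e^{2\rho}dt^2$ at the node. There the corresponding length runs from the horocycle of length $h_0(\wabstract)$ to the horocycle of length $2$, namely $\log\bigl(2/h_0(\wabstract)\bigr)$. Using $\ell\cosh\wabstract(\ell)\to h_0(\wabstract)$ and $\ell\cosh\wstd(\ell)\to2$, both boundary lengths converge. Moreover $\wstd(\ell)-\wabstract(\ell)\to\log\bigl(2/h_0(\wabstract)\bigr)$, since $\cosh$ is asymptotically $e^{\rho}/2$ for large $\rho$.

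For the third piece, I will use the standard fact that the complement of the standard collars of the pinching curves converges geometrically to the complement of the standard cusp neighbourhoods of the nodes. Concretely, the metrics on the pairs of pants of $\Ps$ converge as the boundary lengths go to $0$, with the standard collars matching the standard cusp neighbourhoods. Along this convergence the orthogeodesic representative of $a$ in the thick part converges, and the twists about $\gamma$ do not affect it. On the $X_k$ side, $a$ is orthogonal to $\gamma$ and exits the collar orthogonally to its boundary. On the limit side, $a$ enters the cusp along a geodesic orthogonal to horocycles. So the thick-part length converges. Summing the three kinds of pieces gives $\ell^{\wabstract}_{X_k,\Gamma}(a)\to\ell^{\wabstract}_{X_0,\Gamma}(a)$.

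The delicate point I expect is justifying the geometric convergence of the orthogeodesic in the thick part. I would argue this directly through right-angled hexagon formulas in each pair of pants containing $a$. Those formulas extend continuously when a boundary length tends to $0$, with hexagons degenerating to pentagons with an ideal vertex. A truncated side then converges to the corresponding horocyclically truncated side.
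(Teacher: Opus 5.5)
You compute the collar pieces correctly, and this gives a clean reduction. Since $\wstd(\ell)-\wabstract(\ell)\to\log\bigl(2/h_0(\wabstract)\bigr)$, which is the distance between the horocycles of lengths $h_0(\wabstract)$ and $2$, continuity of $\ell^{\wabstract}_{\bullet,\Gamma}(a)$ is equivalent to continuity of $\ell^{\wstd}_{\bullet,\Gamma}(a)$.

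The gap is that this $\wstd$ case is the whole content of the lemma, and you assert it rather than prove it. Concretely, you need the length of your third piece (between the standard collar boundaries on $X_k$) to converge to the length of the piece between the standard horocycles on $X_0$. Your sketch does not establish this:
\begin{enumerate}
\item With an arbitrary $\Ps\supseteq\Gamma$, as in your set-up, the arc $a$ generally crosses many curves of $\Ps\smallsetminus\Gamma$. This is exactly the situation in Proposition~\ref{HexagonalCoordinates}, where the lemma is used.
\item In that case $a$ meets each pair of pants in a segment that is not an orthogeodesic. Right-angled hexagon trigonometry then does not compute its length, and ``a truncated side converges to a horocyclically truncated side'' applies only when $a$ is itself a seam.
\item Orthogonality at the collar boundary and at the horocycle does not give convergence of lengths either. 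Even granting near-isometries between the thick parts, you would still have to control where they send the truncation boundaries, and compare the orthogeodesic with competing paths.
\end{enumerate}

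Within your framework, the simplest repair is to adapt $\Ps$ to $a$. Suppose $a$ joins boundary elements $b_1,b_2$ of the component of $\Sigma\smallsetminus\Gamma$ containing it. Take a regular neighbourhood of $b_1\cup a\cup b_2$ (or of $b\cup a$ if both ends lie on the same $b$). Its boundary consists of curves that are either peripheral in that component or essential and disjoint from $\Gamma$. Completing these curves together with $\Gamma$ gives a pants decomposition $\Ps\supseteq\Gamma$ in which $a$ lies in a single pair of pants. Then:
\begin{enumerate}
\item $\ell^{\wabstract}_{X,\Gamma}(a)$ is an explicit function of at most three boundary lengths, namely hexagon or pentagon trigonometry minus the truncation widths.
\item One checks that this function extends continuously to zero lengths of the curves of $\Gamma$, with the corresponding cusped-pants formula as its value.
\item These boundary lengths converge in the extended Fenchel--Nielsen coordinates.
\end{enumerate}
This yields an elementary, fully explicit proof.

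The paper argues differently, using Abikoff's description of the topology: $K$-bilipschitz maps with $K\searrow1$, defined off deep neighbourhoods of the nodes and cusps, with no requirement that they match truncation boundaries. It combines this with a lasso argument:
\begin{enumerate}
\item Extend the truncated arc at each end by a segment of length $\delta_K\to0$. The peripheral geodesic loop at each new endpoint has length $2K^{-2}r^{\wabstract}_0$.
\item Its image has length at most $2K^{-1}r^{\wabstract}_0<2r^{\wabstract}_{\ell_{X_m}(\gamma)}$, so it must lie inside the $\wabstract$-collar.
\item This forces the image arc to cross the collar boundary, which gives the $\limsup$ inequality. The symmetric argument gives the $\liminf$ inequality.
\end{enumerate}
The paper's route needs no formulas or case analysis; the adapted-pants route is more concrete.
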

\begin{proof}
    To prove this, we use the fact that the topology on $\overline{\teich}(\Sigma)$ can be described via bilipschitz homeomorphisms defined on complements of neighborhoods of the nodes, see~\cite[Section 1]{Abikoff1977}. Let us set up the definitions and notation introduced by Abikoff, who proves that this topology coincides with the one induced from the Fenchel--Nielsen coordinates~\cite[Theorem 1]{Abikoff1977}.
    
    Given an essential simple multicurve $\Gamma\subset\Sigma$, let $\Sigma_{\Gamma}$ denote the quotient space of $\Sigma$ obtained by collapsing each component of $\Gamma$ into a point, called a \emph{node}. Let $\Sigma'_{\Gamma}\subseteq\Sigma_{\Gamma}$ be the complement of the nodes.

    A \emph{deformation of nodal hyperbolic surfaces} is a triple $(X_1,X_2,f)$ where
    \begin{itemize}
        \item $X_i$ is a complete hyperbolic structure on $\Sigma'_{\Gamma_i}$ where $\Gamma_i\subset\Sigma$ is an essential simple multicurve, for $i=1,2$;
        \item $f\colon \Sigma_{\Gamma_1}\to\Sigma_{\Gamma_2}$ is a continuous surjection such that
        \begin{enumerate}[label=(\roman*)]
            \item $f$ sends nodes to nodes;
            \item $f^{-1}\restr{\Sigma'_{\Gamma_2}}$ is a homeomorphism onto its image;
            \item the preimage under $f$ of each node in $\Sigma_{\Gamma_2}$ is a point or an essential simple closed curve.
        \end{enumerate}
    \end{itemize}
    Two deformations $(X_1,X_2,f)$ and $(X_1,X_3,g)$ of $X_1$ are equivalent if there exists a homeomorphism $h\colon\Sigma_{\Gamma_2}\to\Sigma_{\Gamma_3}$ whose restriction to $\Sigma'_{\Gamma_2}\to\Sigma'_{\Gamma_3}$ is an isometry from $X_2$ to $X_3$, and such that $h\circ f$ is homotopic to $g$.

    The augmented Teichmüller space $\overline{\teich}(\Sigma)$ is the space of \emph{equivalence classes of deformations} of $X_0$ where $X_0$ is any choice of complete hyperbolic structure on $\Sigma$.
    
    Let us fix a deformation $(X_0,Y_0,f_0)$ (denoted by $Y_0$ for conciseness) and let $\Gamma_0$ denote the simple multicurve associated to $Y_0$, i.e.\ $Y_0$ is a hyperbolic structure on $\Sigma'_{\Gamma_0}$. Let $K>1$ and $N$ be a neighborhood of the nodes and cusps, i.e.\ a neighborhood of $C_0\coloneq \Sigma_{\Gamma_0}-\Sigma'_{\Gamma_0}$ and of the original cusps in $\Sigma$. The $(K,N)$-neighborhood of $Y_0$ is the set of deformations $(X_0,X,f)$ of $X_0$ that admit a deformation $(X,Y_0,g)$ satisfying the following conditions
    \begin{itemize}
        \item $g^{-1}\restr{\Sigma_{\Gamma_0}-N}$ is $K$-bilipschitz;
        \item $g\circ f$ is homotopic to $f_0$.
    \end{itemize}
    One obtains a basis of neighborhoods of $Y_0$ by taking $K\searrow 1$ and smaller and smaller neighborhoods $N$ of the cusps and nodes.

    We are now ready to prove continuity of the truncated length function $\ell^w_{\bullet,\Gamma}(a)$. Let $\{\gamma_i\}_{1\leq i\leq j}$ be the collection of curves in $\Gamma$ meeting $\partial a$, where $j\in\{0,1,2\}$ and the collection is empty if $j=0$. Given $\Gamma^0\subseteq\Gamma$ and $X\in S(\Gamma^0)$, corresponding to a deformation $(X_0,X,f)$, the geodesic representative $a^*$ of $a$ in $X$ can be described as follows. Up to an ambient isotopy, we can assume that $a\cap f^{-1}(N_{\Gamma}^w)$ is the union of two subarcs (possibly points if the collar width is zero). Then $a^*\cap X_{\Gamma}^w$ is the shortest geodesic representative of $f(a)\cap X_{\Gamma}^w$, in the homotopy class relative to $\partial N_{\Gamma}^w$. It is then automatically orthogonal to $\partial N_{\Gamma}^w$.

    Let $(X_m)_{m\in\N}$ be a sequence in $\overline{\teich}(\Sigma)$ converging to $X_{\infty}$, with each $X_m\in S(\Gamma_m)$ for some $\Gamma_m\subseteq\Gamma$ and $X_{\infty}\in S(\Gamma_{\infty})$. Thus $\Gamma_m\subseteq\Gamma_{\infty}$ for all $n$ large enough. Since $\Gamma_m$ can take only finitely many values, it suffices to consider a subsequence with $\Gamma_m$ constant. Thus we can assume that $\Gamma_m=\Gamma_1\subseteq\Gamma_{\infty}$ for all $n\in\N$.

    If both endpoints of $a$ lie on $\Gamma-\Gamma_{\infty}$, then continuity of the truncated length directly follows: the non-truncated length of $a$ varies continuously (e.g.\ by a doubling) and the truncation width varies continuously (by definition). If at least one endpoint of $a$ lies on $\Gamma_{\infty}-\Gamma_1$ and the other lies on some component $\gamma\subseteq\Gamma-\Gamma_{\infty}$, then upon doubling at $\gamma$ we can assume that both endpoints of $a$ lie on $\Gamma_{\infty}-\Gamma_1$. From now on, we make this assumption and we denote by $\gamma_1,\gamma_2$ the components of $\Gamma_{\infty}-\Gamma_1$ containing the endpoints of $a$. They may coincide.

    We are going to show that
    \begin{equation*}
        \limsup\limits_{m\to+\infty}\ell_{X_m,\Gamma}^w(a)\leq\ell_{X_{\infty,\Gamma}}^w(a)\leq\liminf\limits_{m\to+\infty}\ell_{X_m,\Gamma}^w(a).
    \end{equation*}

    Consider a neighborhood $N$ of the cusps and nodes in $X_{\infty}$ that is contained deep enough in the $h_0(w)$-horocyclic neighborhoods, e.g.\
    \begin{equation*}
        N\coloneq \bigcup_{i=1}^n N_{h}(c_i)\cup\bigcup_{\gamma\subset\Gamma_{\infty}}N_{h}(\gamma)
    \end{equation*}
    for some $h>0$ much smaller than $h_0(w)$.

    When $m\to +\infty$, we have $X_m\to X_{\infty}$, hence
    \begin{equation*}
        \left|\frac{r^w_{\ell_{X_m}(\gamma_i)}}{r^w_{0}}\right|\to 1.
    \end{equation*}

    Let $1<K<2$. Let $m_K\in\N$ be such that, for all $m\geq m_K$, the following two conditions are satisfied:
    \begin{itemize}
        \item $X_m$ is contained in the $(K,N)$-neighborhood around $X_{\infty}$, let $(X_m,X_{\infty},g_m)$ be the corresponding deformation, i.e.\ $g_m$ is in the correct homotopy class and is such that
        \[
            g_m^{-1}\restr{X_{\infty}-N}\colon X_{\infty}-N\to X_m
        \]
        is a $K$-bilipschitz homeomorphism onto its image $Y_m\subset X_m$.
        \item $K^{-1}<r^w_{\ell_{X_m}(\gamma_i)}/r^w_{0}<K$ for $i=1,2$.
    \end{itemize}

    Fix some $m\geq m_K$. First, let $h_K<h_0(w)$ be such that
    \begin{equation*}
        r(h_K)=\arcsinh(\tfrac{1}{2}h_K)=K^{-2} r_0^w.
    \end{equation*}
    Then the horocycles of respective length $h_0(w)$ and $h_K$ lie at distance $\delta_K\coloneq\ln(h_0(w)/h_K)$ one from each other.

    \begin{figure}[h]
        \centering
        \begin{overpic}[width=.9\linewidth,keepaspectratio]{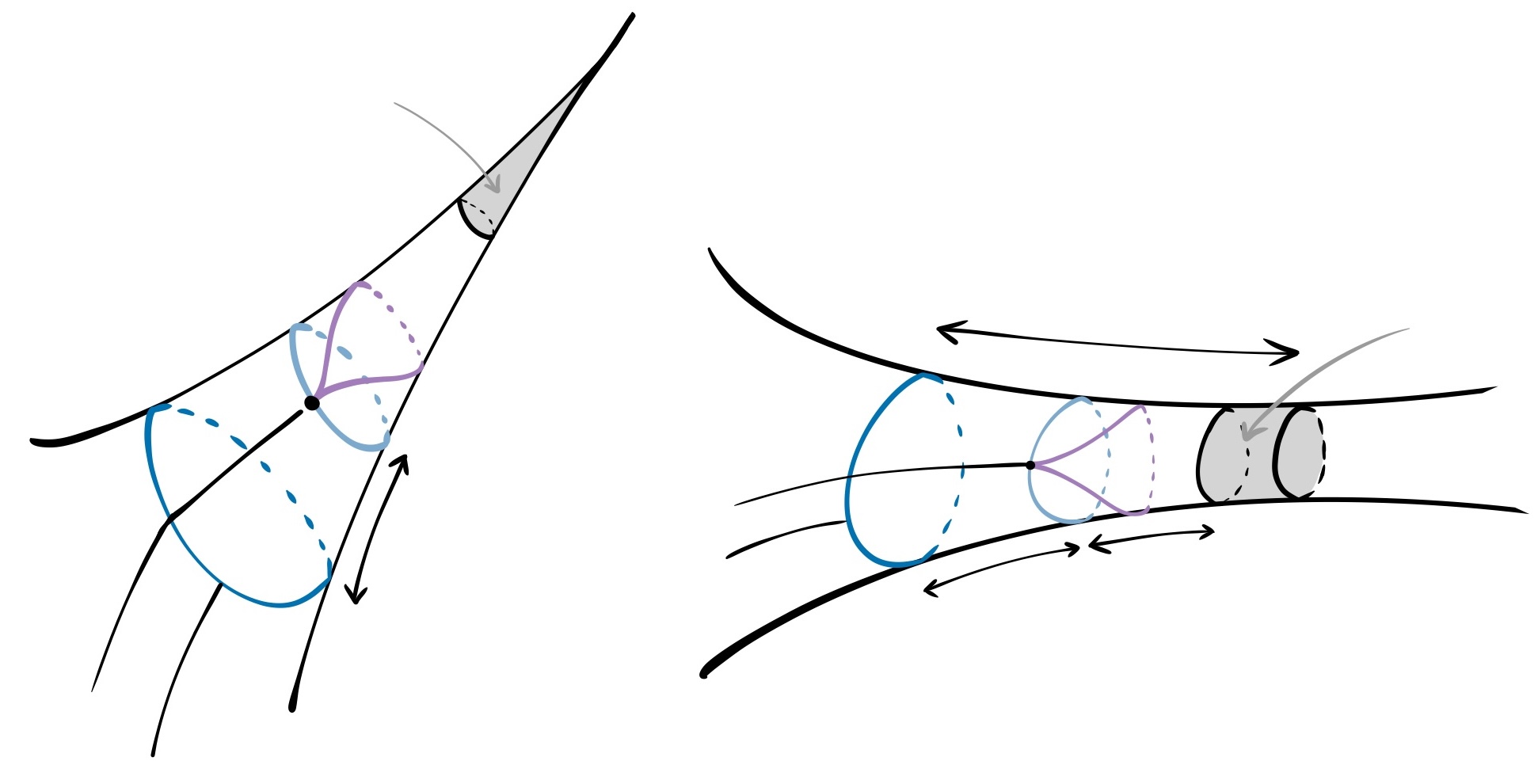}
        \put(23,44){\color{Gray}$N$}
        \put(27.5,38){$h$}
        \put(15.5,30.5){\color{RoyalBlue}$h_K$}
        \put(8,25.5){\color{RoyalBlue}$h_0$}
        \put(13,17){$a^+$}
        \put(24.5,15){$\delta_K$}
        \put(12,7){$\hat{a}$}
        \put(25.5,31){\color{Orchid}$\alpha_i$\color{black}}
        
        \put(92,29){\color{Gray}$N$}
        \put(64,11.5){$\delta_{K,i}$}
        \put(74,14){$v_{K,i}$}
        \put(66,30){$w(\ell_{X_m}(\gamma_i))$}
        \put(74,21){\color{Orchid}$\alpha_i$\color{black}}
        \put(58,21){$a^+$}
        \put(50,14){$\hat{a}$}
    \end{overpic}
        \caption{Lasso argument to show that truncated length function is continuous.}~\label{CtsTrunclength}
    \end{figure}

    Let $\hat{a}$ denote the truncated geodesic arc representing $a$ in $X_{\infty}$ and orthogonal to $\partial N_{h_0(w)}(\gamma_1)$ and $\partial N_{h_0(w)}(\gamma_2)$, i.e.\ $\ell_{X_{\infty},\Gamma}^w(a)=\ell_{X_{\infty}}(\hat{a})$. Let $a^+$ denote the arc obtained from $\hat{a}$ by extending both ends with geodesic segments of length $\delta_K$. Thus $a^+$ has endpoints on the $h_K$-horocycles at $\gamma_i$, $i=1,2$, and let $\alpha_i$ be the unique geodesic loop at each endpoint, see Figure~\ref{CtsTrunclength}. We have $\ell_{X_{\infty}}(\alpha_i)=2\cdot r(h_K)$.

    Since $N$ was chosen to be deep enough in the node neighborhoods, the concatenation $\alpha_1\cup a^+\cup \alpha_2$ is disjoint from $N$. Thus, the arc $g_m^{-1}(a^+)$ has length
    \begin{equation*}
        \ell_{X_m}(g_m^{-1}(a^+))\leq K\cdot \ell_{X_{\infty}}(a^+)=K\cdot (\ell_{X_{\infty},\Gamma}^w(a)+2\delta_K)
    \end{equation*}
    with endpoints contained in loops of length
    \begin{equation*}
        \ell_{X_m}(g_m^{-1}(\alpha_i))\leq K\ell_{X_{\infty}}(\alpha_i)=2K^{-1}r_0^w<2r^w_{\ell_{X_m}(\gamma_i)}.
    \end{equation*}
    By the above strict inequality, the loop $g_m^{-1}(\alpha_i)$ must be properly contained inside the collar neighborhood $\Cs_{w(\ell_{X_m}(\gamma_i))}(\gamma_i)$, for $i=1,2$. This implies that $g_m^{-1}(a^+)$ meets $\partial\Cs_{w(\ell_{X_m}(\gamma_i))}(\gamma_i)$ for $i=1,2$, and thus
    \begin{equation*}
        \ell_{X_m,\Gamma}^w(a)\leq\ell_{X_m}(g_m^{-1}(a^+))\leq K\cdot (\ell_{X_{\infty},\Gamma}^w(a)+2\delta_K).
    \end{equation*}
    At the limit $K\to 1$, we have
    \begin{equation*}
        \delta_K=\ln\left(\frac{h_0(w)}{h_K}\right)=\ln(2\sinh(r_0^w))-\ln(2\sinh(K^{-2}r_0^w))\to 0
    \end{equation*}
    and thus
    \begin{equation*}
        \limsup\limits_{m\to +\infty}\ell_{X_m,\Gamma}^w(a)\leq\ell_{X_{\infty},\Gamma}^w(a).
    \end{equation*}

    We now show the opposite inequality with a similar argument. Fix again $1<K<2$ and $m\geq m_K$. For $i=1,2$, let $v_{K,i}<w(\ell_{X_m}(\gamma_i))$ be such that
    \begin{equation*}
        r_{\ell_{X_m}(\gamma_i)}(v_{K,i})=\arcsinh(\sinh(\tfrac{\ell_{X_m}(\gamma_i)}{2})\cosh(v_{K,i}))=K^{-2}r_{\ell_{X_m}(\gamma_i)}^w,
    \end{equation*}
    and let $\delta_{K,i}\coloneq w(\ell_{X_m}(\gamma_i))-v_{K,i}$.

    Again, let $\hat{a}$ be the truncated geodesic arc representing $a$ in $X_m$ and orthogonal to $\partial\Cs_{w(\ell_{X_m}(\gamma_i))}(\gamma_i)$, $i=1,2$, i.e.\ $\ell_{X_m,\Gamma}^w(a)=\ell_{X_m}(\hat{a})$. Let $a^+$ be obtained by extending both ends of $\hat{a}$ by geodesic segments of length $\delta_{K,1}$ and $\delta_{K,2}$, respectively. Thus the endpoints of $a^+$ lie on $\partial\Cs_{v_{K,i}}(\gamma_i)$, $i=1,2$. Let $\alpha_i$ be the unique geodesic loop at each endpoint. Then $\ell_{X_m}(\alpha_i)=2\cdot r_{\ell_{X_m}(\gamma_i)}(v_{K,i})$.

    As above, since $a^+$ is contained in $Y_m=g^{-1}(X_{\infty})-N$ (by the choice of deep enough neighborhood $N$), we obtain that $g_m(a^+)$ has length
    \begin{equation*}
        \ell_{X_{\infty}}(g_m(a^+))\leq K\cdot\ell_{X_m}(a^+)=K\cdot (\ell_{X_m,\Gamma}^w(a)+\delta_{K,1}+\delta_{K,2}),
    \end{equation*}
    and, for each $i=1,2$, its $i$-th endpoint is contained in a loop peripheral to the node at $\gamma_i$ in $X_{\infty}$ and of length
    \begin{equation*}
        \ell_{X_{\infty}}(g_m(\alpha_i))\leq K\cdot \ell_{X_m}(\alpha_i)=2K^{-1} r_{\ell_{X_m}(\gamma_i)}^w<2r_0^w,
    \end{equation*}
    hence the loop $g_m(\alpha_i)$ must be contained in $N_{h_0(w)}(\gamma_i)$. Therefore, we obtain
    \begin{equation*}
        \ell_{X_{\infty},\Gamma}^w(a)\leq \ell_{X_{\infty}}(g_m(a^+))\leq K\cdot (\ell_{X_m,\Gamma}^w(a)+\delta_{K,1}+\delta_{K,2}).
    \end{equation*}
    Again, at the limit $K\to 1$, we have (using the inequality $x-y\leq\coth(y)\ln\tfrac{\cosh(x)}{\cosh(y)}$ for $x\geq y>0$, which follows from $\ln\tfrac{\cosh(x)}{\cosh(y)}=\int^x_{y}\tanh(s)\mathrm{d}s\geq (x-y)\tanh(y)$)
    \begin{align*}
        \delta_{K,i}=w(\ell_{X_m}(\gamma_i))-v_{K,i}\leq \coth(v_{K,i})\cdot \ln\left(\frac{\sinh(r_{\ell_{X_m}(\gamma_i)}^w)}{\sinh(K^{-2}\cdot r_{\ell_{X_m}(\gamma_i)}^w)}\right)\to 0
    \end{align*}
    and thus
    \begin{equation*}
        \ell_{X_{\infty},\Gamma}^w(a)\leq\liminf\limits_{m\to +\infty}\ell_{X_m,\Gamma}^w(a),
    \end{equation*}
    concluding the proof: $\ell_{X_m,\Gamma}^w(a)\to \ell_{X_{\infty},\Gamma}^w(a)$ when $m\to+\infty$.

    The proof applies readily to the remaining cases, where $a$ may have some endpoint on $\Gamma_1$ or in original cusps of $\Sigma$, and is even slightly simpler, as $h_0(w)$-horocyclic neighborhoods remain constant along the sequence, in place of the degenerating collar.
\end{proof}

\begin{proposition}[Hexagonal coordinates]\label{HexagonalCoordinates}
    Let $\wabstract$ be a truncation function. Let $(\Gamma,\As)$ be a hexagon decomposition of $\Sigma$. The map
    \begin{align*}
        \begin{split}
            \Hex_{(\Gamma,\As)}^{\wabstract}\colon\bigcup_{\Gamma^0 \subseteq \Gamma} S(\Gamma^0)&\longrightarrow\left(\frac{\R_{\geq 0}\times\R}{\{0\}\times\R}\right)^{|\Gamma|}\times\R_{>0}^{|\As|}\\
            X &\longmapsto \left(([\ell_X(\gamma),\tau_X(\gamma)])_{\gamma\in\Gamma},(\ell_{X,\Gamma}^{\wabstract}(a))_{a\in\As}\right)
        \end{split}
    \end{align*}
    is a closed embedding.
\end{proposition}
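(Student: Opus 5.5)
The plan has four parts: continuity, injectivity, properness (which gives closedness), and a homeomorphism-onto-image step.

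\emph{Continuity.} On the domain $\bigcup_{\Gamma^0\subseteq\Gamma}S(\Gamma^0)$, the coordinates $[\ell_X(\gamma),\tau_X(\gamma)]$ are continuous. To see this, complete $\Gamma$ to a pants decomposition $\Ps\supseteq\Gamma$ and use the extended Fenchel--Nielsen homeomorphism $\FN_{\Ps}$ recalled in Section~\ref{sec:AugmentedTeichmuller}; our coordinates are a projection of it. The truncated lengths are continuous by Lemma~\ref{lem:truncated_length_continuous}.

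\emph{Injectivity.} Take $X\in S(\Gamma^0)$. The arcs of $\As$ cut $X$ minus the nodes and the curves of $\Gamma\smallsetminus\Gamma^0$ into generalized right-angled hexagons. Each hexagon is determined up to isometry by three alternating sides, and we read these off from the data:
\begin{itemize}
\item the sides along $\Gamma\smallsetminus\Gamma^0$ come from the lengths $\ell_X(\gamma)$;
\item the arc sides come from the truncated lengths, since the untruncated orthogeodesic length equals $\ell^{\wabstract}_{X,\Gamma}(a)$ plus the collar widths $\wabstract(\ell_X(\gamma))$ at its ends;
\item sides at pinched curves and cusps are ideal, with truncated length measured from the horocycle of length $h_0(\wabstract)$.
\end{itemize}
The cleanest route is to note that the arc lengths of a pants-type piece determine its boundary lengths. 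Equivalently, in each pair of pants of $X\smallsetminus\Gamma$ (or each piece of $\Sigma\smallsetminus\Gamma$), the truncated orthogeodesic lengths determine the hexagons and hence the hyperbolic structure on $X\smallsetminus\Gamma$. The gluing along the curves of $\Gamma\smallsetminus\Gamma^0$ is then recovered from the twists $\tau_X(\gamma)$. No twist data is needed at the nodes, which matches the quotient by $\{0\}\times\R$. Hence $\Hex^{\wabstract}_{(\Gamma,\As)}$ is injective. I expect this step to need a short check for each of the four types of generalized hexagon, using standard hyperbolic trigonometry as in \cite{Buser}, but nothing deep.

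\emph{Properness, hence closedness.} Let $X_m$ be a sequence whose images converge to a point $p$ in the target. We must extract a subsequence converging in the domain. Bounded coordinates give bounded lengths $\ell_{X_m}(\gamma)$ for $\gamma\in\Gamma$ and bounded truncated arc lengths. The truncated lengths also stay bounded below by a positive constant, because the limit lies in $\R_{>0}^{|\As|}$. Thus each hexagon has sides in a compact range, so the lengths of the other pants curves (curves of a completion $\Ps\supseteq\Gamma$ lying inside $X\smallsetminus\Gamma$) are bounded above and below. They cannot go to zero, because a curve of length tending to zero forces some orthogeodesic crossing its collar to have truncated length tending to infinity, by the collar theorem (Theorem~\ref{CollarThm}). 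Their twists are also bounded, since arc lengths control relative positions.

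By compactness in extended Fenchel--Nielsen coordinates, a subsequence converges in $\bigcup_{\Gamma^0\subseteq\Gamma}S(\Gamma^0)$. Continuity and injectivity identify the limit as the preimage of $p$. So the map is proper and injective into a locally compact Hausdorff space, which makes it a closed embedding.

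\emph{Main obstacle.} The main obstacle is this properness step. One must rule out degeneration or unbounded twisting of curves not in $\Gamma$, using only bounded truncated arc lengths. I would argue that the arcs of $\As$, together with $\Gamma$ and uniformly short peripheral loops, give a cell structure with edges of bounded length. The surface diameter is then bounded away from nodes, every curve of $\Ps\smallsetminus\Gamma$ has a bounded-length representative, and the collar lower bound prevents its length from shrinking.
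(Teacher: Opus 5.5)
Your proposal follows essentially the same route as the paper: continuity from the extended Fenchel--Nielsen coordinates and Lemma~\ref{lem:truncated_length_continuous}; injectivity by regluing the truncated generalized hexagons, each of which is already determined by its three arc sides alone; and sequential properness by ruling out pinching, unbounded twisting and unbounded length for the curves of a pants completion $\Ps\supseteq\Gamma$ lying outside $\Gamma$, with the length bound coming from an edge path of truncated arcs and collar or cusp boundaries, exactly as in your last paragraph.

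One correction: the target is not locally compact at points with a coordinate equal to $\Zero$, because every neighbourhood of $\Zero$ contains all classes $[\delta,\tau]$, $\tau\in\R$, for some small $\delta>0$, which form a closed non-compact set. Conclude instead as the paper does: your subsequence argument is sequential properness, and together with continuity, injectivity and metrizability it shows directly that closed sets have closed images.
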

\begin{proof}
    We write $\Hex\coloneq \Hex_{(\Gamma,\As)}^{\wabstract}$ throughout. The map $\Hex$ is continuous by Lemma \ref{lem:truncated_length_continuous} and by continuity of the extended Fenchel--Nielsen coordinates. Since the domain and target are metrizable, it suffices to show that $\Hex$ is injective and sequentially proper.

    Given the data of $\Hex(X)$, one can reconstruct $X$ by gluing collar neighborhoods, cusp neighborhoods and right-angled hexagons (which are uniquely determined by the lengths of three opposite sides) with the given truncated lengths. This shows that $\Hex$ is injective.

    It remains to show that $\Hex$ is sequentially proper. Let $(X_m)_{m\in\N}$ be a sequence in the domain such that $\Hex(X_m)$ is convergent. Let $\Ps$ be a pants decomposition that contains $\Gamma$. We shall argue that the sequence of extended Fenchel--Nielsen coordinates $(\FN_{\Ps}(X_m))_{m\in\N}$ has a convergent subsequence. If the coordinates corresponding to a curve $\gamma\in\Gamma$ were divergent, then $\Hex(X_m)$ would diverge as well. Thus, we can assume that the coordinates corresponding to all curves in $\Gamma$ remain bounded or tend to $\Zero$.
    
    Let $\gamma'\in\Ps\setminus\Gamma$. We need to show that its length-twist parameters \[(\ell_{X_m}(\gamma'),\tau_{X_m}(\gamma'))_{m\in\N}\] are contained in a compact subset of $\R_{>0}\times\R$. Since $\As$ is a maximal system of simple arcs in $\Sigma\setminus\Gamma$, there is an arc $a\in\As$ that intersects $\gamma'$.

    Assume that the length-twist parameters along $\gamma'$ are not contained in a compact subset of $\R_{>0}\times\R$. Passing to a subsequence, we must fall into one of the following cases:
    \begin{itemize}
        \item $\ell_{X_m}(\gamma')\to 0$; or
        \item $\tau_{X_m}(\gamma')\to \pm\infty$ while $\ell_{X_m}(\gamma')>\epsilon>0$ for all $m\in\N$; or
        \item $\ell_{X_m}(\gamma')\to+\infty$.
    \end{itemize}
    Either of the first two cases forces $\ell_{X_m,\Gamma}^{\wabstract}(a)\to+\infty$: the first by the collar lemma and the second because of infinite twisting, which contradicts the assumption that $\left(\Hex(X_m)\right)_{m\in\N}$ converges.
    
    Let us assume that we fall into the third case, i.e.\ $\ell_{X_m}(\gamma')\to+\infty$. The curve $\gamma'$ is homotopic to a finite concatenation of subarcs of the $a_{\Gamma}^{\wabstract}$ ($a\in\As$) and of the cusp and collar boundaries (this finite decomposition does not depend on $m$). The former have lengths $\leq\ell_{X_m,\Gamma}^{\wabstract}(a)$, which are bounded by assumption. The latter have respective lengths $h_0(\wabstract)$ and
    \[
        h_{\ell_{X_m}(\gamma)}(w)=\ell_{X_m}(\gamma)\cosh(\wabstract(\ell_{X_m}(\gamma)))\leq\ell_{X_m}(\gamma)\cosh(\wstd(\ell_{X_m}(\gamma))),
    \]
    for some $\gamma\in\Gamma$, which are bounded since the $\ell_{X_m}(\gamma)$ are bounded. Thus $\ell_{X_m}(\gamma')$ remains bounded as well. Therefore, $X_m$ has extended Fenchel--Nielsen coordinates along $\Ps$ whose parameters along components of $\Ps-\Gamma$ do not degenerate. Thus, up to a subsequence, $X_m\to X_{\infty}$ for some $X_{\infty}\in \bigcup_{\Gamma^0 \subseteq \Gamma} S(\Gamma^0)$.
\end{proof}

\section{Estimates on the Teichmüller distance from ideal triangulations}\label{sec:teich}
In this section, we bound the Teichmüller distance between punctured hyperbolic surfaces along shearing deformations. The argument uses Penner's decorated Teichmüller theory. Given a hyperbolic surface equipped with an ideal triangulation, truncating along horocycles around the cusps reduces the problem to studying a decomposition into right-angled hexagons. Uniform control over the geometry of these truncated hexagons yields a bi-Lipschitz map with a controlled constant between the original and the ``sheared'' surfaces, which provides an estimate on their Teichmüller distance.

\subsection{Bi-Lipschitz construction between truncated hexagons}\label{sec:bilip} We begin by recalling several notions from Penner's theory of decorated Teichmüller space, see~\cite[Section~4]{Penner2012}.
Let $\h_1$ and $\h_2$ be two horocycles centered at distinct ideal points of the Poincaré disk $\D$. Let $a$ be the geodesic connecting their respective centers and let $\delta$ denote the signed hyperbolic distance between $\h_1 \cap a$ and  $\h_2 \cap a$. The \emph{lambda-length} of $\h_1$ and $\h_2$ is defined by 
\begin{equation}\label{lambda_hyper_length}
    \lambda(\h_1, \h_2) = e^{\delta/2}.
\end{equation}
A \emph{decoration} of a geodesic is a choice of horocycle at each endpoint. Lambda-lengths are invariant under Möbius transformations. Fixing a decoration on an ideal triangulation determines a canonical truncation of its edges and hence finite edge lengths. Let $B_1, B_2, B_3$ be three horocycles centered at three distinct ideal points. For $\{i,j,l\}=\{1,2,3\}$ let $a_j$ be the geodesic joining the centers of $B_i$ and $B_l$  and set $\lambda_j=\lambda(\h_i, \h_l)$. The hyperbolic length of the horocyclic segment $\h_j \subset B_j$ between $a_i$ and $a_l$ is called the $h$-length of $\h_j$ and is denoted by $h_j=\ell(\h_j)$, see Figure~\ref{FigureLmbdaHlength}. The $h$-lengths and lambda-lengths are related by: 
\begin{equation}\label{h_lambda_relation}
    h_j = \frac{\lambda_j}{ \lambda_i\lambda_l}
\end{equation}
and conversely, 
\begin{equation}\label{lambda_h_relation}
    \lambda_j = \frac{1}{\sqrt{h_i h_l}}.
\end{equation}
\begin{figure}[h]
    \centering
    \begin{overpic}[width=0.5\linewidth,keepaspectratio]{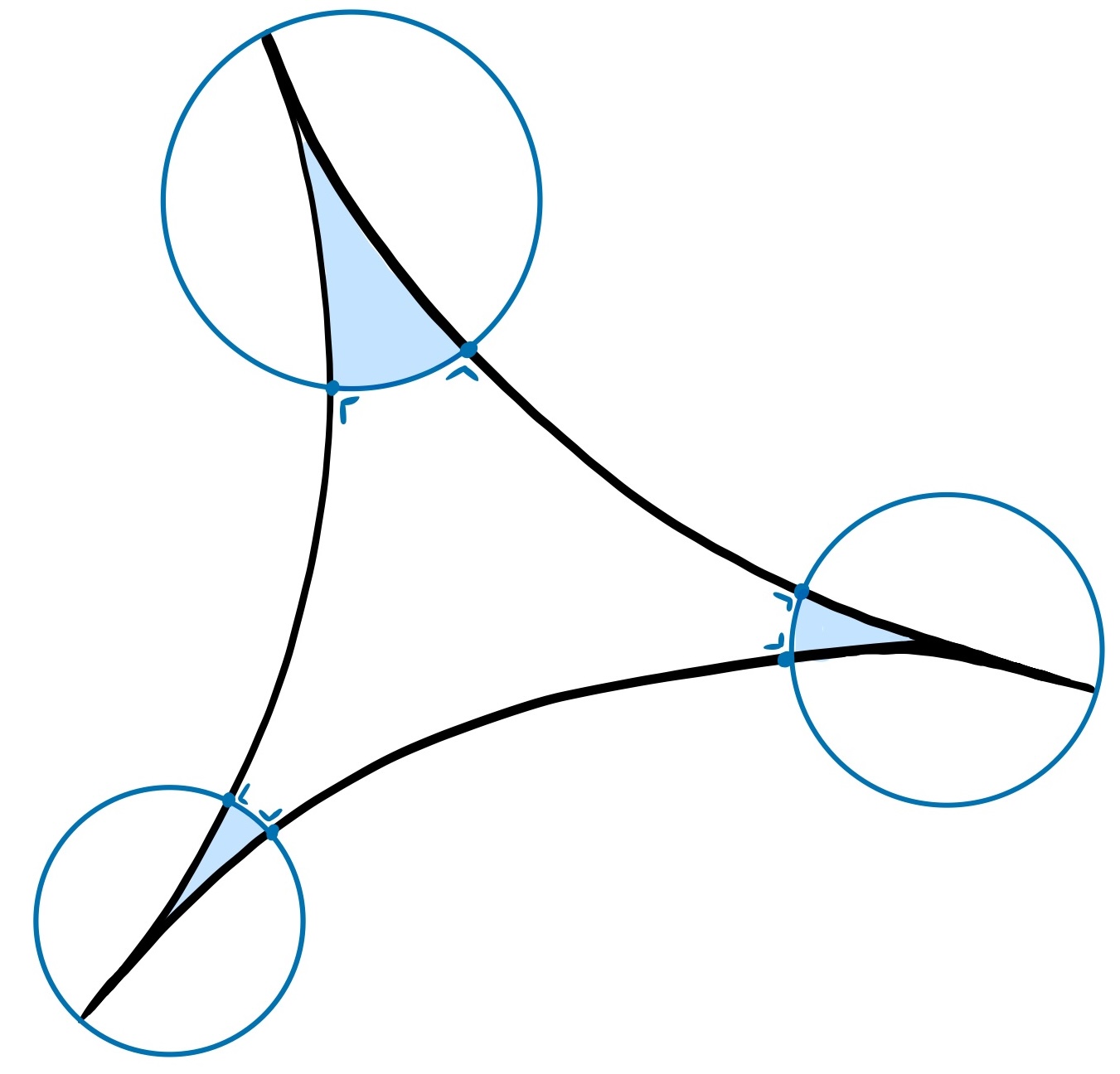}
    \put(35.5,57){\small\color{RoyalBlue}$\h_i$}
    \put(24,26.5){\small\color{RoyalBlue}$\h_j$}
    \put(64,40){\small\color{RoyalBlue}$\h_l$}
    \put(45,28){\small$\lambda_i$}
    \put(55,55){\small$\lambda_j$}
    \put(21,42){\small$\lambda_l$}
    \end{overpic}
    \vspace{-0.4cm}
    \caption{An ideal triangle with a decoration}\label{FigureLmbdaHlength}\vspace{-0.2cm}
\end{figure}
Lambda-lengths also determine shear parameters. Let two decorated ideal triangles share an edge and form an ideal quadrilateral. Let $\lambda_a,\lambda_b,\lambda_c,\lambda_d$ denote the lambda-lengths of the boundary edges in cyclic order, labelled so that the common edge joins the corner between $a$ and $b$ to the corner between $c$ and $d$. Then the shear parameter $s$ along the common edge is given by 
\[
    s=\log\left(\frac{\lambda_b\lambda_d}{\lambda_a\lambda_c}\right).
\] 
In this section we will use three expressions for hyperbolic distances in the upper half-plane $\Hh^2 = \{z = x+iy \mid y > 0\}$:
\begin{itemize}
    \item[-] The distance between any two points $z$ and $z'$ satisfies $\cosh(d(z,z')) = 1 + \frac{|z-z'|^2}{2\Im(z)\Im(z')}$.
    \item[-] The distance from a point $z = x+iy$ to the imaginary axis $i\mathbb{R}_{>0}$ is given by $d(z, i\mathbb{R}_{>0}) = \arcsinh\left(\frac{|x|}{y}\right)$.
    \item[-] If two points $z=x+iy_0$ and $z'=x+iy_1$ lie on the same vertical line, their distance is $d(z,z') = \left|\log\left(\frac{y_0}{y_1}\right)\right|$.
\end{itemize}

\begin{lemma}\label{Shear0-HorocyclicSegments}
    Let $X$ be a punctured hyperbolic surface and let $\mathcal{T}$ be an ideal
    triangulation of $X$ whose shearing coordinates vanish, $\shear_{\mathcal{T}}(X)=0$.
    Fix the decoration of $X$ given by horocycles of length $\eta$ around each cusp.
    Then, for every cusp $c$, the arcs of $\mathcal{T}$ cut the horocycle of the
    decoration around $c$ into $v(c)$ segments, all of the same $h$-length
    \[
        h(c)=\frac{\eta}{v(c)},
    \]
    where $v(c)$ is the number of ends of arcs of $\mathcal{T}$ at $c$, an arc with
    both ends at $c$ being counted twice.
\end{lemma}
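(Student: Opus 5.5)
Zero shear along an edge means the two shear points on that edge coincide. I will use this to show that, around each cusp $c$, every corner of the horocycle is cut at the same ``height''. Equal $h$-lengths will then follow from the fact that the horocycle has total length $\eta$.

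Fix a cusp $c$ and lift to the upper half-plane with $c$ at $\infty$. The lifts of the arcs of $\mathcal{T}$ ending at $c$ are vertical lines $x=x_k$, cyclically ordered, and consecutive lines bound lifted ideal triangles $\widetilde\Delta_k$ with a third vertex on $\R$. The decoration horocycle at $c$ lifts to a horizontal line $y=y_0$. The segment of it inside $\widetilde\Delta_k$ has hyperbolic length $(x_{k+1}-x_k)/y_0$. Going once around $c$ covers the full horocycle, so these $v(c)$ lengths sum to $\eta$. It therefore suffices to show that all widths $x_{k+1}-x_k$ are equal.

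For the ideal triangle with vertices $x_k,x_{k+1},\infty$, the tangency point of the inscribed circle on the side $x=x_k$ sits at height $x_{k+1}-x_k$. This follows by normalising to $0,1,\infty$, where the tangency points on the vertical sides are at $i$ and $1+i$, and then using scaling invariance. Now let $\widetilde\Delta_{k-1}$ and $\widetilde\Delta_k$ share the vertical edge $x=x_k$. Their shear points on that edge are at heights $x_k-x_{k-1}$ and $x_{k+1}-x_k$, so the shear is $\pm\log\bigl((x_{k+1}-x_k)/(x_k-x_{k-1})\bigr)$. Alternatively, this is the Penner formula $s=\log(\lambda_b\lambda_d/\lambda_a\lambda_c)$ combined with the relation \eqref{h_lambda_relation} for $h$-lengths. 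Vanishing shear forces consecutive widths to be equal, hence all widths around $c$ are equal. Each segment then has $h$-length $\eta/v(c)$.

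The main point needing care is the bookkeeping at the cusp. An arc with both ends at $c$ contributes two distinct vertical lifts, which is why it counts twice in $v(c)$. Each corner of a triangle at $c$ contributes exactly one segment. The cyclic sequence of edges around $c$, in which consecutive edges share a triangle, is connected, so equality propagates to every segment. One should also check that the shear is computed correctly even when the two triangles adjacent to an edge are the same triangle of $X$. Working in the universal cover with distinct lifts handles this, since the shear parameter $S^{\mathcal{T}}_\lambda$ is defined there. Finally, I assume $\eta$ is small enough, for instance $\eta\le 2$, that the horocycles are embedded; this gives the decoration its meaning but does not affect the computation.
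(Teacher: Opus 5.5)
Your proof is correct and follows the same scheme as the paper: both show that the shear along each edge at $c$ equals $\pm\log$ of the ratio of the two $h$-lengths at $c$ on either side of that edge, so vanishing shears make all $h$-lengths at $c$ equal, and these $v(c)$ lengths sum to $\eta$. The only difference is that you verify this identity directly, by putting $c$ at $\infty$ and computing the heights of the tangency points, whereas the paper derives it from Penner's formula $s=\log(\lambda_b\lambda_d/\lambda_a\lambda_c)$ together with $h_j=\lambda_j/(\lambda_i\lambda_l)$, which is the alternative you mention yourself.
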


\begin{proof}
    Fix a cusp $c$ and let $\tilde{c}$ be a lift to $\mathbb{D}$. Let $a_1, \dots, a_k$ be the lifts of the ideal arcs incident to $\tilde{c}$, indexed cyclically, where $k=v(c)$ and for $i=1,\dots,k$ denote by $\lambda_i$ the lambda-length along $a_i$. For each $i$, let $\mu_i$ denote the lambda-length of the side of the ideal triangle bounded by $\lambda_i$ and $\lambda_{i+1}$ opposite to $\tilde{c}$.
    Since all the shearing coordinates are zero, the formula between shear and lambda-lengths gives:
    \[
        1 = \frac{\mu_i \lambda_{i-1}}{\mu_{i-1} \lambda_{i+1}}
    \]
    and hence
    \[
        \frac{\mu_i}{\lambda_{i+1}} = \frac{\mu_{i-1}}{\lambda_{i-1}} 
    \]
    for every $i=1,\dots,k$. 
    
    Let $h_i(c)$ be the $h$-length of the horocyclic segment between $\lambda_i$ and $\lambda_{i+1}$.
    By~(\ref{h_lambda_relation})
    \[
        h_i(c) = \frac{\mu_i}{\lambda_i \lambda_{i+1}} = \frac{1}{\lambda_i} \frac{\mu_i}{\lambda_{i+1}}.
    \]
    Therefore
    \[ h_i(c) = \frac{1}{\lambda_i} \frac{\mu_{i-1}}{\lambda_{i-1}} = \frac{\mu_{i-1}}{\lambda_{i-1} \lambda_i} = h_{i-1}(c). \]
    Thus all horocyclic segments at $c$ have the same $h$-length, which we denote by $h(c)$. Since the boundary of the horocycle neighbourhood around $c$ has length $L$,
    \[ \eta = \sum_{i=1}^{v(c)} h_i(c) = v(c) \cdot h(c). \]
    Hence \[h(c) = \frac{\eta}{v(c)}\] which proves the lemma.
\end{proof}

A \emph{truncated hexagon} $H$ is a truncated ideal triangle, that is a right-angled hexagon with three alternating geodesic sides $\beta_1$, $\beta_2$, $\beta_3$ obtained by truncating an ideal triangle $T$ with vertices $v_1$, $v_2$, $v_3$ along pairwise disjoint horoballs $B_1$, $B_2$, $B_3$ centered at these vertices. We write in cyclic order
\[
    \partial H=(\h_1,\beta_3,\h_2,\beta_1,\h_3,\beta_2),
\] 
where $\h_i \subset \partial B_i$ and for $\{i,j,l\}=\{1,2,3\}$ the geodesic side $\beta_i$ lies on the side $a_i$ of $T$ joining $v_j$ and $v_l$. 

Next we construct a bi-Lipschitz map between two truncated hexagons. The first lemma provides a bi-Lipschitz map between the boundaries of two truncated hexagons with uniformly bounded geodesic sides and horocyclic sides that are uniformly comparable.

Next we construct a bi-Lipschitz map between two truncated hexagons, in two steps. Lemma~\ref{LemmaBiLipBoundaryHex}  constructs a bi-Lipschitz map $\partial H\to\partial H'$ between the boundaries.  Lemma~\ref{LemmaBiLipHex} extends any such boundary map to a bi-Lipschitz map $H\to H'$ of the hexagons.

\begin{lemma}\label{LemmaBiLipBoundaryHex}
    Let $b_{\max}>0$ and $0<h_{\min}<1/2$ be constants. Let $H$ and $H'$ be two
    truncated hexagons in $\Hh^{2}$ with
    \[
        \partial H=(\h_1,\beta_3,\h_2,\beta_1,\h_3,\beta_2),
        \qquad
        \partial H'=(\h_1',\beta_3',\h_2',\beta_1',\h_3',\beta_2'),
    \]
    and assume that, for $i=1,2,3$,
    \[
        \ell(\beta_i)\leq b_{\max}
        \qquad\text{and}\qquad
        h_{\min}\leq\ell(\h_i),\,\ell(\h_i')\leq\tfrac12 .
    \]
    Then there exists a bi-Lipschitz map
    \[
        F_{\partial}\colon\partial H\longrightarrow\partial H'
    \]
    with bi-Lipschitz constant
    \[
        C_{\partial}
        =\max\left\{N,\ \frac{\sqrt2\,N}{c_0},\ \frac{\sqrt3\,N}{c_1},\
        \frac{\sqrt3\,N}{c_2},\ C_3\right\},
    \]
    where
    \begin{align*}
        M&=\frac{1}{2h_{\min}}, \qquad C=\frac{\log(1/h_{\min})}{\log 2}, \qquad N=\max\{M,C\},\\
        c_0&=\sqrt2\arccosh(9/8),\\
        c_1&=\frac{\arccosh\bigl(1+(b_{\max}^{2}+1)/2\bigr)}{\sqrt{b_{\max}^{2}+1}},\\
        c_2&=\frac{1}{\sqrt2}\min\left\{2\arccosh\left(\frac98\right),\ \frac{\arccosh\bigl(1+h_{\min}^{2}b_{\max}\bigr)}{2b_{\max}}\right\},\\
        C_3&=\max\left\{\frac{2b_{\max}+1}{\log 4},\ \frac{4\log(1/h_{\min})+1}{\log 4}\right\}.
    \end{align*}
    In particular $C_{\partial}$ depends only on $h_{\min}$ and $b_{\max}$.
\end{lemma}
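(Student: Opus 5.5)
We construct $F_\partial$ side by side, matching $\h_i\mapsto\h_i'$ and $\beta_i\mapsto\beta_i'$ with the common vertices fixed, and we check the bi-Lipschitz inequality for the induced path metric on the boundary and for the hyperbolic distance restricted to $\partial H$. The constants in $C_\partial$ suggest the split: $N$ bounds the distortion of each side, $c_0,c_1,c_2$ are lower bounds on distances between points on different sides, and $C_3$ handles the geodesic sides.

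First we control the geometry of a single truncated hexagon. By \eqref{lambda_h_relation}, $\lambda_i=1/\sqrt{h_jh_l}$, so $\ell(\beta_i)=2\log\lambda_i=-\log(h_jh_l)$. With $h_{\min}\le h\le \tfrac12$ this gives
\[
2\log 2\le \ell(\beta_i)\le 2\log(1/h_{\min}).
\]
The hypothesis $\ell(\beta_i)\le b_{\max}$ is kept as a separate bound. We map each horocyclic side $\h_i\to\h_i'$ affinely in arclength. Its Lipschitz constant is a ratio of two $h$-lengths in $[h_{\min},\tfrac12]$, hence at most $1/(2h_{\min})=M$. We map each geodesic side $\beta_i\to\beta_i'$ affinely as well, and its ratio is at most $\log(1/h_{\min})/\log 2=C$. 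Hence on each side $F_\partial$ is $N$-bi-Lipschitz for the intrinsic arclength.

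Next we compare arclength with hyperbolic distance on $\partial H$, using the upper half-plane formulas listed above. Along a geodesic side the two agree. Along a horocyclic segment of length $h\le\tfrac12$, the chord $d$ satisfies $2\sinh(d/2)=h$, so the two are comparable up to a bounded factor. For two points on adjacent sides meeting at a right angle, hyperbolic trigonometry (a right-angled triangle, $\cosh d=\cosh x\cosh y$ in the geodesic case, and its horocyclic analogue) shows $d\ge c\,(x+y)/\sqrt2$. I expect $c_0$ to be the constant for the horocycle–geodesic corner, where it is attained through a value like $\arccosh(9/8)$ at the scale $h\le 1/2$. For points on non-adjacent sides we need a positive lower bound on their distance. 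Two horocycle sides are separated by a geodesic side of length at least $2\log2$, and a horocycle side is separated from the opposite geodesic side by at least a computable amount, which gives $c_1$ and $c_2$. Since the total boundary length is bounded in terms of $b_{\max}$ and $h_{\min}$, the path distance is at most a constant multiple ($\sqrt2$ or $\sqrt3$ for sums over up to three sides, via Cauchy–Schwarz) of the hyperbolic distance. This yields the factors $\sqrt2 N/c_0$ and $\sqrt3N/c_1,\sqrt3N/c_2$. Finally, $C_3$ handles pairs of points on the same geodesic side, or on sides sharing a long geodesic, where the comparison is between lengths in $[\log 4,\,2b_{\max}+1]$ and $[\log4,\,4\log(1/h_{\min})+1]$.

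The main obstacle is the case analysis for points on non-adjacent sides. There we need an explicit lower bound on the hyperbolic distance between opposite sides of $H$ that depends only on $h_{\min}$ and $b_{\max}$. I would normalise $v_1=\infty$ in $\Hh^2$, so that $\h_1$ is a horizontal segment at height $1/h_1$. I would then compute distances to the other sides explicitly using $\cosh d=1+|z-z'|^2/(2\Im z\Im z')$, and take the minimum over the compact parameter range.
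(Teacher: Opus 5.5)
Your outline has the same architecture as the paper's proof. You define a side-by-side map with horocyclic factors $k_i=h_i'/h_i\in[1/M,M]$ and geodesic stretch factors in $[1/C,C]$, then compare boundary arclength with hyperbolic distance case by case. In the paper the entries $N,\sqrt2N/c_0,\sqrt3N/c_1,\sqrt3N/c_2$ of $C_\partial$ are exactly $N$ times such comparison constants. Your preliminary steps are fine: $2\log2\le\ell(\beta_i)\le2\log(1/h_{\min})$, the factors $M$ and $C$, and the chord relation on horocycles.

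However, the proposal does not establish the stated $C_\partial$: the cases behind the constants are misidentified and one is missing. In the paper the constants arise as follows.
\begin{itemize}
\item $c_0$ comes from the only kind of corner a truncated hexagon has, horocycle--geodesic, via $\cosh z=\cosh y+\tfrac12x^2e^y$. There are no geodesic--geodesic corners.
\item $c_1$ comes from two horocyclic sides $\h_i,\h_j$ on either side of $\beta_l$.
\item $c_2$ comes from two \emph{geodesic} sides $\beta_i,\beta_j$ on either side of $\h_l$, via $\cosh z=\cosh(x-w)+\tfrac{h_l^2}{2}e^{x+w}$.
\item $C_3$ comes from the opposite pair $\h_i,\beta_i$. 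Two points on one geodesic side only cost $C$.
\end{itemize}
Your enumeration of non-adjacent pairs never treats $\beta_i,\beta_j$, and this is the delicate pair. Near $\h_l$ the two sides come within $2\arcsinh(h_l/2)$ of each other, which is exactly where the $h_{\min}$-dependence of $c_2$ comes from.

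The mechanism you propose for non-adjacent sides is a positive lower bound on their distance together with a perimeter bound. This yields ratios of the form (perimeter)/(minimal distance), and produces neither the factor $\sqrt3$ nor $c_1,c_2$. Those come from quantitative lower bounds $z\ge c_1\sqrt{\delta_l^2+x^2+w^2}$ and $z\ge c_2\sqrt{x^2+w^2+h_l^2}$, followed by Cauchy--Schwarz. These bounds are derived from explicit half-plane formulas for $\cosh z$ and concavity, of $u\mapsto\arccosh(1+u^2/2)$ and of $u\mapsto\arccosh(1+\tfrac{h_{\min}^2}{2}u)$ on $[0,2b_{\max}]$ respectively. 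The paper uses your coarse argument only for the opposite pair, where $d(\h_i,\beta_i)\ge\log(2/h_i)\ge\log4$ gives $C_3$.

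Your route can be completed into a proof with \emph{some} constant depending only on $h_{\min},b_{\max}$. To get the stated constant, you would then have to check by hand that each coarse ratio is dominated by $C_\partial$. For instance, in the $\beta_i,\beta_j$ case you need $(4\log(1/h_{\min})+\tfrac12)/(2\arcsinh(h_{\min}/2))\le C_\partial$. For this you must bound geodesic sides by $2\log(1/h_{\min})$ rather than by $b_{\max}$, which is not assumed for $H'$ anyway. As written, the constants are asserted rather than derived.

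Finally, mapping $\beta_i$ affinely is harmless for this lemma. The paper instead uses a piecewise linear map broken at the contact point $m_i$, with slopes $\kappa_j,\kappa_l$ on pieces of lengths $\log(1/h_j),\log(1/h_l)$. This ensures $f(m_i)=m_i'$, which Lemma~\ref{LemmaBiLipHex} needs in order to extend the map across the contact triangle.
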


\begin{proof}
    The geometry of a truncated hexagon is determined by the lengths of its three alternating geodesic sides. Set 
    \[
        \delta_i\coloneq \ell(\beta_i), \qquad h_i=\ell(\h_i), \qquad i=1,2,3
    \] and let $\lambda_i$ be the corresponding lambda-length relative to the decoration provided by the horocycles. We use the same notation for the hexagon $H'$ and write \[k_i \coloneq \frac{h_i'}{h_i}.\]
    The relation between $h$-lengths and lambda-lengths then gives:
    \[ 
        \lambda_i' = \frac{1}{\sqrt{k_j k_l h_j h_l}} = \frac{1}{\sqrt{k_j k_l}} \lambda_i.
    \]
    By (\ref{lambda_hyper_length}) and (\ref{lambda_h_relation}), the hyperbolic length $\delta_i$ satisfies: 
    \begin{equation}\label{delta-truncation}
        \delta_i=2\log(\lambda_i)=\log\left(\frac{1}{h_j}\right)+\log\left(\frac{1}{h_l}\right).
    \end{equation}
    For $i=1,2,3$ let $m_i$ be the point on  $\beta_i$ which decomposes $\beta_i$ into two subsegments $\beta_i^j$ and $\beta_i^l$ of length, respectively, $\log\left(\frac{1}{h_j}\right)$ and $\log\left(\frac{1}{h_l}\right)$.

    Define $f:\partial H\to \partial H'$ by sending each horocyclic side $\h_i$ linearly onto $\h_i'$ with scaling factor $k_i$ and by sending each geodesic side $\beta_i$ piecewise linearly onto $\beta_i'$. Hence on $\beta_i^j$ the map has scaling factor:  
    \[
        \kappa_{j}=\frac{\log(1/h_j')}{\log(1/h_j)}
    \]
    and on $\beta_i^l$ the scaling factor is:
    \[
        \kappa_{l}=\frac{\log(1/h_l')}{\log(1/h_l)}
    \]
    Note that $\beta_{i}^j$ and $\beta_{l}^j$ are both re-scaled by the same factor $\kappa_j$. By construction, the map $f$ sends the truncation point $m_i$ to the corresponding point $m_i'$ on $\beta_i'$. We now check that $f$ is bi-Lipschitz using a case-by-case argument. Let $p,q \in \partial H$.

    \begin{itemize}
        \item \textbf{Same horocyclic side.}
        If $p,q$ lie on the same horocyclic side $\h_i$, then the horocyclic arclength is scaled by $k_i$. The hyperbolic distance of the arclength $x$ is $\phi(x)=2\arcsinh(x/2)$. Since $\phi$ is concave with $\phi(0)=0$ the function $\phi(u)/u$ is decreasing. Hence
        \[
            \frac{d(f(p),f(q))}{d(p,q)} = k_i \in \left[\frac{1}{M}, M\right],
        \] 
        where $M=\frac{1}{2h_{\min}}$.
        \item \textbf{Same geodesic side.}
        Assume $p,q \in \beta_i$, and suppose both points lie in the same subsegment, say $\beta_i^j$.
        The map $f$ acts by a constant stretch factor
        \[
        \kappa_j=\frac{\log(1/h_j')}{\log(1/h_j)}.
        \]
        Since $h_j,h_j'\in[h_{\min},1/2]$, this factor satisfies
        \[
         \kappa_j \in [1/C, C],
        \]
        where $C=\frac{\log(1/h_{\min})}{\log(2)}$. Thus
        \[
        \frac{d(f(p),f(q))}{d(p,q)} \in [1/C, C].
        \]
        
        The same estimate holds on $\beta_i^l$.
        
        If $p\in \beta_i^j$ and $q\in \beta_i^l$, recall that $m_i$ is the ``center point'' for the stretch factor along $\beta_i$. Then
        \[
        d(p,q)=d(p,m_i)+d(m_i,q),
        \qquad
        d(f(p),f(q))=\kappa_j d(p,m_i)+\kappa_l d(m_i,q).
        \]
        Using the previous bounds on $\kappa_j,\kappa_l$ gives
        \[
        \frac{d(f(p),f(q))}{d(p,q)} \in [1/C, C].
        \]
        \item \textbf{Consecutive sides.}
        Assume $p \in \h_i$ and $q \in \beta_j$, and let $c$ be the corner point between $h_i$ and $\beta_j$. Let 
        \[
            x = d_{\h_i}(p, c) 
            \qquad 
            y = d_{\beta_j}(q, c)
        \] be the distances along the horocycle and geodesic sides respectively. 
        Let \[y'= d(f(q), f(c)),\] 
        by construction, 
        \begin{itemize}
            \item if $q\in \beta_{j}^i$ then $y'=\kappa_i \cdot y$;
            \item if $q\in \beta_{j}^l$ then $y'=\kappa_i \cdot \log(1/h_i) + \kappa_l \cdot (y-\log(1/h_i))$.
        \end{itemize}
        In both cases, from earlier computations we know that $\kappa_i, \kappa_l \in [1/C,C]$ thus $ k_{\beta_j}(y) =y'/y $ satisfies $k_{\beta_j}(y) \in [1/C,C]$. 
        
        The distance \[z = d(p, q)\] is given by 
        \[
            \cosh(z) = \cosh(y) + \frac{1}{2}x^2 e^y.
        \]
        Set \[z=g_0(x,y)=\arccosh\left(\cosh(y) + \frac{1}{2}x^2 e^y\right).\]
        \begin{claim}
            One has the coarse bounds 
            \[
                c_0 \sqrt{x^2 +y^2} \leq g_0(x,y) \leq x+y,
            \] 
            where $c_0=\sqrt{2}\arccosh(9/8)$.
        \end{claim}
        \begin{proofclaim}
            Indeed, the upper bound follows from the triangle inequality. For the lower bound, observe that $\cosh(z)\geq \cosh(y)$, so
            \[
                z \geq y.
            \]
            Additionally, since $y\geq 0$ one has 
            \[
                \cosh(z)\geq 1+\frac{1}{2}x^2.
            \] 
            Let $\phi(x)=\arccosh\left(1+\frac{1}{2}x^2\right)$. The function $\phi$ is concave for $x>0$ and $\phi(0)=0$ thus $\phi(x)/x$ is decreasing. Recall that $h_i \leq 1/2$, by letting $c_0'=2\phi(1/2)$ we obtain $\phi(x)\geq c_0' x$. Thus \[z \geq c_0' x.\] Combining these two lower bounds on $z$, we obtain 
            \[
                2z^2 \geq y^2 + (c_0')^2 x^2 \Rightarrow z \geq \frac{\min(c_0',1)}{\sqrt{2}} \sqrt{x^2 + y^2},
            \] 
            and we can choose $c_0 = \frac{\min(c_0',1)}{\sqrt{2}}$ to conclude the claim.
        \end{proofclaim} 
        Under $f$, the new distance 
        \[
            z' = d(f(p), f(q)).
        \] 
        satisfies 
        \[
            z' = g_0(k_i x, k_{\beta_j}(y) y).
        \]  
        The bounds on $g_0$ yield: 
        \begin{gather*}
          c_0\sqrt{k_i^2 x^2 + k_{\beta_j}(y)^2 y^2}  \leq z' \leq k_i x +  k_{\beta_j}(y) y\\
          c_0 \min(k_i, k_{\beta_j}(y))\sqrt{x^2 + y^2} \leq z' \leq \max(k_i, k_{\beta_j}(y))(x+y)\\
          \frac{c_0\min(k_i, k_{\beta_j}(y))}{\sqrt{2}}(x+y) \leq z' \leq \max(k_i, k_{\beta_j}(y)) \sqrt{2} \sqrt{x^2 + y^2}\\
        \frac{c_0\min(k_i, k_{\beta_j}(y))}{\sqrt{2}} g_0(x,y) \leq z' \leq \frac{\max(k_i, k_{\beta_j}(y)) \sqrt{2}}{c_0} g_0(x,y)
        \end{gather*}
        Set $N=\max\{M,C\}$, since $k_i \in [1/M,M]$ and $k_{\beta_j}\in [1/C, C]$ we obtain \[ \frac{c_0}{\sqrt{2}N} \leq \frac{d(f(p), f(q))}{d(p,q)} \leq \frac{\sqrt{2}}{c_0}N.\] Thus, we choose $C_0 =\frac{2}{\arccosh(9/8)}N$ to write  \[\frac{1}{C_0} \leq \frac{d(f(p), f(q))}{d(p,q)} \leq C_0.\]

        \item \textbf{Non-consecutive sides, separated by a single intermediate side.} There is two cases.
        
        \smallskip
        
        \textit{Case 1: the middle side is geodesic.} Assume $p\in\h_i$, and $q\in \h_j$. Let $c_i$ be the corner $\h_i \cap \beta_l$ and $c_j$ be the corner $\h_j \cap \beta_l$. Let 
        \[
            x = d_{h_i}(p, c_i), \qquad w = d_{h_j}(q, c_j).
        \]
        The distance $z= d(p, q)$ satisfies
        \[
            \cosh(z)= \cosh(\delta_l) + \frac{e^{\delta_l}}{2}(x^2+w^2+x^2w^2)+xw.
        \]
        Writing $z=g_1(x,w,\delta_l)$, one obtains coarse bounds
        \[
        c_1 \sqrt{\delta^2+x^2+w^2}
        \leq g_1(x,w, \delta_l) \leq \frac{1}{c_1}(\delta+x+w),
        \]
        with  $c_1= \frac{\arccosh(1+(b_{\max}^2+1)/2)}{\sqrt{b_{\max}^2+1}}$.

        Indeed, the upper bound follows from the triangle inequality. For the lower bound, observe that, 
        \[
            \cosh(z)\geq \cosh(\delta_l) + \frac{1}{2}(x^2+w^2) \geq 1 + \frac{\delta_l^2+x^2+w^2}{2}.
        \] 
        Next, recall that $0 < \delta_l < b_{\max}$ and $x,w<1/2$ and so 
        \[
            \delta_l^2+x^2+w^2< b_{\max}^2+1.
        \]
        Let $\phi(u)=\arccosh(1+\frac{u^2}{2})$, since $\phi$ is concave for $u>0$ and $\phi(0)=0$ we obtain 
        \[
            \phi(u)\geq \frac{\phi\left(\sqrt{b_{\max}^2+1}\right)}{\sqrt{b_{\max}^2+1}}u.
        \] 
        Thus, 
        \begin{align*}
            z &\geq \arccosh\left( 1 + \frac{\delta_l^2+x^2+w^2}{2} \right)
            =\phi\left(\sqrt{\delta_l^2 + x^2 + w^2}\right)\\
            &\geq c_1 \sqrt{\delta_l^2 + x^2 + w^2}
        \end{align*}
        where $c_1= \frac{\arccosh(1+(b_{\max}^2+1)/2)}{\sqrt{b_{\max}^2+1}}$.
        
        Under $f$, the new distance, 
        \[
            z'=d(f(p),f(q)).
        \] 
        satisfies
        \[
                z'= g_1(k_i x,k_j w, k_{\beta_l}\delta_l)
        \]
        where $k_{\beta_l}=\delta_l'/\delta_l$. Recall that $\delta_l'$ is the hyperbolic length of $\beta_l'$ and satisfies
        \[
            \delta_l'= \kappa_j\log\left(\frac{1}{h_j}\right)+\kappa_i\log\left(\frac{1}{h_i}\right)
        \]
        where $k_{\beta_l} \in [1/C, C]$. Since $k_i,k_j\in [1/M,M]$ and $k_{\beta_l} \in [1/C,C]$ by defining $C_1=\frac{\sqrt{3}}{c_1} N$ we obtain:
            \[ \frac{1}{C_1} \leq \frac{d(f(p), f(q))}{d(p,q)} \leq C_1.\]
            
            \textit{Case 2: the middle side is horocyclic.} Assume that $p\in \beta_i$ and $q \in \beta_j$. Let $c_i$ be the corner $\beta_i \cap \h_l$ and $c_j$ be the corner $\beta_j \cap \h_l$. Let 
            \[
                x = d_{\beta_i}(p, c_i) \qquad w = d_{\beta_j}(q, c_j).
            \] 
            From hyperbolic computations the distance $z = d(p, q)$, satisfies, 
            \[
                \cosh(z)=\cosh(x-w)+\frac{h_l^2}{2}e^{x+w}.
            \]
            Writing $z=g_2(x,w,h_l)$, we obtain the following bounds
            \begin{claim}
                \[
                c_2\sqrt{x^2+w^2+h_l^2}\leq g_2(x,w,h_l) \leq \frac{1}{c_2}(x+w+h_l),
            \] 
            where $c_2 = \frac{1}{\sqrt{2}}\min\left\{ 2\arccosh\left(\frac{9}{8}\right),\;\frac{\arccosh(1 + h_{\min}^2b_{\max} )}{2b_{\max}} \right\}$.
            \end{claim}
            \begin{proofclaim}
                The upper bound follows from the triangle inequality. For the lower bound, first observe that 
                \[
                    \cosh(z)\geq 1+ \frac{h_l^2}{2}.
                \] 
                Next, define $\phi(u)=\arccosh(1+ u^2/2)$, as before, for $u>0$ the function $\phi$ is concave. Since $h_l \in [h_{\min},1/2]$ we obtain:
                \begin{equation}\label{eq_g2_bound1}
                    z\geq \phi(h_l)\geq c_2'h_l,
                \end{equation}
                where $c_2'= 2\phi(1/2)=2\arccosh\left(\frac{9}{8}\right)$. 
                
                Secondly observe that 
                \[
                    \cosh(z)\geq 1+\frac{h_{\min}^2}{2}e^{x+w} \geq 1+\frac{h_{\min}^2}{2}(x+w).
                \] 
                Let $\psi(u)=\arccosh\left( 1+ \frac{h_{\min}^2}{2}u\right)$. Because $\psi$ is concave and $\psi(0)=0$ for $0<u<2b_{\max}$ we obtain $\psi(u)\geq \frac{\psi(2b_{\max})}{2b_{\max}}u$ and so \begin{equation}\label{eq_g2_bound2}
                    z\geq \psi(x+w) \geq c_2''(x+w),
                \end{equation}
                where $c_2'' = \frac{\arccosh(1 + h_{\min}^2b_{\max} )}{2b_{\max}}$. By combining (\ref{eq_g2_bound1}) and (\ref{eq_g2_bound2}) we obtain \begin{align*}
                    2z^2 &\geq (c_2')^2h_l^2 +  (c_2'')^2(x+w)^2 \\
                    &\geq (c_2')^2h_l^2 +  (c_2'')^2(x^2+w^2) \\
                    &\geq \min(c_2', c_2'')^2(x^2+w^2+h_l^2).
                \end{align*} 
                Hence the claim follows by letting $c_2 = \frac{1}{\sqrt{2}}\min(c_2', c_2'')$. 
            \end{proofclaim}

        Under $f$, the new distance 
        \[
            z'=d(f(p),f(q))
        \]
        satisfies
        \[
            z'=g_2(k_{\beta_i}(x) x, k_{\beta_j}(w) w, k_l h_l)
        \]
        where, as proved above, $k_{\beta_i}(x), k_{\beta_j}(w) \in [1/C, C]$. Since $k_l \in [1/M, M]$, by defining $C_2 = \frac{\sqrt{3}}{c_2}N$ we obtain: 
        \[ 
            \frac{1}{C_2} \leq \frac{d(f(p), f(q))}{d(p,q)} \leq C_2.
        \]
        
        \item \textbf{Non-consecutive sides, separated by two intermediate sides.} Assume $p \in \h_i$ and $q \in \beta_i$. On one hand \[d(p,q)\geq d(\h_i, \beta_i) \geq  \log(\frac{2}{h_i}) \geq \log(4),\] similarly $d(f(p),f(q))\geq \log(4)$. On the other hand \[ d(p, q) \leq \ell(\beta_i) + \ell(\h_j) + \ell(\beta_l) + \ell(\h_i) \leq 2 b_{\max} + 1\] and 
        \begin{align*}
            d(f(p), f(q)) &\leq \ell(\beta_i') + \ell(\h_j') + \ell(\beta_l') + \ell(\h_i') \leq 2 \max_i \delta_i' + 1 \\ &\leq 4 \log(1/h_{\min})+1.
        \end{align*}
        Consequently, \[ \frac{1}{C_3} \leq \frac{d(f(p), f(q))}{d(p,q)} \leq C_3 \]
        where $C_3= \max\left\{ \frac{2 b_{\max} + 1}{\log(4)}, \frac{4 \log(1/h_{\min})+1}{\log(4)} \right\}$.
        \end{itemize}
    We conclude that $f:\partial H \longrightarrow \partial H'$ is a bi-Lipschitz map with bi-Lipschitz constant \[C_{\partial}=\max\left\{ N, \frac{\sqrt{2}N}{c_0},\frac{\sqrt{3}N}{c_1},\frac{\sqrt{3}N}{c_2}, C_3 \right\},\] where 
    \begin{align*}
        M&=\frac{1}{2h_{\min}}, \qquad C=\frac{\log(1/h_{\min})}{\log(2)}, \qquad N=\max\{M,C\},\\
        c_0&=\sqrt{2}\arccosh(9/8),\\
        c_1&= \frac{\arccosh(1+(b_{\max}^2+1)/2)}{\sqrt{b_{\max}^2+1}},\\
        c_2 &= \frac{1}{\sqrt{2}}\min\left\{ 2\arccosh\left(\frac{9}{8}\right),\;\frac{\arccosh(1 + h_{\min}^2b_{\max} )}{2b_{\max}} \right\}\\
        C_3&= \max\left\{ \frac{2 b_{\max} + 1}{\log(4)}, \frac{4 \log(1/h_{\min})+1}{\log(4)} \right\}.
    \end{align*}
\end{proof}

The following lemma extends the map from Lemma~\ref{LemmaBiLipBoundaryHex} to a bi-Lipschitz map between two truncated hexagons with uniformly bounded geodesic sides and horocyclic sides that are uniformly comparable.

\begin{lemma}\label{LemmaBiLipHex}
    Let $H$ and $H'$ be two truncated hexagons with 
    \[
        \partial H=(\h_1,\beta_3,\h_2,\beta_1,\h_3,\beta_2),
        \qquad
        \partial H'=(\h_1',\beta_3',\h_2',\beta_1',\h_3',\beta_2').
    \]
    Assume that the geodesic sides of $H$ satisfy
    \[
    \ell(\beta_i)\le b_{\max},
    \qquad i=1,2,3,
    \]
    and that the horocyclic sides of $H$ and $H'$ are uniformly bounded from above and below:
    \[
        h_{\min} \le \ell(\h_i), \;\ell(\h_i') \le 1/2,
    \]
    where $0<h_{\min}<1/2$. Then there exists a bi-Lipschitz map
    \[
    F : H \longrightarrow H'
    \]
    whose bi-Lipschitz constant $C_F$ depends only on $h_{\min}$, and $b_{\max}$.
    Specifically,
    \[
        C_F = \max\{C_{f}, 1\},
    \]
    where 
    \[ 
        C_{f} = \frac{3+\sqrt{5}}{2} \frac{C}{2h_{\min}}\sqrt{C^2+ \frac{1+(C-1)^2}{4h_{\min}^2}}
    \]
     with  \[C=\frac{\log(1/h_{\min})}{\log(2)}.\] 
\end{lemma}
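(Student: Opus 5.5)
We will build $F$ in explicit coordinates rather than by an abstract extension, because the shape of $C_f$ is that of a product of Lipschitz constants of explicit maps. The factor $\frac{3+\sqrt5}{2}$ is the largest singular value squared of the shear matrix $\begin{pmatrix}1&1\\0&1\end{pmatrix}$, equivalently the operator norm of a shear-type linear map. So the plan is to parametrise each truncated hexagon by coordinates in which the map becomes piecewise affine with controlled linear part.

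\textbf{Decomposition into cusp pieces.} Using Lemma~\ref{Shear0-HorocyclicSegments}-style $h$-length bookkeeping, decompose $H$ into three pieces. Each piece sits in the horoball-side region near vertex $v_i$, bounded by $\h_i$, the two adjacent geodesic subsegments $\beta_j^i,\beta_l^i$ (of length $\log(1/h_i)$, cut at the points $m_j,m_l$ of Lemma~\ref{LemmaBiLipBoundaryHex}), and the horocyclic arc through $m_j,m_l$. That arc has length $1$, since $h_i e^{\log(1/h_i)}=1$. The three pieces meet in the central ideal-triangle region bounded by three horocyclic arcs of length $1$ through the shear points. Because these lengths of $1$ do not depend on the hexagon, the central region can be taken to be the same compact region in $H$ and $H'$ after an isometry, and we map it by the identity (constant $1$, explaining the $\max\{C_f,1\}$). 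One caveat: the three length-$1$ horocycles through the midpoints pairwise touch, so the central region is the standard one in every ideal triangle.

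\textbf{Maps on the cusp pieces.} Put $v_i=\infty$ in the upper half-plane and use coordinates $(x,t)$ with $t=\log y$. The piece is then $\{0\le x\le h_i e^{t}\ \text{(rescaled)},\ 0\le t\le \log(1/h_i)\}$, after normalising horizontal position. Define $F(x,t)=(k(t)\,x,\ \kappa_i t)$, i.e.\ stretch the height linearly by $\kappa_i=\log(1/h_i')/\log(1/h_i)\in[1/C,C]$ and rescale horizontally so that the boundary horocycles match. This agrees with $F_\partial$ on $\h_i$ (scale $k_i$) and on $\beta^i$ (scale $\kappa_i$), and it is the identity on the length-$1$ arc. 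In the metric $(dx^2+dy^2)/y^2$, compute the differential in an orthonormal frame $(e^{-t}\partial_x,\partial_t)$. It is upper triangular, with diagonal entries of size at most $M\cdot(\text{ratio of } e^{t})$ and at most $C$. The off-diagonal term arises from the $t$-dependence of the horizontal scaling and of $e^{t}$ versus $e^{\kappa t}$, and it is bounded by roughly $(C-1)/(2h_{\min})$ times a horizontal width $\le 1$. Bound the operator norm of $\begin{pmatrix}a&b\\0&d\end{pmatrix}$ by $\frac{3+\sqrt5}{2}\max(\cdot)$ after factoring. This should produce exactly $\frac{C}{2h_{\min}}\sqrt{C^2+\frac{1+(C-1)^2}{4h_{\min}^2}}$ for both $F$ and $F^{-1}$, with the inverse treated symmetrically since the hypotheses are symmetric in $H,H'$.

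\textbf{Gluing and the main obstacle.} The local Lipschitz bounds must be turned into global bi-Lipschitz bounds. Since $H$ and $H'$ are geodesically convex and $F$ is a piecewise smooth homeomorphism that is continuous across the gluing arcs, the length-metric argument gives $d(F(p),F(q))\le C_F\, d(p,q)$ directly, by integrating along the geodesic; the same holds for $F^{-1}$. The main obstacle is the differential computation on the cusp piece. One must choose the horizontal scaling $k(t)$ (I would try the affine-in-$e^{t}$ interpolation between $k_i$ at $t=0$ and $1$ at the top) so that the map is simultaneously consistent with $F_\partial$ and the off-diagonal entry stays bounded by $(C-1)/(2h_{\min})$. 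If this fails, fall back to linear interpolation in $t$ of $\log k$, accepting a slightly worse constant.
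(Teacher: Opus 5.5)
Your decomposition $H=Q_1\cup Q_2\cup Q_3\cup\Delta$ is exactly the paper's: three cusp pieces cut off by the length-one horocyclic arcs through the points $m_i$, with the contact triangle mapped by an isometry. Your map on $Q_i$ is also the paper's, once $k$ is fixed correctly, and $k$ is not a free choice. Write $t$ for the distance from $\h_i$ and $x$ for horocyclic arclength, so that $Q_i=\{0\le t\le\log(1/h_i),\ 0\le x\le h_ie^{t}\}$. For $(x,t)\mapsto(k(t)x,\kappa_i t)$ to send the side $x=h_ie^{t}$ onto $x'=h_i'e^{t'}$, i.e.\ to agree with $F_\partial$ on $\beta^i_j$, you need $k(t)=k_ie^{(\kappa_i-1)t}$. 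Your first suggestion, $k$ affine in $e^{t}$, therefore does not map $Q_i$ onto $Q_i'$. Your fallback, with $\log k$ linear in $t$, is the only admissible choice; it is precisely the paper's $f_i(s,t)=\bigl(\kappa_i s,\tfrac{\tau_i'(\kappa_i s)}{\tau_i(s)}t\bigr)$ and costs nothing.

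Where you genuinely differ from the paper is the derivative estimate. The paper stays in the non-orthogonal coordinates $(t,x)$, where the metric is $dt^2+(dx-x\,dt)^2$. Since $0\le x\le1$, this is $\frac{1+\sqrt5}{2}$-bi-Lipschitz to $dt^2+dx^2$; applying this on both sides gives the factor $\frac{3+\sqrt5}{2}$, consistent with your shear-matrix reading. The paper then bounds the Euclidean Jacobian, whose off-diagonal entry can be as large as $M(C-1)$ with $M=1/(2h_{\min})$, by its Frobenius norm, and bounds the inverse through $\det\ge 1/(CM)$; that is how $C_f$ arises.

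Carried out correctly, your orthonormal-frame computation is better than you expect. One has $F^*(dt')=\kappa_i\,dt$ and $F^*(dx'-x'\,dt')=k(t)\,(dx-x\,dt)$, so in orthonormal coframes $DF=\mathrm{diag}(\kappa_i,k(t))$, with no off-diagonal term at all. Concretely, put $v_i=\infty$ in the upper half-plane with $\beta^i_l\subset\{X=0\}$, $\beta^i_j\subset\{X=w\}$ and $\h_i\subset\{y=y_0\}$. Then the map is simply $(X,y)\mapsto(\tfrac{w'}{w}X,\ y_0'(y/y_0)^{\kappa_i})$. Since $k(t)$ lies between $k_i\in[1/M,M]$ and $1$, and $\kappa_i\in[1/C,C]$, you get $\|DF^{\pm1}\|\le\max\{M,C\}\le C_f$. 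So you will not reproduce $C_f$ ``exactly''; you obtain a sharper constant, which implies the lemma.

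One correction to your gluing step: $H$ is not geodesically convex, because the geodesic between two points of $\h_i$ passes through the horoball $B_i$. There are two fixes. You can integrate in the intrinsic length metric of $H$, which is all that Proposition~\ref{PropTeichShear} actually uses. Alternatively, compare the intrinsic and ambient metrics, using that a horocyclic arc of length $x\le 1/2$ has chord $2\arcsinh(x/2)\ge 0.98\,x$.
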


\begin{proof} 
    Let $T$ and $T'$ be two decorated ideal triangles in the Poincaré disk $\D$. Truncating along the chosen horocycles produces right-angled hexagons $H$ and $H'$. Assume that $H$ and $H'$ satisfy the hypotheses of the lemma. Set 
    \[
        \delta_i\coloneq \ell(\beta_i), \qquad h_i=\ell(\h_i), \qquad i=1,2,3
    \] and let $\lambda_i$ be the corresponding lambda-length relative to the decoration provided by the horocycles. We use the same notation for the hexagon $H'$ and write \[k_i \coloneq \frac{h_i'}{h_i}.\]
    
    Let $v_1, v_2, v_3$ denote the ideal vertices of $T$. The spine of $T$, denoted by $\spine(T)$, is the set of points with more than one closest-projection point to $\partial T$. The truncated spine 
    \[
        \spine(H)=\spine(T) \cap H
    \]
    is a tripod with three geodesic legs, denoted by $\sigma_1,\sigma_2,\sigma_3$, meeting at a single point $O \in H$. The point $O$ is the center of the inscribed circle of $T$. Equivalently, $O$ is the center of the contact triangle $\Delta$, whose sides are three horocyclic leaves emanating from $v_1$, $v_2$ and $v_3$ and meeting tangentially. We use the analogous notation $\spine(H')$, $\sigma_i'$, $O'$ and $\Delta'$ for the hexagon $H'$. 
    
    For $i=1,2,3$, let
    \[
        L_i=d(O,\h_i) \qquad L_i'=d(O',\h_i').
    \] 
    A direct computation yields 
    \[
        L_i=\log\left(\frac{2}{\sqrt{3}h_i}\right)
    \] 

    Since  $h_i \in [h_{\min}, 1/2]$ we have 
    \[
        \log\left(\frac{4}{\sqrt{3}}\right) \leq L_i \leq \log\left(\frac{2}{\sqrt{3}h_{\min}}\right).
    \] 
    Similarly,
    \[
        L_i'=\log\left(\frac{2}{\sqrt{3}h_i'}\right)=L_i-\log(k_i).
    \] 
    Observe also that the distance from the horocycle side $\h_i$ to the contact triangle $\Delta$ is
    \[  
        L_i- \log\left( \frac{2}{\sqrt{3}}\right)= \log\left(\frac{1}{h_i}\right).
    \]

    Let \[f:\partial H \longrightarrow \partial H'\] be the boundary map given by Lemma~\ref{LemmaBiLipBoundaryHex}. We use the notation introduced in the proof of that lemma. 

    We now equip $H$ with the horocyclic foliation from its underlying ideal triangle $T$. That is, for each ideal vertex $v_i$, consider the family of horocycles centered at $v_i$. These horocycles are orthogonal to the adjacent geodesic sides of $T$ and to the corresponding leg of the spine. Three distinguished leaves meet tangentially and bound the contact triangle $\Delta$. The same construction holds for $\Delta' \subset T'$. 

    For $i=1,2,3$ let $m_i$ be the point on  $\beta_i$ which decomposes $\beta_i$ into two subsegments $\beta_i^j$ and $\beta_i^l$ given by relation~(\ref{delta-truncation}). The points $m_1,m_2,m_3$ are the three vertices of $\Delta$, and similarly the points $m_1', m_2', m_3'$ are the vertices of $\Delta'$. Indeed consider $m_i$, by definition 
    \[
        d(m_i, \h_j)=\log\left(\frac{1}{h_j}\right)
    \]
    Hence $m_i$ lies on the horocycle leaf based at $v_j$ whose length is 
    \[
        h_j e^{\log\left(1/h_j\right)}=1.
    \]
    This leaf is one side of the contact triangle $\Delta$. Since $f$ maps each point $m_i$ to the corresponding point $m_i'$, it maps the vertices of $\Delta$ to the vertices of $\Delta'$.

    For $i=1,2,3$, let $Q_i$ denote the region bounded by $\h_i$, the adjacent subsegment $\beta_l^i$, $\beta_j^i$ and the side of the contact triangle $\Delta$ joining $m_l$ to $m_j$: 
    \[
        Q_i=(\h_i,\; \beta_l^i,\; \partial\Delta(m_l,m_j),\; \beta_j^i).
    \]
    Let $Q_i'$ be the corresponding region in $H'$. 

    \begin{figure}[h]
        \centering
        \begin{overpic}[width=0.7\linewidth,keepaspectratio]{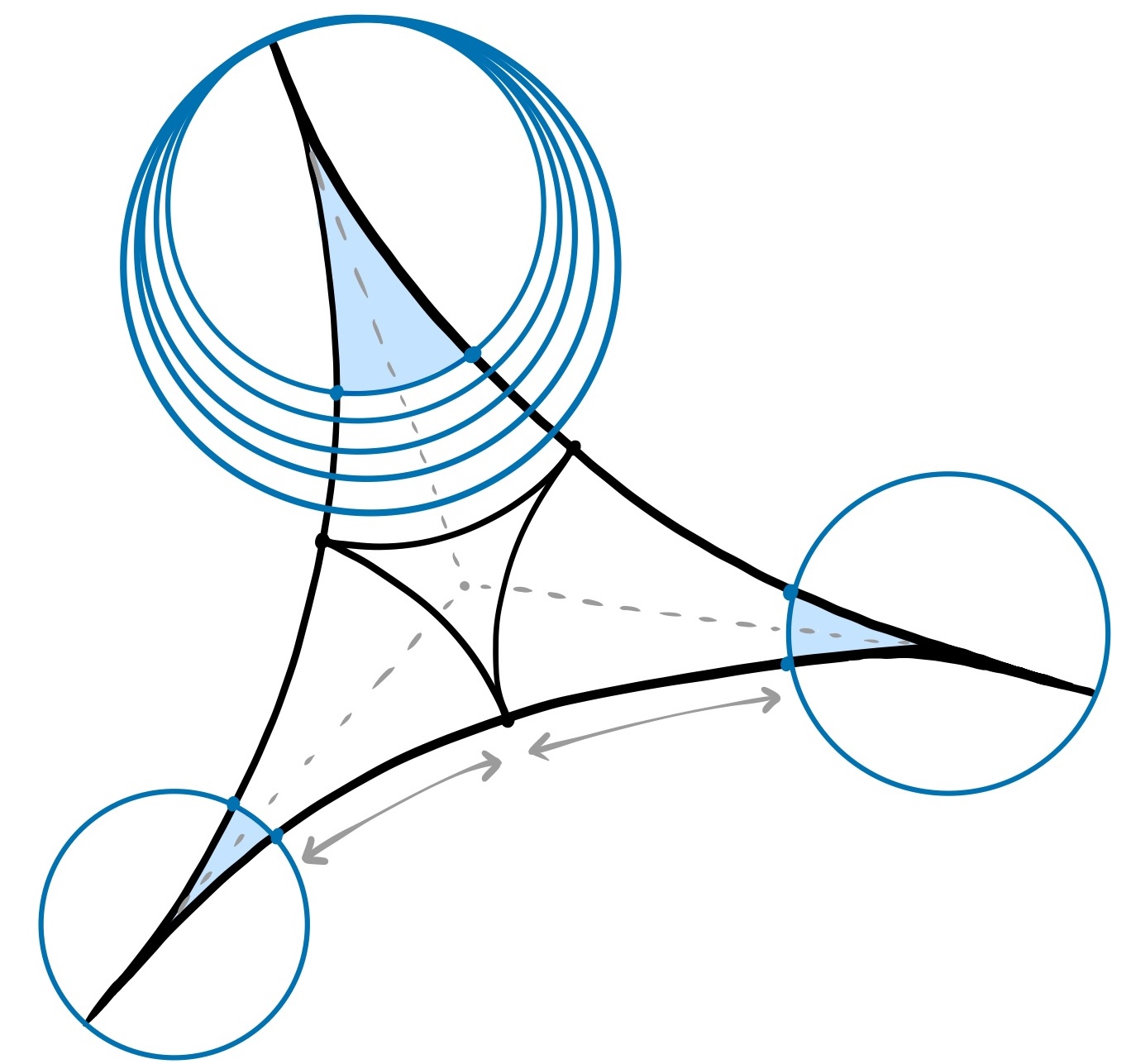}
        \put(34.5,62){\small\color{RoyalBlue}$\h_i$}
        \put(23,23.25){\small\color{RoyalBlue}$\h_j$}
        \put(66,38){\small\color{RoyalBlue}$\h_l$}
        \put(29,32){\small\color{Gray}$\sigma_j$}
        \put(56,41){\small\color{Gray}$\sigma_l$}
        \put(52,54){\small$m_j^{i,l}$}
        \put(45,34){\small$m_i$}
        \put(21,45){\small$m_l^{i,j}$}
        \put(33,18){\small\color{Gray}$\log\left(\frac{1}{h_j}\right)$}
        \put(54,26){\small\color{Gray}$\log\left(\frac{1}{h_l}\right)$}
        \end{overpic}
        \vspace{-0.4cm}
        \caption{Construction of a bi-Lipschitz map between truncated hexagons.}\label{FigureConstHexBiLipMap}
    \end{figure}
    
    For each $i=1,2,3$ let 
    \[
        \alpha_i : [0, L_i] \longrightarrow \sigma_i
    \] 
    be the parametrization of the $i$-th leg of the spine. For $0 \leq s \leq  \log\left(1/h_i\right)$ let $C_i(s)$ denote the horocycle leaf orthogonal to $\beta_j,\beta_l,\sigma_i$ and passing through $\alpha_i(s)$. Let 
    \[
        \lambda_i : [0, \tau_i(s)] \longrightarrow C_i(s)
    \] 
    be the arc-length parametrization of $C_i(s)$. The length of $C_i(s)$ between $\beta_j$ and $\beta_l$ is 
    \[
        \tau_i(s)=h_{i} e^{s}
    \] 
    Similarly let 
    \[
        \tau_i'(s')=h_{i}' e^{s'}
    \] 
    denote the corresponding leaf length in $Q_i'$. Every point of $H\smallsetminus \Delta$ lies on a horocycle leaf. Consequently, each point of $Q_i$ can be written in coordinates 
    \[p=(s,t),\] 
    where 
    \[ 0 \leq s \leq \log\left(\frac{1}{h_i}\right), \qquad 0 \leq t \leq \tau_i(s).\]
    Here $s$ encodes the distance from the horocycle side $\h_i$ along the spine while $t$ records the position along the leaf $C_i(s)$. 
    
    Recall that both boundary segment $\beta_l^i$ and $\beta_j^i$ are re-scaled by the same factor 
    \[
        \kappa_i=\frac{\log(1/h_i')}{\log(1/h_i)}.
    \]
    We define 
    \[f_{i}: Q_i \longrightarrow Q_i'\]
    by 
    \[ 
        f_{i}(s,t)= \left(\sigma(s,t), \theta(s,t)\right) = \left(s \cdot \kappa_i, \frac{\tau_i'(s\cdot \kappa_i)}{\tau_i(s)}t \right).
    \]
    Thus $ f_{i}$ rescales the spine direction by a factor $\kappa_i$ and maps each horocycle leaf linearly onto the corresponding horocycle leaf of $Q_i'$.

    We first verify that $f_{i}$ agrees with boundary map $f: \partial H \longrightarrow \partial H'$ constructed in Lemma~\ref{LemmaBiLipBoundaryHex}. 
    \begin{itemize}
        \item[-] The geodesic side $\beta_l^i$ is given by $t=0$ $(s,0)$ and hence \[f_{i}(s,0)=(s \cdot \kappa_i, 0)\] which is precisely the restriction of $f$ to $\beta_l^i$.
        \item[-] Similarly, the side $\beta_j^i$ is given by $t=\tau_i(s)$, and therefore \[f_{i}(s, \tau_i(s))=(s\kappa_i, \tau_i'(s\cdot \kappa_i))\] which agrees with the re-scaling of $\beta_j^i$ under $f$.
        \item[-] On the horocyclic segment $\h_i$, corresponding to $s=0$, we obtain 
        \[ 
            f_{i}(0,t)= \left(0, \frac{\tau_i'(0)}{\tau_i(0)}t\right)=\left(0,\frac{h_i'}{h_i}t\right)
        \] 
        so the map stretches $h_i$ with the factor $k_i= h_i'/h_i$, as required.
        \item[-] Finally, the side of the contact triangle $\partial \Delta(m_l, m_j)$ is given by \[s=\log\left(\frac{1}{h_i}\right).\] Since
        \[
            \tau_i\left(\log\left(\frac{1}{h_i}\right)\right)=\tau_i'\left(\log\left(\frac{1}{h_i'}\right)\right)=1
        \] 
        its image by $f_{i}$ is the corresponding side of $\Delta'$. Hence $f_{i}$ coincides with $f$ on $\partial Q_i$.
    \end{itemize}

    In the coordinates $(s,t)$ the hyperbolic metric is \[(1+t^2)ds^2+dt^2-2t\,ds \,dt.\] Since $0\leq t \leq \tau_i(s)=h_i e^s \leq 1$ the corresponding matrix
    \[
        \left( \begin{matrix}
        1+t^2 & -t\\
        -t & 1 \\
        \end{matrix} \right)
    \]  
    has determinant $1$ and its eigenvalues lies in $\left[\frac{3-\sqrt{5}}{2}, \frac{3+\sqrt{5}}{2}\right]$. The hyperbolic metric is therefore $\frac{1+\sqrt{5}}{2}$-bi-Lipschitz to the Euclidean matric $ds^2+dt^2$. To prove that $f_i$ is bi-Lipschitz it suffices to show that it is bi-Lipschitz with respect to the Euclidean metric and multiply the resulting constant by $(\frac{1+\sqrt{5}}{2})^2=\frac{3+\sqrt{5}}{2}$.  Hence it remains to show that the absolute value of determinant of the Jacobian matrix of $f_{i}$ is bounded below and that each entry of the matrix is bounded above.
    We compute the partial derivatives of $(\sigma(s,t),\theta(s,t))$ with respect to $(s,t)$. A direct calculation gives 
    \[\frac{\partial\sigma}{\partial s} = \kappa_i, \qquad 
    \frac{\partial \sigma}{\partial t} = 0 \]
    and 
    \[\frac{\partial \theta}{\partial t} = \frac{\tau_i'(s\cdot \kappa_i)}{\tau_i(s)}= k_i^{1+\frac{s}{\log(h_i)}}\]
    while 
    \[ 
        \frac{\partial\theta}{\partial s} = \frac{\partial}{\partial s} \left(t\frac{\tau_i'(s\cdot \kappa_i)}{\tau_i(s)} \right)= t k_i^{1+\frac{s}{\log(h_i)}} \frac{\log(k_i)}{\log(h_i)}= t \frac{\tau_i'(s\cdot \kappa_i)}{\tau_i(s)} (\kappa_i-1).
    \]

    The Jacobian matrix of $f_{i}$ is given by 
    \[
        Df_{i}=\left( \begin{matrix}
            \kappa_i & 0\\
            t(\kappa_i-1) \frac{\tau_i'(s\kappa_i)}{\tau_i(s)} & \frac{\tau_i'(s\cdot \kappa_i)}{\tau_i(s)} \\
    \end{matrix} \right). \]

    We compute its determinant: 
    \[
        \det(Df_{i})=\kappa_i\frac{\tau_i'(s\cdot \kappa_i)}{\tau_i(s)}= \kappa_i k_i^{1+\frac{s}{\log(h_i)}}.
    \] 
    Recall that, by proof Lemma~\ref{LemmaBiLipBoundaryHex}, $\kappa_i\in[1/C, C]$ and $k_i\in [1/M, M]$. Since $s\in [0,\log(1/h_i)]$ we obtain \[|\det(Df_{i})|\geq \frac{1}{CM}.\]

    Next, we bound the entries of the Jacobian matrix of $f_{i}$. Since $0 \leq s\leq \log\left(\frac{1}{h_{\min}}\right)$, we have:
    \[
        \left| \frac{\partial \sigma}{\partial s}\right| \leq C, \qquad \left| \frac{\partial \theta}{\partial t} \right| \leq M,
    \]
    and 
    \[
        \left|\frac{\partial\theta}{\partial s}\right| \leq M \frac{\log(M)}{\log(2)}.
    \]
    Since the spectral norm $\|Df_{i}\|$ is always less than or equal to the Frobenius norm, we obtain: 
    \[ 
        \|Df_{i}\| \leq \sqrt{C^2 + M^2\left(1+\frac{\log^2 M}{\log^2 2}\right)}.
    \]
    Since the norm of the inverse is bounded by the original norm divided by the determinant,
    \[ \| Df_{i}^{-1} \| \leq CM
    \sqrt{C^2 + M^2\left(1+\frac{\log^2 M}{\log^2 2}\right)}.\]
    Thus $f_{i}$ is bi-Lipschitz with constant   
    \[ C_{f_{i}} = \frac{3+\sqrt{5}}{2} CM
    \sqrt{C^2 + M^2\left(1+\frac{\log^2 M}{\log^2 2}\right)}.
    \]

    For $i=1,2,3$, at $s=\log(1/h_i)$ one has $f_i(s,t)=(\log(1/h_i'), t)$ thus the map $f_{i}$ sends isometrically $\partial \Delta \cap Q_i$ onto $\partial \Delta' \cap Q_i'$. Let 
    \[ I: \Delta \longrightarrow \Delta' \] 
    be the isometry between the two contact triangle, in particular it maps $m_i$ to $m_i'$. Define 
    \[ F:H\longrightarrow H' \]  
    by setting 
    \[ F|_{Q_i}=f_{i}, \qquad F|_{\Delta}=I.\]
    The maps agree on their common boundaries, so $F$ is well-defined. Since for each $i=1,2,3$ the map $f_i$ is $C_{f_{i}}$-bi-Lipschitz and $I$ is an isometry it follows that $F$ is bi-Lipschitz with bi-Lipschitz constant 
    \[C_F = \{ \max_i C_{f_{i}}, 1\}.\]
\end{proof}

It is clear that any two compact hexagons are bi-Lipschitz. Lemma~\ref{LemmaBiLipBoundaryHex} and Lemma~\ref{LemmaBiLipHex} state that any two truncated hexagons whose geodesic sides are bounded above by $b_{\max}$ and whose horocyclic sides lie in $[h_{\min},1/2]$ are bi-Lipschitz homeomorphic, via a map with an explicit constant $C_F=C_F(h_{\min},b_{\max})$. The constant depends on the pair of hexagons only through the two bounds $h_{\min}$ and $b_{\max}$ and the map is defined on the boundary by a function of the lengths of the sides. 

\subsection{Teichmüller distance along a shearing deformation}\label{sec:shearpath}
We now apply the two previous lemmata hexagon by hexagon. Truncating the cusps of a surface carrying an ideal triangulation decomposes it into truncated hexagons. Two such surfaces with bounded shear parameters satisfy the hypotheses of Lemma~\ref{LemmaBiLipBoundaryHex} and Lemma~\ref{LemmaBiLipHex}. Gluing the local maps, after a correction along the shared geodesic sides, yields a global bi-Lipschitz map with an explicit bi-Lipschitz constant and hence a bound on the Teichmüller distance. First we need the following auxiliary lemma, which
provides the correction to glue the maps defined on each hexagon.

\begin{lemma}\label{correction_lemma}
    Let $H$ be a truncated hexagon with \[
        \partial H=(\h_1,\beta_3,\h_2,\beta_1,\h_3,\beta_2) 
    \]
    and let $0 < h_{\min} < 1/2$ such that for $i=1,2,3$ \[h_{\min} \leq h_i=\ell(\h_i) \leq 1/2 .\] Set $\rho_0=\frac{1}{2}\arcsinh(h_{\min})$ and $\delta_{\max}=\max_i \ell(\beta_i)$. For $i=1,2,3$, let $\psi_i:\beta_i \longrightarrow \beta_i$ be a homeomorphism fixing both endpoints of the geodesic segment $\beta_i$, differentiable except at finitely many points and such that for some $K \geq 1$,
    \[
        \frac{1}{K}\leq \psi_i'(x)\leq K
    \]
    at every point $x\in\beta_i$ where $\psi_i'$ exists. Then there exists a homeomorphism \[\Psi:H \longrightarrow H\] such that\begin{enumerate}
        \item $\Psi|_{\beta_i}= \psi_i$ for $i=1,2,3$;
        \item $\Psi$ is the identity on the horocyclic sides $\h_1$, $\h_2$, $\h_3$ and outside the set of points at distance at most $\rho_0$ from $\beta_1 \cup \beta_2 \cup \beta_3$;
        \item $\Psi$ is differentiable outside a finite union of analytic arcs and satisfies at every differentiable point \[ ||D\Psi||\leq C_{\Psi}, \quad ||D\Psi^{-1}||\leq C_{\Psi}, \] where $C_{\Psi}=K\sqrt{1+K^2+\cosh^2(\rho_0)\frac{\delta_{\max}^2}{\rho_0^2}}$.
    \end{enumerate}
\end{lemma}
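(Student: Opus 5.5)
We build $\Psi$ by interpolating between $\psi_i$ on $\beta_i$ and the identity at distance $\rho_0$, in Fermi coordinates about each geodesic side. For each $i$, parametrise a neighbourhood of $\beta_i$ in $H$ by $(x,r)$, where $x\in[0,\delta_i]$ is arclength along $\beta_i$ and $r\in[0,\rho_0]$ is the distance to $\beta_i$. The metric is then $\cosh^2(r)\,dx^2+dr^2$.

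First we check that these collars fit inside $H$ and are pairwise disjoint.
\begin{itemize}
\item The horocyclic sides $\h_j$ meet $\beta_i$ orthogonally at its endpoints. In the collar of width $\rho_0$, the region swept by $x\in[0,\delta_i]$ stays inside $H$ provided $\rho_0$ is small compared to the injectivity-type quantities near the corners.
\item The distance between two geodesic sides $\beta_i,\beta_l$ adjacent to $\h_j$ is controlled by $h_j\ge h_{\min}$. By the formula $\cosh z=\cosh(x-w)+\tfrac{h_l^2}{2}e^{x+w}$ from the proof of Lemma~\ref{LemmaBiLipBoundaryHex}, the distance is at least $\arccosh(1+h_{\min}^2/2)\ge\arcsinh(h_{\min})$. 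Hence the $\rho_0=\tfrac12\arcsinh(h_{\min})$ neighbourhoods of distinct sides are disjoint.
\end{itemize}

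Near the corners the Fermi strip is not exactly the $\rho_0$-neighbourhood, because the horocycle $\h_j$ is not a Fermi line $x=\mathrm{const}$. The fix is that $\psi_i$ fixes the endpoints, and we can take $\Psi$ to preserve the lines $x=0$ and $x=\delta_i$, on which it is the identity. Points of $H$ near the corners outside the strip are then left fixed, and $\Psi$ equals the identity on $\h_j$. Making this precise is where I expect the main obstacle: we must verify that the strip $\{0\le x\le\delta_i,\ 0\le r\le\rho_0\}$ lies in $H$ and meets $\h_j$ only at the corner. This should follow from convexity of horoballs: $\h_j$ bends away from the perpendicular at the corner into the horoball, so the perpendicular geodesic at the endpoint of $\beta_i$ stays in $H$ for length $\rho_0$, until it meets the other side's strip, which is excluded by disjointness.

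On the strip, define
\[
\Psi(x,r)=\Bigl(\bigl(1-\tfrac{r}{\rho_0}\bigr)\psi_i(x)+\tfrac{r}{\rho_0}\,x,\ r\Bigr),
\]
and set $\Psi=\mathrm{id}$ elsewhere. For each fixed $r$ this is a convex combination of two increasing homeomorphisms of $[0,\delta_i]$ fixing the endpoints, so it is a homeomorphism. It restricts to $\psi_i$ at $r=0$ and to the identity at $r=\rho_0$ and on $x\in\{0,\delta_i\}$, which gives properties (1) and (2). The non-differentiability locus is contained in the lines over the finitely many break points of $\psi_i$ together with the curves $r=\rho_0$, $x=0$ and $x=\delta_i$, a finite union of analytic arcs.

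For the derivative bound (3), work in the orthonormal frame $(\cosh(r)^{-1}\partial_x,\partial_r)$. Write $\Psi=(X(x,r),r)$, where
\[
X_x=(1-\tfrac r{\rho_0})\psi_i'+\tfrac r{\rho_0}\in[1/K,K],\qquad |X_r|=\frac{|\psi_i(x)-x|}{\rho_0}\le\frac{\delta_{\max}}{\rho_0}.
\]
The matrix of $D\Psi$ in this frame has entries
\begin{itemize}
\item $X_x$, whose absolute value is at most $K$;
\item $\cosh(r)\,X_r$, whose absolute value is at most $\cosh(\rho_0)\,\delta_{\max}/\rho_0$;
\item $0$ and $1$.
\end{itemize}
The Frobenius norm then gives
\[
\|D\Psi\|\le\sqrt{K^2+1+\cosh^2(\rho_0)\,\delta_{\max}^2/\rho_0^2}\le C_\Psi.
\]
The determinant is $X_x\ge1/K$, so $\|D\Psi^{-1}\|\le\|D\Psi\|/\det\le K\|D\Psi\|$, which is at most $C_\Psi$ as stated.
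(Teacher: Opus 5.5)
Your proposal is essentially the paper's proof: the same Fermi rectangles $[x_i^-,x_i^+]\times[0,\rho_0]$, the same interpolation $\bigl((1-\rho/\rho_0)\psi_i(x)+(\rho/\rho_0)x,\,\rho\bigr)$, and the same orthonormal-frame Jacobian bounded via the Frobenius norm and $\|D\Psi^{-1}\|\le\|D\Psi\|/\det D\Psi$. Your tangency/convexity argument at the corners is a coordinate-free version of the paper's check that $R_i\cap B_j$ is a single corner, and your use of $\cosh z=\cosh(x-w)+\tfrac{h^2}{2}e^{x+w}$ actually gives disjoint strips, whereas the paper only shows they can meet at distance exactly $\rho_0$.

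The one check your containment sketch omits is that the strip avoids the horoball $B_i$ opposite $\beta_i$, which the paper gets from $d(a_i,B_i)\ge\log 4>\rho_0$. With that in place, the parts of $a_j,a_l$ outside $\beta_j,\beta_l$ lie in horoballs the strip meets only at the endpoints of $\beta_i$, so the connected strip stays in $T$ and hence in $H$.
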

\begin{proof}
    Let $H$ be a truncated hexagon obtained by truncating an ideal triangle $T=(a_1,a_2,a_3)$ with vertices $v_1$, $v_2$, $v_3$. Fix $i\in \{1,2,3\}$ and lift $a_i$ to the imaginary axis in $\Hh^2$, parametrized by $z=(X,y)$, such that the lift of $T$ is contained in the right half of the upper half-plane $\left\{z\in \Hh^2 \middle| X \geq 0\right\}$. Consider the Fermi coordinates 
    \[
        \R \times \R_{>0} \longrightarrow \Hh^2, (x,\rho) \longmapsto e^x(\tanh(\rho), \sech(\rho))
    \] 
    which parametrize $\left\{z\in \Hh^2 \middle| X \geq 0\right\}$. The curve $\{\rho=0\}$ parametrizes $a_i$ by its arc length $x$, while the curve $\{x=x_0\}$ corresponds to the geodesic through $ie^{x_0}$ orthogonal to $a_i$. In these coordinates, the hyperbolic metric is 
    \[
        \frac{dX^2+dy^2}{y^2}=\cosh^2(\rho)dx^2+d\rho^2.
    \]
    We define 
    \[
        R_i = \left\{
                p \in \mathbb{H}^2
                \;\middle|\;
                d(p,a_i) \leq \rho_0,\; p\in T,\; \text{and the foot of }p\text{ on }a_i\text{ lies on }\beta_i
            \right\}.
    \]
    In Fermi coordinates $R_i=[x_i^-,x_i^+]\times[0,\rho_0]$ where $[x_i^-,x_i^+]$ is the arc length interval of $\beta_i$ on $a_i$. Consequently $x_i^{-}-x_i^{+}= \ell(\beta_i)\leq \delta_{\max}$.
    \begin{claim}
       For each $i\in \{1,2,3\}$: 
       \begin{enumerate}[label=(\alph*)]
            \item\label{claimA} $R_i \subset H$ and $R_i$ meets $\partial H \smallsetminus \beta_i$ only at the two endpoints of $\beta_i$.
            \item\label{claimB} If $p\in R_i \cap R_j$ with $i\neq j$ then $d(p,\beta_i)=d(p,\beta_j)=\rho_0$.
        \end{enumerate}
    \end{claim}
    \begin{proofclaim}
        To prove~\ref{claimA} we estimate the following distances: \begin{enumerate}[label=(\roman*)]
            \item \underline{Distance from $R_i$ to the horoballs $B_j$ and $B_l$.} Lift $a_i$ to the imaginary axis. Let $p=(R\sin(\alpha), R\cos(\alpha))$ with $R>0$ and $|\alpha|<\pi/2$. The geodesic orthogonal to $a_i$ corresponds to curve $\{R=c\}$ for some constant $c$. Hence the foot of $p$ on $a_i$ is the point $iR$. Let $i y_0=a_i \cap \partial B_j$ be the endpoint of $\beta_i$ on $\h_j$ and similarly let $i y_1= a_i \cap \partial B_l$ be the endpoint such that $\beta_i=\left\{i R \;\middle|\; y_0 \leq R \leq y_1\right\}$. Hence the horoball $B_j$ is given by
            \[
                B_j=\left\{(X,y) \;\middle|\; X^2+\left(y-\frac{y_0}{2}\right)^2 \leq \frac{y_0^2}{4} \right\} = \left\{ (R,\alpha) \;\middle|\; R \leq y_0\cos(\alpha) \right\}.
            \]
            If $p \in R_i$, then the foot of $p$ on $\beta_i$ is $iR$ which satisfies $y_0 \leq R \leq y_1$. Since $|\alpha|\leq \pi/2$ we have $R\geq y_0 \geq y_0\cos(\alpha)$. Hence $p\in R_i \cap B_j$ if and only if $\alpha=0$ and $R=y_0$. Thus, $R_i \cap B_j=\{iy_0\}$. Similarly, we obtain  $R_i \cap B_l=\{iy_1\}$.
            \item \underline{Distance from $R_i$ to the horoball $B_i$.} Lift the vertex $v_i$ to $\infty$, then $a_j$ and $a_l$ lift to two vertical lines say $x=0$, and $x=w$ for some constant $w>0$. Hence $a_i$ lifts to a half-circle of radius $w/2$ joining the feet of the two lines. On one hand, the horocycle segment of $\partial B_i$ between $a_j$ and $a_l$ has length $\ell(\h_i)$ and one has $\partial B_i = \{ z \;|\; \Im(z)= w/\ell(\h_i) \}$. On the other hand, every point of $a_i$ has height at most $w/2$. Thus we have: 
            \[
                d(a_i, B_i) \geq \log\left(\frac{w/\ell(\h_i)}{w/2}\right)=2\log\left(\frac{2}{\ell(\h_i)}\right) \geq \log(4).
            \]
            Since every point of $R_i$ lies at distance at most $\rho_0 < 1/4$ from $a_i$ we deduce that 
            \[
                d(R_i, B_i)\geq \log(4)-\rho_0 >0.
            \]
            \item\label{point3} \underline{Distance from $R_i$ to the horoballs $a_j$ and $a_l$.} Lift $v_l$ to $\infty$, $a_i$ to the axis $\{x=0\}$ and $a_j$ to the vertical line $\{x=w\}$ for some fixed $w>0$. Then as before, $\partial B_l = \{ z \;|\; \Im(z)= Y_0\}$ where $Y_0=w/\ell(\h_l)$. The side $\beta_i$ is given by $\beta_i=\{(0,y)\;|\;y_0'\leq y\leq Y_0\}$ for some constant $y_0'>0$. Every $p=(0,y)\in \beta_i$ satisfies 
            \begin{align*}
                d(p,a_j) &= \arcsinh\left(\frac{w}{y}\right) \\
                &\geq \arcsinh\left(\frac{w}{Y_0}\right)=\arcsinh(\ell(\h_l)) \\
                &\geq \arcsinh(h_{\min})=2\rho_0. 
            \end{align*}
            Consequently, $d(\beta_i, a_j) \geq 2\rho_0$ and so  $d(R_i, a_j) \geq \rho_0$. Similarly, we obtain $d(\beta_i, a_l) \geq 2\rho_0$ and thus $d(R_i, a_l) \geq \rho_0$. 
        \end{enumerate} 
        We conclude that $R_i$ is connected in $T$, disjoint from $a_j \cup a_l$. Moreover, $R_i \subset H$ and $R_i$ meets $\h_1\cup \h_2 \cup \h_3$ only at the two endpoints of $\beta_i$ and meet neiter $\beta_j$ nor $\beta_l$. 
        Thus Claim~\ref{claimA} follows.

        Let $p\in R_i \cap R_j$ for $i\neq j$, by the conclusion of step~\ref{point3} one has \[2\rho_0 \leq d(\beta_i, a_j) \leq  d(\beta_i, \beta_j) \leq d(\beta_i, p)+d(p, \beta_j).\] However, since the foot of $p$ lies on $\beta_i \subset a_i$ we also have $d(p,\beta_i)=d(p,a_i) \leq \rho_0$, and similarly, $d(p,\beta_j)=d(p,a_j) \leq \rho_0$. Therefore, $d(p,\beta_i)=d(p,\beta_j)=\rho_0$ which proves Claim~\ref{claimB}.
        \end{proofclaim}

        Using the Fermi coordinates $(x,\rho)$ we identify $R_i$ with $[x_i^-,x_i^+]\times[0,\rho_0]$ such that $\beta_i=\{\rho=0\}$ on $a_i$ and $\psi_i$ is an increasing homeomorphism of $[x_i^-,x_i^+]$ fixing the two endpoints. Define
        \[
            \Psi(x,\rho)=
                \begin{cases} 
                \left(  \left(1-\frac{\rho}{\rho_0}\right)\psi_i(x) + \frac{\rho}{\rho_0}x,\; \rho\right), & \text{on } R_i \text{ for } i=1,2,3, \\
                (x,\rho), & \text{on } H\smallsetminus (R_1\cup R_2 \cup R_3). \\
                \end{cases}
        \]
        This map is well-defined homeomorphism. It is the identity on $R_i \cap R_j$, on $\{\rho=\rho_0\}$ and on $\{x=x_i^\pm\}$ since $\psi(x_i^\pm)=x_i^\pm$. We compute its Jacobian:
        \[
            D\Psi=\left( \begin{matrix}
                            \left(1-\frac{\rho}{\rho_0}\right)\psi_i'(x)+\frac{\rho}{\rho_0} & \cosh(\rho)\frac{x-\psi_i(x)}{\rho_0}\\
                            0 & 1 \\
                    \end{matrix} \right)
    \]
    and estimate a lower bound for its determinant:
    \[  
        \det(D\Psi)=\left(1-\frac{\rho}{\rho_0}\right)\psi_i'(x)+\frac{\rho}{\rho_0} \geq \frac{1}{K}.
    \]
    The spectral norm is bounded by the Frobenius norm,
    \begin{align*}
        \|D\Psi\| &\leq \sqrt{1+\left(\left(1-\frac{\rho}{\rho_0}\right)\psi_i'(x)+\frac{\rho}{\rho_0}\right)^2+\left(\cosh(\rho)\frac{x-\psi_i(x)}{\rho_0}\right)^2}\\
        &\leq \sqrt{1+K^2+ \cosh^2(\rho_0)\frac{\delta_{\max}^2}{\rho_0^2}} \leq C_{\Psi}
    \end{align*}
    and since the spectral norm of the inverse is bounded by the spectral norm divided by the determinant we obtain:
    \begin{align*}
        \|D\Psi^{-1}\| &\leq K\sqrt{1+K^2+ \cosh^2(\rho_0)\frac{\delta_{\max}^2}{\rho_0^2}} \leq C_{\Psi}=C_{\Psi}
    \end{align*}
    Outside $R_1 \cup R_2 \cup R_3$ the map is the identity. Thus $\Psi$ is differentiable except along finitely many sides (the $R_i$ and the curves $\{x = c\}$ corresponding to the finitely many points at which $\psi_i$ is not differentiable) with $\|D\Psi^\pm\|\leq C_{\Psi}$. In particular it is bi-Lipschitz.
\end{proof}

\begin{proposition}\label{PropTeichShear}
    Let $X, Y$ be two punctured hyperbolic surfaces in $\teich_{g,n}$, with $2g-2+n>0$ and $n\geq 1$. Let $\mathcal{T}$ be an ideal triangulation of $X$ and $Y$ such that
    \[ 
        \shear_{\mathcal{T}}(Y)=0.
    \] 
    Further assume that the length of every subarc $\hat a$, obtained from an ideal arc $a\in \mathcal{T}$ by removing the portions contained in standard cusp neighborhoods, satisfies
    \[
        \ell_X(\hat a) \leq G(g,n).
    \]
    Then, there exists a bi-Lipschitz map $F : X \longrightarrow Y$ whose bi-Lipschitz constant depends only on $g,n$ and $G(g,n)$. Consequently, 
    \[ 
        \dteich(X, Y) \lesssim \log(g+n) + G(g,n). 
    \]
\end{proposition}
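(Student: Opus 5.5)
The plan is to truncate both surfaces along a common decoration, build hexagon-by-hexagon bi-Lipschitz maps with Lemma~\ref{LemmaBiLipHex}, glue them using Lemma~\ref{correction_lemma}, and read off the Teichmüller distance from the global constant.

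\emph{Step 1: the decoration.} Decorate both $X$ and $Y$ by horocycles of a fixed small length $\eta$ around every cusp; $\eta=1$ lies inside the standard cusp neighbourhoods and will do. Every ideal arc of $\mathcal{T}$ then becomes a truncated arc, and every ideal triangle becomes a truncated hexagon.

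On $Y$, Lemma~\ref{Shear0-HorocyclicSegments} shows that every horocyclic side at a cusp $c$ has length $\eta/v(c)$. Since $v(c)\leq 2|\mathcal{T}|=12g-12+6n$, these lengths lie in $[h_{\min},1/2]$ with $h_{\min}\asymp 1/(g+n)$. The geodesic sides of $Y$ then have length $\log(1/h_j)+\log(1/h_l)\lesssim\log(g+n)$ by \eqref{delta-truncation}.

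On $X$, each truncated arc differs from $\hat a$ by at most a bounded amount, so the geodesic sides have length at most $G+O(1)=:b_{\max}$. The same identity \eqref{delta-truncation} gives $\log(1/h_j)\leq b_{\max}$, hence $h_j\geq e^{-b_{\max}}$. Shrinking $\eta$ by a universal factor forces $h_j\leq 1/2$.

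We therefore take
\[
h_{\min}=\min\{c/(g+n),\,e^{-G-O(1)}\}, \qquad b_{\max}=\max\{G,\log(g+n)\}+O(1).
\]

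\emph{Step 2: local maps and gluing.} For each triangle, Lemma~\ref{LemmaBiLipHex} gives a $C_F$-bi-Lipschitz map from the truncated hexagon of $X$ to the corresponding hexagon of $Y$, with $C_F=C_F(h_{\min},b_{\max})$. These maps do not agree on shared geodesic sides. By construction the boundary map on a side $\beta_i$ is piecewise linear about the point $m_i$, but the two adjacent hexagons have different splitting points, because $X$ has nonzero shear.

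The discrepancy $\psi=f_{\text{right}}^{-1}\circ f_{\text{left}}$ on each shared side is a piecewise linear self-homeomorphism fixing the endpoints, with derivative in $[1/C^2,C^2]$. Precomposing one side's map by the correction $\Psi$ of Lemma~\ref{correction_lemma} makes the maps agree. The correction is supported in an $\rho_0$-neighbourhood of the side, these neighbourhoods are disjoint from the other sides by part (b) of the claim there, and $\Psi$ fixes the horocyclic sides.

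On the cusp neighbourhoods, map the region inside the horocycle of length $\eta$ on $X$ to the one on $Y$ by the obvious map. It is an isometry in the radial direction and rescales each horocyclic segment by $k_i$, matching the boundary map of Lemma~\ref{LemmaBiLipBoundaryHex}. The resulting $F$ is bi-Lipschitz with constant bounded by a polynomial in $C_F$ and $C_\Psi$, so it is polynomial in $1/h_{\min}$ and $b_{\max}$, and it is isotopic to the identity because it respects $\mathcal{T}$.

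\emph{Step 3: the estimate.} A $K$-bi-Lipschitz map is $K^2$-quasiconformal, so $\dteich(X,Y)\leq\log K$. Since $K$ is polynomial in $1/h_{\min}\lesssim (g+n)e^{G}$ and in $b_{\max}$, we get
\[
\log K\lesssim \log(g+n)+G.
\]

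The main obstacle is Step 2. One must check that the two local maps really differ on a shared side only by a map fixing the endpoints; this holds because the corners are matched at the horocycles. One must also ensure that the correction keeps the constant polynomial. Lemma~\ref{correction_lemma} gives $C_\Psi\sim K\,\delta_{\max}\cosh(\rho_0)/\rho_0$ with $\rho_0\sim h_{\min}$, so $C_\Psi$ is polynomial in $1/h_{\min}$ and $b_{\max}$ and its logarithm is acceptable. Some care is also needed at the cusp regions, where the scaling factors $k_i$ differ around a single cusp; this works because the map on the cusp region is simply a horocyclic rescaling, applied piecewise.
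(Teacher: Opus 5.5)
Your proposal is correct and follows essentially the same route as the paper. You truncate at a fixed small horocycle (the paper uses length $1/2$), bound $h_{\min}$ by $\min\{c/(g+n),e^{-G-O(1)}\}$ via Lemma~\ref{Shear0-HorocyclicSegments} and \eqref{delta-truncation}, apply Lemma~\ref{LemmaBiLipHex} hexagon by hexagon, repair shared sides with Lemma~\ref{correction_lemma}, extend radially into the cusps, and take logarithms.

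The only difference is in the gluing. You precompose on the $X$ side so that one hexagon matches its neighbour; this requires fixing a reference hexagon for each edge and setting $\psi_i=\mathrm{id}$ on reference sides. The paper instead post-composes on the $Y$ side so that every local map restricts to the constant-speed map $\Phi_a$ on each edge, which avoids any choice. Both yield a constant polynomial in $1/h_{\min}$ and $b_{\max}$.
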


\begin{proof}
    Let $X,Y\in \teich_{g,n}$ satisfy the assumptions of the proposition. We construct a bi-Lipschitz map  $F:X \longrightarrow Y$ with controlled bi-Lipschitz constant as follows: after truncating the cusps, we first define the map locally on each truncated hexagon using Lemma~\ref{LemmaBiLipHex}. We then modify the local maps along each geodesic side, this costs a controlled correction by Lemma~\ref{correction_lemma}, to ensure that the local maps glue together. Finally we extend it to the whole surface. 

    Let $\hat{X}$ and $\hat{Y}$ be the truncated surfaces obtained from $X$ and $Y$ by removing the cusp neighborhoods bounded by the horocycles of length $1/2$. The ideal triangulation $\mathcal{T}=(a_i)_{i=1}^{6g-6+3n}$ of $X$ (resp. $Y$) induces a decomposition into truncated hexagons 
    \[ 
        \hat{X} = \cup_{k=1}^{4g-4+2n} H_k, \qquad  \hat{Y} = \cup_{k=1}^{4g-4+2n} H_k'.
    \]
    Fix a pair of corresponding hexagons $H=H_k$ and $H'=H_k'$ with boundaries labelled by
    \[
        \partial H= (\h_1, \beta_3, \h_2, \beta_1, \h_3, \beta_2), \]
    and
    \[
        \partial H'= (\h_1', \beta_3', \h_2', \beta_1', \h_3', \beta_2'),
    \] 
    and set $\delta_i\coloneq\ell_{\hat{X}}(\beta_i)$, $h_i\coloneq\ell_{\hat{X}}(\h_i)$ and $h_i'\coloneq\ell_{\hat{Y}}(\h_i')$. Since each geodesic side $\beta_i$ is obtained from the truncated arc $\hat{a_i}$ (truncated along standard cup neighborhoods) by extending it to the horocycles of length $1/2$ we obtain
    \[ 
        4 \log(2) \leq \delta_i \leq G_1(g,n), \quad G_1(g,n)\coloneq G(g,n) + 2 \log(4)
    \] 
    Let $ \lambda_i=\sqrt{e^{\delta_i}}$ denote the corresponding lambda-length such that 
    \[ 
        4 \leq \lambda_i \leq e^{G_1(g,n)/2}.
    \]
    Each  horocyclic segment $\h_i$ is a subarc of a horocycle of length $\frac{1}{2}$, hence $ h_i,\; h_i' \leq \frac{1}{2}$. By the relation~(\ref{h_lambda_relation}) between $h$-lengths and $\lambda$-length since $\lambda_i \geq 4$ we obtain
    \[ 
        h_i=\frac{\lambda_i}{\lambda_j \lambda_l} \geq \frac{4}{e^{G_1(g,n)}} .
    \] 
    On $\hat{Y}$, since $\shear_{\mathcal{T}}(Y)=0$, Lemma~\ref{Shear0-HorocyclicSegments}, with $L=1/2$ gives:
    \[
        h_i' = \frac{1}{2v(c_i)}
    \] 
    where $v(c_i)$ is the number of ends of arcs of $\mathcal{T}$ incident to $c_i$. Since we have $6g-6+3n$ arcs, one has $v(c_i)\leq 12g-12+6n$ and therefore $k_i= \frac{h_i'}{h_i}$ satisfies 
    \[ 
        h_{\min}' \coloneq \frac{1}{12(2g-2+n)} \leq h_i' \leq \frac{1}{2}.
    \]
    Set 
    \[
        h_{\min}\coloneq \min \{4e^{-G_1}, h_{\min}'\}, \quad b_{\max}\coloneq G_1(g,n).
    \]
    
    Every pair $(H_k,H_k')$ satisfies the hypotheses of Lemma~\ref{LemmaBiLipHex} with these values of $h_{\min}$ and $b_{\max}$. Let
    \[
        F_k\colon H_k\longrightarrow H_k'
    \]
    be the resulting maps, which satisfy $\|DF_k^{\pm1}\|\leq C_F$ at every point of differentiability, where $C_F=C_F(h_{\min},b_{\max})$ is the constant of Lemma~\ref{LemmaBiLipHex}. In the notation of that lemma we record, for later use, that the factors $k_i=h_i'/h_i$ satisfy $k_i\in[1/M,M]$ with
    \[
        M \coloneq \frac{1}{2h_{\min}} = \max\left\{6(2g-2+n),\;\frac{e^{G_1}}{8}\right\}.
    \]
    Moreover, the stretch factors $\kappa_i=\log(1/h_i')/\log(1/h_i)$ of the boundary maps satisfy $\kappa_i\in[1/C,C]$ with
    \[
        C\coloneq\frac{\log(1/h_{\min})}{\log 2},
    \]
    and that $M\leq C_{\partial}\leq C_F$.

    Let $H,\overline{H}$ be adjacent hexagons sharing a geodesic side $\beta=\hat a\subset\hat X$. Let $H',\overline{H}'$ be the corresponding hexagons sharing the common side $\beta'=\hat a'\subset\hat Y$ and let $F_H, F_{\overline{H}}$ be the corresponding maps provided by Lemma~\ref{LemmaBiLipHex}. The maps $F_H$ and $F_{\overline{H}}$ agree at the corners of the hexagons and are compatible along the horocyclic sides. Indeed, both maps send each endpoint of $\beta$ to an endpoint of $\beta'$. Moreover, around each cusp $c$ of $X$, the horocyclic sides of the incident hexagons partition the horocycle and the maps send this partition to the corresponding partition on $\hat{Y}$, respecting the cyclic order and matching at the shared corners. Hence the maps $F_k$ fit together into a well-defined piecewise-linear homeomorphism between the two horocycles at $c$, with the same total length $1/2$. The two maps $F_H$ and $F_{\overline{H}}$ send $\beta$ onto the full side $\beta'$. However, the restrictions of $F_H$ and $F_{\overline H}$ to the interior of $\beta$ do not agree in general, when the shear along $a$ is non zero. By construction, $F_H\restr{\beta}$ is piecewise linear with a single breakpoint at the point $m$ of $\beta$ located at distance $\log(1/h_j)$ (resp. $\log(1/h_l)$) from the corner $\beta\cap\h_j$, (resp. $\beta\cap\h_l$), see relation~(\ref{delta-truncation}). Likewise, $F_{\overline{H}}\restr{\beta}$ splits $\beta$ at the tangency point $\overline{m}$ of the other adjacent triangle. The distance between $m$ and $\overline{m}$ is exactly the shear parameter along $a$ which can be non-zero and so the piecewise stretch factors differ. Therefore, we correct the maps $F_H$, using Lemma~\ref{correction_lemma}.

    For each arc $a\in\mathcal{T}$, let $\beta=\hat a\subset\hat X$ and $\beta'=\hat a'\subset\hat Y$ be the corresponding truncated arcs (extended to the length-$1/2$ horocycles) and let $\delta_a,\delta_a'$ denote their lengths. Each endpoint of $\beta$ is the intersection of $a$ with a horocycle of $\hat X$ and corresponds to an endpoint of $\beta'$. Define
    \[
        \Phi_a\colon\beta\longrightarrow\beta'
    \]
    to be the unique homeomorphism of constant speed $\delta_a'/\delta_a$ with respect to the arc-length parameter that maps $\beta$ onto $\beta'$, matching the corresponding endpoints. By computing $\delta_a$ and $\delta_a'$ using relation~(\ref{delta-truncation}) in either adjacent hexagon we obtain
    \[
        \frac{\delta_a'}{\delta_a}
        =\frac{\log(1/h_j')+\log(1/h_l')}{\log(1/h_j)+\log(1/h_l)}
        \;\in\;\bigl[\min(\kappa_j,\kappa_l),\,\max(\kappa_j,\kappa_l)\bigr]\subset\left[\frac1C,\,C\right].
    \]
    Fix $k$ and write $H=H_k$, $H'=H_k'$. For each geodesic side $\beta_i$ of $H$, lying on the arc $a_i$, define
    \[
        \psi_i\coloneq\Phi_{a_i}\circ\bigl(F_k\restr{\beta_i}\bigr)^{-1}\colon\;\beta_i'\longrightarrow\beta_i'.
    \]
    By the discussion above, $\psi_i$ is a homeomorphism of $\beta_i'$ fixing both endpoints. The map $F_k\restr{\beta_i}$ is piecewise linear with slopes $\kappa_j,\kappa_l\in[1/C,C]$ (proof of Lemma~\ref{LemmaBiLipBoundaryHex}) and $\Phi_{a_i}$ has constant slope $\delta_{a_i}'/\delta_{a_i}\in[\min(\kappa_j,\kappa_l),\max(\kappa_j,\kappa_l)]$. Hence $\psi_i$ is piecewise linear with slopes in $[1/K,K]$, where
    \[
        K\coloneq C^2 .
    \]
    Apply Lemma~\ref{correction_lemma} to the truncated hexagon $H'$, with the lower bound $h_{\min}'$ for its horocyclic sides, the reparametrizations $\psi_1,\psi_2,\psi_3$, and
    \[
        \varrho_0=\frac{1}{2}\arcsinh(h_{\min}'), \qquad
        \delta_{\max}=\max_i\ell(\beta_i')\leq 2\log\frac{1}{h_{\min}'}=2\log\bigl(12(2g-2+n)\bigr),
    \]
    where the last bound is obtained from relation~(\ref{delta-truncation}) and $h_i'\geq h_{\min}'$. We obtain a homeomorphism $\Psi_k\colon H_k'\to H_k'$ with $\|D\Psi_k^{\pm1}\|\leq C_\Psi$, equal to the identity on the horocyclic sides, and restricting to $\psi_i$ on each $\beta_i'$. Define
    \[
        \widetilde F_k\coloneq\Psi_k\circ F_k\colon\;H_k\longrightarrow H_k' .
    \]
    Then, at every point of differentiability,
    \[
        \bigl\|D\widetilde F_k^{\pm1}\bigr\|\leq C_\Psi\, C_F.
    \]
    The restriction of $\widetilde F_k$ to each geodesic side $\beta_i$ equals $\psi_i\circ F_k\restr{\beta_i}=\Phi_{a_i}$, and the restriction of $\widetilde F_k$ to each horocyclic side is unchanged: it is the linear map of factor $k_i$. Define
    \[
        F\colon\hat{X}\longrightarrow\hat{Y}, \qquad F\restr{H_k}=\widetilde F_k .
    \]
    This is well defined: if two hexagons $H_k$ and $H_{k'}$ (possibly $k=k'$, when the two sides of an arc bound the same triangle) are adjacent along $\beta=\hat a$, both restrictions to $\beta$ equal $\Phi_a$. Together with the corner and horocycle compatibilities, $F$ is a well-defined homeomorphism, since each $\widetilde F_k$ is a homeomorphism respecting the boundary identifications.
    
    In the upper half-plane, a cusp neighborhood bounded by a horocycle of length $1/2$ is isometric to
    \[
        \left\{ z\in \Hh \;\middle|\; \ima(z) \geq 2 \right\}/(z \mapsto z + 1),
    \]
    equipped with the metric $ds^2=\frac{dx^2+dy^2}{y^2}$. The boundary horocycle $\{\ima(z)=2\}$ has length $1/2$. Around each cusp of $\hat X$, the map $F$ restricts on the boundary horocycle to a piecewise-linear homeomorphism onto the corresponding horocycle of $\hat Y$, with slopes among the factors $k_i\in[1/M,M]$. In these coordinates, we extend $F$ to the cusp neighborhood by defining
    \[
        \widetilde{F}(x,y)\coloneq(F(x),y).
    \]
    Its Jacobian is $D\widetilde F=\bigl(\begin{smallmatrix}\partial_x F&0\\0&1\end{smallmatrix}\bigr)$. Performing this construction at every cusp yields a homeomorphism
    \[
        \widetilde F\colon X\longrightarrow Y
        \qquad\text{with}\qquad
        \bigl\|D\widetilde F^{\pm1}\bigr\|\leq C_\Psi C_F \text{ a.e.}
    \]

    Let $p,q\in X$ and let $\gamma$ be a minimizing geodesic joining them. The segment $\gamma$ meets the union of the arcs of $\mathcal{T}$ and of the $n$ horocycles of length $1/2$ in finitely many points and subsegments:
    \[
        \gamma=\gamma_1\cup\dots\cup\gamma_m,
    \]
    each contained in a single closed hexagon or closed cusp neighborhood. Applying the local estimates on each piece yields
    \[
        \ell_{Y}(\widetilde F\circ\gamma)=\sum_{i=1}^m\ell_{Y}(\widetilde F\circ\gamma_i)\leq C_\Psi C_F \sum_{i=1}^m \ell_{X}(\gamma_i) = C_\Psi C_F\, \ell_{X}(\gamma),
    \]
    and so $d_Y(\widetilde F(p),\widetilde F(q))\leq C_\Psi C_F\, d_X(p,q)$. The same argument applied to $\widetilde F^{-1}$ gives the lower bound. Thus $\widetilde F$ is $C_\Psi C_F$-bi-Lipschitz.

    Since we work in dimension two, a $C$-bi-Lipschitz map is also a $C^2$-quasiconformal map, hence
    \[
        \dteich(X,Y)\leq\frac{1}{2}\log\bigl((C_\Psi C_F)^2\bigr)=\log C_\Psi+\log C_F .
    \]
    It remains to estimate $C_\Psi$. Since $h_{\min}'\leq\frac{1}{12}$ and $\arcsinh$ is concave with $\arcsinh(0)=0$, we have $\arcsinh(h_{\min}')\geq 12\arcsinh(\tfrac{1}{12})\,h_{\min}'\geq 0.998\,h_{\min}'$, so
    \[
        \varrho_0\geq 0.499\, h_{\min}', \qquad \cosh(\varrho_0)\leq\cosh\Bigl(\tfrac12\arcsinh\bigl(\tfrac{1}{12}\bigr)\Bigr)<1.001 ,
    \]
    and therefore
    \[
        \cosh(\varrho_0)\,\frac{\delta_{\max}}{\varrho_0}
        \leq\frac{1.001}{0.499}\cdot 12(2g-2+n)\cdot 2\log\bigl(12(2g-2+n)\bigr)
        \leq 49\,(2g-2+n)\log\bigl(12(2g-2+n)\bigr) .
    \]
    Using $\sqrt{1+u^2+v^2}\leq 1+u+v$ for $u,v\geq0$ and $K=C^2$,
    \[
        C_\Psi\leq C^2\Bigl(1+C^2+49\,(2g-2+n)\log\bigl(12(2g-2+n)\bigr)\Bigr) .
    \]
    We conclude that
    \[
        \dteich(X,Y)\;\leq\;\log C_F \;+\; 2\log C \;+\; \log\Bigl(1+C^2+49\,(2g-2+n)\log\bigl(12(2g-2+n)\bigr)\Bigr) .
    \]
    Finally, $\log(1/h_{\min})=\max\bigl\{G_1-\log 4,\;\log\bigl(12(2g-2+n)\bigr)\bigr\}$, so that
    \[
        C\leq\frac{G_1(g,n)+\log\bigl(12(2g-2+n)\bigr)}{\log 2},
    \]
    and $C_F$ is the explicit function of $h_{\min}$ and $b_{\max}=G_1(g,n)$ given by Lemma~\ref{LemmaBiLipHex}. In particular, from direct computations we obtain
    \[
        \dteich(X, Y) \lesssim \log(g+n) + G(g,n) .
    \]
\end{proof}

\begin{remark}
    Let $\Sigma$ be a hyperbolic surface of type $(g,n)$, note that Proposition~\ref{PropTeichShear} holds with $\teich(\Sigma)$ replaced by a stratum $S(\Gamma)$ and $\mathcal{T}$ by an ideal triangulation of $\Sigma \smallsetminus \Gamma$.
\end{remark}

\section{Estimates on the Weil--Petersson distance from hexagon decompositions}\label{sec:wpgraft}

Given a hexagon decomposition $(\Gamma, \mathcal{A})$, define
\[
    V(\Gamma, \mathcal{A})=
    \left\{ X \in \overline{\teich}_{g,n} \; \middle| \; \max_{\gamma \in \Gamma}\ell_X(\gamma) \leq \max, \quad \max_{a\in \mathcal{A}}  \ell_{X, \Gamma}^{\wtrunc}(a) \leq L_A \right\}.
\] 
    
By Corollary~\ref{ShortHexagonDecompositionExistence} the union of these regions over all hexagon decompositions covers $\overline{\teich}_{g,n}$. 
Let $\mathcal{A}_\infty$ be the ideal triangulation obtained from $(\Gamma, \mathcal{A})$ by pinching every curve $\gamma \in \Gamma$ to a node. Equivalently, $\mathcal{A}_\infty$ is the spiralling triangulation $\mathcal{T}_{(\Gamma, \mathcal{A})}$ on the stratum $S(\Gamma)$. Since shearing coordinates with respect to $\mathcal{A}_\infty$ give a global parametrization of $S(\Gamma)$, any admissible choice of shear parameters determines a unique noded surface in this stratum.

In this section, we develop a tool for projecting a surface lying in $V(\Gamma, \mathcal{A})$ to the stratum $S(\Gamma)$. The main idea is to use \emph{grafting} along the curves of $\Gamma$. As the grafting parameter tends to infinity, the curves of $\Gamma$ are pinched and the surface in $V(\Gamma, \mathcal{A})$ converges to a noded surface in the stratum $S(\Gamma)$. Our goal is to control the Weil--Petersson distance along this grafting ray and the shearing coordinates of the limit noded surface with respect to the ideal triangulation $\mathcal{A}_\infty$.

\subsection{Weil--Petersson distance along a grafting ray}\label{sec:graftray}
We briefly introduce some background on complex projective structure and grafting. Let $\Sigma$ be a surface with $\chi(\Sigma)<0$. A \emph{complex projective structure} on $\Sigma$ is a maximal atlas of charts with values in $\CP^1$ whose transition maps are restrictions of Möbius transformations. The space of marked complex projective structures on $\Sigma$ up to isotopy is denoted by $\mathcal{CP}(\Sigma)$. 
Passing to the universal cover $\widetilde{\Sigma}$, a projective structure $Z\in \mathcal{CP}(\Sigma)$ determines \emph{a developing map} 
\[
    f:\widetilde{\Sigma} \longrightarrow \CP^1
\] 
and \emph{a holonomy representation} 
\[
    r:\pi_1(\Sigma) \longrightarrow \mathrm{PSL}(2,\C)
\] 
satisfying \[f(\gamma \cdot x)=r(\gamma)f(x)\] for every $x \in \widetilde{\Sigma}$ and $\gamma \in \pi_1(\Sigma)$. The pair $(f,r)$ is uniquely determined up to conjugation by elements of $\PSL(2,\C)$.

Since Möbius transformations are biholomorphic, every complex projective structure $Z\in \mathcal{CP}(\Sigma)$ determines an underlying Riemann surface structure. This gives the forgetful projection 
\[
    \pi: \mathcal{CP}(\Sigma) \longrightarrow \teich(\Sigma).
\]
For $X \in \teich(\Sigma)$, the difference between two projective structures in $\pi^{-1}(X)$ is measured by the \emph{Schwarzian derivative}. 
Let $f:\Delta \rightarrow \CP^1$ be a locally univalent complex function on an open domain $\Delta \subseteq \C$, its Schwarzian derivative is defined by: 
\[
    \Schw(f)=\left(\frac{f''}{f'}\right)'-\frac{1}{2}\left(\frac{f''}{f'}\right)^2
\]
In particular, $\Schw(f)=0$ if and only if $f$ is a Möbius transformation.

Let $X \in \overline{\teich}(\Sigma)$ and let $\gamma \subset X$ be a simple closed geodesic. We denote by $\Grr_{t\cdot \gamma}(X)$ the projective structure obtained by cutting $X$ along $\gamma$ and inserting, without twisting, a Euclidean cylinder of height $t$ and circumference equal to $\ell_X(\gamma)$. This construction is called \emph{grafting} along $\gamma$.
\begin{figure}
    \centering
    \begin{overpic}[width=0.8\linewidth,keepaspectratio]{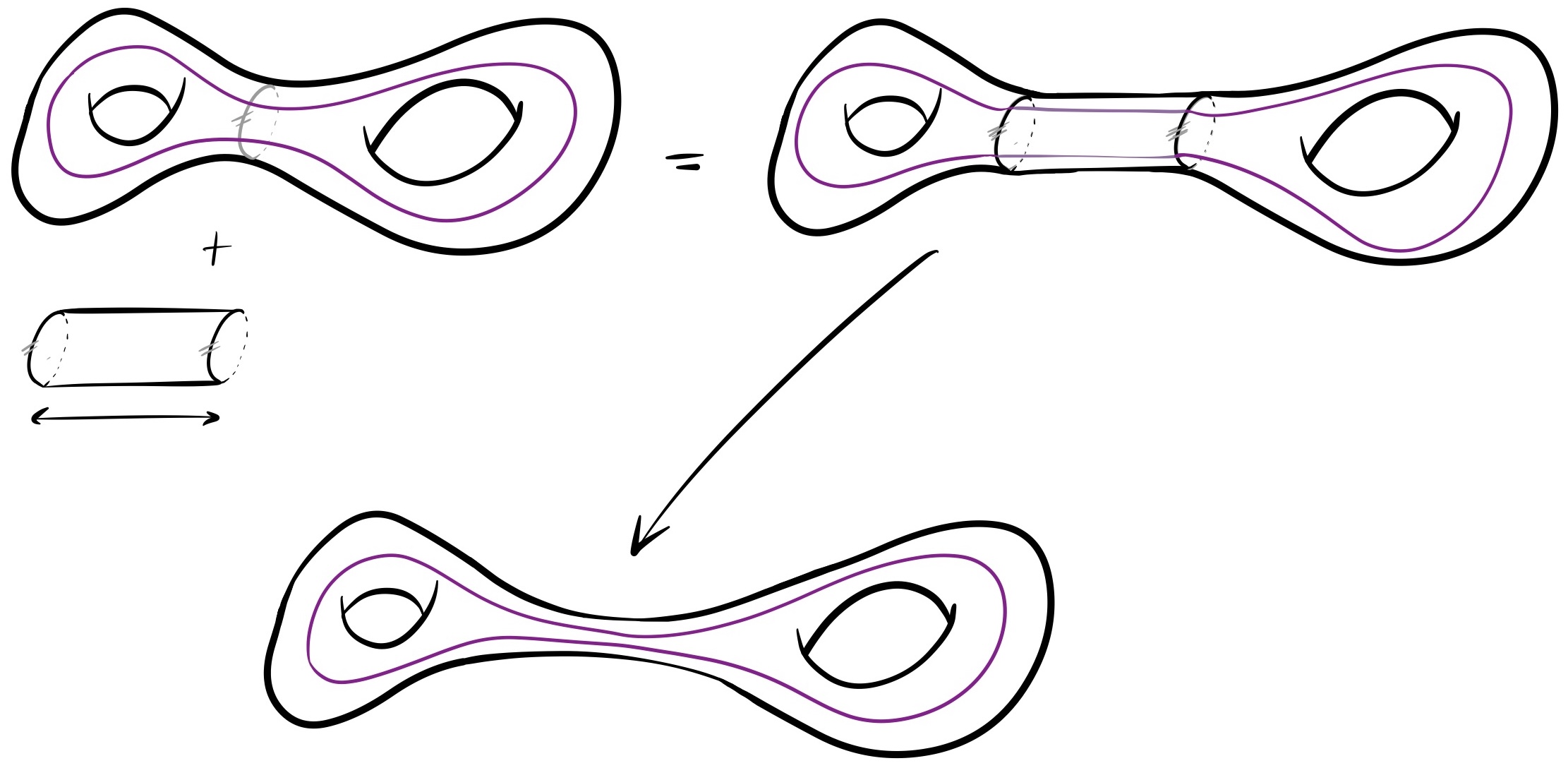}
    \put(18,45){$X$}
    \put(7,19.3){$t$}
    \put(30,2){$\gr_{t\cdot\gamma}(X)$}
    \put(69,45){$\Grr_{t\cdot\gamma}(X)$}
    \end{overpic}
    \vspace{-0.2cm}
    \caption{Grafting operation along a simple closed curve.}\label{Grafting}
\end{figure}
The Euclidean metric on the grafting cylinder and the hyperbolic metric on the hyperbolic parts glue together to define the Thurston metric $\rho_{\Th}$ on $\Grr_{t\cdot \gamma}(X)$. By uniformization, there exists a unique hyperbolic metric in the conformal class of $\rho_{\Th}$. We denote the resulting hyperbolic surface by \[\gr_{t\cdot \gamma}(X).\] The construction extends to simple multicurves. 

The following two lemmata, obtained by Diaz and Kim~\cite{DiazKim2012}, are length estimates on grafted surfaces that will be used throughout the next sections. 
\begin{lemma}[Proposition 3.4~\cite{DiazKim2012}]\label{DiazKimLengthGr}
    Let $X \in \teich_{g,n}$ and let $\Gamma = (\gamma_1, \ldots, \gamma_k)$ be a collection of pairwise disjoint simple closed curves on $X$. Let ${(X_t)}_{t\geq 0}$ denote the grafting ray defined by
    \[
        X_t=\gr_{t\Gamma}(X)
    \] 
    and starting at $X_0 = X$. Then, for all $i=1,\ldots,k$,
    \[
    \frac{2\theta(\ell_i)}{2\theta(\ell_i)+t} \cdot \ell_i \leq \ell_{X_t}(\gamma_i) \leq \frac{\pi}{\pi + t}\cdot \ell_i,
    \]
    where $\ell_i \coloneq \ell_X(\gamma_i)$ and $\theta(\ell)\coloneq\arctan\left(\frac{e^{2\wstd(\ell)}-1}{2e^{\wstd(\ell)}}\right)$
\end{lemma}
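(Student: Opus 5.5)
We prove the two inequalities separately; both reduce to conformal moduli of annuli around $\gamma_i$. Throughout, fix $i$ and write $\ell=\ell_i$, $\gamma=\gamma_i$. Recall that $X_t=\gr_{t\Gamma}(X)$ is the uniformization of the Thurston metric on $\Grr_{t\Gamma}(X)$. This Thurston metric is hyperbolic outside the grafting cylinders. Each grafting cylinder is Euclidean, of height $t$ and circumference $\ell$, hence of modulus $t/\ell$.

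\emph{Upper bound.} Consider the annular cover $\widehat X\to X$ associated with $\gamma$.
\begin{itemize}
\item[(i)] As a hyperbolic annulus $\Hh^2/\langle z\mapsto e^{\ell}z\rangle$, the cover $\widehat X$ is conformally a round annulus of modulus $\pi/\ell$. In the polar coordinates $z=re^{i\alpha}$, the angle $\alpha$ ranges over an interval of length $\pi$ and $\log r$ is periodic of period $\ell$.
\item[(ii)] Grafting lifts to the annular cover: the annular cover of $\Grr_{t\gamma}(X)$ is obtained from $\widehat X$ by cutting along the core geodesic (the lift of $\gamma$) and inserting one cylinder of modulus $t/\ell$. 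The other components of $\Gamma$ lift to non-compact arcs and only make the cover larger. So the annular cover of $X_t$ contains an essential annulus of modulus at least $(\pi+t)/\ell$.
\item[(iii)] By uniformization, the annular cover of $X_t$ associated with $\gamma$ is a hyperbolic annulus of modulus $\pi/\ell_{X_t}(\gamma)$.
\end{itemize}
Monotonicity of the modulus under essential conformal embeddings then gives $\pi/\ell_{X_t}(\gamma)\geq(\pi+t)/\ell$, which is the upper bound.

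\emph{Lower bound.} Here we build an explicit quasiconformal map and apply Wolpert's lemma $\ell_{Y}(\gamma)\leq K\,\ell_{Z}(\gamma)$ for a $K$-quasiconformal map $Z\to Y$.
\begin{itemize}
\item[(i)] In Fermi coordinates around $\gamma$ on $X$, the standard collar $\Cs_{\wstd(\ell)}(\gamma)$ corresponds in the upper half-plane to the sector $|\alpha|\leq\theta$, where
\[
\theta=\arctan\bigl(\sinh \wstd(\ell)\bigr)=\arctan\Bigl(\frac{e^{2\wstd(\ell)}-1}{2e^{\wstd(\ell)}}\Bigr).
\]
Hence the collar is conformally an annulus of modulus $2\theta/\ell$.
\item[(ii)] By the Collar Theorem~\ref{CollarThm}, the standard collars of the $\gamma_j$ are embedded and pairwise disjoint. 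So in $\Grr_{t\Gamma}(X)$ each grafting cylinder together with its collar is an embedded annulus $A_j$ of modulus $(2\theta(\ell_j)+t)/\ell_j$, and these annuli are pairwise disjoint.
\item[(iii)] Write each $A_j$ as a round annulus in logarithmic coordinates. Define $\Phi\colon X_t\to X$ as the affine compression in the radial (modulus) direction that takes $A_j$ onto the standard collar of $\gamma_j$ in $X$. Take $\Phi$ to be the identity (that is, the natural conformal identification) outside $\bigcup_j A_j$.
\item[(iv)] The compression on $A_j$ is $K_j$-quasiconformal with $K_j=(2\theta(\ell_j)+t)/(2\theta(\ell_j))$, and $\Phi$ is conformal elsewhere. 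So $\Phi$ is $K$-quasiconformal with $K=\max_j K_j$, and it is isotopic to the marking-preserving identification.
\end{itemize}

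\emph{Main obstacle.} The first point to check is the gluing in step (iii). The compression must fix both boundary circles of $A_j$ pointwise, up to a rotation that can be absorbed since grafting is done without twist. Affine maps in the logarithmic coordinate of a round annulus do this, so $\Phi$ is continuous. The second, more delicate point is that Wolpert's lemma applied to $\Phi$ with the global constant $K$ gives only $\ell\leq K\,\ell_{X_t}(\gamma_i)$, with $K$ possibly coming from another component. To obtain the per-curve constant $K_i$, we instead bound $\ell$ via the modulus of the annular cover. The lift of $\Phi$ to the annular covers of $\gamma_i$ is conformal except on the lift of $A_i$, which is compressed by the factor $K_i$, and on lifts of the other annuli, which are non-essential in that cover. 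We then compare the moduli $\pi/\ell_{X_t}(\gamma_i)$ and $\pi/\ell$ through the distortion of this one essential sub-annulus. This yields the stated bound $\ell_{X_t}(\gamma_i)\geq \frac{2\theta(\ell_i)}{2\theta(\ell_i)+t}\,\ell_i$.
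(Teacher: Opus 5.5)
Your upper bound (comparing the modulus $\pi/\ell_{X_t}(\gamma)$ of the annular cover with $(\pi+t)/\ell$) is the standard argument. So is your single-curve lower bound (affine compression of collar $\cup$ cylinder, then Wolpert's lemma $\ell_X(\gamma)\le K\,\ell_{X_t}(\gamma)$). The paper gives no proof of its own and only cites D\'{\i}az--Kim.

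One step of the upper bound is asserted rather than shown. In the annular cover, every non-core lift of $\Gamma$ is grafted too, including the non-core lifts of $\gamma$ itself, which you omit. This cuts $\{0<\arg z<\pi+t\}/\langle z\mapsto e^{\ell}z\rangle$ open rather than literally enlarging it. So an essential sub-annulus of modulus $\ge(\pi+t)/\ell$ still has to be produced by an embedding argument. For $t\ge\pi/2$, each outermost half-disc fits into the adjacent inserted lune; for small $t$ this is not immediate.

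The genuine gap is the last step of the lower bound when $k\ge 2$, and it cannot be closed. Your map $\Phi$ is $K$-quasiconformal with $K=\max_jK_j$, so Wolpert's lemma yields only $\ell_{X_t}(\gamma_i)\ge\frac{2\theta_{\min}}{2\theta_{\min}+t}\,\ell_i$ with $\theta_{\min}=\min_j\theta(\ell_j)$. The proposed localisation does not repair this. In the annular cover $\widehat X$ of $\gamma_i$, the lift of $\Phi$ has dilatation $K_j$ on every lift of $A_j$, and $K_i$ on the non-core lifts of $A_i$. These pieces are inessential, but the modulus of an annulus is not controlled by the dilatation on one essential sub-annulus. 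Here the inessential pieces push $\operatorname{mod}(\widehat{X_t})=\pi/\ell_{X_t}(\gamma_i)$ up, which is exactly the wrong direction for a lower bound on length.

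In fact the per-curve constant is false in general. Compute the first variation of $\operatorname{mod}(\widehat{X_t})$ by pairing the grafting Beltrami differential with $dz^2/z^2$ on $\widehat X=\Hh^2/\langle z\mapsto e^{\ell_i}z\rangle$. The core contributes $\ell_i$, and a non-core lift at distance $d$ from the core contributes $R(\cosh d)$. This matches McMullen's complex earthquakes combined with Riera's formula, and gives
\[
\left.\frac{d}{dt}\right|_{t=0}\ell_{X_t}(\gamma_i)=-\frac{1}{\pi}\Bigl(\ell_i+\sum R(\cosh d)\Bigr),\qquad R(u)=u\log\frac{u+1}{u-1}-2>0,
\]
where the sum runs over the non-core lifts of $\Gamma$ in $\widehat X$. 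The claimed lower bound has derivative $-\ell_i/(2\theta_i)$ at $t=0$, so it forces $\sum R\le\ell_i\bigl(\pi/(2\theta_i)-1\bigr)$.

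Now take a twice-punctured torus cut into two pairs of pants $(\gamma_i,\gamma_j,\text{cusp})$, with $\ell_i=\epsilon$ small and $\ell_j\to\infty$. The two seams give two lifts of $\gamma_j$ with $\cosh d\to\coth(\epsilon/2)$. Hence $\sum R\ge 2R(\cosh d)\to 2\bigl(\epsilon\coth(\epsilon/2)-2\bigr)\approx\epsilon^2/3$. On the other hand, $\ell_i\bigl(\pi/(2\theta_i)-1\bigr)\approx\epsilon^2/\pi$; for $\epsilon=0.1$ these are about $0.00333$ and $0.00329$. So the stated per-curve lower bound fails for small $t>0$.

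What your argument does establish is the case $k=1$, or the multicurve bound with $\theta_{\min}$ in place of $\theta(\ell_i)$.
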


\begin{lemma}[Proof of Proposition 3.5~\cite{DiazKim2012}]\label{DiazKimLengthGr2}
    Let $X \in \teich_{g,n}$ and let $\gamma$ be a simple closed curve on $X$. Let ${(X_t)}_{t\geq0}$ denote the grafting ray defined by
    \[
        X_t=\gr_{t\gamma}(X)
    \] 
    and starting at $X_0 = X$. Let $L_{t}$ be a horizontal geodesic arc on the euclidean grafting cylinder. Decompose $\eta_t$ into three subarcs 
    \[
        \eta_t= \eta_t^1 \cup \eta_t^2 \cup \eta_t^3
    \] so that $\eta_t^1$ and $\eta_t^3$ both have euclidean length $\tau<t$, where $\tau$ is any fixed positive number. Then, 
    \[\ell_{X_{t}}(\eta_{t})\leq 2\tau + 2\log\left(\frac{\cos\left(\frac{\tau \pi}{2 t}\right)}{\sin\left(\frac{\tau \pi}{2 t}\right)}\right).\]
\end{lemma}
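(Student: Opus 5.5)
The plan is to compare the hyperbolic metric $\rho_{X_t}$ of $X_t=\gr_{t\gamma}(X)$ with two auxiliary conformal metrics on the grafting cylinder. Near the two boundary circles I will use the Thurston metric $\rho_{\Th}$. On the middle portion I will use the hyperbolic metric of a Euclidean strip. Throughout I read $L_t$ and $\eta_t$ as the same arc: a horizontal segment crossing the inserted Euclidean cylinder of height $t$, of Euclidean length $t$. It is split into $\eta_t^1,\eta_t^3$ of Euclidean length $\tau$ adjacent to the two boundary circles, and the middle piece $\eta_t^2$ of Euclidean length $t-2\tau$.

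\textbf{Step 1 (end pieces).} I will invoke the standard fact (Kulkarni--Pinkall, Tanigawa) that on a grafted projective surface the Thurston metric dominates the uniformizing hyperbolic metric, $\rho_{X_t}\leq\rho_{\Th}$. On the cylinder $\rho_{\Th}$ is the Euclidean metric, so
\[
\ell_{X_t}(\eta_t^1)\leq\tau \qquad\text{and}\qquad \ell_{X_t}(\eta_t^3)\leq\tau .
\]

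\textbf{Step 2 (middle piece).} Lift the open grafting cylinder to the universal cover of $X_t$. Its universal cover is the Euclidean strip $S_t=\{z \mid 0<\operatorname{Re} z<t\}$, which maps holomorphically into $\widetilde{X_t}\cong\Hh^2$. The horizontal arc lifts to the segment $\{x \mid 0<x<t\}$ on the real axis. The Schwarz--Pick lemma says this holomorphic map is a contraction from the hyperbolic metric of $S_t$ to that of $\Hh^2$. Hence, pointwise on the cylinder,
\[
\rho_{X_t}\leq\rho_{S_t}=\frac{\pi}{t\sin(\pi x/t)}\,|dz| ,
\]
the last formula coming from the conformal map $z\mapsto e^{i\pi z/t}$ of $S_t$ onto a half-plane. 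Integrating along the real segment from $\tau$ to $t-\tau$ gives
\[
\ell_{X_t}(\eta_t^2)\leq\int_\tau^{t-\tau}\frac{\pi\,dx}{t\sin(\pi x/t)}
=\Bigl[\log\tan\tfrac{\pi x}{2t}\Bigr]_\tau^{t-\tau}
=2\log\left(\frac{\cos(\frac{\tau\pi}{2t})}{\sin(\frac{\tau\pi}{2t})}\right).
\]

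\textbf{Step 3 (conclusion).} Adding the three estimates yields the claimed bound $\ell_{X_t}(\eta_t)\leq 2\tau+2\log\bigl(\cot(\tfrac{\tau\pi}{2t})\bigr)$.

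\textbf{Main obstacle.} The delicate point is justifying the two metric comparisons with the correct normalisations. For the Thurston metric, I need the inequality $\rho_{X_t}\leq\rho_{\Th}$ with constant $1$, not merely up to a multiplicative constant. For the strip, I must check that the inclusion of the lifted cylinder into $\widetilde{X_t}$ is genuinely holomorphic, so that Schwarz--Pick applies. I would cite the Diaz--Kim argument and Tanigawa for the first, and note that the developing structure is only used conformally for the second.
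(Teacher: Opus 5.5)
Your proof is correct and is essentially the argument behind the cited Diaz--Kim estimate; it also matches the tools this paper uses elsewhere. The domination $\rho_{X_t}\leq\rho_{\Th}$ bounds each end piece by $\tau$. Schwarz--Pick for the inclusion of the open grafting annulus into $X_t$ gives the middle term $2\log\cot(\tfrac{\tau\pi}{2t})$; this is the comparison used in the proof of Lemma~\ref{WPgraftingPath}, where $\lambda_{t,i}$ becomes your $\pi/(t\sin(\pi x/t))$ after rescaling.

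One small point: integrating over $[\tau,t-\tau]$ needs $2\tau\leq t$. That condition is implicit in the three-piece decomposition, but the hypothesis $\tau<t$ alone does not give it. Indeed, as $\tau\to t$ the right-hand side tends to $-\infty$, so the bound would fail.
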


A \emph{grafting ray} is the path 
\[
    t \longmapsto \gr_{t\Gamma}(X)
\] in Teichmüller space. Any two points of Teichmüller space are connected by a grafting ray~\cite{DumasWolf2008}. Moreover, Hensel proved that a grafting ray converges geometrically to a noded surface obtained by pinching the curves of $\Gamma$ to nodes~\cite{hensel2008}. The resulting geometric limit is called the \emph{end of a grafting ray} and is denoted by: 
\[
    \gr_{\infty \Gamma}(X)=\lim_{t \to \infty} \gr_{t\Gamma}(X).
\] 
We now prove a distance estimate.

\begin{lemma}\label{WPgraftingPath}
    Let $g,n$ be integers such that $2g-2+n >0$, let $X \in \teich_{g,n}$ and let $\Gamma = (\gamma_1, \ldots, \gamma_k)$ be a collection of disjoint simple closed curves on $X$. Let $(X_t)_{t\geq 0}$ denote the grafting ray starting at $X_0 = X$ defined by $X_t=\gr_{t\Gamma}(X)$, and set $\ell_i \coloneq \ell_X(\gamma_i)$. Then for every $\tau>0$,
    \[
        \dwp\left(X_{\tau}, X_{\infty}\right) \;\leq\; \sqrt{2}\,\pi\, \sqrt{\sum_{i=1}^k \ell_i}\tau^{-1/2},
    \]
    where $X_\infty \coloneq \gr_{\infty \Gamma}(X)$.
\end{lemma}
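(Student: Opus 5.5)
The plan is to bound the Weil--Petersson length of the tail of the grafting ray, $t\in[\tau,\infty)$, by integrating the WP norm of its tangent vector. The target $X_\infty$ is the WP-limit of $X_t$ by Hensel's convergence result together with the completeness of $\overline{\teich}_{g,n}$. Hence it suffices to show
\[
    \dwp(X_\tau,X_\infty)\le\int_\tau^\infty\Bigl\|\tfrac{d}{dt}X_t\Bigr\|_{\WP}\,dt,
    \qquad
    \Bigl\|\tfrac{d}{dt}X_t\Bigr\|_{\WP}\le \frac{\pi}{\sqrt2}\sqrt{\textstyle\sum_i\ell_i}\;t^{-3/2}.
\]
Integrating gives $\sqrt2\pi\sqrt{\sum\ell_i}\,\tau^{-1/2}$, which is exactly the constant in the statement.

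To estimate the tangent vector, I will use the standard description of the derivative of grafting. Since $\gr_{(t+s)\Gamma}(X)=\gr_{s\Gamma}(\gr_{t\Gamma}X)$ is not quite right, I will instead differentiate the Thurston-metric picture directly. Increasing $t$ stretches the Euclidean cylinders $C_i$ of height $t$ and circumference $\ell_i$. The infinitesimal deformation is then represented by a Beltrami differential $\mu_t$ supported on $\cup_i C_i$. In the flat coordinate $z$ on $C_i$, it is the constant $\mu_t=\frac{1}{2t}\,\frac{d\bar z}{dz}$, up to sign, corresponding to the affine stretch in the height direction.

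I then pair $\mu_t$ against $\phi\in\Omega(X_t)$ and use Cauchy--Schwarz on the cylinders:
\[
|\langle\mu_t,\phi\rangle|\le\frac1{2t}\int_{\cup C_i}|\phi|
\le\frac1{2t}\Bigl(\int_{\cup C_i}\rho_t^2\Bigr)^{1/2}\Bigl(\int\frac{|\phi|^2}{\rho_t^2}\Bigr)^{1/2}.
\]
Here $\rho_t$ is the hyperbolic density, so the last factor is $\|\phi\|_2$. It remains to bound the hyperbolic area of the grafting cylinders. On a flat cylinder of circumference $\ell_i$ and height $t$, the hyperbolic metric of the grafted surface is dominated by the complete hyperbolic metric of the flat annulus, by the Schwarz--Pick comparison, since the cylinder embeds conformally. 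That metric is $\bigl(\frac{\pi}{t}\sec(\pi y/t)\bigr)^2|dz|^2$ for $y\in(0,t)$.

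This area is infinite, so the naive estimate fails. This is the main obstacle. To get around it, I will avoid integrating a constant Beltrami differential over the whole cylinder. Instead I will use the harmonic representative, or equivalently estimate the WP norm through the dual pairing with $\phi$ directly. On the cylinder, a holomorphic quadratic differential with finite $L^2$ norm is controlled by its zeroth Fourier mode $c\,dz^2$, the other modes integrating to zero against the constant $\mu_t$. So
\[
\langle\mu_t,\phi\rangle=\frac{1}{2t}\,c\,\ell_i t ,
\]
while $\|\phi\|_2^2\ge |c|^2\int_{C_i}\rho_t^{-2}\ge |c|^2\ell_i\int_0^t\frac{t^2}{\pi^2}\cos^2(\pi y/t)\,dy=|c|^2\ell_i t^3/(2\pi^2)$. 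This uses the upper bound on $\rho_t$ coming from the Thurston metric comparison, namely $\rho_t$ at most the flat-annulus hyperbolic density, which is exactly the right direction.

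Summing over the components of $\Gamma$ and applying Cauchy--Schwarz once more gives
\[
\|\mu_t\|_{\WP}\le \tfrac{\pi}{\sqrt2}\bigl(\textstyle\sum_i\ell_i\bigr)^{1/2}t^{-3/2},
\]
which is the required bound. I expect the delicate points to be two: justifying the Fourier-mode reduction, since $\phi$ lives on all of $X_t$ and its restriction to $C_i$ need not be a pure mode; and getting the density comparison in the correct direction, following the Diaz--Kim style estimates of Lemma~\ref{DiazKimLengthGr2}.
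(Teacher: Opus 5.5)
Your proposal is correct and follows the paper's proof essentially step for step: the same Beltrami differential $\frac{1}{2t}\frac{d\bar z}{dz}$ on the grafting cylinders, and the Schwarz--Pick comparison of $\rho_t$ with the hyperbolic metric $\lambda$ of the flat annulus. It then uses the zeroth Fourier mode both for the pairing and, via Parseval, for the lower bound on $\|\phi\|_2$, followed by Cauchy--Schwarz over the components and integration of $t^{-3/2}$ over $[\tau,\infty)$. The Fourier step you flag as delicate is harmless once you replace $\rho_t^{-2}$ by the rotation-invariant weight $\lambda^{-2}$ \emph{before} applying Parseval; as written, your intermediate bound $|c|^2\int_{C_i}\rho_t^{-2}$ is not justified, since $\rho_t$ need not be rotation-invariant. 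Also, on $y\in(0,t)$ the annulus density is $\frac{\pi}{t}\csc(\pi y/t)$ rather than $\sec$, which does not change the value $t^3/(2\pi^2)$ of the integral.
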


\begin{proof} 
    The proof follows very closely the strategy of Wolpert to estimate the WP length of rays determined by a certain class of quadratic differentials~\cite{Wolpert1975}, and the work of Dumas and Wolf to compute the Beltrami differentials associated with a grafting ray~\cite{DumasWolf2008}. 
    
   For each $t>0$, grafting along $\Gamma$ on $X$ inserts, for every curve $\gamma_i \in \Gamma$, a Euclidean cylinder of height $t$ and circumference $\ell_i$ where $\ell_i \coloneq\ell_X(\gamma_i)$. This cylinder corresponds to an annulus 
    \[
        A_{t,i}=\left\{z_{t,i} \in \C \;|\; 1 \leq |z_{t,i}|\leq e^{\frac{2\pi t}{\ell_i}} \right\}
    \] 
    whose modulus is $\mod(A_{t,i})=t/\ell_i$. 
    Passing to normal coordinates 
    \[
        \log(z_{t,i})=\zeta_{t,i},
    \] 
    we identify the annulus $A_{t,i}$ with the rectangle 
    \[
        R_{t,i}=\left\{\zeta_{t,i}=u+iv\in \Hh\;\middle|\; 0 \leq u \leq \frac{2\pi t}{\ell_i},\; 0 \leq v \leq 2\pi \right\}
    \] 
    obtained as a quotient of the vertical strip $0\leq \Re(\zeta_{t,i})\leq \frac{2\pi t}{\ell_i}$ by the translation $\zeta \mapsto \zeta + 2\pi i$. The Poincaré metric on this strip is 
    \[
        \lambda_{t,i}~=~\frac{\ell_{i}}{2t}\csc\left(\frac{\ell_{i}}{2t} \Re(\zeta_{t,i})\right)|d\zeta_{t,i}|.
    \]
    
    For each $i=1,\ldots,k$ and $s>0$, let $f_{i}:R_{t,i} \longrightarrow R_{s,i}$  be the quasiconformal map between two grafting cylinders given by 
    \[
        \zeta_{t,i} \mapsto \zeta_{s,i} =\zeta_{t,i} + \left(\frac{s}{t}-1\right)\frac{\left(\overline{\zeta_{t,i}} + \zeta_{t,i}\right)}{2}.
    \] 
    The maps $f_{i}$ extended by the identity define a quasi-conformal map $f$ between $X_t$ and $X_s$ with Beltrami coefficient 
    \[ 
        \frac{\partial_{\overline \zeta_{t}}f}{\partial_{\zeta_t}f}=\frac{s-t}{t+s}
    \] 
    on the cylinders and $0$ elsewhere. Taking the derivative with respect to $s$ and setting $s=t$ gives $1/2t$. Thus the tangent to the grafting ray at $X_t$ is the Beltrami differential 
    \[
        \mu_f(t)=\frac{1}{2t}\frac{\overline{d\zeta_t}}{d\zeta_t},
    \] 
    where $\frac{\overline{d\zeta_t}}{d\zeta_t}=\frac{\overline{d\zeta_{t,i}}}{d\zeta_{t,i}}$ on the grafting cylinders and zero outside.
    
    Now we estimate the Weil--Petersson norm of $\mu_f(t)$. By definition,
    \begin{align}\label{quotientdefwp}
        \left\|\mu_f(t) \right\|_{\WP} =\sup_{\phi \neq 0} \frac{|\langle \mu_f(t), \phi \rangle|}{ \left\|\phi \right\| }
    \end{align}
    where the supremum is taken over all holomorphic quadratic differentials on $X_t$. To estimate this quotient, observe that the annuli $A_{t,i}$ are pairwise disjoint on  $X_t$ and by the Schwarz--Pick lemma applied to the inclusions $A_{t,i} \hookrightarrow X_t$, the pullback of the hyperbolic metric $\rho$ from $X_t$ restricted to $A_{t,i}$ is bounded by the hyperbolic metric $\lambda_{t,i}$ on the annulus and so
    \begin{align}\label{eq0}
        \left\|\phi \right\|^2 = \int_{X_t}|\phi|^2 \rho^{-2} \geq \sum_{i=1}^{k} \int_{R_{t,i}}|\phi|^2\lambda_{t,i}^{-2}.
    \end{align}
    On $R_{t,i}$ write $\phi=\phi_i(\zeta_{t,i})\,(d\zeta_{t,i})^2$; the function $\phi_i$ is holomorphic and $2\pi i$-periodic, hence admits a Laurent expansion 
    \[
        \phi_i(\zeta_{t,i})=\sum_{n\in\Z} a_{n,i}\, z_{t,i}^{n}=\sum_{n\in\Z} a_{n,i}\, e^{n\zeta_{t,i}}.
    \]
    Moreover the function $\phi_i$ satisfies the following equations:
   \begin{equation}\label{eq1}
        \int_0^{2\pi} \phi_i(u+iv)\, dv =\sum_{n\in\Z} a_{n,i} e^{nu} \int_0^{2\pi} e^{inv}\, dv = 2\pi\, a_{0,i},
    \end{equation}
    and 
    \begin{align}
        \int_0^{2\pi} |\phi_i(u+iv)|^2\, dv &= \sum_{n\in\Z}\sum_{m\in\Z} a_{n,i}\overline{a_{m,i}}e^{(n+m)u} \int_0^{2\pi} e^{i(n-m)v}\, dv \notag \\ 
        &=\sum_{n\in\Z} a_{n,i}\overline{a_{n,i}}e^{2nu}2\pi \notag =2\pi \sum_{n\in\Z} |a_{n,i}|^2 e^{2nu} \notag \\
        &\geq 2\pi\, |a_{0,i}|^2. \label{eq2}
    \end{align}
    Then we obtain a lower bound for the denominator in~(\ref{quotientdefwp}):
    \begin{align}
        \left\|\phi \right\|^2 &\underset{\text{by}~(\ref{eq0})}{\geq} \sum_{i=1}^{k} \int_{R_{t,i}}|\phi|^2 \lambda_{t,i}^{-2} 
        = \sum_{i=1}^{k} \int_{0}^{2\pi t/ \ell_i} \int_{0}^{2\pi} |\phi(u+iv)|^2  \lambda_{t,i}^{-2} \;dv\,du \notag \\
        &= \sum_{i=1}^{k} \int_{0}^{2\pi t/ \ell_i} \lambda_{t,i}^{-2}  \left(\int_{0}^{2\pi}|\phi_i(u+iv)|^2 \,dv\right)\,du \notag\\
        &\underset{\text{by}~(\ref{eq2})}{\geq} \sum_{i=1}^{k} \int_{0}^{2\pi t/ \ell_i} \lambda_{t,i}^{-2}2\pi\,|a_{0,i}|^2 \,du =\sum_{i=1}^{k} 2\pi\,|a_{0,i}|^2\,  \frac{4t^2}{\ell_i^2} \int_{0}^{\frac{2\pi t}{\ell_i}} \sin^2\left(\frac{\ell_i}{2t}u\right) du \notag \\
        &= \sum_{i=1}^{k} \frac{8\pi^2 t^3}{\ell_i^3}\,|a_{0,i}|^2\label{eq5},
    \end{align}
    the second step holds because $\lambda_{t,i}$ does not depend on the imaginary part of $\zeta_{t,i}$. Finally for the numerator, we compute
    \begin{align}
        \left\langle \mu_f(t), \phi \right\rangle
        &=\sum_{i=1}^{k}\int_{R_{t,i}}  \frac{1}{2t}\phi_i \;dv\,du
        =\sum_{i=1}^{k} \frac{1}{2t}\int_{0}^{2\pi t/ \ell_i} \int_{0}^{2\pi}\phi_i(u+iv) \;dv\,du \notag \\
        &\underset{\text{by}~(\ref{eq1})}{=}\sum_{i=1}^{k} \frac{1}{2t} \int_{0}^{2\pi t/ \ell_i} (2\pi a_{0,i}) \,du = 2\pi^2\sum_{i=1}^{k} \frac{a_{0,i}}{\ell_i},\label{eq3},
    \end{align}
    therefore
    \begin{align}
        |\left\langle \mu_f(t), \phi \right\rangle |
        &\underset{\text{by}~(\ref{eq3})}{\leq} \sum_{i=1}^{k} \frac{2\pi^2 |a_{0,i}|}{\ell_i}=\sum_{i=1}^{k} \sqrt{\frac{4\pi^4 \ell_i|a_{0,i}|^2}{\ell_i^3}\frac{t^3}{t^3}} \notag\\
        &\leq \sqrt{\sum_{i=1}^{k} \frac{8\pi^2t^3}{\ell_i^3}|a_{0,i}|^2}\sqrt{\sum_{i=1}^{k} \frac{\pi^2\ell_i}{2t^3}}\notag\\
        &\underset{\text{by}~(\ref{eq5})}{\leq}\left\|\phi \right\| \frac{\pi}{\sqrt{2}}\sqrt{\sum_{i=1}^{k}\ell_i}\;t^{-3/2}. \label{eq4}
    \end{align}
    Consequently, for $0<\tau \leq s,$
    \begin{align}
       \dwp(X_{\tau}, X_s) &\leq \int_{\tau}^s \left\|\mu_f(t)\right\|_{\WP} \, dt
        = \int_{\tau}^s \sup_{\phi} \frac{|\langle \mu_f(t), \phi \rangle|}{ \left\|\phi \right\|}  \, dt \notag\\
        &\underset{\text{by}~(\ref{eq4})}{\leq}\int_{\tau}^s \sup_{\phi} \frac{ \left\|\phi \right\| \frac{\pi}{\sqrt{2}}\sqrt{\sum_{i=1}^{k}\ell_i} t^{-3/2}}{\left\|\phi \right\|} \, dt \notag \\
        &\leq \sqrt{2}\,\pi\, \sqrt{\sum_{i=1}^k \ell_i}\tau^{-1/2}. \notag
    \end{align}
    The grafting ray $(X_t)_{t\geq0}$ converges to a noded surface $X_{\infty} \in S(\Gamma)$ when $t\rightarrow \infty$~\cite{hensel2008} hence letting $s\rightarrow \infty$ gives the distance estimate by continuity of the grafting map~\cite{Tanigawa1997, McMullen1998, ScannelWolf2002}.
\end{proof}

Next we give an estimate on the Weil--Petersson distance between the starting point of a grafting ray and any point along that ray.

\begin{lemma}\label{BoundStartGraftingRay}
    Let  $g,n$ be integers such that $2g-2+n >0$, let $X \in \teich_{g,n}$ and let $\Gamma = (\gamma_1, \ldots, \gamma_k)$ be a collection of disjoint simple closed curves on $X$. Let $(X_t)_{t\geq 0}$ denote the grafting ray starting at $X_0 = X$ defined by $X_t=\gr_{t\Gamma}(X)$, and set $\ell_i \coloneq \ell_X(\gamma_i)$. Then for every $\tau\geq 0$,
    \[
        \dwp(X,X_{\tau})\leq \sqrt{\tau\sum_{i=1}^k \ell_i\,}.
    \]
\end{lemma}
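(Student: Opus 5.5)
We reuse the setup of the proof of Lemma~\ref{WPgraftingPath}, which shows that at every $t>0$ the tangent vector to the grafting ray is the Beltrami differential
\[
\mu_f(t)=\frac{1}{2t}\frac{\overline{d\zeta_t}}{d\zeta_t},
\]
supported on the grafting cylinders $R_{t,i}$. That proof bounded $\|\mu_f(t)\|_{\WP}$ by $t^{-3/2}$. This decays well at infinity but blows up like $t^{-3/2}$ at $t=0$, so it is useless near the start of the ray. We therefore need a second bound on the WP norm that is good for small $t$: precisely, we will show $\|\mu_f(t)\|_{\WP}\lesssim t^{-1/2}$.

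\textbf{Step 1: a small-$t$ bound on the norm.} Since $\|\mu_f(t)\|_{\WP}$ is the norm of the harmonic projection of $\mu_f(t)$, it is at most the $L^2$ norm of $\mu_f(t)$ with respect to the hyperbolic area of $X_t$:
\[
\|\mu_f(t)\|_{\WP}^2\le \int_{X_t}|\mu_f(t)|^2\,dA_{X_t}.
\]
Equivalently, by Cauchy--Schwarz,
\[
|\langle\mu,\phi\rangle|\le \Bigl(\int|\mu|^2\rho^2\Bigr)^{1/2}\Bigl(\int|\phi|^2\rho^{-2}\Bigr)^{1/2}.
\]
Since $|\mu_f(t)|=\frac1{2t}$ on the cylinders and vanishes elsewhere, this gives
\[
\|\mu_f(t)\|_{\WP}^2\le \frac{1}{4t^2}\sum_i \area_{X_t}(R_{t,i}).
\]

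\textbf{Step 2: area of the cylinders.} By the Schwarz--Pick argument already used in the proof of Lemma~\ref{WPgraftingPath}, $\rho\le\lambda_{t,i}$ on $R_{t,i}$. So it suffices to compute the area of the rectangle $R_{t,i}$ in its own Poincaré metric:
\[
\int_0^{2\pi t/\ell_i}\!\!\int_0^{2\pi}\frac{\ell_i^2}{4t^2}\csc^2\!\Bigl(\frac{\ell_i u}{2t}\Bigr)\,dv\,du.
\]
This integral diverges at the boundary of the strip, so the full Poincaré metric is not enough. There are two ways to handle this.

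\emph{Option (a): Thurston metric.} Use the Thurston metric instead. The hyperbolic metric on $X_t$ is bounded above by the Thurston metric, by comparison of the Thurston metric with the uniformizing metric (Kulkarni--Pinkall / Tanigawa). In the Thurston metric the cylinder has Euclidean area $t\ell_i$. This yields
\[
\|\mu_f(t)\|_{\WP}^2\le \frac{\sum_i \ell_i}{4t},
\qquad\text{hence}\qquad
\|\mu_f(t)\|_{\WP}\le \tfrac12\Bigl(\sum_i\ell_i\Bigr)^{1/2}t^{-1/2}.
\]

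\emph{Option (b): Gauss--Bonnet.} Bound $\area_{X_t}(R_{t,i})$ directly using $\area(X_t)=2\pi(2g-2+n)$. This would produce a different, topology-dependent constant, so Option (a) is preferable.

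\textbf{Step 3: integrate.} Integrating the Step~2 bound from $0$ to $\tau$ gives
\[
\dwp(X,X_\tau)\le\int_0^\tau \tfrac12\Bigl(\sum_i\ell_i\Bigr)^{1/2}t^{-1/2}\,dt=\sqrt{\tau\sum_i\ell_i},
\]
which is exactly the claimed constant. Continuity of the grafting ray at $t=0$ justifies taking the lower limit of integration to be $0$.

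\textbf{Main obstacle.} The delicate point is the comparison $\rho_{\mathrm{hyp}}\le\rho_{\Th}$ on the cylinder, which is needed so that the area of $R_{t,i}$ is at most $t\ell_i$. This comparison is standard: the Thurston metric is the Kobayashi-type projective metric, and it dominates the uniformizing hyperbolic metric. We would cite it, for example via McMullen or Tanigawa.
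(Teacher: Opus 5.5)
Your proposal is correct and is essentially the paper's proof: you bound $\|\mu_f(t)\|_{\WP}$ by the $L^2$ norm, use $\rho_{X_t}\le\rho_{\Th}$ so that $\area_{X_t}(C_{t,i})\le t\ell_i$, and integrate $\tfrac12\bigl(\sum_i\ell_i\bigr)^{1/2}t^{-1/2}$, passing to $t=0$ by continuity of grafting. One correction to your discarded Option (b): it would not merely give a different, topology-dependent constant. The bound $\area_{X_t}(C_{t,i})\le 2\pi(2g-2+n)$ only gives $\|\mu_f(t)\|_{\WP}\lesssim t^{-1}$, which is not integrable at $t=0$, so an area bound that vanishes as $t\to0$, such as the Thurston-metric comparison in Option (a), is genuinely needed.
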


\begin{proof}
    Let $t>0$, by the proof of Lemma~\ref{WPgraftingPath}, the tangent to the grafting ray at $X_t$ is the Beltrami differential 
    \[
        \mu_f(t)=\hat{\mu}\frac{\overline{d\zeta_t}}{d\zeta_t},\qquad \text{ where } \hat{\mu}=\frac{1}{2t},
    \] 
    and with $\frac{\overline{d\zeta_t}}{d\zeta_t}=\frac{\overline{d\zeta_{t,i}}}{d\zeta_{t,i}}$ on the grafting cylinders $C_{t,i}$ and zero outside. The Weil--Petersson norm is bounded by the $L^2$- norm~\cite[Section 2]{BBB2023}
    \begin{equation}\label{WPbelowL2}
        \left\|\mu_f(t)\right\|_{\WP} \leq \left\|\mu_f(t)\right\|_{L^2}
        \coloneq \left(\int_{X_t} |\hat{\mu}|^2\,\rho_t^{2}\right)^{1/2}.
    \end{equation}
    Since $\mu_f(t)$ is supported on the pairwise disjoint grafted cylinders $C_{t,1},\dots,C_{t,k}$ we obtain
    \[
        \left\|\mu_f(t)\right\|^2_{L^2} = \frac{1}{4t^2}\sum_{i=1}^k \area_{X_t}(C_{t,i}),
    \]
    where $\area_{X_t}$ denotes the hyperbolic area on $X_t$. The hyperbolic metric on $X_t$ is bounded by the Thurston metric of the projective structure $\Grr_{t\Gamma}(X)$, see~\cite{Tanigawa1997,McMullen1998} which is the Euclidean metric on the grafted cylinder, so
    \[
        \area_{X_t}(C_{t,i}) \leq \area_{\Th}(C_{t,i}) = t\,\ell_i .
    \]
    Therefore,
    \[
        \left\|\mu_f(t)\right\|_{\WP}^2 \underset{\text{by}~(\ref{WPbelowL2})}{\leq} \frac{1}{4t^2} t\sum_{i=1}^k\ell_i \;=\; \frac{1}{4t}\sum_{i=1}^k\ell_i.
    \]
    Integrating, for $0<a<\tau$, yields
    \[
        \dwp(X_a,X_{\tau}) \leq \int_a^{\tau} \frac{1}{2}\sqrt{\sum_{i=1}^k \ell_i} t^{-1/2}\,dt
        = \sqrt{\sum_{i=1}^k \ell_i}\bigl(\sqrt{\tau}-\sqrt{a}\bigr).
    \]
    Since grafting is continuous~\cite{Tanigawa1997, McMullen1998, ScannelWolf2002}, one has $X_a\to X$ in $\teich_{g,n}$ as $a\to 0$, and so the distance estimate follows.
\end{proof}

By combining Lemma~\ref{WPgraftingPath} and Lemma~\ref{BoundStartGraftingRay}, we obtain the following estimate for the Weil--Petersson distance along a grafting ray:

\begin{proposition}\label{WPGraftingRay}
    Let  $g,n$ be integers such that $2g-2+n >0$, let $X \in \teich_{g,n}$, and let $\Gamma$ be a collection of disjoint simple closed curves on $X$. Let $(X_t)_{t\geq 0}$ denote the grafting ray starting at $X_0 = X$ defined by $X_t=\gr_{t\Gamma}(X)$, and let $X_\infty=\gr_{\infty\Gamma}(X)$. Then,
    \[
        \dwp(X, X_{\infty}) \leq 2^{5/4}\sqrt{\pi} \,\sqrt{\sum_{\gamma \in \Gamma} \ell_X(\gamma)} .
    \]
\end{proposition}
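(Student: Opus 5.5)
We combine the two previous lemmata through the triangle inequality at an intermediate time $\tau>0$, and then choose $\tau$ to balance the two terms. Write $S\coloneq\sum_{\gamma\in\Gamma}\ell_X(\gamma)$. Lemma~\ref{BoundStartGraftingRay} controls the initial segment of the ray, and Lemma~\ref{WPgraftingPath} controls its tail. Together they give, for every $\tau>0$,
\[
    \dwp(X,X_\infty)\;\leq\;\dwp(X,X_\tau)+\dwp(X_\tau,X_\infty)\;\leq\;\sqrt{S}\,\Bigl(\tau^{1/2}+\sqrt{2}\,\pi\,\tau^{-1/2}\Bigr).
\]
This bound holds uniformly in $\tau$. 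Here $X_\infty\in S(\Gamma)\subset\overline{\teich}_{g,n}$ is the geometric limit of the ray (Hensel), so the left-hand side makes sense in the completion. The tail estimate of Lemma~\ref{WPgraftingPath} was already passed to the limit $s\to\infty$ in its proof.

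It remains to minimise the function $\tau\mapsto\tau^{1/2}+\sqrt{2}\,\pi\,\tau^{-1/2}$ over $\tau>0$. By AM--GM the minimum is $2\sqrt{\sqrt{2}\,\pi}=2\cdot 2^{1/4}\sqrt{\pi}=2^{5/4}\sqrt{\pi}$, attained at $\tau=\sqrt{2}\,\pi$. Substituting this value yields
\[
    \dwp(X,X_\infty)\leq 2^{5/4}\sqrt{\pi}\,\sqrt{\textstyle\sum_{\gamma\in\Gamma}\ell_X(\gamma)},
\]
which is the claim.

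There is essentially no obstacle once the two lemmata are available. The only point requiring care is that the two bounds apply along the same ray with the same normalisation of the grafting parameter $t$ and the same lengths $\ell_i=\ell_X(\gamma_i)$ measured on the initial surface. Both lemmata are stated for exactly this ray $X_t=\gr_{t\Gamma}(X)$, so this is consistent. The choice $\tau=\sqrt{2}\,\pi$ is independent of $X$ and $\Gamma$, which is why the resulting constant is universal.
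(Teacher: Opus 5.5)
Your proposal is correct and matches the paper's proof: the paper splits the ray at $X_\tau$ with the triangle inequality, bounds the initial segment by Lemma~\ref{BoundStartGraftingRay} and the tail by Lemma~\ref{WPgraftingPath}, and obtains the prefactor $\sqrt{\tau}+\sqrt{2}\,\pi/\sqrt{\tau}$. It then takes $\tau=\sqrt{2}\,\pi$ to get $2(\sqrt{2}\pi)^{1/2}=2^{5/4}\sqrt{\pi}$, which is exactly your computation.
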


\begin{proof}
    Let $\tau>0$. By Lemma~\ref{BoundStartGraftingRay} and Lemma~\ref{WPgraftingPath}, then
    \[
        \dwp(X, X_{\infty}) \leq  \dwp(X, X_{\tau}) +  \dwp(X_{\tau}, X_{\infty}) 
        \leq \left(\sqrt{\tau} + \frac{\sqrt{2}\,\pi}{\sqrt{\tau}}\right)\sqrt{\sum_{\gamma \in \Gamma} \ell_X(\gamma)}.
    \]
    The choice $\tau=\sqrt{2}\,\pi$ minimizes the prefactor and gives $2(\sqrt{2}\pi)^{1/2}=2^{5/4}\sqrt{\pi}$.
\end{proof}

\subsection{Projection along a grafting ray while controlling shear parameters}\label{sec:graftshear2} Let $X$ in $ \overline{\teich}_{g,n}$ and let $(\Gamma, \mathcal{A})$ be a short hexagon decomposition of $X$. Recall that by Corollary~\ref{Bershear}, the shear parameters with respect to the spiralling triangulation $\mathcal{T}_{(\Gamma, \mathcal{A})}$ are bounded on $X$ by 
\[
    12\log(8\pi(2g-2+n))+25.
\] 
Our goal is to project $X$ to the stratum $S(\Gamma)$ obtained by pinching the curves of $\Gamma$, while maintaining uniform control on the shear parameters. The idea is to travel along iterated grafting rays. Although grafting rays are at bounded distance from Teichmüller geodesics~\cite{DiazKim2012}, grafting does not form a flow~\cite{hensel2008}; in particular $\gr_{t\Gamma}(\gr_{s\Gamma}(X))$ is not the same as $\gr_{(t+s)\Gamma}(X)$. 

First we graft along the curves of $\Gamma$ whose lengths are at least $\epsilon_T$. This yields a surface $X_{t_0}$ on which every curve of $\Gamma$ has length at most $\epsilon_T$. We show that $(\Gamma, \mathcal{A})$ remains uniformly ``short'' on $X_{t_0}$ and therefore by Remark~\ref{BershearRmk2}, the shear parameters of $\mathcal{T}_{(\Gamma, \mathcal{A})}$ remains uniformly bounded. 
Finally we graft simultaneously along the entire multicurve $\Gamma$. Along this second grafting ray, the lengths of the curves in $\Gamma$ tend to zero while the truncated length of the orthogeodesic arcs remain uniformly controlled. This gives uniform upper bound for the shear parameters along the grafting ray. By continuity of the shearing coordinates under pinching~\cite{Roger2013} this upper bound extends to the limit noded surface $X_{\infty}$.  

Recall that the endpoint $X_{\infty}$ lies in the stratum $S(\Gamma)$. In this stratum, the spiralling triangulation $\mathcal{T}_{(\Gamma, \mathcal{A})}$ is the ideal spiralling triangulation $\mathcal{A}_\infty$ obtained from $(\Gamma, \mathcal{A})$ by pinching the curves of $\Gamma$. The shearing coordinates associated to $\mathcal{A}_\infty$ give a global parametrization of $S(\Gamma)$~\cite{Bonahon1996}.

\begin{figure}[h]
    \centering
    \begin{overpic}[width=1.0\linewidth,keepaspectratio]{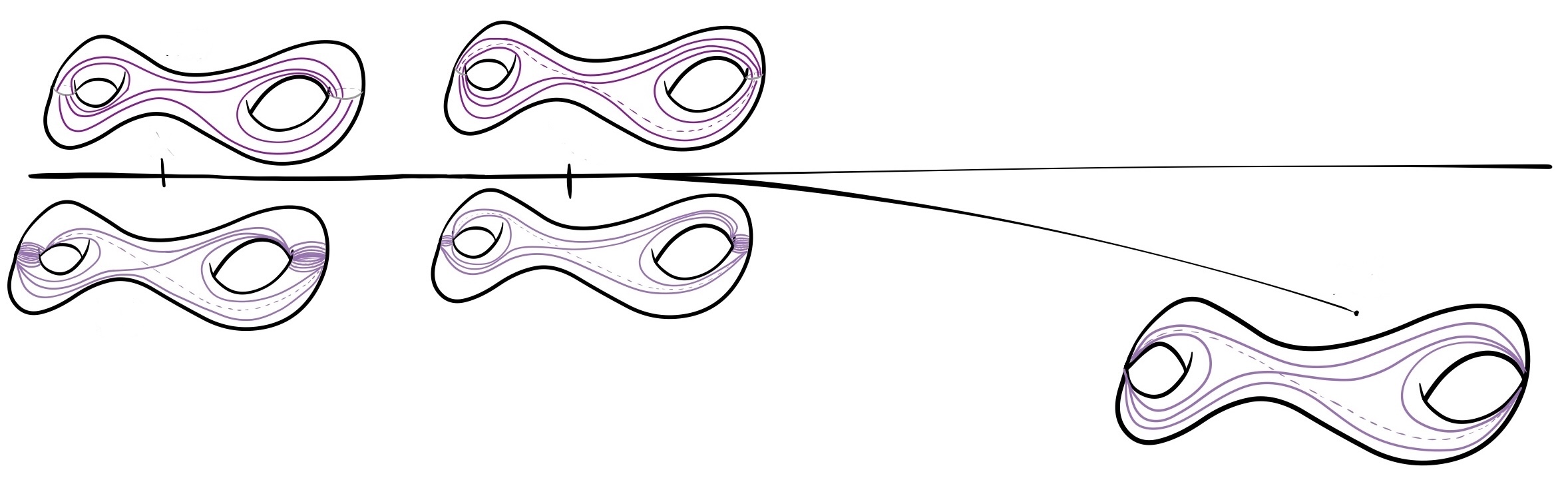}
    \put(9.3,20.6){\small$X$}
    \put(34,21){$X_{t_0}$}
    \put(86,12){$X_{\infty}$}
    \end{overpic}
    \vspace{-0.6cm}
    \caption{Iterating grafting rays to project a point onto a stratum.}\label{GraftingShearFig}
\end{figure}

\begin{proposition}\label{GraftingShear}
    Let $(\Gamma, \mathcal{A})$ be a hexagon decomposition and let $X\in V(\Gamma, \mathcal{A})$. Then there exists a noded surface 
    \[
        X_\infty \in S(\Gamma)
    \] 
    such that the shearing coordinates of $X_\infty$ with respect to the ideal spiralling triangulation $\mathcal{A}_{\infty}$ satisfy: 
    \[ 
        \shear_{\mathcal{A}_{\infty}}(X_\infty) \leq 7 \log(8\pi(2g-2+n))+80.
    \] 
\end{proposition}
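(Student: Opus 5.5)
We follow the two-step iterated grafting strategy sketched before the statement, and control at every stage the lengths of $\Gamma$ and the $\wtrunc$-truncated lengths of the arcs of $\As$. The shear bound then comes from Theorem~\ref{BershearRmk2}.

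\textbf{Step 1: shorten the long curves.} Let $\Gamma_{\geq}\subseteq\Gamma$ be the set of curves with $\ell_X(\gamma)\geq\epsilon_T$. On each of them $\ell_X(\gamma)\leq L=2\log(8\pi(2g-2+n))$. Graft along $\Gamma_{\geq}$ for a time $t_0$ and use Lemma~\ref{DiazKimLengthGr}: taking $t_0=\pi L/\epsilon_T$ gives $\ell_{X_{t_0}}(\gamma)\leq\frac{\pi}{\pi+t_0}L\leq\epsilon_T$ for every $\gamma\in\Gamma_{\geq}$.

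The curves of $\Gamma\smallsetminus\Gamma_{\geq}$ lie in the hyperbolic part, so their lengths should not increase much. I would argue this with the Thurston metric comparison (hyperbolic metric $\leq\rho_{\Th}$), which gives $\ell_{X_{t_0}}(\gamma)\leq\ell_X(\gamma)$.

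\textbf{Step 2: bound the truncated arcs on $X_{t_0}$.} Each arc $a\in\As$ crosses the grafted cylinders only near its endpoints. Its truncated length on $X_{t_0}$ is bounded by:
\begin{itemize}
\item its hyperbolic length outside the cylinders, which is at most $L_A$ plus the part of the old collar beyond the truncation (controlled by $\Delta_{\wtrunc}$), using $\rho_{\Th}$ again;
\item plus the horizontal crossing of each cylinder, which Lemma~\ref{DiazKimLengthGr2} bounds by $2\tau+2\log\cot(\tau\pi/2t_0)$.
\end{itemize}
Choose $\tau$ of order $1$ and recall $t_0\asymp\log(g+n)$. Each crossing then costs $O(\log\log(g+n))$, and in any case at most $O(\log(g+n))$.

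There is a subtlety here. On $X_{t_0}$ the curve is short, so $\wtrunc$ truncates a collar of width $\arccosh(\epsilon_T/\ell)$. Since this truncation only removes length, it helps. We conclude that $(\Gamma,\As)$ is $(\epsilon_T,L_A',\wtrunc)$-short on $X_{t_0}$ with $L_A'\leq L_A+O(\log(g+n))$, with explicit constants.

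\textbf{Step 3: pinch and pass to the limit.} Now graft simultaneously along all of $\Gamma$ starting from $X_{t_0}$. By Lemma~\ref{DiazKimLengthGr}, lengths of $\Gamma$ stay $\leq\epsilon_T$ and tend to $0$. The key claim is that truncated lengths stay bounded by $L_A'+c$ uniformly in $t$. The idea is that the new cylinder sits inside the region removed by the truncation: the cylinder has Thurston length $t$, while the truncation width $\arccosh(\epsilon_T/\ell_t)$ grows like $\log(t)$.

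I expect this to be the main obstacle. The comparison is delicate because the hyperbolic metric on the cylinder is not Euclidean, and the geodesic representative may leave the cylinder. I would first try the explicit model. On the cylinder the Poincaré metric of the strip $\lambda_{t,i}$ dominates the restriction of the hyperbolic metric, by Schwarz--Pick. One checks that the boundary of the $\wtrunc$-collar on $X_t$ lies within bounded distance of the cylinder's boundary. Lemma~\ref{DiazKimLengthGr2} then shows that the non-truncated portion inside the cylinder is $O(1)$.

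Given this, Theorem~\ref{BershearRmk2} with $L'=\epsilon_T$ and $L_A'$ bounds the shears of $X_t$ uniformly along the ray. By Hensel's convergence $X_t\to X_\infty\in S(\Gamma)$ and continuity of shears under pinching~\cite{Roger2013}, the same bound holds for $\shear_{\As_\infty}(X_\infty)$. Tracking constants ($4\max(L_A',\epsilon_T)$ with $L_A'\approx\tfrac{7}{4}\log(8\pi(2g-2+n))+O(1)$) should give $7\log(8\pi(2g-2+n))+80$.
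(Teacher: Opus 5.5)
Your proposal is essentially the paper's proof. Both graft the curves of $\Gamma$ longer than $\epsilon_T$ for time $\asymp L/\epsilon_T$ (Lemma~\ref{DiazKimLengthGr}), control truncated lengths via $\rho_{X_t}\le\rho_{\Th}$ and Lemma~\ref{DiazKimLengthGr2}, then graft along all of $\Gamma$, and finish with Theorem~\ref{BershearRmk2}, Hensel's convergence and Roger's continuity. In the last grafting step the paper, like you, compares the $\log t+O(1)$ growth of the horizontal segments with that of $\wtrunc(\ell_{X_t}(\gamma))$, and concludes that the grafted part adds at most $2\Delta_{\wtrunc}$.

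There is one adjustment needed to get the explicit constant $80$. The arc only has to be prolonged from the cylinder boundary to the core curve, so the paper charges half a crossing per endpoint: $\tfrac12\ell_{X_{t_0}}(\eta_{t_0})\le \tau+\log L+3$, hence $L_A'=L_A+2\log L+8$. Your Step~2 charges a full crossing per endpoint plus an extra $\Delta_{\wtrunc}$. That keeps the $7\log$ leading term but, after the factor $4$ in Theorem~\ref{BershearRmk2}, yields an additive constant above $80$.
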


\begin{proof}
    Let $(\Gamma,\mathcal{A})$ be a hexagon decomposition and let $X\in V(\Gamma,\mathcal{A})$.
    
    The strategy is as follows. First we graft along the curves of $\Gamma$ that are longer than $\epsilon_T$ in order to obtain a surface on which every curve of $\Gamma$ has length at most $\epsilon_T$. We then graft along the entire multicurve $\Gamma$ and control the shear parameters along the second grafting ray.
    
    Let
    \[
        \Gamma_0=\{\gamma\in\Gamma : \ell_X(\gamma)>\epsilon_T \}.
    \]
    If $\Gamma_0=\varnothing$, set $X_{t_0}=X$. Otherwise define
    \[
        X_{t_0}=\gr_{t_0\Gamma_0}(X),
        \qquad
        t_0=\pi\left(\frac{L}{\epsilon_T}-1\right).
    \]
    Next, we show that the hexagon decomposition $(\epsilon_T,L_A',\mathcal{A})$ remains $(\Gamma, \wtrunc)$-short on $X_{t_0}$. For every $\gamma\in\Gamma_0$, Lemma~\ref{DiazKimLengthGr} gives
    \[
        \ell_{X_{t_0}}(\gamma) \le \frac{\pi}{\pi+t_0}\ell_X(\gamma) \le \epsilon_T.
    \]
    Hence every curve of $\Gamma$ has length at most $\epsilon_T$ on $X_{t_0}$.
    Recall that $X_{t_0}$ is the uniformization of the Thurston metric obtained by inserting Euclidean cylinders along the curves of $\Gamma_0$, see~\cite{DumasWolf2008}. The hyperbolic metric on $X_{t_0}$ is smaller than the Thurston metric which coincides with the hyperbolic metric of $X$ outside the grafted parts, see for instance~\cite{Tanigawa1997,McMullen1998}. Hence if $a\in \mathcal{A}$ does not end on curves in $\Gamma_0$ we obtain 
    \[
        \ell_{X_{t_0},\Gamma}^{\wtrunc}(a)\leq \ell_{X,\Gamma}^{\wtrunc}(a) \leq L_A.
    \]  
    Suppose now that $a$ ends on a curve of $\Gamma_0$. Then it is prolonged, via the grafting operation, with an horizontal arc from the core curve of the grafting cylinder to its boundary that we denote by $\eta_{t_0}$. Since every curve of $\Gamma$ has length at most $\epsilon_T$ on $X_{t_0}$ and since the hyperbolic metric on $X_{t_0}$ is smaller than the Thurston metric we obtain:
    \begin{equation*}
        \ell_{X_{t_0},\Gamma}^{\wtrunc}(a)\leq \ell_{X,\Gamma}^{\wtrunc}(a) + \frac{1}{2}\sum_{i=1}^{k} \ell_{X_{t_0}}(\eta_{t_0,i}),
    \end{equation*}
    where $k\in\{1,2\}$ depends on whether $a$ terminates on one or two curves of $\Gamma_0$.
    It therefore remains to estimate the length of the horizontal segments $\eta_{t_0,i}$, on $X_{t_0}$. By Lemma~\ref{DiazKimLengthGr2}, 
    \begin{equation}\label{eq_ortho-geod-arc2}
        \frac{1}{2}\ell_{X_{t_0}}(\eta_{t_0,i})\leq \tau + \log\left(\frac{\cos\left(\frac{\tau \pi}{2 t_0}\right)}{\sin\left(\frac{\tau \pi}{2 t_0}\right)}\right)= \tau + \log\left(\cot\left(\frac{\tau \epsilon_T}{2(L-\epsilon_T)}\right)\right),
    \end{equation}
    where $\tau$ is a fixed positive number $<t_0$, take $\tau=1$. Note that $2g-2+n >0$ and so $L>2.8$. Hence the right-hand side of the inequality~(\ref{eq_ortho-geod-arc2}) is bounded by: 
    \[
        \frac{1}{2}\ell_{X_{t_0}}(\eta_{t_0,i})\leq \tau + \log(L) + 3,
    \] 
    and so we obtain the following bound: 
    \[
        \ell_{X_{t_0},\Gamma}^{\wtrunc}(a)\leq\ell_{X,\Gamma}^{\wtrunc}(a) + 2\log(L)+ 8 \leq L_A'.
    \]
    where $L_A'= L_A+2\log(L)+8$. Therefore the hexagon decomposition $(\Gamma, \mathcal{A})$ is $(\epsilon_T, L_A', \wtrunc)$-short on $X_{t_0}$.
    Consider the grafting ray $(X_t)_{t\geq 0}$ defined by
    \[
        X_{t}=\gr_{t\Gamma}(X_{t_0}).
    \]
    Let $t >0$, by Lemma~\ref{DiazKimLengthGr} every curve of $\Gamma$ has length at most $\epsilon_T$ on $X_{t}$. To bound the shear parameters with respect to $\mathcal{T}_{(\Gamma,\mathcal{A})}$ on $X_{t}$, by Theorem~\ref{BershearRmk2}, it remains to bound the truncated length of each orthogeodesic arc $ a \in \mathcal{A}$ on $X_{t}$. As before, if  $a\in \mathcal{A}$ does not end on curves in $\Gamma$ (that is, it ends in cusps) we obtain 
    \[
        \ell_{X_{t},\Gamma}^{\wtrunc}(a)\leq \ell_{X_{t_0},\Gamma}^{\wtrunc}(a)\leq L_A'.
    \]  
    Otherwise, the orthogeodesic arc $a$ decomposes into a portion outside the grafting cylinders and one or two horizontal segment(s) $\eta_{t,i}$ inside the grafted cylinders. Fix $0 < \tau < t$ then Lemma~\ref{DiazKimLengthGr2} implies that a horizontal segment $\eta_t$ satisfies

    \begin{equation}\label{length_horizontal_subarc}
        \frac{1}{2}\ell_{X_t}(\eta_{t,i}) \leq \log\left(\cot \left(\frac{\tau \pi}{2 t}\right)\right) + \tau. 
    \end{equation}
    Let $\gamma \in \Gamma$ and recall that by Lemma~\ref{DiazKimLengthGr},
    \[
        \ell_{X_{t}}(\gamma)\leq\frac{\pi}{\pi+t}\ell_{X_{t_0}}(\gamma)\leq \frac{\pi}{\pi+t} \epsilon_T.
    \] thus it follows that 
    \begin{equation}\label{width_collar_comparison}
        \wtrunc(\ell_{X_t}(\gamma)) \geq \wtrunc\left(\frac{\pi}{\pi+t}\epsilon_T \right)=\arccosh\left(\frac{\pi+t}{\pi}\right)
    \end{equation}
    and 
    \begin{equation}\label{width_std_collar_comparison}
        \wstd(\ell_{X_t}(\gamma)) \geq \arcsinh\left(\frac{1}{\sinh\left(\frac{\pi}{\pi +t} \epsilon_T \right)}\right).
    \end{equation}
    By comparing the functions on the right hand-side of inequalities (\ref{length_horizontal_subarc}), (\ref{width_collar_comparison}) and (\ref{width_std_collar_comparison}) we observe that they all grow asymptotically like $\log(t) + \log\left(\frac{2}{\tau \pi}\right)+ \mathcal{O}(1)$ for large $t$. In particular, we obtain that on $X_t$ the grafted subarc stays inside the standard collar neighborhood of the geodesic representative of its endcurve(s) in $\Gamma$. Consequently, the grafted part contributes to at most $2\Delta_{\wtrunc}$ to the $\wtrunc$-truncated length of the arc $a$ and so for large enough $t$,
    \[
        \ell_{X_t, \Gamma}^{\wtrunc}(a)\leq \ell_{X_{t_0}, \Gamma}^{\wtrunc}(a)+ \Delta_{\wtrunc} \leq L_A' + 2\Delta_{\wtrunc}.
    \]
    Thus for large enough $t$, the hexagon decomposition $(\Gamma, \mathcal{A})$ is $(\epsilon_T, L_A' + 2\Delta_{\wtrunc}, \wtrunc)$-short on $X_t$. Hence, by Theorem~\ref{BershearRmk2}, the shear parameters on $X_{t}$ with respect to $\mathcal{T}_{(\Gamma,\mathcal{A})}$ are bounded above by 
    \[
        7 \log(8\pi(2g-2+n))+80.
    \]        

    Let $t~\rightarrow~\infty$ then the grafting ray $(X_t)_{t\geq0}$ converges to a noded surface $X_{\infty} \in S(\Gamma)$~\cite{hensel2008}. Roger proved that shearing coordinates extend continuously to the augmented Teichmüller space under pinching~\cite[Proposition 6]{Roger2013}. In the stratum $S(\Gamma)$, the spiralling triangulation $\mathcal{T}_{(\Gamma, \mathcal{A})}$ is the ideal spiralling triangulation $\mathcal{A}_\infty$ obtained from $(\Gamma, \mathcal{A})$ by pinching the curves of $\Gamma$. Since the shearing coordinates with respect to $\mathcal{T}_{(\Gamma,\mathcal{A})}$ remain uniformly bounded along the ray, the same bound holds at the limit:
    \[
        \shear_{\mathcal{A}_{\infty}}(X_\infty)
        \leq
        7 \log(8\pi(2g-2+n))+80,
    \]
    this completes the proof.
\end{proof}

We now combine Proposition~\ref{WPGraftingRay}, which controls the Weil--Petersson length of a grafting ray, with Proposition~\ref{GraftingShear} which controls shear parameters along a grafting ray.

\begin{proposition}\label{TravellingToStratum}
    Let $(\Gamma, \mathcal{A})$ be a hexagon decomposition and let $X\in V(\Gamma, \mathcal{A})$. Then there exists a noded surface 
    \[ 
        X_\infty \in S(\Gamma)
    \]
    such that:
    \[ 
        \shear_{\mathcal{A}_{\infty}}(X_\infty) \leq   7 \log(8\pi(2g-2+n))+80.
    \]
    and 
    \[
        \dwp(X,X_\infty)\leq 5\sqrt{2\pi(2g-2+n)\log(8\pi(2g-2+n))}.
    \]
\end{proposition}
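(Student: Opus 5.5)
We take the noded surface $X_\infty$ produced in the proof of Proposition~\ref{GraftingShear}; the shear bound then holds by that proposition, and only the Weil--Petersson estimate needs proof. That surface comes from two grafting steps. First we go from $X$ to $X_{t_0}=\gr_{t_0\Gamma_0}(X)$, where $t_0=\pi(L/\epsilon_T-1)$. Then we follow the full ray $\gr_{t\Gamma}(X_{t_0})$ out to its end $X_\infty\in S(\Gamma)$. By the triangle inequality,
\[
\dwp(X,X_\infty)\leq \dwp(X,X_{t_0})+\dwp\bigl(X_{t_0},\gr_{\infty\Gamma}(X_{t_0})\bigr).
\]

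\textbf{First leg.} Apply Lemma~\ref{BoundStartGraftingRay} with $\tau=t_0$ to get $\dwp(X,X_{t_0})\leq\sqrt{t_0\sum_{\gamma\in\Gamma_0}\ell_X(\gamma)}$. Since $X\in V(\Gamma,\As)$, each $\ell_X(\gamma)\leq L$. Also $|\Gamma|\leq 3g-3+n\leq \tfrac32(2g-2+n)$. This gives
\[
t_0\sum\ell_X(\gamma)\lesssim \frac{\pi L}{\epsilon_T}\cdot L\cdot(2g-2+n),
\]
which is of order $(2g-2+n)\log^2$. That is too large by a factor $\sqrt{\log}$. To fix this, we bound $\sum_{\gamma\in\Gamma_0}\ell_X(\gamma)$ by an area argument rather than curve counting: the curves of $\Gamma_0$ are disjoint, and collars of definite width around them have area comparable to $\ell_X(\gamma)$. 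This should give $\sum_{\gamma\in\Gamma}\ell_X(\gamma)\lesssim \area(X)=2\pi(2g-2+n)$.

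We then balance the time parameter. If needed, we replace $t_0$ by the smallest time that brings all of $\Gamma_0$ below $\epsilon_T$, which is at most $\pi(L/\epsilon_T-1)\lesssim\log(8\pi(2g-2+n))$. The first leg is then bounded by a constant times $\sqrt{2\pi(2g-2+n)\log(8\pi(2g-2+n))}$.

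\textbf{Second leg.} Apply Proposition~\ref{WPGraftingRay} at $X_{t_0}$ to get $\dwp(X_{t_0},X_\infty)\leq 2^{5/4}\sqrt\pi\sqrt{\sum_{\gamma\in\Gamma}\ell_{X_{t_0}}(\gamma)}$. After the first leg every curve of $\Gamma$ has length at most $\epsilon_T$ on $X_{t_0}$, by Lemma~\ref{DiazKimLengthGr} and the definition of $\Gamma_0$. Hence the sum is at most $\epsilon_T|\Gamma|\leq\tfrac32\epsilon_T(2g-2+n)$. This leg is therefore $O(\sqrt{2g-2+n})$, which is absorbed into the main term.

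\textbf{Constant.} Adding the two legs and tracking constants ($\epsilon_T\approx0.41$, $L=2\log(8\pi(2g-2+n))$, and $\log(8\pi(2g-2+n))\geq\log(8\pi)>3$, which lets us absorb the $\sqrt{2g-2+n}$ term) should give the stated factor $5$.

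\textbf{Main obstacle.} The hard step is the first leg: the naive estimate is too large by $\sqrt{L}$. First we try the area bound $\sum_{\gamma\in\Gamma}\ell_X(\gamma)\leq C\,\area(X)$ for disjoint curves, using that curves of length at least $\epsilon_T$ have embedded collars of uniform width. If that gives the wrong constant, a fallback is to graft each $\gamma\in\Gamma_0$ only for time $\pi(\ell_X(\gamma)/\epsilon_T-1)$. This is not literally a single ray, so we would have to concatenate rays and sum the costs, estimating each with Lemma~\ref{BoundStartGraftingRay}.
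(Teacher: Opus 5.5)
Your overall structure matches the paper: the same $X_\infty$ as in Proposition~\ref{GraftingShear}, the triangle inequality through $X_{t_0}$, and the second leg via Proposition~\ref{WPGraftingRay} using $\ell_{X_{t_0}}(\gamma)\leq\epsilon_T$. The first leg, however, has a genuine gap.

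\textbf{The area bound is false.} Your proposed fix $\sum_{\gamma\in\Gamma}\ell_X(\gamma)\lesssim\area(X)$ does not hold. Curves of length at least $\epsilon_T$ do not have collars of uniform width: $\wstd(\ell)\to 0$ as $\ell\to\infty$, and a standard collar has area $2\ell/\sinh(\ell/2)\leq 4$, which is not comparable to $\ell$. For a concrete example, let $\Gamma$ be a pants decomposition whose curves all have length comparable to $\log(2g-2+n)$, and let $\As$ consist of the seams. The seams are then very short, so $X\in V(\Gamma,\As)$. Every pair of pants still has area $2\pi$, while $\sum_{\gamma\in\Gamma}\ell_X(\gamma)\asymp(2g-2+n)\log(2g-2+n)$.

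In this example the grafting time needed is also of order $\log(2g-2+n)$. So Lemma~\ref{BoundStartGraftingRay} with $\tau=t_0$ only gives $\dwp(X,X_{t_0})\lesssim\sqrt{2g-2+n}\,\log(2g-2+n)$, which misses the stated bound by exactly the factor you identified. Your fallback is worse. Bounding each separately grafted curve by $\sqrt{t_\gamma\ell_X(\gamma)}\asymp\ell_X(\gamma)$ and summing gives an estimate linear in $\sum_\gamma\ell_X(\gamma)$, not in its square root.

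\textbf{The missing observation: the grafting time is irrelevant.} Set $S=\sum_{\gamma\in\Gamma_0}\ell_X(\gamma)$. The proofs of Lemma~\ref{BoundStartGraftingRay} and Lemma~\ref{WPgraftingPath} give two pointwise bounds on the tangent vector of the ray $t\mapsto\gr_{t\Gamma_0}(X)$: $\|\mu_f(t)\|_{\WP}\leq\frac12\sqrt{S}\,t^{-1/2}$ and $\|\mu_f(t)\|_{\WP}\leq\frac{\pi}{\sqrt2}\sqrt{S}\,t^{-3/2}$. Splitting the integral at $t=\sqrt2\pi$ shows that the entire ray has Weil--Petersson length at most $2^{5/4}\sqrt{\pi}\sqrt{S}$. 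Hence $\dwp(X,X_{t_0})\leq 2^{5/4}\sqrt{\pi}\sqrt{S}$ whatever $t_0$ is.

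This is exactly how the paper handles the first leg: any segment of the ray is no longer than the ray itself, and then Proposition~\ref{WPGraftingRay} applies. With this, the crude count $S\leq|\Gamma_0|\,L\leq\frac32(2g-2+n)\cdot 2\log(8\pi(2g-2+n))$ already gives order $\sqrt{(2g-2+n)\log(8\pi(2g-2+n))}$. The factor $\sqrt{\log}$ you wanted to save comes from the $t^{-3/2}$ decay of the tangent vector, not from a better bound on the total length of $\Gamma$.

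Adding your second leg, the total is $2^{5/4}\sqrt{\pi}\sqrt{\tfrac32(2g-2+n)}\,\bigl(\sqrt{2\log(8\pi(2g-2+n))}+\sqrt{\epsilon_T}\bigr)$. Using $\epsilon_T\leq 0.13\log(8\pi(2g-2+n))$, this is below $5\sqrt{2\pi(2g-2+n)\log(8\pi(2g-2+n))}$.
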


\begin{proof}
    Let  
    \[
        \Gamma_0=\{\gamma\in\Gamma : \ell_X(\gamma)>\epsilon_T \}.
    \] 
    By the proof of Proposition~\ref{GraftingShear}, there exists a surface 
    \[
        X_{t_0}=\gr_{t_0 \Gamma_0}(X)
    \] 
    where the curves in $\Gamma$ have lengths at most $\epsilon_T$ on $X_{t_0}$. 
    Moreover the grafting ray $(X_t)_{t\geq 0}$ where 
    \[
        X_t=\gr_{t \Gamma}(X_{t_0})
    \]
    converges to a noded surface $X_\infty \in S(\Gamma)$ whose shearing coordinates with respect to $\mathcal{A}_{\infty}$ satisfy
    \[ 
        \shear_{\mathcal{A}_{\infty}}(X_\infty) \leq 7\log(8\pi(2g-2+n))+80.
    \]
    
    It remains to estimate the Weil--Petersson distance from $X$ to $X_\infty$. By the triangle inequality,
    \[
        \dwp(X,X_\infty)\leq \dwp(X,X_{t_0}) + \dwp(X_{t_0}, X_\infty).
    \]
    The surface $X_{t_0}$ lies on the grafting ray $(\gr_{t\Gamma_0}(X))_{t\geq0}$ and every segment of
    the ray has Weil–Petersson length at most the length of the ray, so Proposition~\ref{WPGraftingRay} yields
    \[
        \dwp(X,X_{t_0})
        \leq 2^{5/4}\sqrt{\pi} \sqrt{\sum_{\gamma \in \Gamma_0} \ell_X(\gamma)}
        \leq 2^{5/4}\sqrt{\pi}\sqrt{(3g-3+n)\,L},
    \]
    since $\ell_X(\gamma)\leq L$ for every $\gamma\in\Gamma$ (the hexagon decomposition $(\Gamma,\mathcal{A})$ is short on $X$) and $|\Gamma_0|\leq 3g-3+n$. Similarly, applying Proposition~\ref{WPGraftingRay} to the ray $(\gr_{t\Gamma}(X_{t_0}))_{t\geq0}$ and using $\ell_{X_{t_0}}(\gamma) \leq \epsilon_T$ for every $\gamma\in\Gamma$,
    \[
        \dwp(X_{t_0}, X_\infty) \leq 2^{5/4}\sqrt{\pi}\, \sqrt{(3g-3+n)\,\epsilon_T}.
    \]
    Write $m=2g-2+n$, so that $3g-3+n\leq \frac{3}{2}m$ and $L=2\log(8\pi m)$. Since $\epsilon_T=\frac{3\log 3}{8}\leq 0.13\log(8\pi)\leq 0.13 \log(8\pi m)$, we obtain
    \begin{align*}
        \dwp(X,X_\infty)
        &\leq 2^{5/4}\sqrt{\pi}\sqrt{\tfrac{3}{2}m} \left(\sqrt{2\log(8\pi m)} + \sqrt{\epsilon_T}\right)\\
        &\leq 2^{5/4}\sqrt{\tfrac{3\pi}{2}} \left(\sqrt{2}+\sqrt{0.13}\right)\sqrt{m\log(8\pi m)}
        \leq 5\sqrt{2\pi m \log(8\pi m)}\\
        &\leq 5\sqrt{2\pi(2g-2+n)\log(8\pi(2g-2+n))}.
    \end{align*}

\end{proof}

\section{Projection to the thick part of a stratum}\label{sec:project}

Let $X$ in $ \overline{\teich}_{g,n}$, we show that there exists a hexagon decomposition $(\Gamma,\mathcal{A})$ such that $X$ projects to the thick part of the stratum $S(\Gamma)$. Moreover we control the Weil-Petersson distance from $X$ to the stratum while maintaining uniform control on the shear parameters of the arrival point.

The strategy is as follows. Start with a short hexagon decomposition $(\Gamma_0,\mathcal{A}_0)$ on $X$. Graft along the curves of $\Gamma_0$ whose lengths are larger than $\epsilon_T$. This yields a hyperbolic surface $X_{t_0}$ on which every curve of $\Gamma_0$ has length at most $\epsilon_T$. The decomposition $(\Gamma_0,\mathcal{A}_0)$ is $\wtrunc$-short on $X_{t_0}$. During this first grafting process, additional curves may become short. If a curve not contained in $\Gamma_0$ has length below $\epsilon_T$ on $X_{t_0}$, we add it to the pinching multicurve. This yields a new $\wtrunc$-short hexagon decomposition $(\Gamma,\mathcal{A})$ on $X_{t_0}$ such that every curve in $\Gamma$ has length at most $\epsilon_T$. Then consider the grafting ray obtained by grafting along the entire multicurve $\Gamma$ sarting from $X_{t_0}$. Since every curve in $\Gamma$ has length at most $\epsilon_T$ on $X_{t_0}$, their collars are sufficiently long to absorb the change in the metric induced by the grafting operation. Therefore, along this second grafting ray, curves lying in the thick part of the surface remain in the thick part. Consequently the endpoint $X_{\infty}$ lies in the thick part of the stratum $S(\Gamma)$. Moreover, the lengths of the curves in $\Gamma$ tend to zero, while the truncated lengths of the orthogeodesic arcs in $\mathcal{A}$ remain uniformly bounded. Hence the shear parameters of the spiralling triangulation $\mathcal{T}_{(\Gamma,\mathcal{A})}$ are bounded along this ray and the same bound hold at the limiting surface $X_{\infty}$.

\subsection{Grafting along very short simple closed curves}\label{sec:veryshort}
We prove the following auxiliary lemma, which shows that a short hexagon decomposition, whose curves all have length at most $\epsilon \leq \epsilon_0$, can be enlarged to include every simple closed curve with length at most $\epsilon$.

\begin{lemma}~\label{LemmaNiceHexDecomp}
    Let $\wabstract:\R_{>0} \longrightarrow \R_{\geq 0}$ be a truncation function. Given $\epsilon \leq \epsilon_0$ and $X\in \teich_{g,n}$, let $(\Gamma, \mathcal{A})$ be an $(\epsilon, L_A, \wabstract)$-short hexagon decomposition of $X$. Then there exists a hexagon decomposition $(\Gamma_1, \mathcal{A}_1)$ of $X$ such that
    \begin{enumerate}[topsep=1pt]
        \item $\Gamma \subseteq \Gamma_1$ and $\ell_{X}(\gamma) \leq \epsilon$ for all $\gamma \in\Gamma_1$,
        \item $\ell_{X,\Gamma_1}^{\wabstract}(a)\leq 4 L_A + \epsilon$ for all $a\in\As_1$,
        \item $\ell_{X}(\alpha) > \epsilon$ for all simple closed curve $\alpha$ not in $\Gamma_1$.
    \end{enumerate}
\end{lemma}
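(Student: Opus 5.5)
The plan is to build $\Gamma_1$ by adding to $\Gamma$ every simple closed geodesic of length at most $\epsilon$ not already in it. Then we modify the arcs of $\mathcal{A}$ only where they cross these new curves, exactly as in the curve-addition move of the hexagon graph. Set $\Gamma'=\{\alpha\notin\Gamma : \ell_X(\alpha)\le\epsilon\}$. Since $\epsilon\le\epsilon_0=2\arcsinh(1)$, the collar theorem (Theorem~\ref{CollarThm}) shows that any two distinct geodesics of length $\le\epsilon$ are disjoint. Hence $\Gamma_1=\Gamma\cup\Gamma'$ is a simple multicurve, and items (1) and (3) hold by construction. Each $\alpha\in\Gamma'$ is disjoint from $\Gamma$, so it lies in $X\smallsetminus\Gamma$, which is cut into generalized right-angled hexagons by $\mathcal{A}$.

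The first step is to check that each $\alpha\in\Gamma'$ meets each arc $a\in\mathcal{A}$ at most once. The hexagons then give a good description of $\alpha\cap\mathcal{A}$, and repeated curve additions are legitimate. Suppose $\alpha$ crossed $a$ twice. The subarc of $a$ between two consecutive crossings lies inside the standard collar of $\alpha$: an orthogeodesic entering a collar of width $\wstd(\ell_X(\alpha))\ge\wstd(\epsilon_0)$ and crossing the core must cross the full collar. Combined with the truncated length bound $\ell^{\wabstract}_{X,\Gamma}(a)\le L_A$, one would like to argue that a second crossing forces $a$ to traverse the collar twice. Alternatively, one can bypass this by not requiring single intersections. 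Cut every $a$ at all of its intersection points with $\Gamma'$. Each resulting piece is an arc with endpoints on $\Gamma_1$ or in cusps, and we take its orthogeodesic representative in $X\smallsetminus\Gamma_1$, identifying duplicates. Then complete the resulting family to a maximal family $\mathcal{A}_1$ of disjoint orthogeodesics. Completion arcs can be chosen inside the pieces of the old hexagons cut by $\Gamma'$, so their lengths are controlled by the same hexagon side lengths.

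The second step is the length bound (2). Take a piece $b$ of $a$ between two consecutive endpoints, each lying on $\Gamma'$, on $\Gamma$, or in a cusp. Its orthogeodesic representative is no longer than the corresponding subarc of $a$ with short detours along curves of $\Gamma'$ (length $\le\epsilon$) added. After truncation, the part of $b$ outside the $\wabstract$-collars of curves in $\Gamma'$ is at most the part of $a$ outside the truncation neighbourhoods of $\Gamma$. That part is at most $L_A$, up to the portions of $a$ inside the standard collars of $\Gamma'$, which are removed by truncation. Completion arcs lie in a region bounded by at most a bounded number of hexagon sides: at most two old truncated arcs of length $\le L_A$ each, plus truncated pieces. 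Concatenating shows the length is at most $4L_A+\epsilon$, where the $\epsilon$ accounts for travelling once along a curve of $\Gamma'$. I would organize this by drawing a single old hexagon $H$ cut by the curves of $\Gamma'$ passing through it. Each piece of $H\smallsetminus\Gamma'$ is a polygon whose sides are subarcs of at most three old arcs (each $\le L_A$ after truncation) and subarcs of curves of length $\le\epsilon$. Any new orthogeodesic inside it is bounded by the perimeter, minus the horocyclic and collar-boundary parts, giving a bound like $3L_A+\epsilon$. The stated constant $4L_A+\epsilon$ leaves some slack for truncation-width mismatches, which are bounded using $\Delta_{\wabstract}$ and the fact that $\ell\le\epsilon$.

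The main obstacle is this comparison of truncations. An old arc ending on $\gamma\in\Gamma$ is truncated at $\wabstract(\ell_X(\gamma))$, while a new endpoint on $\alpha\in\Gamma'$ is truncated at $\wabstract(\ell_X(\alpha))$. Moreover, the portion of $a$ inside the collar of $\alpha$ must actually be discarded rather than counted. To get this, I would use that the old truncated subarc already avoids the $\wabstract$-collar of $\alpha$ except near where it crosses $\alpha$. The new truncated piece is then a subarc of the old truncated arc, extended by at most a path of length $\le\epsilon$ along $\partial\mathcal{C}_{\wabstract}(\alpha)$ or $\alpha$ to reach the orthogonal foot. The length of that boundary path is at most $\ell\cosh(\wabstract(\ell))\le h_0$-type bounds. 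I will need to check that this is at most $\epsilon$ in the truncation normalisation, or else absorb it into the $4L_A$ term.
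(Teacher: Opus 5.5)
Your route is genuinely different from the paper's. The paper never modifies $\mathcal{A}$. With $\Gamma_1$ the set of all curves of length $\le\epsilon$, it observes that $\partial X_0\subseteq\partial X_1$, so every point of the $(\Gamma_1,w)$-truncated surface $X_1$ lies within $\mathcal{L}_A'=2L_A+\epsilon/2$ of $\partial X_1$. It then takes for $\mathcal{A}_1$ the arcs dual to the cut locus of $X_1$ relative to $\partial X_1$, adding diagonals at vertices of valence $>3$. Each such arc is homotopic to a concatenation of two shortest paths to $\partial X_1$, which gives $2\mathcal{L}_A'=4L_A+\epsilon$. You instead cut the old arcs along $\Gamma'$, a multi-curve, multi-crossing version of the curve-addition move. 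This can be completed, and even yields $\ell^{w}_{X,\Gamma_1}(b)\le L_A$. As written, however, the two steps that carry the proof are left open.

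\textbf{The single-crossing attempt.} Drop it. Between two consecutive crossings $a$ must leave the collar of $\alpha$: a lift of $a$ meets each lift of $\alpha$ at most once, and distinct lifts of the collar are disjoint. Moreover the hypotheses do not exclude several crossings when $L_A\ge 4\wstd(\ell_X(\alpha))$.

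\textbf{The completion step.} It is never analysed, and your perimeter bound is unjustified: around a cusp corner a boundary path must follow a horocyclic segment of length up to $h_0(w)$, not $\le\epsilon$. In fact nothing needs completing.
\begin{itemize}
\item For an old hexagon $H$, a segment of $\Gamma'\cap H$ cannot end on a side lying on $\Gamma$ or at a cusp. It cannot return to the arc-side it left either, since geodesics form no bigons. So it joins two distinct arc-sides and cuts off the boundary side between them.
\item Hence $H\smallsetminus\Gamma'$ consists of corner regions and strips, plus exactly one central region, which is a topological hexagon. Each corner region or strip is a rectangle with two sides on $\Gamma_1$ or at a cusp, and its two arc-sides are isotopic pieces.
\item After removing duplicates, the complementary regions are exactly the $4g-4+2n$ central hexagons. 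So the family is already maximal.
\item An Euler characteristic count then shows that no two remaining arcs are isotopic and none is inessential.
\end{itemize}

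\textbf{The truncation comparison.} The point you flag is immediate.
\begin{itemize}
\item By Theorem~\ref{CollarThm}, each $\alpha\in\Gamma'$ misses the $w$-collars of $\Gamma$ and the $h_0(w)$-cusp neighbourhoods.
\item So every crossing point lies on $a^{w}_{\Gamma}$, which is a single segment by the claim in the proof of Lemma~\ref{TruncOrthoSpectrum}.
\item Hence every piece, stopped where $a$ enters an old truncation neighbourhood, has length $\le L_A$.
\item Its lift joins the two lifted truncation neighbourhoods (equidistant neighbourhoods of lifts of curves of $\Gamma_1$, or horoballs), which are convex. The truncated length of the new orthogeodesic is exactly the distance between them.
\end{itemize}
So no detour along $\partial\mathcal{C}_{w}(\alpha)$ is needed.

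Comparing the two routes: the paper's spine construction is canonical and uses only a covering-radius bound for $X_1$, with no bookkeeping of how $\Gamma'$ meets $\mathcal{A}$. Once these points are filled in, your construction is more elementary. It gives the constant $L_A$ instead of $4L_A+\epsilon$, and it produces $\mathcal{A}_1$ directly from $\mathcal{A}$ by cutting.
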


\begin{proof}
    Let $X\in \teich_{g,n}$ and let $(\Gamma, \mathcal{A})$ be a $(\epsilon, L_A, \wabstract)$-short hexagon decomposition of $X$ such that $\ell_X(\gamma) \leq \epsilon$ for all $\gamma \in \Gamma$. Let $\Gamma_1$ be the set of all (disjoint) simple closed curves on $X$ with length at most $\epsilon$:
    \[
        \Gamma_1=\left\{ \alpha \text{ a simple closed curve} \mid \ell_X(\alpha) \leq \epsilon \right\}.
    \]
    By construction, $\Gamma \subseteq \Gamma_1$ and $\Gamma_1$ satisfies conditions (1) and (3) of the Lemma. 
    
    Consider the $(\Gamma_1,\wabstract)$-truncation of $X$,
    \[
        X_{1}=X-\left(\bigcup_{i=1}^n N_{h_0(\wabstract)}(c_i) \cup \bigcup_{\gamma\in\Gamma_1} \mathcal{C}_{\wabstract(\ell_X(\gamma))}(\gamma)\right),
    \]
    and its $(\Gamma,\wabstract)$-truncation
    \[
        X_{0}=X-\left(\bigcup_{i=1}^n N_{h_0(\wabstract)}(c_i) \cup \bigcup_{\gamma\in\Gamma} \mathcal{C}_{\wabstract(\ell_X(\gamma))}(\gamma)\right).
    \]
    Let $x\in X_0$, then $x$ belongs to some hexagon $H \in (\Gamma, \mathcal{A})$. Since $(\Gamma, \mathcal{A})$ is short on $X$, and $\ell_X(\gamma) \leq \epsilon$ for all $\gamma \in \Gamma$, the distance from $x$ to $\partial H$ is bounded by half of sum of the length of three consecutive sides
    \[
        d(x, \partial H) \leq \frac{1}{2}\left( 2L_A + \epsilon\right),
    \]
    therefore 
    \[
        d(x, \partial X_0) \leq d(x, \partial H) + L_A \leq \mathcal{L}_A' \coloneq 2 L_A + \frac{\epsilon}{2}.
    \]
    
    Because the $\wabstract$-collars around the curves in $\Gamma_1$ are mutually disjoint, we obtain $\partial X_0 \subseteq \partial X_1$. Consequently, for any point $x \in X_1$ we have:
    \begin{equation}\label{bound_arc}
        d(x, \partial X_1) \leq d(x, \partial X_0) \leq \mathcal{L}_A'.
    \end{equation}

    Let $B_1$ denote the collection of boundary components of $X_1$. We consider the Voronoï cell decomposition of $X_1$ with respect to $B_1$ as defined in~\cite{BowditchEpstein1988},~\cite[Section 2.1]{Mondello2009} and~\cite[Section 2.2]{Budd2025}. We present briefly the construction, we proceed as in~\cite[Theorem 1.3]{Parlier2016} and~\cite[Proposition 2.8]{Bershear}. The valence of a point $x\in X_1$, denoted by $\nu(x)$ is the number of shortest geodesics from $x$ to $B_1$ realizing the distance $d(x,B_1)$. Note that $\nu(x) \geq 1$ and two distinct shortest geodesics at one point can end on the same boundary component. Define 
    \[ 
        V= \left\{x \in X_1 \mid \nu(x)\geq 3 \right\} \text{ and } N= \left\{x \in X_1 \mid \nu(x)=2 \right\}. 
    \]
    Then $V$ is a finite collection of points called \emph{vertices} while $N$ is a finite disjoint union of simple open geodesic arcs called \emph{edges}. Their union $V \cup N$ is a $1$-dimensional CW-complex called the cut locus (or spine) of $X_1$. For $e \in N$, let $\beta_e$ be an edge of the cut locus of $X_1$. Pick any point $x \in \beta_e$ and let $\gamma_1$ and $\gamma_2$ be the two shortest geodesics joining $x$ with $B_1$. Let $\alpha_e$ be the orthogeodesic arc in the isotopy class of $\gamma_1 \cup \gamma_2$, that is meeting $B_1$ perpendicularly. We say that $\alpha_e$ is dual to $\beta_e$. Recall that by construction of $X_1$, $\alpha_e$ has length at most $2 \mathcal{L}_A'$ since $x$ lies at distance at most $\mathcal{L}_A'$ from $\partial X_1$. If the cut locus of $X_1$ is trivalent, that is if all the vertices have valence 3, then the collection of dual arcs $(\alpha_e)_{e\in N}$ forms a valid hexagon decomposition of $X_1$ and is called the \emph{dual graph} of the spine. If $V$ contains a vertex $v$ of valence $\nu(v)=k > 3$, we add arcs to its dual graph as follows. Let $(a_i)_{i=1}^{k}$ be the family of shortest geodesic from $v$ to $B_1$. Consider the isotopy classes of the $k-3$ concatenations $a_1 \star a_3, \dots, a_1 \star a_{k-1}$. Then we add to the dual graph the orthogeodesic representatives from each isotopy classes. Repeating this process for all vertices of degree greater than $3$ yields an augmented dual graph that defines a valid hexagon decomposition of $X_1$. Since each $a_i$ has length at most $ \mathcal{L}_A'$, the new arcs have lengths at most $2 \mathcal{L}_A'$ on $X_1$. 
    
    Since $X_1$ was obtained by removing disjoint collars and cusp neighborhoods, we can continuously extend the arcs of this augmented dual graph from $X_1$ back across the collars to the full surface $X$. This construction yields an arc set $\mathcal{A}_1$ such that $(\Gamma_1, \mathcal{A}_1)$ forms a hexagon decomposition of $X$. By construction, any curve $\gamma \in \Gamma_1$ satisfies $\ell_X(\gamma) \leq \epsilon$, all simple closed curves shorter than $\epsilon$ are contained in $\Gamma_1$, and the truncated length of any arc $a \in \mathcal{A}_1$ satisfies $\ell_{X, \Gamma_1}^{\wabstract}(a) \leq 2 \mathcal{L}_A'$, completing the proof.

\end{proof}

For a closed hyperbolic surface $X$ and a point $x \in X$, the injectivity radius, denoted by $\inj_X(x)$, is the radius of the largest embedded geodesic ball centered at $x$. By~\cite[Lemma 4.1.5]{Buser} the injectivity radius is half the length of the shortest non-contractible closed loop passing through $x$.

Let $\epsilon \leq \epsilon_0$, the $\epsilon$-thick part of $X$, denoted by $X_{\geq \epsilon}$, is the subsurface where the injectivity radius is at least $\epsilon$: 
\[
    X_{\geq \epsilon}= \{ x \in X \mid \inj_X(x) \geq \epsilon\}.
\]
The complement of this region is the $\epsilon$- thin part of the surface, denoted by $X_{\leq \epsilon}$, which consists of disjoint collars around short geodesics and neighborhoods of cusps. If an essential simple closed curve $\gamma$ lies entirely inside the $\epsilon$-thick part $X_{\geq \epsilon}$, then its hyperbolic length satisfies: \[ \ell_X(\gamma)\geq 2\epsilon.\]

Grafting along very short curves produces long hyperbolic collars. As a result, the change in the metric is absorbed inside these long collars, and the effect of grafting decays exponentially as one moves away from the grafting locus. Consequently, curves lying in the thick part of the surface---away from the grafting locus---are unaffected by the deformation that is any curve initially in the thick part remains in the thick part. 

\begin{lemma}\label{LemmaStayThick}
    Given $\epsilon \leq \epsilon_0/2$ and $X \in \overline{\teich}_{g,n}$, let $(\Gamma, \mathcal{A})$ be a hexagon decomposition of $X$ satisfying the following conditions:
    \begin{itemize}
        \item $\ell_X(\gamma) \leq \epsilon$ for all $\gamma \in \Gamma$,
        \item $\ell_X(\beta) > \epsilon$ for every simple closed curve $\beta \notin \Gamma$.
    \end{itemize}
    For $t\geq 0$, let $X_t \coloneq \gr_{t\Gamma}(X)$. Then there exists a constant $\epsilon' > 0$, depending only on $\epsilon$, such that for all $t \geq 0$ and all simple closed curves $\alpha$ within $X_t \smallsetminus \Gamma$ we have,
    \[
        \ell_{X_t}(\alpha) \geq \epsilon',
    \]
    where $\epsilon'$ satisfies the lower bound
    \[
        \epsilon' \;\geq\; \frac{1-\sinh\left(\frac{\epsilon_0}{4}\right)}{1+\sinh\left(\frac{\epsilon_0}{4}\right)}
        \frac{\epsilon}{\sqrt{1+3\coth^2\left( \frac{\epsilon_0}{8}\right)}}
        \;\geq\;\frac{\epsilon}{21.5}.
    \]
\end{lemma}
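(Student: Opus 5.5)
We plan to compare lengths on $X_t$ and on $X$ using the Thurston metric, and to show that a short curve $\alpha$ on $X_t$ disjoint from $\Gamma$ must have a representative staying in a region where the grafted metric is uniformly comparable to the hyperbolic metric of $X$.

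\emph{Step 1: where the grafting acts.} Since $\ell_X(\gamma)\leq\epsilon\leq\epsilon_0/2$ for every $\gamma\in\Gamma$, the standard collars have width $\wstd(\ell_X(\gamma))\geq\wstd(\epsilon_0/2)$, which is large. Let $\alpha$ be a simple closed curve in $X_t\smallsetminus\Gamma$. Then $\alpha$ is not homotopic into a collar of $\Gamma$, since the only essential curves in those collars are the curves of $\Gamma$. By the second hypothesis and the collar theorem (Theorem~\ref{CollarThm}), the geodesic representative $\alpha^*$ on $X$ has length $>\epsilon$. Moreover, by the usual thick--thin argument, $\alpha^*$ enters each collar $\Cs(\gamma)$ only by crossing it; but $\alpha$ is disjoint from $\Gamma$, so $\alpha^*$ meets at most the outer portions of the collars, within a bounded depth.

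\emph{Step 2: comparing the metrics.} On $X_t$ we have the conformal factor relating the hyperbolic metric $\rho_t$ to the Thurston metric $\rho_{\Th}$: we always have $\rho_t\leq\rho_{\Th}$. For the lower bound, we use the Schwarz--Pick lemma, applied to the inclusion of the non-grafted region, together with an explicit comparison. Concretely, the grafted collar of $\gamma$ on $\Grr_{t\Gamma}(X)$ is conformally an annulus containing the original hyperbolic half-collars of width $\wstd(\ell_X(\gamma))$ on each side. Let $p$ be a point at distance $d$ from the boundary of that annulus region, lying in the hyperbolic part. Comparing with the hyperbolic metric of the round annulus, we get that $\rho_t(p)/\rho_X(p)$ is bounded below by a quantity of the form
\[
\frac{1-\sinh(\epsilon_0/4)}{1+\sinh(\epsilon_0/4)}.
\]
This comes from a Koebe/Schwarz-type distortion estimate for a disc of radius $\epsilon_0/4$ embedded around $p$. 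The embedded disc exists because the points of $\alpha^*$ lie in the $\epsilon_0/4$-thick part of $X$, or at bounded depth in collars.

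\emph{Step 3: the curve bound.} Take the $\rho_t$-geodesic representative $\alpha_t$. If $\alpha_t$ stays out of the deep part of the grafted cylinders, integrate the pointwise comparison to obtain $\ell_{X_t}(\alpha)\geq c\,\ell_X(\alpha^*)\geq c\,\epsilon$. Otherwise, $\alpha_t$ passes through a Euclidean cylinder or a deep collar part without being homotopic to its core. Then it must cross it, and in Fermi coordinates the length it picks up is at least a multiple of the length computed with the metric $d\rho^2+\ell^2\cosh^2\rho\,dt^2$. The factor $\sqrt{1+3\coth^2(\epsilon_0/8)}$ is where I expect the comparison between a transversal and a tangential direction in the collar to enter: it is the operator-norm comparison between Fermi-coordinate and Euclidean lengths at depth $\epsilon_0/8$.

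The main obstacle is Step 2, namely obtaining a uniform lower bound on $\rho_t/\rho_{\Th}$ on the thick part, independent of $t$. I would first try the Schwarz lemma in the annulus cover associated with $\gamma$, where the hyperbolic metric is explicit. Finally, we check numerically that the product of constants is at least $1/21.5$.
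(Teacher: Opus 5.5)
\textbf{There is a genuine gap: Step~2, which carries all the content, is missing.}

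Your overall architecture matches the paper's. You write $\rho_{X_t}=e^{-2v}\rho_{\Th}$ with $v\ge 0$, bound $e^{-2v}$ from below (independently of $t$) on the region where the $X_t$-geodesic $\alpha_t$ lives, and integrate using $\int_{\alpha_t}\rho_{\Th}\ge \ell_X(\alpha)>\epsilon$. But the tool you propose for Step~2 points the wrong way. Schwarz--Pick applied to an inclusion $U\hookrightarrow X_t$ (the non-grafted region, or a round annulus) gives $\rho_{X_t}\le \rho_U$. That is an \emph{upper} bound on $\rho_{X_t}$, of the same kind as $\rho_{X_t}\le\rho_{\Th}$, not the lower bound on $\rho_{X_t}/\rho_{\Th}$ you need. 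A lower bound requires control of the projective structure itself.

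The ``Koebe distortion on a disc of radius $\epsilon_0/4$'' and the ``Fermi-coordinate operator norm at depth $\epsilon_0/8$'' are read off from the constants in the statement rather than derived. Your Step~3 case distinction is also unnecessary. Since $\alpha$ is disjoint from $\Gamma$, the collar theorem on $X_t$ places $\alpha_t$ entirely in $\hat X_t = X_t\smallsetminus\bigcup_{\gamma\in\Gamma}\Cs_{\wstd}(\gamma)$. So it never crosses a grafting cylinder or a deep collar, and no transversal-versus-tangential comparison enters.

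\textbf{The two ideas you are missing.}
\begin{enumerate}
\item Outside the grafting cylinders, $v$ satisfies $\Delta_{\Th} v=\tfrac12(1-e^{-4v})\ge 0$, so it is subharmonic. By the maximum principle its maximum over $\hat X_t$ is attained on the collar boundaries $\partial\Cs_{\wstd}(\gamma)$. This reduces everything to a pointwise estimate there and removes any dependence on $t$ or on where $\alpha$ sits.
\item Take $z_0\in\partial\Cs_{\wstd}(\gamma)$ and set $\delta=\epsilon/2$, $R_\delta=\log\bigl(1/\sinh(\epsilon/2)\bigr)$. The injectivity radius is at least $\delta$ on the ball $B(z_0,R_\delta)$: inside the collar this follows from explicit collar geometry, outside it from the thickness hypothesis. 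Bridgeman--Brock--Bromberg then bound the Schwarzian of the developing map of $\Grr_{t\Gamma}(X)$ on that ball by $K(\delta)=\tfrac32\coth^2(\delta/2)$. Their comparison theorem gives $\rho_{\Th}(z_0)\le \sqrt{1+2K(\delta)}\,\coth(R_\delta/2)\,\rho_{X_t}(z_0)$.
\end{enumerate}

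This is where the factors in the statement come from: $\coth(R_\delta/2)=\frac{1+\sinh(\epsilon/2)}{1-\sinh(\epsilon/2)}$ and $\sqrt{1+3\coth^2(\epsilon/4)}$. They arise from a Schwarzian bound, not from a Fermi-coordinate comparison.

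Note that these factors are evaluated at $\epsilon$, not $\epsilon_0$. Since $\coth$ is decreasing and $\epsilon/4\le\epsilon_0/8$, one has $\sqrt{1+3\coth^2(\epsilon/4)}\ge\sqrt{1+3\coth^2(\epsilon_0/8)}$. So the $\epsilon_0$-form of the constant, and the bound $\epsilon/21.5$ in the statement, follow from this argument only when $\epsilon=\epsilon_0/2$. Do not tune your estimates to reproduce them.
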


\begin{proof}
    Let $X\in \overline{\teich}_{g,n}$ and let $(\Gamma, \mathcal{A})$ be a hexagon decomposition of $X$ satisfying the hypotheses of the lemma. Fix a simple closed curve $\alpha$ disjoint from $\Gamma$. Let $t\geq 0$ and set 
    \[
        X_t = \gr_{t\Gamma}(X).
    \] 
    Denote by $\rho_{\Th}$ and $\rho_{X_t}$ the  line element of, respectively, the Thurston projective metric and the hyperbolic metric on $X_t$. The Thurston metric is conformally equivalent to the hyperbolic metric on $X_t$~\cite{ChoiDumasRafi2012} thus 
    \[
        \rho_{X_t}=e^{2u}\rho_{\Th},
    \] 
    where $u$ is a real-valued function. Since the hyperbolic metric is smaller than the Thurston projective metric on $X_t$, we have $ \rho_{X_t} \leq \rho_{\Th}$ and so
        \begin{align*}
             u \leq 0.
        \end{align*}  
    Let $v\coloneq -u $ which implies $v\geq 0$. The Gaussian curvature of $\rho_{\Th}$ is well-defined everywhere except at the boundary of the grafting cylinders: its value is 0 inside the grafting cylinder and -1 outside. Hence, outside the grafting cylinders, by the conformal change of curvature, the density function satisfies
    \[
        v\geq 0, \qquad \Delta_{Th}v = \frac{1}{2} \left(1-e^{-4v}\right) \geq 0
    \]  
    where $\Delta_{Th}$ is the Laplacian operator with respect to the Thurston metric. Thus $v$ is a subharmonic function. While inside the grafting cylinders the density function satisfies, 
    \[
        \Delta_{Th}v = -\frac{1}{2}e^{-4v}\leq 0
    \] 
    and so $v$ is a superharmonic function.
    
    Consider the truncated surface $\hat{X_t}$ obtained from $X_t$ by removing the standard collars around the simple closed curves in $\Gamma$:
    \[
        \hat{X_t}=X_t\smallsetminus \cup_{\gamma \in \Gamma} \mathcal{C}_{\wstd(\ell_{X_t}(\gamma))}(\gamma).
    \] 
    Note that the symmetric grafting cylinder might not be entirely contained in the standard collar after uniformization. However, inside the grafting cylinder, the function $v$ is strictly decreasing as we move away from the core curve. Furthermore, by the maximum principle, the maximum of the subharmonic function $v$ outside the grafting cylinders must be attained on the boundary. Therefore on $\hat{X_t}$, the extremal value of $v$ is achieved on the boundary closest to the grafting locus. 

    Let $\delta=\frac{\epsilon}{2}$ and define
    \[
        F(\ell)=\wstd(\ell)-\arccosh\left(\frac{\sinh\left(\delta\right)}{\sinh\left(\frac{\ell}{2}\right)}\right),
        \qquad
        R_{\delta}=\inf_{\ell\in(0,\epsilon]} F(\ell),
    \]
    and
    \begin{equation}\label{defintion_K_delta}
        K(\delta)\coloneq\frac{3}{2}\coth^2\left(\frac{\delta}{2}\right)=\frac{3}{2}\coth^2\left(\frac{\epsilon}{4}\right).
    \end{equation}
    \begin{claim}\label{claimC}
        For every $\gamma\in\Gamma$, the conformal factor satisfies 
        \[
            e^{-2\max_{z_0\in\partial \mathcal{C}_{\wstd(\ell_t(\gamma))}(\gamma)}v(z_0)}
            \;\geq\;
            \frac{1}{\sqrt{1+2K(\delta)}\,\coth\left(\frac{R_{\delta}}{2}\right)} .
        \]
    \end{claim}
    \begin{proofclaim}
        We follow closely~\cite[Proof of Lemma 5.20]{CremaschiGiovannini2025}. The function $F$ is increasing on $\ell\in(0,\epsilon]$, with $F(\epsilon)=\wstd(\epsilon)$, we obtain  
        \[
            \lim_{\ell\to 0}F(\ell)
            =\log\left(\frac{1}{\sinh(\delta)}\right),
        \]
        so that the infimum is not attained and
        \begin{equation}\label{bound_R_delta}
            R_{\delta}=\log\left(\frac{1}{\sinh\left(\frac{\epsilon}{2}\right)}\right)>0,
            \quad\text{equivalently}\qquad
            \coth\left(\frac{R_{\delta}}{2}\right)=\frac{1+\sinh\left(\frac{\epsilon}{2}\right)}{1-\sinh\left(\frac{\epsilon}{2}\right)}.
        \end{equation}
        Fix $\gamma \in \Gamma$, let $z_0\in \partial \mathcal{C}_{\wstd(\ell_X(\gamma))}(\gamma)$ and consider a point $p\in B(z_0, R_{\delta}) \cap \mathcal{C}_{\wstd(\ell_{X_t}(\gamma))}(\gamma)$. Let $d_p$ be the distance from $p$ to the boundary $\partial \mathcal{C}_{\wstd(\ell_X(\gamma))}(\gamma)$, let $\ell=\ell_{X_t}(\gamma)$. By~\cite[Lemma 4.1.5]{Buser}, the injectivity radius at $p$ satisfies:
        \begin{align*}
            \inj(p)&= \arcsinh\left(\sinh\left(\frac{\ell}{2}\right) \cosh\left(\wstd(\ell)-d_p \right)\right) \eqcolon G(\ell, d_p) 
        \end{align*}

        Observe that $G(\ell,x)$ is continuous and decreasing with respect to $x$. Since $p\in B(z_0, R_{\delta}) \cap \mathcal{C}_{\wstd(\ell_X(\gamma))}(\gamma)$, we have $d_p\leq R_{\delta}$. Therefore: 
            \begin{align*}
                \text{inj}(p) &\geq \text{arcsinh}\left( \sinh\left(\frac{\ell}{2}\right) \cosh(\wstd(\ell) - R_{\delta}) \right) \\
                &\geq \text{arcsinh}\left( \sinh\left(\frac{\ell}{2}\right) \cosh(\wstd(\ell) - F(\ell)) \right) \\
                &\geq \text{arcsinh}\left( \sinh\left(\frac{\ell}{2}\right) \cdot \frac{\sinh(\delta)}{\sinh\left(\frac{\ell}{2}\right)} \right) \\
                &= \text{arcsinh}(\sinh(\delta))= \delta.
            \end{align*} 
        
    Now let $f$ be the developing map of the complex projective structure $Gr_{t\Gamma}(X)$. Consider the restriction of $f$ to the long complex projective tube $f:\Delta\rightarrow \Xi $. By~\cite[Section 5.1]{CremaschiGiovannini2025}, its image $\Xi $ is not simply connected. Let $\Schw(f)$ be the Schwarzian derivative of $f$. By~\cite[Corollary 2.12]{BridgemanBrockBromberg2019}, for any $z\in \Xi$ we have
        \begin{align*}
            \|\Schw(f)(z)\| \leq \frac{3}{2}\coth^2\left(\frac{\inj(z)}{2}\right).
        \end{align*}
    In particular for $z_0\in \partial\mathcal{C}_{\wstd(\ell_X(\gamma))}(\gamma)$ we established that $\inj(p)\geq \delta$ for all $p\in B(z_0, R_{\delta}) \cap \mathcal{C}_{\wstd(\ell_X(\gamma))}(\gamma)$. Then take $p \in B(z_0, R_{\delta}) \smallsetminus \mathcal{C}_{\wstd(\ell_X(\gamma))}(\gamma)$. Since every simple closed curve outside $\Gamma$ has length $>\epsilon$, such $p$ lies outside the $\epsilon/2$-thin part and so $\inj(p)\geq \epsilon/2=\delta$. Consequently for all $p\in B(z_0, R_{\delta})$, 
    \begin{equation}~\label{defintion_K_delta}
        \|\Schw(f)(p)\| \leq \frac{3}{2}\coth^2\left(\frac{\delta}{2}\right) \eqcolon K(\delta).
    \end{equation}
    Applying~\cite[Theorem 2.8]{BridgemanBrockBromberg2019}, we obtain 
    \[
        \rho_{\Th}(z_0) \leq \rho_{X_t}(z_0)\sqrt{1+2K(\delta)}\coth\left(\frac{R_{\delta}}{2}\right).
    \] 
    Substituting in $\rho_{X_t}=e^{-2v}\rho_{\Th}$ yields the bound:
        \[ 
            e^{-2v(z_0)} \geq \frac{1}{\sqrt{1+2K(\delta)}\coth\left(\frac{R_{\delta}}{2}\right)},
        \]
        for any $z_0\in \partial\mathcal{C}_{\wstd(\ell_X(\gamma))}(\gamma)$, which proves the claim.
    \end{proofclaim}
    
    Finally, to finish the proof of the lemma, let $\alpha$ be a simple closed curve such that $\alpha \cap \Gamma= \emptyset$. By the hypotheses $\ell_X(\alpha) > \epsilon$. Let \begin{enumerate}
        \item[-] $\alpha^*$ be the geodesic representative of $\alpha$ with respect to $\rho_{\Th}$,
        \item[-] $\alpha^*_t$ be the geodesic representative of $\alpha$ with respect to $\rho_{X_t}$
    \end{enumerate}    
    Since $\alpha$ is disjoint from any curve $\gamma \in \Gamma$, by the Collar Theorem, $\alpha^*_t$ is disjoint from its standard collar $\mathcal{C}_{\wstd(\ell_{X_t}(\gamma))}(\gamma)$ on $X_t$. Hence $\alpha$ is entirely contained within $\hat{X_t}$. Integrating along $\alpha$ yields:
        \begin{align*}
            \ell_{X_t}(\alpha) &= \ell_{X_t}(\alpha^*_t) = \int_{\alpha^*_t}\rho_{X_t} = \int_{\alpha^*_t}e^{-2v}\rho_{\Th} \\ 
            &\geq e^{-2\max_{z \in \alpha^*_t} v(z)} \int_{\alpha^*_t} \rho_{\Th}(z)  \\
            &\geq e^{-2\max_{z \in \partial \mathcal{C}(\gamma')} v(z)} \int_{\alpha^*} \rho_{\Th}(z) =  e^{-2\max_{z \in \partial \mathcal{C}(\gamma)} v(z)} \ell_{X}(\alpha^*)\\
            &\geq \frac{ \ell_{X}(\alpha^*)}{\sqrt{1+2K(\delta)}\coth\left(\frac{R_{\delta}}{2}\right)} > \frac{\epsilon}{\sqrt{1+2K(\delta)}\coth\left(\frac{R_{\delta}}{2}\right)} \eqcolon \epsilon'
        \end{align*}    
    where the third inequality follows from Claim~\ref{claimC}. Finally substituting~(\ref{bound_R_delta}) and (\ref{defintion_K_delta}) gives:
        \[
            \epsilon' \;\geq\; \frac{1-\sinh\left(\frac{\epsilon_0}{4}\right)}{1+\sinh\left(\frac{\epsilon_0}{4}\right)}\cdot
            \frac{\epsilon}{\sqrt{1+3\coth^2\left( \frac{\epsilon_0}{8}\right)}}
            \;\geq\;\frac{\epsilon}{21.5}
        \]
    for $\epsilon=\epsilon_T$ we obtain $\epsilon_T' \approx 0.0191656\ldots\geq 0.0191$.
\end{proof}

\subsection{Controlled grafting path to the thick part}\label{sec:controlled} 
We use the results from the previous subsection to project any point in the augmented Teichmüller space into the thick part of a stratum, while controlling both the Weil--Petersson distance to the projected point and the shearing coordinates of the arrival point.

\begin{proposition}\label{PropProjectThickStartum}
    Let $X\in \overline{\teich}_{g,n}$. Then there exists a hexagon decomposition $(\Gamma_1, \mathcal{A}_1)$ of $X$ and a noded surface $X_\infty$
    such that\begin{enumerate}
        \item $\shear_{\mathcal{A}_{1\infty}}(X_\infty) \leq 28\log(8\pi(2g-2+n))+247,$
        \item  $X_\infty$ lies in the $\epsilon_T'$-thick part of the stratum $S(\Gamma_1)$, where $\epsilon_T'\approx 0.0191$ is a universal constant, independent of the topology.
        \item $\dwp(X,X_{\infty})\leq 5 \sqrt{2\pi(2g-2+n)\log(8\pi(2g-2+n))}$.
    \end{enumerate}
\end{proposition}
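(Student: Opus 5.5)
We follow the strategy outlined at the start of Section~\ref{sec:project}, combining the two-stage grafting of Proposition~\ref{GraftingShear} with Lemmas~\ref{LemmaNiceHexDecomp} and~\ref{LemmaStayThick}.

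\emph{Stage one.} By Corollary~\ref{ShortHexagonDecompositionExistence}, choose a short hexagon decomposition $(\Gamma_0,\As_0)$ of $X$. If $X$ lies in a stratum, we work factor by factor, since each stratum is a product of Teichmüller spaces. Let $\Gamma_0'\subseteq\Gamma_0$ be the curves of length $>\epsilon_T$. Graft along $\Gamma_0'$ for time $t_0=\pi(L/\epsilon_T-1)$. The first half of the proof of Proposition~\ref{GraftingShear} shows that on $X_{t_0}$ every curve of $\Gamma_0$ has length $\le\epsilon_T$. It also shows that $(\Gamma_0,\As_0)$ is $(\epsilon_T,L_A',\wtrunc)$-short with $L_A'=L_A+2\log L+8$.

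\emph{Enlarging the multicurve.} Apply Lemma~\ref{LemmaNiceHexDecomp} on $X_{t_0}$ with $\epsilon=\epsilon_T\le\epsilon_0/2$. This gives $(\Gamma_1,\As_1)$ with $\Gamma_0\subseteq\Gamma_1$, all curves of $\Gamma_1$ of length $\le\epsilon_T$, and every other simple closed curve of length $>\epsilon_T$. The truncated arc lengths are at most $4L_A'+\epsilon_T$. Since $(\Gamma_1,\As_1)$ is a hexagon decomposition of the underlying topological surface, it is also one for $X$, as the statement requires.

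\emph{Stage two.} Graft $X_{t_0}$ along all of $\Gamma_1$ and let $X_\infty=\gr_{\infty\Gamma_1}(X_{t_0})\in S(\Gamma_1)$.

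For item (2), Lemma~\ref{LemmaStayThick} gives $\ell_{X_t}(\alpha)\ge\epsilon_T'$ for every $t$ and every simple closed curve $\alpha$ disjoint from $\Gamma_1$. We then pass to the limit $t\to\infty$. A closed geodesic of $X_\infty$ corresponds to a curve $\alpha$ in $\Sigma\smallsetminus\Gamma_1$, and lengths of such curves are continuous on $\overline{\teich}_{g,n}$ (Abikoff topology, as in Lemma~\ref{lem:truncated_length_continuous}). So $\ell_{X_\infty}(\alpha)\ge\epsilon_T'$.

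For item (1), we repeat the second half of the proof of Proposition~\ref{GraftingShear} with $(\Gamma_1,\As_1)$. The horizontal grafted segments eventually lie inside the standard collars, so for large $t$ the decomposition is $(\epsilon_T,\,4L_A'+\epsilon_T+2\Delta_{\wtrunc},\wtrunc)$-short. Theorem~\ref{BershearRmk2} then bounds the shears by
\[
4\epsilon_T+4(4L_A'+\epsilon_T+2\Delta_{\wtrunc})+3\epsilon_T+8\varrho .
\]
Substituting $L_A\le\log(8\pi m)+2\Delta_{\wtrunc}$ (or $L$) and $\log L\lesssim\log\log(8\pi m)\le\log(8\pi m)$ should give $28\log(8\pi m)+247$. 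Roger's continuity result~\cite{Roger2013} carries the bound to $X_\infty$.

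For item (3), we argue exactly as in Proposition~\ref{TravellingToStratum}. The first ray costs at most $2^{5/4}\sqrt\pi\sqrt{\tfrac32 m L}$. The second costs at most $2^{5/4}\sqrt\pi\sqrt{|\Gamma_1|\epsilon_T}$, with $|\Gamma_1|\le\tfrac32 m$. The same arithmetic then yields $5\sqrt{2\pi m\log(8\pi m)}$.

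\emph{Main obstacle.} The delicate step is item (1). The constants must be checked to fit exactly $28\log(8\pi m)+247$, which requires careful bookkeeping of $L_A'$ after the factor $4$ from Lemma~\ref{LemmaNiceHexDecomp}. It also requires justifying that the arcs of $\As_1$ ending on newly added curves of $\Gamma_1\smallsetminus\Gamma_0$ behave like those in Proposition~\ref{GraftingShear}, namely that the grafted part stays in the collar. The thickness limit in item (2) is secondary but also needs the continuity argument.
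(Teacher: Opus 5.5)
Your proposal is correct and follows the paper's proof step for step: a short hexagon decomposition; grafting the curves longer than $\epsilon_T$ for time $t_0=\pi(L/\epsilon_T-1)$; enlarging via Lemma~\ref{LemmaNiceHexDecomp}; grafting along all of $\Gamma_1$; then Lemma~\ref{LemmaStayThick} plus continuity of lengths for thickness, Theorem~\ref{BershearRmk2} plus Roger's continuity for the shears, and Proposition~\ref{WPGraftingRay} on both rays for the distance. On the constant bookkeeping you flag (with $m=2g-2+n$), the shear bound is $11\epsilon_T+16L_A'+8\Delta_{\wtrunc}+8\varrho$ with $L_A'=L_A+2\log L+8$, and $28\log(8\pi m)+247$ is recovered only with $L_A=\log(8\pi m)+2\Delta_{\wtrunc}$ (using $32\log L\leq 12\log(8\pi m)+21$), not with $L_A=L$, which gives a leading coefficient of at least $32$.
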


\begin{proof}
    Let $X\in \overline{\teich}_{g,n}$, by Corollary~\ref{ShortHexagonDecompositionExistence} there exists a short hexagon decomposition $(\Gamma,\mathcal{A})$ of $X$. First, we graft along the curves of $\Gamma$ that are longer than $\epsilon_T$ in order to obtain a surface on which every curve of $\Gamma$ has length at most $ \epsilon_T$. 
    Let
    \[
        \Gamma_0=\{\gamma\in\Gamma : \ell_X(\gamma)>\epsilon_T \}.
    \]
    If $\Gamma_0=\varnothing$, set $X_{t_0}=X$. Otherwise define
    \[
        X_{t_0}=\gr_{t_0\Gamma_0}(X),
        \qquad
        t_0=\pi\Bigl(\frac{ L}{\epsilon_T}-1\Bigr).
    \]
    By proof Proposition~\ref{GraftingShear} the hexagon decomposition $(\Gamma,\mathcal{A})$ is $(\epsilon_T, L_A', \wtrunc)$-short on $X_{t_0}$ where $L_A'=L_A + 2\log(L) + 8$. By Lemma~\ref{LemmaNiceHexDecomp}, there exists an $(\epsilon_T, L_A'', \wtrunc)$-short hexagon decomposition $(\Gamma_1, \As_1)$ on $X_{t_0}$ where 
    \[ 
        L_A'' = 4L_A' + \epsilon_T = 4L_A + 8\log(L) + 32 + \epsilon_T. 
    \] 
    Moreover, any simple closed curve $\alpha$ not in $\Gamma_1$ satisfies 
    \[
        \ell_{X_{t_0}}(\alpha) > \epsilon_T.
    \]

    Consider the grafting ray $(X_t)_{t\geq0}$ defined by $X_{t}=\gr_{t\Gamma_1}(X_{t_0})$. By Lemma~\ref{LemmaStayThick}, there exists a constant $\epsilon_T'\approx  0.0191$, depending only on $\epsilon_T$, such that for all $t\geq 0 $, any simple closed curve $\alpha$ in $ X_{t}\smallsetminus \Gamma_1$ satisfies 
    \[
        \ell_{X_t}(\alpha) \geq \epsilon_T'.
    \]
    The grafting ray $(X_t)_{t\geq0}$ converges to a noded surface $X_{\infty}$ in the stratum $S(\Gamma_1)$. By continuity of the length function on the augmented Teichmüller space we deduce that 
    \[
        \ell_{X_{\infty}}(\alpha) \geq \epsilon_T'.
    \]
    Therefore $X_{\infty}$ lies in the $\epsilon_T'$-thick part of the stratum $S(\Gamma_1)$.

    The next step is to bound, for large $t\geq 0$, the shear parameters with respect to $\mathcal{T}_{(\Gamma_1,\mathcal{A}_1)}$ on $X_{t}$. By Lemma~\ref{DiazKimLengthGr} for any $\gamma \in \Gamma_1$,
    \[
        \ell_{X_t}(\gamma) \leq \frac{\pi}{\pi+t}\ell_{X_{t_0}}(\gamma) \leq \epsilon_T'.
    \]
    By Theorem~\ref{BershearRmk2}, it suffices to bound the truncated length of each orthogeodesic arc $a \in \mathcal{A}_1$ on $X_{t}$. As in proof Proposition~\ref{GraftingShear}, if  $a\in \mathcal{A}$ does not end on curves in $\Gamma$ (that is, it ends in cusps) we obtain 
    \[
        \ell_{X_{t},\Gamma_1}^{\wtrunc}(a)\leq \ell_{X_{t_0},\Gamma_1}^{\wtrunc}(a) \leq L_A''.
    \]  
    Otherwise, the orthogeodesic arc $a$ decomposes into a portion outside the grafting cylinders and one or two horizontal segment(s) $\eta_{t,i}$ inside the grafted cylinders. Fix $0 < \tau < t$ then Lemma~\ref{DiazKimLengthGr2} implies that a horizontal segment $\eta_t$ satisfies

    \begin{equation}\label{length_horizontal_subarc2}
        \frac{1}{2}\ell_{X_t}(\eta_{t,i}) \leq \log\left(\cot \left(\frac{\tau \pi}{2 t}\right)\right) + \tau. 
    \end{equation}
    Let $\gamma \in \Gamma_1$ and recall that by Lemma~\ref{DiazKimLengthGr},
    \[
        \ell_{X_{t}}(\gamma)\leq\frac{\pi}{\pi+t}\ell_{X_{t_0}}(\gamma)\leq \frac{\pi}{\pi+t} \epsilon_T.
    \] thus it follows that 
    \begin{equation}\label{width_collar_comparison2}
        \wtrunc(\ell_{X_t}(\gamma)) \geq \wtrunc\left(\frac{\pi}{\pi+t}\epsilon_T \right)=\arccosh\left(\frac{\pi+t}{\pi}\right)
    \end{equation}
    and 
    \begin{equation}\label{width_std_collar_comparison2}
        \wstd(\ell_{X_t}(\gamma)) \geq \arcsinh\left(\frac{1}{\sinh\left(\frac{\pi}{\pi +t} \epsilon_T \right)}\right).
    \end{equation}
    By comparing the functions on the right hand-side of inequalities (\ref{length_horizontal_subarc2}), (\ref{width_collar_comparison2}) and (\ref{width_std_collar_comparison2}) we obtain that, on $X_t$, the grafted subarc stays inside the standard collar neighborhood of the geodesic representative of its endcurve(s) in $\Gamma$. Consequently, the grafted part contributes to at most $2\Delta_{\wtrunc}$ to the $\wtrunc$-truncated length of the arc $a$ and so for large enough $t$,
    \begin{equation}\label{boundArcTruncatedCVStratum}
         \ell_{X_t, \Gamma_1}^{\wtrunc}(a)\leq \ell_{X_{t_0}, \Gamma_1}^{\wtrunc}(a)+ 2\Delta_{\wtrunc} \leq L_A'' + 2\Delta_{\wtrunc}.
    \end{equation}
   Thus, for large enough $t$, the hexagon decomposition $(\Gamma_1, \mathcal{A}_1)$ is $(\epsilon_T, L_A'' + \Delta_{\wtrunc}, \wtrunc)$-short on $X_t$. By Theorem~\ref{BershearRmk2} we conclude that the shear parameters on $X_{t}$ with respect to $\mathcal{T}_{(\Gamma_1,\mathcal{A}_1)}$ are bounded by 
    \[
        28\log(8\pi(2g-2+n))+247.
    \]        
    Roger proved that shearing coordinates extend continuously to the augmented Teichmüller space under pinching~\cite[Proposition 6]{Roger2013}. The spiralling triangulation $\mathcal{T}_{(\Gamma_1,\mathcal{A}_1)}$ is the ideal triangulation ${\mathcal{A}_1}_\infty$. Since the shearing coordinates with respect to $\mathcal{T}_{(\Gamma_1,\mathcal{A}_1)}$ are uniformly bounded on $X_t$, the same bound holds at the limit:
    \[
        \shear_{{\mathcal{A}_1}_\infty}(X_\infty)
        \leq
        28\log\left(8\pi(2g-2+n)\right)+247.
    \]

   Finally we apply Proposition~\ref{WPGraftingRay} to estimate the WP distance between $X$ and $X_{\infty}$:

   \begin{align*}
    \dwp(X,X_{\infty}) &\leq  \dwp(X,X_{t_0}) + \dwp(X_{t_0},X_{\infty})\\
    &\leq 5\sqrt{2\pi(2g-2+n)\log(8\pi(2g-2+n))}.
   \end{align*}

    This completes the proof.
\end{proof}

\section{Estimates on Weil–Petersson distances from ideal triangulations}\label{sec:wpshear}
Proposition~\ref{PropTeichShear} gives a bound on the Teichmüller distance along a shearing deformation, but it depends on controlling all truncated lengths (or shear parameters). However, one might want to change only a few shear parameters at a time. We therefore need a local estimate for the Weil–Petersson cost of a shearing deformation. 

Let $\mathcal{T}$ be an ideal triangulation of a punctured hyperbolic surface $X$ with punctures $p_1,\dots, p_n$ and shear parameters $(s_{1}, \dots, s_{6g-6+3n})$ with respect to $\mathcal{T}$. Kahn and Markovi\'{c} define the \emph{oscillation norm} $\mathcal{O}_{\mathcal{T}}(X)$ as the supremum over the sums of shears along a tuple of consecutive edges ending at the same cusp~\cite{KahnMarkovic2008}, 
\[
    \mathcal{O}_{\mathcal{T}}(X) \coloneq \max_{i=1,\dots,n} \; \max_{1 \leq j \leq k \leq m_i} \left| \sum_{r=j}^k s_{\iota(i,r)} \right|,
\] 
where $m_i$ is the number of arc-ends at $p_i$ and $\iota(i,1),\dots,\iota(i,m_i)$ enumerate those arc-ends in the cyclic order around $p_i$. Hence $s_{\iota(i,j)}$ is the shear parameter of the edge carrying the $j$-th arc-end, an edge with both ends at $p_i$ contributes twice.  

If one surface has trivial shears and the other has uniformly bounded oscillation norm, then the Weil–Petersson distance between them is controlled by that oscillation norm and the area of the surface.

\begin{lemma}\label{WPshearingpath}
    Let $X,Y \in \teich_{g,n}$ with $2g-2+n>0$ and $n\geq 1$. Let $\mathcal{T}$ be an ideal triangulation on $X$ and $Y$ such that 
    \[
        \shear_{\mathcal{T}}(X)=0, \qquad \mathcal{O}_{\mathcal{T}}(Y)\leq M,
    \]
    where $M$ is a uniform constant indepedent of the topology. Then the Weil--Petersson distance between $X$ and $Y$ satisfies
    \[
        \dwp(X, Y) \leq C(M) \sqrt{2\pi(2g-2+n)}.
    \]
    where $C(M)$ is a constant depending only on $M$.
\end{lemma}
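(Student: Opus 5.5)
The plan is to go through quasiconformal maps, using the Kahn--Markovi\'c theory of shear-induced maps, and then convert to a Weil--Petersson bound with Linch's inequality $\dwp(X,Y)\leq\sqrt{\area(X)}\,\dteich(X,Y)$. The target bound is exactly of the form $C(M)\sqrt{\area(X)}$, so it suffices to bound $\dteich(X,Y)$ by a constant $D(M)$ depending only on $M$. In particular this constant must not depend on $g$ or $n$.

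\textbf{Step 1: lift to the Farey tessellation.} Lift $\mathcal{T}$ to $\Hh^2$. Since $\shear_{\mathcal{T}}(X)=0$, the lifted triangulation of $X$ is the Farey tessellation, and the deck group of $X$ acts on it by isometries. The shear parameters of $Y$ define a shear function $s$ on the edges of the Farey tessellation, invariant under this group. By hypothesis $\mathcal{O}_{\mathcal{T}}(Y)\leq M$. Because the oscillation norm only involves consecutive arc-ends at each cusp, it equals the Kahn--Markovi\'c oscillation of the lifted shear function. That lifted oscillation takes the supremum over fans of consecutive edges at each Farey vertex; at a lifted cusp the fan is bi-infinite and periodic, and its partial sums over a full period vanish because the shears around a cusp sum to zero. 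So the lifted shear function also has oscillation at most $M$.

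\textbf{Step 2: build an equivariant quasisymmetric map.} Invoke the theorem of Kahn--Markovi\'c \cite{KahnMarkovic2008}, also in the form of \v{S}ari\'c--Wang--Wolfram \cite{SaricWangWolfram2024}. It says that a shear function on the Farey tessellation with bounded oscillation $\leq M$ induces a quasisymmetric homeomorphism $h_s$ of $\partial\Hh^2$, with quasisymmetry constant depending only on $M$. The map $h_s$ sends the Farey tessellation to the tessellation with shears $s$, and it is canonical. By uniqueness, $h_s$ conjugates the Fuchsian group of $X$ to that of $Y$ once both are normalized at a base triangle.

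\textbf{Step 3: extend and pass to the quotient.} Apply the Douady--Earle extension to $h_s$. This gives a quasiconformal map $\Hh^2\to\Hh^2$ with dilatation $K(M)$, which is equivariant by conformal naturality of the extension. It descends to a $K(M)$-quasiconformal map $X\to Y$ in the correct homotopy class, since it respects the marking given by $\mathcal{T}$. Hence $\dteich(X,Y)\leq\tfrac12\log K(M)$, and Linch's inequality gives $\dwp(X,Y)\leq\tfrac12\log K(M)\sqrt{2\pi(2g-2+n)}$.

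\textbf{Main obstacle.} The key step is checking that the hypotheses of the Kahn--Markovi\'c / \v{S}ari\'c--Wang--Wolfram theorem are met. Their criterion may be phrased with a normalized oscillation, namely sums of $e^{s}$-type quantities or differences over fans, rather than the plain partial-sum bound used here. I would first verify that the paper's $\mathcal{O}_{\mathcal{T}}$ dominates their quantity up to a function of $M$, using that bounded partial sums of shears bound the ratios of adjacent horocyclic lengths by $e^{M}$. I would also check that their dilatation bound depends only on the oscillation and not on the combinatorics of the fan, in particular not on the valences $v(c)$, which can grow with $g+n$.
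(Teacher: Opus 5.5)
Your proof is correct, but it takes a different route from the paper.

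\textbf{The paper's route.} The paper never builds a single quasiconformal map $X\to Y$. It moves along the linear shear path $t\mapsto(t\,s_i)$. It builds explicit maps $S_t\to S_\tau$ cell by cell on the Voronoi cells around the punctures, following Kahn--Markovi\'c and \v{S}ari\'c--Wang--Wolfram. Differentiating in $\tau$ gives an infinitesimal Beltrami differential with $\|\mu_t\|_\infty\le C_2(M)\,\mathcal{O}_{\mathcal{T}}(Y)$. This holds uniformly in $t$, because the partial sums of $t s$ are bounded by $tM\le M$. The paper then bounds $\|\mu_t\|_{\WP}\le\|\mu_t\|_{L^2}\le\|\mu_t\|_\infty\sqrt{\area}$ and integrates over $[0,1]$.

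\textbf{Your route.} You get one $K(M)$-quasiconformal map in the right homotopy class. It comes from \v{S}ari\'c's shear criterion for quasisymmetry on the Farey tessellation, followed by the conformally natural Douady--Earle extension. You then apply Linch's inequality. The final step is the same in both proofs, since the paper's $L^2$ estimate is exactly the infinitesimal form of Linch's inequality. So the two arguments differ only in how they obtain topology-independent Teichmüller control.

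\textbf{What each buys.} Your route is shorter. It gives the stronger intermediate statement $\dteich(X,Y)\le\tfrac12\log K(M)$, without needing differentiability of the shear path or the derivative formulas of \v{S}ari\'c--Wang--Wolfram. The paper's route is constructive and local: $\mu_t$ is explicit and vanishes wherever the relevant partial sums vanish. That is what one would need to beat the $\sqrt{\area}$ factor by a genuine $L^2$ estimate, as in Kahn--Markovi\'c's original application. The lemma as stated does not exploit this.

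\textbf{Your flagged obstacle.} It resolves as you expected. \v{S}ari\'c's condition bounds, for every fan, ratios of the form
\[
\frac{1+e^{\sigma_1}+\dots+e^{\sigma_m}}{1+e^{-\sigma'_1}+\dots+e^{-\sigma'_m}},
\]
where each exponent is a partial sum of consecutive shears in the fan. If $|\sigma_j|,|\sigma'_j|\le M$, every such ratio lies in $[e^{-2M},e^{2M}]$ for every fan length. So the bound does not depend on the valences $v(c)$. Your periodicity and zero-sum argument also correctly shows that all lifted partial sums are bounded by $\mathcal{O}_{\mathcal{T}}(Y)$, including runs that wrap around and runs longer than one period.
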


\begin{proof}
    The proof follows very closely the construction of Kahn--Markovi\'{c}~\cite[Section 2]{KahnMarkovic2008} and \v{S}ari\'{c}--Wang--Wolfram ~~\cite[Lemma 5.1]{SaricWangWolfram2024}. We define a continuous path in Teichmüller space between the two surfaces, compute the Beltrami dilatation of a corresponding family of quasiconformal maps, and integrate its Weil–Petersson norm.
    
    Let $(s_i)_{i=1}^{6g-6+3n}$ denotes the shearing coordinates of $Y$ along $\mathcal{T}$. We define the shear path
    \[
        \psi \colon [0,1]\longrightarrow \overline{\teich}_{g,n}
    \] 
    where $\psi(t)$ corresponds to $S_t$ with shearing coordinates $t\cdot s_i$ with respect to $\mathcal{T}$. Since the shearing coordinates are analytic on the augmented Teichmüller space~\cite{Roger2013} the map $\psi$ is differentiable. To establish the distance bound, we must estimate the following integral 
    \[
        \dwp(X, Y) \leq \int_0^1 \left\lVert \frac{\partial \psi}{\partial t}(t) \right\rVert \,dt . 
    \]
    To compute the tangent vector to the shear path defined by $\psi$ at the point $S_t$ we construct a family of quasiconformal maps $f\colon S_t \longrightarrow S_{\tau}$ for $t,\tau \in [0,1]$. The tangent vector is obtained from the Beltrami dilatation of $f$: 
    \[
        \frac{\partial_{\bar{z}}f}{\partial_z f}(t,\tau,z)
    \] 
    by setting 
    \[
        \mu_t(z)\coloneq \frac{\partial}{\partial \tau}\frac{\partial_{\bar{z}}f}{\partial_z f} \Bigg|_{\tau=t}.
    \] 
    Let $\mathcal{T}(t)$ denote the ideal triangulation corresponding to $\mathcal{T}$ on $S_t$. The dual graph of $\mathcal{T}(t)$ gives a Voronoï cell decomposition with respect to the set of punctures $p_1(t),\dots , p_n(t)$. Each puncture $p_i$ corresponds to a cell $C_{\infty}(i,t)$ with boundary $\partial C_{\infty}(i,t)$ given by the geodesic arcs connecting the centers of the ideal triangles of $\mathcal{T}(t)$. 
    To construct $f$, we work cell by cell. Consider a lift of the ideal triangulation $\mathcal{T}(t)$ to $\mathbb{H}$ where the fan of edges
    \[
        (\lambda_k(t))_{k=\iota(0),\dots,\iota(m_i)},
    \]
    incident to the puncture $p_i$ shares a common endpoint at $\infty$. Here $m_i$ equals the number of edges in $\mathcal{T}(t)$ ending at $p_i$. 

    Consider the triangle 
    \[
        \Delta_0(t)=(0,1,\infty)
    \] 
    with a spike ending at $p_i$. Recall that the shear parameter along $\lambda_j(t)$ equals $t\cdot s_{j}$, hence the remaining triangles having a spike ending at $p_i(t)$ correspond to triangles of the form
    \[
        \Delta_k(t)=(\phi_t(k), \infty, \phi_t(k+1))
    \] 
    where 
    \[
        \phi_t(k)=1+e^{ts_{i(1)}}+~\dots + e^{t\left(s_{i(1)}+~\dots +s_{i(k-1)}\right)}.
    \] We denote by $c_k(t)$ the center of $\Delta_k(t)$: \[
        c_k(t)=\phi_t(k)+\frac{\lambda_{t,k}}{2}+i\frac{\sqrt{3}}{2}\lambda_{t,k},
    \] 
    where $\lambda_{t,k}=\phi_t(k+1)-\phi_t(k)$. Take two consecutive triangles $\Delta_{k-1}(t),\;\Delta_k(t)$ and observe that $c_{k-1}(t)$ can also be written as: \[
        c_{k-1}(t)=\phi_t(k)-\frac{r_{t,k}}{2}+i\frac{\sqrt{3}}{2}r_{t,k},
    \] 
    where $r_{t,k}=\phi_t(k)-\phi_t(k-1)$. Let $\gamma_{t, k-1}$ be the geodesic arc connecting the centers $c_{k-1}(t)$ and $c_k(t)$.
    
    The arcs $\gamma_{t,k}$ are pairwise disjoint except at their endpoints
    and their union meets every vertical line $\{\Re(z)=x\}$ in exactly one point.
    We let $u(x,t)>0$ denote the imaginary part of that point. In other words, define $u(x,t)$ to be the function whose graph are the geodesic arcs $\gamma_{t,k}$, then 
    \[
        C_{\infty}(i,t)=\left\{x+iy \; \vert \; y\geq u(x,t) \right\}.
    \] 
    We split $C_{\infty}(i,t)$ into strips $A_k(t)$:\[A_k(t)=\left\{x+iy \; \vert \; y\geq u(x,t),\; x\in \left[\phi_t(k)-\frac{r_{t,k}}{2} ,\phi_t(k)+\frac{\lambda_{t,k}}{2}\right] \right\}.\]
    \begin{figure}
        \centering
        \begin{overpic}[width=1\linewidth,keepaspectratio]{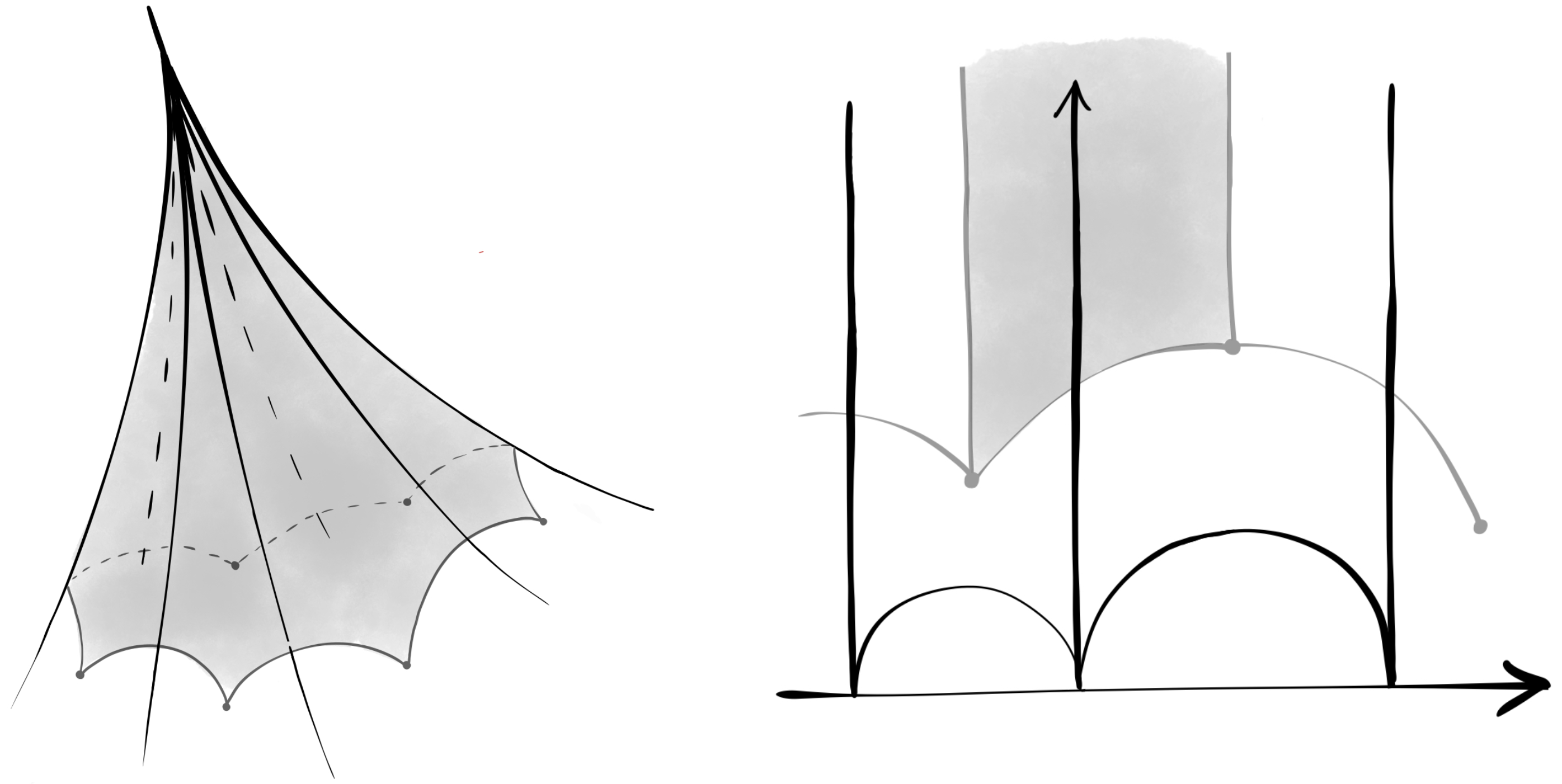}
            \put(51,3){\small$\phi_t(k-1)$}
            \put(67.75,46){$\infty$}
            \put(67,3){\small$\phi_t(k)$}
            \put(83,3){$\phi_t(k+1)$}
            \put(77,25){\color{Gray}$c_k(t)$}
            \put(60,17){\color{Gray}$c_{k-1}(t)$}
            \put(61,48){\color{Gray}$A_{k}(t)$}
        \end{overpic}
        \caption{Fan of ideal arcs ending at the puncture $p_i$ and a strip $A_{k}(t)$ of its Voronoï cell $C_{\infty}(i,t)$.}\label{IMG_1708}
    \end{figure}
    Next we define a map on the cell boundary
    \[
        f : \partial C_{\infty}(i,t) \longrightarrow \partial C_{\infty}(i,\tau).
    \]
    For each $k=0\dots,m_i$, the map $f$ sends the center $c_k(t)$ to $c_k(\tau)$ and stretches the geodesic arc $\gamma_{t,k}$ to $\gamma_{\tau,k}$. First, to define the map $f$ on $\partial A_k(t)$ we use four auxiliary functions. 
    \begin{itemize}
        \item The map $T_{-\phi_t(k)}(z)=z-\phi_t(k)$ to translate $\gamma_{t,k-1}$ to a subarc of the circle $\texttt{C}_{t, k}$ of center $\lambda_{t,k}-r_{t,k}$ and radius $R_{t,k}=\sqrt{\lambda_{t,k}^2-\lambda_{t,k}r_{t,k}+r_{t,k}^2}$.
        \item The Möbius transformation $B_{\lambda_{t,k},r_{t,k}}\colon \mathbb{H} \longrightarrow \mathbb{H}$ to send the circle $\texttt{C}_{t,k}$ to the vertical line $(0,\infty)$ such that \[B_{\lambda_{t,k},r_{t,k}}\circ T_{-\phi_t(k)}(c_{k-1}(t))=i.\] 
        \item The map \[C_{l_{t,k},l_{\tau,k}}\colon i\mathbb{R}^{+} \longrightarrow i\mathbb{R}^{+}\] to stretch the hyperbolic distance by a factor $\frac{l_{\tau,k}}{l_{t,k}}$ where $l_{\tau,k}$ and $l_{t,k}$ denote, respectively, the lengths of $\gamma_{\tau,k}$ and $\gamma_{t,k}$.
        \item The mobius transformation $B^{-1}_{\lambda_{\tau,k},r_{\tau,k}}\colon \mathbb{H} \longrightarrow \mathbb{H}$ and the translation $T_{\phi_\tau(k)}$.
    \end{itemize}
    After computations, we obtain that $f$ is given on $\partial A_k(t)$ by 
    \[
        f=T_{\phi_{\tau}(k)}\circ \underbrace{B^{-1}_{\lambda_{\tau,k},r_{\tau,k}} \circ C_{l_{t,k},l_{\tau,k}} \circ B_{\lambda_{t,k},r_{t,k}}}_{f_{t,k}}\circ T_{-\phi_t(k)},
    \] 
    where the auxiliary function $f_{t,k}$ is defined as follows: 
    \[
        f_{t,k}(x+iu(x,t))=\alpha_{t,k}(x)+i\beta_{t,k}(x)
    \] 
    with the real and imaginary parts given by a computation from~\cite[Lemma 5.1]{SaricWangWolfram2024}:
    \[
        \alpha_{t,k}(x)=\frac{\bigl(R_{\tau,k}+\lambda_{\tau,k}-r_{\tau,k}\bigr)\,w^2-\bigl(R_{\tau,k}-\lambda_{\tau,k}+r_{\tau,k}\bigr)\,K_{\tau,k}^2}{w^2+K_{\tau,k}^2}\]
    and 
    \[
        \beta_{t,k}(x)=\frac{2\,R_{\tau,k}\,K_{\tau,k}\,w}{w^2+K_{\tau,k}^2}
    \] 
    where the intermediate quantities $w$ and $ K_{t,k}$ are defined by: 
    \[
        w=\left(K_{t,k}\;\frac{2\,R_{t,k}\,u(x,t)}{\bigl(R_{t,k}+\lambda_{t,k}-r_{t,k}-x\bigr)^{2}+u(x,t)^{2}}\right)^{l_{\tau,k}/l_{t,k}},
    \] 
    and 
    \[
        K_{t,k}=\frac{2R_{t,k}+2\lambda_{t,k}-r_{t,k}}{\sqrt{3}\,r_{t,k}}.
    \]
    Then $f$ extends continuously to the whole cell $C_{\infty}(i,t)$ by defining: 
    \[
        f(x+iy)=\alpha(x)+i(\beta(x)+y-u(t,x)),
    \] 
    for 
    \[
        y\geq u(x,t), \qquad x\in \bigcup_{k=0}^m \left[\phi_t(k)-\frac{r_{t,k}}{2} ,\phi_t(k)+\frac{\lambda_{t,k}}{2}\right].
    \]
    Define
    \begin{equation}\label{eq:partial-sums2}
        \sigma_{t,0}\coloneq 0,\qquad \sigma_{t,k}\coloneq t\left(s_{\iota(1)}+\dots+s_{\iota(k)}\right)\quad(1\leq k\leq m_i),
    \end{equation}
    and write $(r_{t,k},\lambda_{t,k})=\left(e^{\sigma_{t,k-1}},e^{\sigma_{t,k}}\right)$, so that $\alpha_{t,k}$, $\beta_{t,k}$ and their $x$-derivatives are functions of the two partial sums $\sigma_{t,k-1},\sigma_{t,k}$ and $\sigma_{\tau,k-1},\sigma_{\tau,k}$. 
    By definition of the oscillation norm, and since $t\in[0,1]$,
    \begin{equation}\label{eq:sigma-bound}                              
        \left\vert \sigma_{t,k}\right\vert\;\leq\;t\,\mathcal{O}_{\mathcal{T}}(Y)\;\leq\;\mathcal{O}_{\mathcal{T}}(Y)\;\leq\;M
        \qquad\text{for all }k\text{ and all }t\in[0,1],
    \end{equation}
    so all four parameters lie in $[-M,M]$. By~\cite[Corollary 5.2]{SaricWangWolfram2024} the functions
    \[
        (\sigma,\varsigma,x)\longmapsto \alpha_{e^{\sigma},e^{\varsigma}}(x),\;\beta_{e^{\sigma},e^{\varsigma}}(x),\;\partial_x \alpha_{e^{\sigma},e^{\varsigma}}(x),\; \partial_x \beta_{e^{\sigma},e^{\varsigma}}(x)
    \]
    are real analytic and bounded on $[-M,M]^2\times\left[-\frac12,\frac12\right]$, with $\partial_x\alpha>0$. Then observe that 
    \[ 
        \alpha(t,t,x)=x \qquad \beta(t,t,x)=u(x,t).
    \] 
    Fix $x\in [-\frac{r_{t,k}}{2} ,\frac{\lambda_{t,k}}{2}]$, by expanding around $\tau=t$ we obtain
    \begin{gather*}
        \partial_x \alpha(t,\tau,x) = 1+\psi_1(x)(\tau-t) + o(\tau-t),\\
        \partial_x \beta(t,\tau,x) = \partial_x u(x,t)+\psi_2(x)(\tau-t) + o(\tau-t),
    \end{gather*}
    where, $\psi_1$ and $\psi_2$ depend on the shear parameters via $\sigma_{t,k-1}$ and $\sigma_{t,k}$. By~\cite[Lemma 5.4]{SaricWangWolfram2024}, for $\vert\sigma\vert$ in $[-M,M]^2$ there is a constant $C_1=C_1(M)$, independent of the topology of the surface such that
    \begin{equation}\label{eq:psi-bound}              
        \left\lVert\psi_1\right\rVert_{\infty},\;\left\lVert\psi_2\right\rVert_{\infty}
        \;\leq\; C_1\left(\left\vert \sigma_{t,k-1}\right\vert+\left\vert \sigma_{t,k}\right\vert\right)
        \;\leq\; 2\,C_1 \,\mathcal{O}_{\mathcal{T}}(Y).
    \end{equation}
    By~\cite[Theorem 5.3]{SaricWangWolfram2024}, $f$ is quasiconformal and its Beltrami dilatation coefficient is:
    \[
        \frac{\partial_{\bar{z}}f}{\partial_z f}(t,\tau,z)=\frac{\partial_x \alpha -1+i(\partial_x \beta - \partial_x u)}{\partial_x\alpha+1+i(\partial_x \beta - \partial_x u)}=\frac{(\psi_1(x)+i\psi_2(x))(\tau-t)+o(\tau-t)}{(\psi_1(x)+i\psi_2(x))(\tau-t)+2+o(\tau-t)}.
    \]
    Taking the derivative at $\tau$ and setting $\tau=t$ we obtain:
    \[
        \mu_t(z)=\frac{1}{2}(\psi_1(x)+i\psi_2(x)),
    \]
    and in particular there exists a uniform constant $C_2=C_2(M)$ that does not depend on the topology of the surface such that
    \begin{equation}\label{BoundMutInf}
        \Vert\mu_t\Vert_\infty, \vert\mu_t(x)\vert\leq C_2\mathcal{O}_{\mathcal{T}}(Y). 
    \end{equation}
    Let $\rho^2(z)$ be the density of the hyperbolic metric on $S_t$. By~(\ref{WPbelowL2}),
    \[
        \left\|\mu_f(t)\right\|_{\WP} \leq \left\|\mu_f(t)\right\|_{L^2} = \left(\int_{X_t} |\mu|^2\,\rho^2\right)^{1/2}.
    \] We estimate
    \begin{align}
        \int_{S_t} &\vert\mu(z)\vert^2 \rho^2(z) \,dx \,dy = \sum_{i=1}^n\int_{C_{\infty}(i,t)}\vert\mu(z)\vert^2 \rho^2(z) \,dx \,dy \notag\\ 
        &=\sum_{i=1}^n \sum_{k=\iota(0)}^{\iota(m_i)} \int_{A_{t,k}}\vert\mu(z)\vert^2 \rho^2(z) \,dx \,dy \notag\\
        &\leq C_2^2 \mathcal{O}_{\mathcal{T}}(Y)^2\int_{\phi_t(k)-\frac{r_{t,k}}{2}}^{\phi_t(k)+\frac{\lambda_{t,k}}{2}} \int_{u(x,t)}^{\infty} \frac{1}{y^2} \,dy \,dx \notag\\
        &\leq C_2^2 \mathcal{O}_{\mathcal{T}}(Y)^2 \sum_{i=1}^n \sum_{k=\iota(1)}^{\iota(m_i)} \int_{\phi_t(k)-\frac{r_{t,k}}{2}}^{\phi_t(k)+\frac{\lambda_{t,k}}{2}} \frac{1}{u(x,t)} \,dx. \label{boundWPSHearpath}
    \end{align}
    Recall that on the strip $A_{t,k}$, the graph of $u(x,t)$ is the geodesic arc $\gamma_{t,k}$. After translating by $-\phi_t(k)$, this arc becomes a subarc of the circle centered at $\lambda_{t,k}-r_{t,k}$ with radius $R_{t,k}=\sqrt{\lambda_{t,k}^2-\lambda_{t,k}r_{t,k}+r_{t,k}^2}$. We estimate the integral:
     \begin{align}
        \int_{\phi_t(k)-\frac{r_{t,k}}{2}}^{\phi_t(k)+\frac{\lambda_{t,k}}{2}} \frac{dx}{u(x,t)}
        &= \int_{-\frac{r_{t,k}}{2}}^{\frac{\lambda_{t,k}}{2}} \frac{dx}{\sqrt{(\lambda_{t,k}-r_{t,k})^2+\lambda_{t,k}r_{t,k}-(x-(\lambda_{t,k}-r_{t,k}))^2}} \notag \\
        &= \arcsin\left(\frac{ \frac{\lambda_{t,k}}{2} - (\lambda_{t,k}-r_{t,k})}{\sqrt{(\lambda_{t,k}-r_{t,k})^2+\lambda_{t,k}r_{t,k}}}\right)\notag \\
        &\quad - \arcsin\left(\frac{-\frac{r_{t,k}}{2} - (\lambda_{t,k}-r_{t,k})}{\sqrt{(\lambda_{t,k}-r_{t,k})^2+\lambda_{t,k}r_{t,k}}}\right)\notag \\
        &= \arcsin\left(\frac{1 - w_k /2}{\sqrt{(w_k-1)^2+w_k}}\right) - \arcsin\left(\frac{1/2 - w_k}{\sqrt{(w_k-1)^2+w_k}}\right)\label{boundWPshear2}
    \end{align}
    where $w_k = \lambda_{t,k}/r_{t,k} = e^{tr_{\iota(k-1)}}$. For $w>0$, define \[g_1(w) \coloneq \frac{1 - w/2}{\sqrt{w^2 - w + 1}}, \quad g_2(w) \coloneq \frac{1/2 - w}{\sqrt{w^2 - w_k + 1}}.\] and $G(w)\coloneq g_1(w)-g_2(w)$

\begin{claim}
    Let $w>0$, then $G(w)=\dfrac{\pi}{3}$.
\end{claim}

\begin{proofclaim}
    Let $s=\sqrt{w^2-w+1}>0$, a direct computation gives
    \[
        1 - g_1(w)^2 = \frac{3w^2}{4(w^2 - w + 1)}=\frac{3w^2}{4s^2} \quad \text{and} \quad 1 - g_2(w)^2 = \frac{3}{4(w^2 - w + 1)}=\frac{3}{4s^2},
    \]
   such that $g_1(w),g_2(w)\in[-1,1]$. Set $A=\arcsin(g_1(w))$ and $B=\arcsin(g_2(w))$, since $w>0$ and $\cos(\arcsinh(y))=\sqrt{1-y^2}$ for $y \in [-1,1]$ we obtain:
    \begin{equation*}
        \cos(A)=\frac{\sqrt{3}\,\vert w\vert}{2s}=\frac{\sqrt{3}\,w}{2s},\qquad \cos(B)=\frac{\sqrt{3}}{2s}.
    \end{equation*}
    Hence
    \begin{align*}
        \sin(A-B)&=g_1\cos B-g_2\cos A=\frac{\sqrt{3}\left((2-w)-w(1-2w)\right)}{4s^2}=\frac{\sqrt{3}}{2},\\
        \cos(A-B)&=\cos A\cos B+g_1g_2=\frac{3w+(2-w)(1-2w)}{4s^2}=\frac{1}{2},
    \end{align*}
    and therefore $G(w)=A-B=\frac{\pi}{3}$.
\end{proofclaim}

    Therefore, by~(\ref{boundWPSHearpath}) and~(\ref{boundWPshear2}) we obtain
    \begin{align*}
        \Vert\mu_t\Vert_{WP}^2 &\leq \left(\int_{X_t} |\mu|^2\,\rho^2\right) \leq  C_2^2 \mathcal{O}_{\mathcal{T}}(Y)^2 \sum_{i=1}^n \sum_{k=\iota(1)}^{\iota(m_i)} G(w_k) \\
        & = C_2^2 \mathcal{O}_{\mathcal{T}}(Y)^2 \cdot \sum_{i=1}^n \sum_{k=\iota(1)}^{\iota(m_i)} \frac{\pi}{3} = C_2^2 \mathcal{O}_{\mathcal{T}}(Y)^2 \cdot  2\pi(2g-2+n).
    \end{align*}
    Hence we conclude: 
    \begin{align*}
        \dwp(X, Y) &\leq \int_0^1 \left\lVert \mu_t(z) \right\rVert_{\WP} \,dt \leq C_2(M) M \cdot \sqrt{2\pi(2g-2+n)}.
    \end{align*}
\end{proof} 

\section{A combinatorial Weil--Petersson distance from the hexagon graph}\label{sec:hexqi}
In this section we compare the combinatorial geometry of the hexagon graph with the Weil--Petersson geometry of the augmented Teichmüller space. The goal is to show that the following map is a quasi-isometry. Let 
\[
\begin{aligned}
    Q \colon \mathscr{H}_{g,n} &\longrightarrow \overline{\teich}_{g,n},\\
    (\Gamma,\mathcal{A}) &\longmapsto N_{\Gamma,\mathcal{A}},
\end{aligned}
\]
where $N_{\Gamma,\mathcal{A}}$ is the unique noded surface in the stratum $S(\Gamma)$ whose shearing coordinates with respect to the induced ideal triangulation $\mathcal{A}_{\infty}$ are all zero.
Recall that a map between metric spaces is a quasi-isometry if it is both coarsely surjective and if it does not distort distances by at most a multiplicative and an additive constant in both directions.
\begin{definition}
    Let $(X_1,d_1)$ and $(X_2,d_2)$ be metric spaces. A map
    $f:X_1\to X_2$ is a $(C_1,C_2)$-quasi-isometry if:
    \begin{itemize}
        \item[-] there exists $W\geq 0$ such that every point $x'\in X_2$ lies at distance at most $W$ from some point in the image of $f$;
        \[
            d_2(x',f(x))\leq W;
        \]
        \item[-] there exist constants $C_1\geq 1$ and $C_2\geq 0$ such that for all $x,y\in X_1$,
        \[
            \frac{1}{C_1}d_1(x,y)-C_2
            \leq d_2(f(x),f(y))
            \leq C_1d_1(x,y)+C_2 .
        \]
    \end{itemize}
\end{definition}

\subsection{Weil--Petersson width of the hexagon graph}\label{sec:hexwidth}
We first prove that $Q$ is coarsely surjective. Namely, we show that every point of the augmented Teichmüller space is within uniformly bounded Weil--Petersson distance from an image point $N_{\Gamma,\mathcal{A}}$.
More precisely, we estimate the \emph{width} of the hexagon graph model:
\[
    \sup_{X\in\overline{\teich}_{g,n}}
    \inf_{(\Gamma,\mathcal{A})\in\mathscr{H}_{g,n}}
    \dwp(X,Q(\Gamma,\mathcal{A})).
\]
In the previous sections we project a surface $X\in V(\Gamma,\mathcal{A})$ onto the stratum $S(\Gamma)$. This projection has two properties: the Weil--Petersson length of the path is controlled, and the shearing coordinates of the projection point are uniformly bounded. Then we show that the canonical noded surface $N_{\Gamma,\mathcal{A}}$ belongs to the region $V(\Gamma,\mathcal{A})$. Finally we use the bi-Lipschitz comparison of truncated hexagons to bound the Weil--Petersson distance between the projection point in $S(\Gamma)$ and the noded surface $N_{\Gamma,\mathcal{A}}$.

Combining these two estimates gives a uniform bound on the Weil--Petersson width of the hexagon graph.

\begin{lemma}\label{Shear0}
    Let $\Sigma$ be a hyperbolic surface of genus $g$ with $n$ punctures such that $2g-2+n>0$. Let $(\Gamma, \mathcal{A})$ be a hexagon decomposition of $\Sigma$. Then $N_{\Gamma, \mathcal{A}}$ lies in $V(\Gamma,\mathcal{A})$.
\end{lemma}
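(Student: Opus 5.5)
To prove Lemma~\ref{Shear0}, I would check the two defining conditions of $V(\Gamma,\mathcal{A})$ directly at the point $N_{\Gamma,\mathcal{A}}\in S(\Gamma)$. The condition on the curves is immediate. Every $\gamma\in\Gamma$ is pinched in $S(\Gamma)$, so $\ell_{N_{\Gamma,\mathcal{A}}}(\gamma)=0\leq L$. The whole content is therefore the bound on truncated lengths: for each $a\in\mathcal{A}$ we need $\ell^{\wtrunc}_{N_{\Gamma,\mathcal{A}},\Gamma}(a)\leq L_A$.

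On the stratum $S(\Gamma)$, every curve of $\Gamma$ is a node. By definition, the $(\Gamma,\wtrunc)$-truncation removes the horocyclic neighbourhoods $N_{h_0(\wtrunc)}$ of length $\epsilon_T$ around every cusp and every node. So $\ell^{\wtrunc}_{N,\Gamma}(a)$ is just the length of the ideal arc $a_\infty\in\mathcal{A}_\infty$ outside the horocycles of length $\epsilon_T$ at its two ends. This computation takes place on the cusped surface $\Sigma\smallsetminus\Gamma$, whose ideal triangulation $\mathcal{A}_\infty$ has all shears zero.

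I would then apply Lemma~\ref{Shear0-HorocyclicSegments} with $\eta=\epsilon_T$ on each component of $\Sigma\smallsetminus\Gamma$ (cusps and nodes treated alike). It shows that each horocycle of length $\epsilon_T$ at a puncture $c$ is cut into $v(c)$ segments of equal $h$-length $h(c)=\epsilon_T/v(c)$. Relation~(\ref{delta-truncation}) then gives the truncated length of an edge $a$ joining punctures $c,c'$:
\[
\ell(a_\infty\text{ truncated})=\log\frac{1}{h(c)}+\log\frac{1}{h(c')}=\log\frac{v(c)v(c')}{\epsilon_T^{2}}.
\]
This is valid provided the horocycles are embedded and disjoint, which holds since $\epsilon_T<2$ and the truncated length stays positive; I would note this in passing. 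The main step is bounding $v(c)$. The number of arc-ends at a puncture is at most twice the number of arcs, that is $v(c)\leq 2(6g-6+3n)=6(2g-2+n)$, and it is smaller on each component. Hence
\[
\ell^{\wtrunc}_{N,\Gamma}(a)\leq 2\log\bigl(6(2g-2+n)\bigr)+2\log(1/\epsilon_T).
\]

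The remaining step, which I expect to be the only delicate point, is comparing this bound with $L_A\geq L=2\log(8\pi(2g-2+n))$. This reduces to checking $6/\epsilon_T\leq 8\pi$, i.e.\ $\epsilon_T\geq 3/(4\pi)\approx0.239$. Since $\epsilon_T=3\log 3/8\approx0.412$, the inequality holds, and the lemma follows. If the constants in the statement of $L_A$ were tighter, I would instead use the $2\Delta_{\wtrunc}$ slack in $L_A$.
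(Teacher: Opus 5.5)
Your proposal is correct and follows the paper's proof essentially step for step. The ingredients match: the node lengths vanish, Lemma~\ref{Shear0-HorocyclicSegments} with $\eta=\epsilon_T$ gives $h(c)=\epsilon_T/v(c)$, the valence bound is $v(c)\le 12g-12+6n$, and the lambda-length relation then yields $2\log\bigl((12g-12+6n)/\epsilon_T\bigr)<L\le L_A$. Your explicit reduction of the final comparison to $\epsilon_T\ge 3/(4\pi)$ is a detail the paper leaves implicit.
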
 

\begin{proof}
    Since for all $\gamma\in \Gamma$ we have $\ell_{N_{\Gamma, \mathcal{A}}}(\gamma)=0$, to show that $N_{\Gamma, \mathcal{A}} \in V(\Gamma,\mathcal{A})$ it remains to bound the truncated lengths of the ideal arcs in $\mathcal{A}_\infty$. All the shear parameters of $N_{\Gamma, \mathcal{A}}$ with respect to $\mathcal{A}_\infty$ are trivial, thus Lemma~\ref{Shear0-HorocyclicSegments} implies that the $h$-lengths associated with a cusp $c$ are all equal to $\epsilon_T/v(c)$ where $v(c)$ denote the number of arc-ends incident to $c$. 
    Every cusp has valence at most $12g-12+6n$ thus 
    \[
        \frac{\epsilon_T}{12g-12+6n} \leq h(c) \leq h_0(\wtrunc)=\epsilon_T.
    \]
    Combining this estimates with the relations~(\ref{lambda_hyper_length}) and~(\ref{lambda_h_relation}) between $h$-lengths, lambda-lengths and hyperbolic lengths, it follows that for every arc $a \in \mathcal{A}_{\infty}$
    \[
        \ell_{N_{\Gamma, \mathcal{A}}, \Gamma}^{\wtrunc}(a) \leq 2\log\left(\frac{12g-12+6n}{\epsilon_T}\right) < L.
    \]
\end{proof}

\begin{proposition}\label{WidthHexagonModel}
    Let $g,n$ be integers such that $2g-2+n >0$, then for any $X$ in $\overline{\teich}_{g,n}$ there exists a hexagon decomposition $(\Gamma, \mathcal{A})$ such that \[\dwp(X, Q(\Gamma, \mathcal{A})) \lesssim\sqrt{2\pi(2g-2+n)} \log\!\bigl(8\pi(2g-2+n)\bigr). \]
\end{proposition}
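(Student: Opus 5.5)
The plan is to go through a noded surface of the stratum. First, use Corollary~\ref{ShortHexagonDecompositionExistence} to pick a short hexagon decomposition $(\Gamma,\As)$ of $X$, so that $X\in V(\Gamma,\As)$. Proposition~\ref{TravellingToStratum} then gives a noded surface $X_\infty\in S(\Gamma)$ with two properties:
\[
    \dwp(X,X_\infty)\leq 5\sqrt{2\pi(2g-2+n)\log\bigl(8\pi(2g-2+n)\bigr)},
\]
and $\shear_{\As_\infty}(X_\infty)\lesssim\log(g+n)$. The displayed distance is already below the target bound, since $\sqrt{\log}\leq\log$ up to constants. It therefore remains to bound $\dwp\bigl(X_\infty,Q(\Gamma,\As)\bigr)$. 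By Lemma~\ref{Shear0}, $Q(\Gamma,\As)=N_{\Gamma,\As}$ lies in the same stratum and has all shears along $\As_\infty$ equal to zero.

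For this second distance I would work entirely inside $S(\Gamma)$, which is a product of Teichmüller spaces of punctured surfaces (the nodes become cusps). On each factor, $\As_\infty$ restricts to an ideal triangulation, so I can apply Proposition~\ref{PropTeichShear} through the Remark following it, with $Y=N_{\Gamma,\As}$ and $X=X_\infty$. The hypothesis to verify is a bound $\ell_{X_\infty}(\hat a)\leq G(g,n)$ on the arcs truncated at the standard cusp neighborhoods.

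This is the step I expect to be the main obstacle. I would get it either from the shortness of $(\Gamma,\As)$ along the grafting ray, as established in the proof of Proposition~\ref{GraftingShear}, passed to the limit using Lemma~\ref{lem:truncated_length_continuous}, or from the bounded shears. In the shear route, the shear points lie outside the cusp neighborhoods, as in the proof of Theorem~\ref{BershearRmk2}, and bounded shears at a cusp of valence $v(c)\lesssim g+n$ control the $h$-lengths through the lambda-length relations. Either route should give $\ell_{X_\infty}(\hat a)\lesssim\log(g+n)$; one caveat is that the change between truncation functions costs only $2(\Delta_{\wtrunc}+\Delta_{w^{\std}})$. Proposition~\ref{PropTeichShear}, applied factor by factor, then yields $\dteich\bigl(X_\infty,N_{\Gamma,\As}\bigr)\lesssim\log(g+n)$, taking the maximum over the factors for the product Teichmüller distance.

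Finally, convert this into a Weil--Petersson bound using Linch's inequality on each factor of the stratum. The Weil--Petersson metric on $S(\Gamma)$ is the product metric, and the areas of the factors add up to $2\pi(2g-2+n)$. This gives
\[
    \dwp\bigl(X_\infty,N_{\Gamma,\As}\bigr)\leq\sqrt{2\pi(2g-2+n)}\;\max_i \dteich\lesssim\sqrt{2\pi(2g-2+n)}\,\log\bigl(8\pi(2g-2+n)\bigr).
\]
Adding the two distances with the triangle inequality gives the claimed bound, up to universal constants.
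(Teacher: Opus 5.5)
Your proposal is correct and is essentially the paper's proof: short hexagon decomposition, Proposition~\ref{TravellingToStratum}, the bound $\ell^{\wtrunc}_{X_t,\Gamma}(a)\leq G(g,n)$ from the proof of Proposition~\ref{GraftingShear} passed to $X_\infty$ by Lemma~\ref{lem:truncated_length_continuous}, then Proposition~\ref{PropTeichShear} against $N_{\Gamma,\As}$, Linch, and the triangle inequality (your factor-by-factor treatment of Linch on the product stratum is a little more careful than the paper's). Rely only on the grafting-ray route for the truncated lengths, though: a bound on $\max|S_\lambda|$ alone controls the $h$-lengths at a cusp only through partial sums of up to $v(c)\lesssim g+n$ consecutive shears, which gives truncated lengths of order $(g+n)\log(g+n)$ rather than $\log(g+n)$.
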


\begin{proof}
    Let $X\in \overline{\teich}_{g,n}$. By Corollary~\ref{ShortHexagonDecompositionExistence}, there exists a hexagon decomposition $(\Gamma,\mathcal{A})$ such that $X\in V(\Gamma,\mathcal{A})$.
    Next, by Proposition~\ref{TravellingToStratum}, there exists a noded surface $X_\infty\in S(\Gamma)$ such that
    \[
        \shear_{\mathcal{A}_\infty}(X_\infty)
        \leq 7\log(8\pi(2g-2+n))+80
    \]
    and
    \[
        \dwp(X,X_\infty)
        \leq 5\sqrt{2\pi(2g-2+n)\log(8\pi(2g-2+n))}.
    \]
    It remains to bound the Weil-Petersson distance between $X_{\infty}$ and $N_{\Gamma,\mathcal{A}}$. Note that by construction,
    \begin{equation}\label{shear_0_local_center_N_H}
        \shear_{\mathcal{A}_\infty}(N_{\Gamma,\mathcal{A}})=0.
    \end{equation}
    Then we show that the truncated length of any arc in $\mathcal{A}_\infty$ is uniformly bounded on $X_\infty$. Recall that $X_\infty$ is obtained as the geometric limit of the grafting ray $(X_t)_{t\geq 0}$ constructed in the propositions~\ref{GraftingShear} and~\ref{TravellingToStratum}, that is
    \[
        X_\infty=\lim_{t\to\infty}X_t.
    \]
    For any arc $a \in \mathcal{A}$ the proof of Proposition~\ref{GraftingShear} gives: 
    \[
        \ell_{X_t, \Gamma}^{\wtrunc}(a)\leq G(g,n)
    \]
    where
    \begin{align*}
        G(g,n)&=L_A'+2\Delta_{\wtrunc}=L_A+2\log(L)+8+2\Delta_{\wtrunc}
    \end{align*}
    By Lemma~\ref{lem:truncated_length_continuous} the truncated length function is continuous under geometric convergence, therefore we obtain:
    \[
        \ell_{X_\infty, \Gamma}^{\wtrunc}(a)
        =
        \lim_{t\to\infty}\ell_{X_t, \Gamma}^{\wtrunc}(a)
        \leq G(g,n).
    \]
    
    Since $\mathcal{A}_{\infty}$ corresponds to $(\Gamma, \mathcal{A})$ on the stratum $S(\Gamma)$, for every arcs $a \in \mathcal{A}_\infty$ we have:
    \[
        \ell_{X_\infty, \Gamma}^{\wtrunc}(a)\leq G(g,n).
    \]
    Hence we apply Proposition~\ref{PropTeichShear} to bound the Teichmüller distance between $X_\infty$ and $N_{\Gamma,\mathcal{A}}$:
    \begin{align*}
        \dteich(X_\infty,N_{\Gamma,\mathcal{A}})
        &\lesssim \log(8\pi(2g-2+n))+G(g,n)\\
        &\lesssim \log(8\pi(2g-2+n)).
    \end{align*}
    By Linch's comparison theorem between the Teichmüller and Weil--Petersson distances,
    \[
        \dwp(X_\infty,N_{\Gamma,\mathcal{A}})
        \lesssim
        \sqrt{2\pi(2g-2+n)}\log(8\pi(2g-2+n)).
    \]   
    Consequently, the triangle inequality gives
    \[
        \begin{aligned}
        \dwp(X,N_{\Gamma,\mathcal{A}})
        &\leq
        \dwp(X,X_\infty)
        +
        \dwp(X_\infty,N_{\Gamma,\mathcal{A}})\\
        &\lesssim
        \sqrt{2\pi(2g-2+n)\log(8\pi(2g-2+n))}
        \\
        &\qquad+
        \sqrt{2\pi(2g-2+n)}
        \log(8\pi(2g-2+n)).
        \end{aligned}
    \]
    
    Hence
    \[
        \dwp(X,N_{\Gamma,\mathcal{A}})
        \lesssim
        \sqrt{2\pi(2g-2+n)}
        \log(8\pi(2g-2+n)).
    \]
\end{proof}

\subsection{Proof of the hexagon graph quasi-isometry}\label{sec:hexproof}
In the previous section we proved that the map
\[
\begin{aligned}
    Q \colon \mathscr{H}_{g,n} &\longrightarrow \overline{\teich}_{g,n},\\
    (\Gamma,\mathcal{A}) &\longmapsto N_{\Gamma,\mathcal{A}},
\end{aligned}
\]
is coarsely surjective. It remains to show that $Q$ coarsely preserves distances. More precisely, we prove that adjacent vertices of the hexagon graph are mapped to points at uniformly bounded Weil--Petersson distance, which, together with coarse surjectivity, gives the quasi-isometry statement.

\subsubsection{\texorpdfstring{$Q$}{Q} is Lipschitz} We prove that two points at distance one in the hexagon graph are send by $Q$ to two points at bounded WP distance in the augmented Teichmüller space. First we need the following standard result, see~\cite[Proposition 2.3]{ChekhovPenner2007} which states that a flip move on one ideal quadrilateral only changes the shearing coordinates along the arcs of the quadrilateral.

\begin{lemma}\label{FlipMoveShear0}
Let $\texttt{Q}$ be an ideal quadrilateral contained in an ideal triangulation $\mathcal{T}$. Denote by
$(s_1,s_2,s_3,s_4,s)$ the shearing coordinates of the five edges involved in the quadrilateral, where $s$ corresponds to the diagonal being flipped and $s_1,s_2,s_3,s_4$ to the four boundary edges. After flipping the diagonal, the new shearing coordinates satisfy
\[
    s'=-s,
\]
and
\[
    \begin{aligned}
    s_1'&=s_1+\phi(s),&
    s_2'&=s_2-\phi(-s),\\
    s_3'&=s_3+\phi(s),&
    s_4'&=s_4-\phi(-s),
    \end{aligned}
\]
where
\[
    \phi(x)=\log(1+e^x).
\]
All other shearing coordinates remain unchanged.
\end{lemma}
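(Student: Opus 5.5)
The proof works in the upper half-plane, through lambda-lengths (Penner coordinates) on a lift of the quadrilateral. Place the ideal quadrilateral $\texttt{Q}$ with vertices $v_1,v_2,v_3,v_4$ in cyclic order, and fix an arbitrary decoration. Write $\lambda_1,\dots,\lambda_4$ for the lambda-lengths of the boundary edges, with edge $i$ joining $v_i$ to $v_{i+1}$. Write $e=\lambda(v_1,v_3)$ for the diagonal being flipped and $f=\lambda(v_2,v_4)$ for the new diagonal. Two facts drive everything. The first is the Ptolemy relation
\[
    ef=\lambda_1\lambda_3+\lambda_2\lambda_4 .
\]
The second is the formula recalled before Lemma~\ref{Shear0-HorocyclicSegments}: the shear along an edge is the log of the cross-ratio of the lambda-lengths of the quadrilateral containing it. Both are invariant under Möbius maps and independent of the decoration.

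\emph{Step 1 (the diagonal).} By the shear formula, $s=\log\bigl(\lambda_2\lambda_4/(\lambda_1\lambda_3)\bigr)$, up to the labelling convention. The new diagonal sees the same four boundary edges, but with the roles of the opposite pairs exchanged. Hence $s'=\log\bigl(\lambda_1\lambda_3/(\lambda_2\lambda_4)\bigr)=-s$.

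\emph{Step 2 (the boundary edges).} Take edge $1$, and let $\Delta_{\mathrm{out}}$ be the triangle on its other side, with third vertex $w$. Before the flip, the quadrilateral around edge $1$ is $(w,v_1,v_3,v_2)$; after the flip it is $(w,v_1,v_4,v_2)$. In the shear formula for edge $1$, only the two terms coming from the triangle on the $\texttt{Q}$ side change. One of them switches from $\lambda(v_1,v_3)=e$ to $\lambda(v_1,v_4)=\lambda_4$. The other switches from $\lambda(v_2,v_3)=\lambda_2$ to $\lambda(v_2,v_4)=f$.

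The shear therefore changes by the log of a ratio built from $e,f,\lambda_2,\lambda_4$. By the Ptolemy relation, this ratio equals $1+\lambda_2\lambda_4/(\lambda_1\lambda_3)=1+e^{s}$, or its reciprocal form $1+e^{-s}$, depending on which end of edge $1$ is involved. This gives $s_1'=s_1+\phi(s)$. The same computation at the other three edges gives the alternating signs stated in the lemma, with $\phi(s)$ or $-\phi(-s)$. Note that $\phi(s)-\phi(-s)=s$, which is consistent with the cusp relation that shears around a cusp sum to zero.

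\emph{Step 3 (all other edges).} Any edge not in $\texttt{Q}$ has its adjacent quadrilateral made of triangles that are unchanged by the flip. So its cross-ratio is unchanged, and its shear is unchanged.

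The main obstacle is bookkeeping rather than mathematics. One has to check the sign and orientation conventions for each of the four boundary edges, so that exactly the pattern $+\phi(s)$ / $-\phi(-s)$ comes out as stated. I would fix an explicit normalisation, $v_1=\infty$, $v_2=0$, $v_3=1$, $v_4=-x$, compute edge $1$ in full, and obtain the remaining three edges from the rotational symmetry of $\texttt{Q}$, which sends $s\mapsto -s$ at each quarter-turn. Alternatively, one can simply cite \cite[Proposition 2.3]{ChekhovPenner2007}.
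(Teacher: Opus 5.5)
Your route is correct and genuinely different from the paper's. The paper works only with upper half-plane coordinates. It normalises the lift as $(0,1,1+e^{s},\infty)$, with outer vertices $-e^{-s_1}$ and $1+e^{s}+e^{s+s_2}$. It reads off $s_1'$ and $s_2'$ from the heights of the inscribed-circle tangency points on the vertical sides, and declares $s_3',s_4'$ to be ``the same computation''. It then proves $s'=-s$ by explicitly locating the two shear points $A,B$ on the new diagonal and straightening that diagonal with a Möbius map.

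You instead combine the Ptolemy relation $ef=\lambda_1\lambda_3+\lambda_2\lambda_4$ with the lambda-length formula for shears recalled before Lemma~\ref{Shear0-HorocyclicSegments}. This gives a uniform, decoration-free treatment of all five edges:
\begin{itemize}
\item $s'=-s$ is immediate, because the two diagonals see the two pairs of opposite sides with their roles swapped;
\item each boundary edge reduces to a one-line Ptolemy identity;
\item the sign pattern becomes conceptual: the pair of opposite sides whose product sits in the numerator of $e^{s}$ gets $+\phi(s)$, and the other pair gets $-\phi(-s)$.
\end{itemize}
The price is that you import the Ptolemy relation, which the paper never states. You also have to apply the orientation-sensitive cross-ratio formula consistently to all five edges; the paper's computation avoids both and is self-contained. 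Your Step~3 is fine (the paper does not argue it at all). The quarter-turn reduction is also valid: it swaps the two diagonals, so it turns the edge-$1$ formula for the flip into the edge-$2$ formula for the inverse flip, and together with $s'=-s$ this yields the alternating pattern.

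Two slips need fixing, and they sit exactly in the bookkeeping you flag as the main obstacle.

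First, for edge $1$ the two changing terms give the ratio $ef/(\lambda_2\lambda_4)$ or its inverse. Ptolemy gives $ef/(\lambda_2\lambda_4)=1+\lambda_1\lambda_3/(\lambda_2\lambda_4)$, not $1+\lambda_2\lambda_4/(\lambda_1\lambda_3)$; the latter equals $ef/(\lambda_1\lambda_3)$ and involves $\lambda_1,\lambda_3$. So with your Step~1 convention $e^{s}=\lambda_2\lambda_4/(\lambda_1\lambda_3)$, applied with the matching orientation, you actually get $s_1'=s_1-\phi(-s)$ and $s_2'=s_2+\phi(s)$. This is the lemma with the pairs $\{1,3\}$ and $\{2,4\}$ exchanged. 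That is harmless, since the statement does not say which pair is which, but your claimed $s_1'=s_1+\phi(s)$ does not follow from your own conventions. Likewise, the choice between $\phi(s)$ and $-\phi(-s)$ is not governed by ``which end of edge $1$ is involved'' but by which pair of opposite sides the edge belongs to.

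Second, your normalisation $v_1=\infty$, $v_2=0$, $v_3=1$, $v_4=-x$ puts the vertices in the cyclic order $v_4,v_2,v_3,v_1$ on $\partial\Hh^2$. Then $v_1v_3$ is a side, not a diagonal. You need $v_4\in(1,\infty)$, which recovers the paper's normalisation $(0,1,1+e^{s},\infty)$.
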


\begin{proof}
We work in the upper half-plane model. Choose a lift of the ideal quadrilateral with vertices
\[
    (0,1,1+e^s,\infty),
\]
so that the diagonal with shear parameter $s$ is the geodesic joining $1$ and $\infty$. The flip replaces this diagonal by the geodesic joining $0$ and $1+e^s$. See Figure~\ref{Flipmove}.
\begin{figure}[h]
    \centering
    \begin{overpic}[width=0.9\linewidth,keepaspectratio]{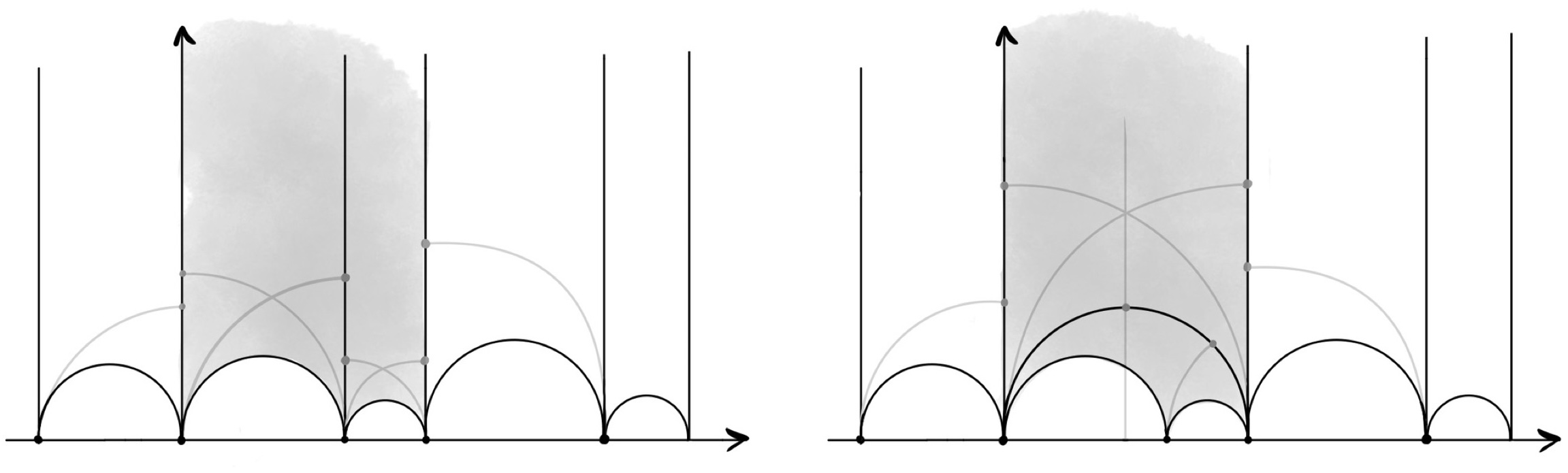}
    \put(0,-1){\tiny$-e^{-s_1}$}
    \put(10.8,-1){\tiny$0$}
    \put(21.3,-1){\tiny$1$}
    \put(24,-1){\tiny$1+e^s$}
    \put(35,-1){\tiny$1+e^s+e^{s+s_2}$}
    \put(18,25){\color{Gray}$Q$}
    \put(10.3,13){\tiny\color{Gray}$i$}
    \put(6,10){\tiny{\color{Gray}$ie^{-s_1}$}}
    \put(52,-1){\tiny$-e^{-s_1}$}
    \put(63.3,-1){\tiny$0$}
    \put(73.7,-1){\tiny$1$}
    \put(77,-1){\tiny$1+e^s$}
    \put(87,-1){\tiny$1+e^s+e^{s+s_2}$}
    \put(55.3,19){\tiny\color{Gray}$i(1+e^s)$}
    \put(58,11){\tiny{\color{Gray}$ie^{-s_1}$}}
    \put(70,11){\tiny$A$}
    \put(77.6,8){\tiny{$B$}}
    
    \end{overpic}
    \caption{A flip move in a hyperbolic ideal quadrilateral.}\label{Flipmove}
\end{figure}
We first compute the new shears on the four boundary edges. Direct computations gives
\[
    s_1'= \log\left(\frac{1+e^s}{e^{-s_1}}\right) =
    s_1+\log(1+e^s),
\]
and
\[
    s_2' = \log\left(\frac{e^{s+s_2}}{1+e^s}\right)
    = s_2-\log(1+e^{-s}).
\]
The same computation on the opposite sides gives
\[
    s_3'=s_3+\log(1+e^s), \qquad s_4'=s_4-\log(1+e^{-s}).
\]
It remains to compute the shear parameter of the new diagonal. Let $A$ and $B$ be the two shear points along the new diagonal. Hence $A$ is the intersection point of the geodesic line $x=(1+e^s)/2$ with the circle $\texttt{C}(c,r)$ of center $c=(1+e^s)/2$ and radius $r=(1+e^s)/2$, see Figure~\ref{Flipmove}, thus 
\[
    A=\frac{1+e^s}{2}+ i\frac{1+e^s}{2}.
\] 
Then, $B$ is the orthogonal intersection point of the circle $\texttt{C}(c,r)$ with the geodesic circle passing through $1$, we obtain 
\[
    B=\frac{1+e^s}{1+e^{2s}}+ i\frac{e^s+e^{2s}}{1+e^{2s}}.
\] 
The Möbius transformation 
\[
    z\mapsto \frac{z}{1+e^s-z}
\]
maps $\mathcal{C}(c,r)$ to the vertical line, $0\mapsto 0$ and $1+e^s\mapsto \infty$. Computing the signed distance between the images of $A$ and $B$ gives
\[
    d_{\text{hyp}}(A,B)=-s .
\]
Hence the new diagonal has shear parameter
\[
    s'=-s
\]
which proves the lemma.
\end{proof}

\begin{lemma}\label{FlipMoveWPDist}
    If $\mathcal{H},\mathcal{H}' \in \mathscr{H}_{g,n}$ are two hexagon decompositions related by a flip move, then \[\dwp(Q(\mathcal{H}), Q(\mathcal{H}')) \lesssim \sqrt{2\pi(2g-2+n)}.\]
\end{lemma}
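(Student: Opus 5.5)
The plan is to reduce the estimate to Lemma~\ref{WPshearingpath}. Write $\mathcal{H}=(\Gamma,\As)$ and $\mathcal{H}'=(\Gamma,\As')$. A flip move does not change the multicurve, so $Q(\mathcal{H})$ and $Q(\mathcal{H}')$ lie in the same stratum $S(\Gamma)$. The ideal triangulations $\As_\infty$ and $\As'_\infty$ of $\Sigma\smallsetminus\Gamma$ differ by a single flip in one ideal quadrilateral $\texttt{Q}$. By definition $\shear_{\As_\infty}(Q(\mathcal{H}))=0$.

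First I compute the shearing coordinates of $Y\coloneq Q(\mathcal{H}')$ with respect to $\As_\infty$. Along $\As'_\infty$ all shears of $Y$ vanish. Apply Lemma~\ref{FlipMoveShear0} to the reverse flip, taking $\As'_\infty$ back to $\As_\infty$, with $s=0$ on the diagonal. The new diagonal then has shear $0$. The four boundary edges of $\texttt{Q}$ acquire shears $\pm\phi(0)=\pm\log 2$, and every other edge keeps shear $0$. An edge of $\texttt{Q}$ may appear twice as a boundary edge, for instance when two sides of $\texttt{Q}$ are identified in $\Sigma\smallsetminus\Gamma$. In that case the contributions add, so each shear has absolute value at most $2\log 2$.

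Next I bound the oscillation norm $\mathcal{O}_{\As_\infty}(Y)$. At any cusp or node, the cyclic sequence of arc-ends has nonzero shear only on ends of boundary edges of $\texttt{Q}$. There are at most eight such ends in total. Hence any partial sum of consecutive shears is bounded by $8\cdot 2\log 2$, and so $\mathcal{O}_{\As_\infty}(Y)\leq M$ for a universal constant $M$, say $M=16\log 2$. A sharper count using the alternating signs $+\phi(0),-\phi(0)$ around each corner is possible but not needed.

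Finally I apply Lemma~\ref{WPshearingpath} in the stratum $S(\Gamma)$, with $X=Q(\mathcal{H})$ and $Y=Q(\mathcal{H}')$. This is legitimate because $S(\Gamma)$ is a product $\prod_i\teich_{g_i,n_i}$ of Teichmüller spaces of punctured surfaces, where the nodes count as punctures. On each factor the lemma gives a bound $C(M)\sqrt{2\pi(2g_i-2+n_i)}$. The Weil--Petersson metric on a stratum is the product metric and the stratum embeds isometrically for $\dwp$ in $\overline{\teich}_{g,n}$. Therefore
\[
    \dwp\bigl(Q(\mathcal{H}),Q(\mathcal{H}')\bigr)\leq C(M)\Bigl(\textstyle\sum_i 2\pi(2g_i-2+n_i)\Bigr)^{1/2}=C(M)\sqrt{2\pi(2g-2+n)},
\]
since areas add. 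Alternatively one can run the proof of Lemma~\ref{WPshearingpath} directly on the disconnected surface, where the area is still $2\pi(2g-2+n)$. The case $\Gamma=\emptyset$ is the lemma itself.

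The main obstacle is justifying this passage to the stratum. Two things must be checked. First, the shear path of Lemma~\ref{WPshearingpath} has to stay in $S(\Gamma)$, which holds because the shears of the nodes' arc-ends are changed consistently, so the cusp conditions remain zero. Second, the WP length of a path inside $S(\Gamma)$ has to bound the distance in $\overline{\teich}_{g,n}$; this I would take from the standard fact (Masur, Wolpert) that the completion restricted to strata carries the product WP metric. A secondary care point is bookkeeping the bounded shear perturbations when boundary edges of $\texttt{Q}$ coincide, but this affects only the value of the universal constant $M$.
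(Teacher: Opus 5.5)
Your proposal is correct and follows the paper's proof. Both apply Lemma~\ref{FlipMoveShear0} with zero diagonal shear to see that one canonical surface has shears $\pm\log 2$ on the four boundary edges of the quadrilateral and $0$ elsewhere relative to the other triangulation, bound the oscillation norm by a universal constant, and conclude with Lemma~\ref{WPshearingpath}; the paper works with $N_{\Gamma,\As}$ relative to $\As'_\infty$, which is symmetric to your choice. Your extra care with coincident sides of the quadrilateral and with applying Lemma~\ref{WPshearingpath} inside $S(\Gamma)$ through its product Weil--Petersson structure covers points the paper leaves implicit, and only changes the universal constant.
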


\begin{proof}
    Let $\mathcal{H}=(\Gamma, \mathcal{A})$ and $\mathcal{H}'=(\Gamma, \mathcal{A}')$ be two vertices in $\mathscr{H}_{g,n}$ that differ by a single flip move. 
    We want to estimate the Weil--Petersson distance between $N_{\Gamma, \mathcal{A}}$ and $N_{\Gamma, \mathcal{A}'}$. By construction, the shearing coordinates of $N_{\Gamma,\mathcal{A}'}$ with respect to the ideal triangulation $\mathcal{A}_{\infty}'$ are trivial: 
    \[
        \shear_{\mathcal{A}_{\infty}'}(N_{\Gamma,\mathcal{A}'})=0.
    \]
    Similarly, one has $\shear_{\mathcal{A}_{\infty}}(N_{\Gamma,\mathcal{A}})=0$, by Lemma~\ref{FlipMoveShear0}, the flip move changes only the shear parameters on the ideal quadrilateral supporting the flipped edge. Hence, the shearing coordinates of $N_{\Gamma,\mathcal{A}}$, with respect to $\mathcal{A}_{\infty}'$, remain trivial outside this quadrilateral and along the flipped arc, while on the boundary of this quadrilateral, the shearing coordinates are
    \[
    (\log 2,-\log 2,\log 2,-\log 2).
    \] 
    Therefore the oscillation norm $\mathcal{O}_{\mathcal{A}_{\infty}'}(N_{\Gamma, \mathcal{A}})$ is uniformly bounded independently of the topology and so by Lemma~\ref{WPshearingpath} we have:
    \[
        \dwp(N_{\Gamma, \mathcal{A}}, N_{\Gamma, \mathcal{A}'}) \leq D \sqrt{\area(X)}. 
    \]
    where $D=2\log(2)$ corresponds to the oscillation norm $\mathcal{O}_{\mathcal{A}_{\infty}}(N_{\Gamma, \mathcal{A}'})$.
\end{proof}

The edges of $\Hs_{g,n}$ are of two types, and Lemma~\ref{FlipMoveWPDist} covers the first. We now bound the Weil--Petersson cost of the second, an adding curve move, which turns out to be the more expensive of the two.

\begin{lemma}\label{LIpschADdingcurve}
    If $\mathcal{H},\mathcal{H}'\in\mathscr{H}_{g,n}$ are two hexagon decompositions related by a curve addition move, then
    \[
        \dwp(Q(\mathcal{H}),Q(\mathcal{H}'))\lesssim \sqrt{2\pi(2g-2+n)}\log(2\pi(2g-2+n)).
    \]
\end{lemma}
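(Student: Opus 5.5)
Let $\mathcal{H}=(\Gamma,\mathcal{A})$ and $\mathcal{H}'=(\Gamma',\mathcal{A}')$ with $\Gamma'=\Gamma\cup\{\alpha\}$, where $\alpha$ is a compatible curve. I would not compare $N_{\Gamma,\mathcal{A}}$ and $N_{\Gamma',\mathcal{A}'}$ through shears, since the two points live in different strata. Instead I would route the comparison through a projection onto $S(\Gamma')$. The steps are as follows.

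\textbf{Step 1 (control on $Y$).} Set $Y\coloneq N_{\Gamma,\mathcal{A}}\in S(\Gamma)$. By Lemma~\ref{Shear0}, the point $Y$ lies in $V(\Gamma,\mathcal{A})$. Moreover, its $h$-lengths are all comparable to $\epsilon_T/(2g-2+n)$ by Lemma~\ref{Shear0-HorocyclicSegments}. Hence every truncated arc of $\mathcal{A}_\infty$ has length at most $2\log\bigl(12(2g-2+n)/\epsilon_T\bigr)$.

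\textbf{Step 2 (length of $\alpha$).} Next I would bound $\ell_Y(\alpha)$. Since $\alpha$ crosses each arc of $\mathcal{A}$ at most once, it is homotopic to a concatenation of at most $6g-6+3n$ subarcs of truncated arcs and horocyclic segments. Working inside each zero-shear truncated hexagon, one can do better: $\alpha$ crosses a hexagon by joining two geodesic sides, and a crossing near the shared horocycle has length comparable to the $h$-length. I would aim to show that the crossings can be chosen near the horocyclic sides, so that each costs $O(1)$, possibly $O(\log(g+n))$. This would give $\ell_Y(\alpha)\lesssim (g+n)\log(g+n)$ at worst.

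\textbf{Step 3 (projection to $S(\Gamma')$).} With this bound, $(\Gamma',\mathcal{A}')$ is $(L',L_A',\wtrunc)$-short on $Y$, with $L'$ and $L_A'$ of order at most $\log(g+n)$ once I show the sharper $O(\log(g+n))$ bound on $\ell_Y(\alpha)$. The new arcs of $\mathcal{A}'$ are pieces of arcs of $\mathcal{A}$ cut at $\alpha$, so their truncated lengths are bounded by those of $\mathcal{A}$ plus a collar correction. I then run the argument of Propositions~\ref{GraftingShear} and~\ref{TravellingToStratum} on the stratum $S(\Gamma)$, viewed as a product of Teichmüller spaces, grafting only along $\alpha$. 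This produces $Y_\infty\in S(\Gamma')$ with
\[
    \dwp(Y,Y_\infty)\leq 2^{5/4}\sqrt{\pi}\sqrt{\ell_Y(\alpha)}.
\]
It also gives truncated lengths of $\mathcal{A}'_\infty$ on $Y_\infty$ bounded by some $G(g,n)\lesssim\log(g+n)$, using Lemma~\ref{lem:truncated_length_continuous}.

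\textbf{Step 4 (back to $Q(\mathcal{H}')$).} Proposition~\ref{PropTeichShear}, applied on the stratum $S(\Gamma')$ as in the subsequent remark, yields
\[
    \dteich\bigl(Y_\infty,N_{\Gamma',\mathcal{A}'}\bigr)\lesssim\log(g+n).
\]
Linch's inequality then gives a Weil--Petersson bound of order $\sqrt{2\pi(2g-2+n)}\log(2\pi(2g-2+n))$. Adding the grafting cost from Step~3, which is smaller, via the triangle inequality finishes the proof.

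\textbf{Main obstacle.} I expect Step~2 to be the hard part: a naive concatenation bound only gives $\ell_Y(\alpha)$ linear in $g+n$. That would still suffice, since $\sqrt{(g+n)\log(g+n)}$ stays below the target. However, it would spoil the shortness needed in Step~3, where the shear bound of Theorem~\ref{BershearRmk2} is linear in $L'$. If the sharp bound on $\alpha$ fails, I would first graft $\alpha$ down to length $\epsilon_T$, as in the first stage of Proposition~\ref{GraftingShear}, paying $\sqrt{\ell_Y(\alpha)}$ in Weil--Petersson distance. Only after that would I apply the shear control.
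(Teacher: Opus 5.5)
Your proposal is correct and is essentially the paper's proof. The paper takes exactly your fallback route: it uses the naive concatenation bound $\ell_{Y}(\alpha)\le(6g-6+3n)(2L_A+2\epsilon_T)$, grafts $\alpha$ first down to length $\epsilon_T$ and then to infinity to reach $X_\infty\in S(\Gamma')$ with truncated lengths at most $L_A+2\log\ell_Y(\alpha)+O(1)\lesssim\log(g+n)$, and closes with Proposition~\ref{PropTeichShear}, Linch's inequality and the triangle inequality. So the sharper $O(\log(g+n))$ bound you seek in Step~2 is unnecessary, and your concern about the shear bound being linear in $L'$ is moot, because Proposition~\ref{PropTeichShear} only needs the truncated lengths on $X_\infty$ to be $\lesssim\log(g+n)$, which the logarithm of the grafting time already gives.
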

    
\begin{proof}
    Let $\mathcal{H}=(\Gamma,\mathcal{A})$ and $\mathcal{H}'=(\Gamma',\mathcal{A}')$ be vertices in $\mathscr{H}_{g,n}$ such that $\mathcal{H}'$ is obtained from $\mathcal{H}$ by adding a compatible simple closed curve $\alpha$, that is
    \[
        \Gamma'=\Gamma\cup\{\alpha\} .
    \]
    Let $N_{\Gamma,\mathcal{A}}$ and $N_{\Gamma',\mathcal{A}'}$ denote the corresponding noded hyperbolic surfaces. First, we estimate the length of $\alpha$ on $N_{\Gamma,\mathcal{A}}$. The curve $\alpha$ has a representative $\alpha^*$ disjoint from the cusp $\wtrunc$-neighborhoods. In addition, $\alpha$ is compatible with $\mathcal{H}$, thus $\alpha$ intersects each arc $a \in \mathcal{A}$ at most once. Moreover, since $\mathcal{H}$ is short on $N_{\Gamma,\mathcal{A}}$, the $(\Gamma,\wtrunc)$-truncated length of $a$ is bounded from above by $L_A$. Consequently, $\alpha$ decomposes into at most $6g-6+3n$ segments, each supported on a hexagon in $\mathcal{H}$. Hence, each such segment has length at most $2L_A+2\epsilon_T$. Therefore,
    \[
        \ell_{N_{\Gamma,\mathcal{A}}}(\alpha) \leq \mathcal{L} \coloneq (6g-6+3n)(2L_A+2\epsilon_T).
    \]

    Set
    \[
        X_{t_0}=\gr_{t_0\alpha}(N_{\Gamma,\mathcal{A}}), \qquad t_0=\pi\left(\frac{\mathcal{L}}{\epsilon_T}-1\right).
    \]
    We show that $(\Gamma', \mathcal{A}')$ is $(\Gamma', \wtrunc)$-short on $X_{t_0}$. By Lemma~\ref{DiazKimLengthGr},
    \[
        \ell_{X_{t_0}}(\alpha) \leq \frac{\pi}{\pi+t_0}\ell_{N_{\Gamma,\mathcal{A}}}(\alpha) \leq \epsilon_T,
    \]
    while the remaining curves in $\Gamma'$ are already pinched. Next, by definition (see Section~\ref{definition_hexagon_graph}), the arcs in $\mathcal{A}'$ are obtained by cutting the arcs in $\mathcal{A}$ at their intersection points with $\alpha$. The arcs $a\in \mathcal{A}$ have bounded truncated length on $N_{\Gamma,\mathcal{A}}$, 
    \[ 
        \ell_{N_{\Gamma,\mathcal{A}}, \Gamma}^{\wtrunc}(a)\leq L_A. 
    \]
    The hyperbolic metric on $X_{t_0}$ is smaller than the Thurston metric on $\Grr_{t_0 \alpha}(N_{\Gamma,\mathcal{A}})$ which coincides outside the grafting cylinder with the hyperbolic metric on $N_{\Gamma,\mathcal{A}}$, see for instance~\cite{Tanigawa1997,McMullen1998}. Hence if an orthogeodesic arc $a \in \mathcal{A}'$ does not meet $\alpha$, its truncated length satisfies
    \[
        \ell_{X_{t_0}, \Gamma'}^{\wtrunc}(a)\leq \ell_{N_{\Gamma,\mathcal{A}}, \Gamma}^{\wtrunc}(a) \leq L_A.
    \]
    Otherwise, $a$ meets $\alpha$ and so $a$ is prolonged, via the grafting operation, with an horizontal arc from the core curve of the grafting cylinder to its boundary that we denote by $\eta_{t_0}$. Since the hyperbolic metric on $X_{t_0}$ is smaller than the Thurston metric we obtain:
    \[
        \ell_{X_{t_0}, \Gamma'}^{\wtrunc}(a)
        \leq \ell_{N_{\Gamma,\mathcal{A}}, \Gamma}^{\wtrunc}(a) +\frac{1}{2}\ell_{X_{t_0}}(\eta_{t_0}).
    \]
    By Lemma~\ref{DiazKimLengthGr2}, 
    \begin{equation}\label{ort-geo-arc-bound}
        \frac{1}{2}\ell_{X_{t_0}}(L_{{t_0}})\leq \tau + \log\left(\frac{\cos\left(\frac{\tau \pi}{2 t_0}\right)}{\sin\left(\frac{\tau \pi}{2 t_0}\right)}\right)= \tau + \log\left(\cot\left(\frac{\tau \epsilon_T}{2(\mathcal{L}-\epsilon_T)}\right)\right),
    \end{equation}
    where $\tau$ is a fixed positive number $<t_0$, take $\tau=1$. Note that $2g-2+n >0$ and so $\mathcal{L}>2.8$. Hence the right-hand side in~(\ref{ort-geo-arc-bound}) is bounded by: 
    \[
        \frac{1}{2}\ell_{X_{t_0}}(\eta_{t_0})\leq \tau + \log(\mathcal{L}) + 3,
    \] 
    and so we obtain the following bound: 
    \[
        \ell_{X_{t_0},\Gamma}^{\wtrunc}(a)\leq\ell_{X,\Gamma}^{\wtrunc}(a) + 2\log(\mathcal{L})+ 8 \leq L_A'.
    \]
    where $L_A'= L_A+2\log(\mathcal{L})+8$. Therefore the hexagon decomposition $(\Gamma, \mathcal{A})$ is $(\epsilon_T, L_A', \wtrunc)$-short on $X_{t_0}$.
    Consider the grafting ray $(X_t)_{t\geq 0}$ defined by
    \[
        X_{t}=\gr_{t\alpha}(X_{t_0}).
    \]
    The next step is to bound, for large $t\geq 0$, the shear parameters with respect to $\mathcal{T}_{\Gamma',\mathcal{A}'}$ on $X_{t}$. By Lemma~\ref{DiazKimLengthGr} we have $\ell_{X_t}(\alpha) \leq \epsilon_T$, moreover, the remaining curves in $\Gamma'$ are already pinched. By Theorem~\ref{BershearRmk2}, it remains to bound the truncated length of each orthogeodesic arc $a \in \mathcal{A}'$ on $X_{t}$. As in proof Proposition~\ref{GraftingShear}, if  $a\in \mathcal{A}$ does not end on curves in $\Gamma$ (that is, it ends in cusps) we obtain 
    \[
        \ell_{X_{t},\Gamma'}^{\wtrunc}(a)\leq \ell_{X_{t_0},\Gamma'}^{\wtrunc}(a) \leq L_A'.
    \]  
    Otherwise, the orthogodesic arc $a$ decomposes into a portion outside the grafting cylinders and one horizontal segment $L_{t}$ inside the grafted cylinder. Fix $0 < \tau < t$ then Lemma~\ref{DiazKimLengthGr2} implies that a horizontal segment $L_{t}$ satisfies

    \begin{equation}\label{length_horizontal_subarc3}
        \frac{1}{2}\ell_{X_t}(L_{t}) \leq \log\left(\cot \left(\frac{\tau \pi}{2 t}\right)\right) + \tau. 
    \end{equation}
  Recall that by Lemma~\ref{DiazKimLengthGr},
    \[
        \ell_{X_{t}}(\alpha)\leq\frac{\pi}{\pi+t}\ell_{X_{t_0}}(\alpha)\leq \frac{\pi}{\pi+t} \epsilon_T.
    \] thus it follows that 
    \begin{equation}\label{width_collar_comparison4}
        \wtrunc(\ell_{X_t}(\alpha)) \geq \wtrunc\left(\frac{\pi}{\pi+t}\epsilon_T \right)=\arccosh\left(\frac{\pi+t}{\pi}\right)
    \end{equation}
    and 
    \begin{equation}\label{width_std_collar_comparison5}
        \wstd(\ell_{X_t}(\alpha)) \geq \arcsinh\left(\frac{1}{\sinh\left(\frac{\pi}{\pi +t} \epsilon_T \right)}\right).
    \end{equation}
    By comparing the functions on the right hand-side of inequalities (\ref{length_horizontal_subarc3}), (\ref{width_collar_comparison4}) and (\ref{width_std_collar_comparison5}) we obtain that, the grafted subarc stays inside the standard collar neighborhood of the geodesic representative of $\alpha$ on $X_t$. Consequently, the grafted part contributes to at most $2\Delta_{\wtrunc}$ to the $\wtrunc$-truncated length of the arc $a$ and so for large enough $t$,
    \[
        \ell_{X_t, \Gamma}^{\wtrunc}(a)\leq \ell_{X_{t_0}, \Gamma}^{\wtrunc}(a) + 2\Delta_{\wtrunc} \leq L_A' + 2\Delta_{\wtrunc}.
    \]
   Thus, for large enough $t$, the hexagon decomposition $(\Gamma', \mathcal{A}')$ is $(\epsilon_T, L_A' + 2\Delta_{\wtrunc}, \wtrunc)$-short on $X_t$. By Theorem~\ref{BershearRmk2} we conclude that the shear parameters on $X_{t}$ with respect to $\mathcal{T}_{(\Gamma',\mathcal{A}')}$ satisfy
    \[
        \shear_{\mathcal{T}_{(\Gamma',\mathcal{A}')}}(X_{t}) \lesssim \log(8\pi(2g-2+n)).
    \]        
    Roger proved that shearing coordinates extend continuously to the augmented Teichmüller space under pinching~\cite[Proposition 6]{Roger2013}. The spiralling triangulation $\mathcal{T}_{(\Gamma',\mathcal{A}')}$ is the ideal triangulation ${\mathcal{A}'}_\infty$. Since the shearing coordinates with respect to $\mathcal{T}_{(\Gamma',\mathcal{A}')}$ are uniformly bounded on $X_t$, the same bound holds at the limit:
    \begin{equation}\label{bound_shear_proof_LIpsh}
        \shear_{{\mathcal{A}'}_\infty}(X_\infty)
        \lesssim \log(8\pi(2g-2+n)).
    \end{equation}
   Finally it remains to estimate the Weil--Petersson distance: 
    \[
        \dwp(N_{\Gamma, \mathcal{A}},N_{\Gamma', \mathcal{A}'}) \leq \dwp(N_{\Gamma, \mathcal{A}},X_{\infty}) + \dwp(X_{\infty},N_{\Gamma', \mathcal{A}'}).
    \]
    Recall that $ \ell_{N_{\Gamma, \mathcal{A}}}(\alpha)\leq\mathcal{L}$ therefore Proposition~\ref{WPGraftingRay} gives
    \begin{align*}
        \dwp(N_{\Gamma, \mathcal{A}},X_{\infty}) &\leq 2^{5/4}\sqrt{\pi \mathcal{L}}.
    \end{align*}
    Furthermore, by inequality (\ref{bound_shear_proof_LIpsh}) and since 
    \[
        \shear_{\mathcal{A}_{\infty}'}(N_{\Gamma', \mathcal{A}'})=0
    \]
    the proof of Proposition~\ref{WidthHexagonModel} gives 
    \[
        \dwp(X_\infty,N_{\Gamma', \mathcal{A}'}) \lesssim \sqrt{2\pi(2g-2+n)}\log(8\pi(2g-2+n)).
    \]
    Consequently
    \[
        \dwp(N_{\Gamma, \mathcal{A}},N_{\Gamma', \mathcal{A}'})
        \lesssim \sqrt{2\pi(2g-2+n)}\log(2\pi(2g-2+n))
    \]
    which concludes the proof.
\end{proof}

From Lemma~\ref{FlipMoveWPDist} and Lemma~\ref{LIpschADdingcurve} we obtain:

\begin{proposition}\label{QLipschitz}
    The map $Q$ is $K(g,n)$-Lipschitz with \[K(g,n)\lesssim \sqrt{2\pi(2g-2+n)}\log(2\pi(2g-2+n)).\] 
\end{proposition}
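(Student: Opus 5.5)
The plan is to reduce the Lipschitz statement to a bound on adjacent vertices, using that $\Hs_{g,n}$ carries its combinatorial path metric. Let $\mathcal{H},\mathcal{H}'\in\Hs_{g,n}$ with $\dhex(\mathcal{H},\mathcal{H}')=m$. Choose a geodesic edge path $\mathcal{H}=\mathcal{H}_0,\mathcal{H}_1,\dots,\mathcal{H}_m=\mathcal{H}'$ in the hexagon graph. By the definition in Section~\ref{definition_hexagon_graph}, each consecutive pair $(\mathcal{H}_{j-1},\mathcal{H}_j)$ is related by exactly one elementary move: a flip, or a curve addition/removal. It therefore suffices to show
\[
    \dwp\bigl(Q(\mathcal{H}_{j-1}),Q(\mathcal{H}_j)\bigr)\leq K(g,n)
\]
for every edge. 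The triangle inequality in $\bigl(\overline{\teich}_{g,n},\dwp\bigr)$ then gives $\dwp(Q(\mathcal{H}),Q(\mathcal{H}'))\leq K(g,n)\,m$.

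For a flip edge, Lemma~\ref{FlipMoveWPDist} gives the bound $\lesssim\sqrt{2\pi(2g-2+n)}$. This is dominated by $\sqrt{2\pi(2g-2+n)}\log(2\pi(2g-2+n))$ once we absorb constants. Note that $2g-2+n\geq 1$ gives $\log(2\pi(2g-2+n))\geq\log(2\pi)>1$, so the logarithm is bounded below by a positive universal constant and no case distinction is needed.

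For a curve addition edge, Lemma~\ref{LIpschADdingcurve} gives the bound directly. A curve removal edge is the same edge read in the opposite direction: if $\mathcal{H}_j$ is obtained from $\mathcal{H}_{j-1}$ by removing $\alpha$, then $\mathcal{H}_{j-1}$ is obtained from $\mathcal{H}_j$ by adding the compatible curve $\alpha$. Since $\dwp$ is symmetric, the same bound applies.

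Let $K(g,n)$ be the maximum of the two edge bounds, so $K(g,n)\lesssim\sqrt{2\pi(2g-2+n)}\log(2\pi(2g-2+n))$. The only subtle point is the meaning of ``$\lesssim$'' in a Lipschitz constant, because an additive constant $c_2$ in a per-edge bound would multiply by $m$. This is harmless. Each per-edge bound has the form $c_1B+c_2$ with $B\geq 1$ up to a universal factor, so it is at most $(c_1+c_2)B$, and $K(g,n)$ can be taken purely multiplicative. No further geometric input is required; the main work was done in the two lemmas.
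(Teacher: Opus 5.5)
Your proposal is correct and follows the paper's argument: the paper deduces the proposition directly from Lemma~\ref{FlipMoveWPDist} and Lemma~\ref{LIpschADdingcurve}, applied edge by edge. Your reduction to a geodesic edge path with the triangle inequality, your treatment of curve removal as a reversed addition, and your absorption of additive constants are the routine details the paper leaves implicit.
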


\subsubsection{\texorpdfstring{$Q$}{Q} is a quasi-isometry}
Here we follow Brock's strategy to prove that $Q$ is a quasi-isometry, we develop some tools to adapt the proof.

\begin{lemma}\label{TruncOrthoSpectrum}
    Let $X\in \teich_{g,n}$ with $2g-2+n>0$ and $L>0$. Let $\Gamma$ be a simple multicurve on $X$ and $\wabstract$ a truncation function. Then
    \[
        \#\left\{
        \begin{array}{c}
        a \text{ is simple orthogeodesic arc on }X\\
        \text{ and } \interior(a) \text{ is disjoint from } \Gamma
        \end{array}
        \;\middle|\;
        \ell_{X,\Gamma}^{\wabstract}(a)\le L
        \right\}
        \text{ is finite.}
        \]
    where $\sharp$ denotes cardinality. In other words, the truncated length spectrum is discrete.
\end{lemma}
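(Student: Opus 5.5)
The plan is to reduce finiteness to a compactness statement on the truncated surface $X^{\wabstract}_{\Gamma}$ and then use discreteness of homotopy classes of paths of bounded length. Each orthogeodesic arc $a$ with interior disjoint from $\Gamma$ has endpoints on cusps or on curves of $\Gamma$. By construction its truncation $a^{\wabstract}_{\Gamma}=a\cap X^{\wabstract}_{\Gamma}$ is a geodesic arc in the compact surface with boundary $X^{\wabstract}_{\Gamma}$, meeting $\partial X^{\wabstract}_{\Gamma}$ orthogonally at its two endpoints. Here we use that truncation along horocycles and equidistant curves preserves orthogonality, since these boundaries are orthogonal to every geodesic going straight into the cusp or the collar. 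Moreover, $a$ is recovered from $a^{\wabstract}_{\Gamma}$ by extending it radially through the collar or cusp neighborhood. The map $a\mapsto a^{\wabstract}_{\Gamma}$ is therefore injective. It suffices to show that there are finitely many orthogeodesic arcs of $X^{\wabstract}_{\Gamma}$ relative to its boundary of length at most $L$.

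To do this, I pass to relative homotopy classes. Distinct orthogeodesic arcs in $X^{\wabstract}_{\Gamma}$ lie in distinct homotopy classes of arcs rel $\partial X^{\wabstract}_{\Gamma}$, by uniqueness of orthogeodesics: the boundary components are convex or horocyclic, so the universal cover argument in $\Hh^2$ gives at most one common perpendicular between two lifts of boundary curves. One should double-check that horocyclic and equidistant boundaries behave well here. The cleanest way is to work instead with the orthogeodesic on the original surface $X$ between the two cusps or curves of $\Gamma$, where uniqueness of the common perpendicular between two horoballs or geodesic lifts in $\Hh^2$ is classical. Then I bound the count. Fix a compact fundamental domain $D$ for $X^{\wabstract}_{\Gamma}$ in its universal cover, which is compact because we removed all cusps and collars. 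Each arc of length $\le L$ lifts to a path starting at a point of $\partial\widetilde{X}^{\wabstract}_{\Gamma}\cap D$ and ending on a lift of a boundary component. That lift meets the $L$-neighborhood of $D$. By proper discontinuity of the deck group action and local finiteness of the family of boundary lifts, only finitely many boundary lifts meet this compact neighborhood. A pair of boundary lifts determines at most one common perpendicular, so only finitely many arcs arise, up to the finitely many choices of the starting boundary lift meeting $D$.

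Finally, simplicity is not even needed for finiteness, and the restriction that $\interior(a)$ avoids $\Gamma$ is exactly what makes $a^{\wabstract}_{\Gamma}$ a single arc in $X^{\wabstract}_{\Gamma}$ rather than several. For the stratum case, which the later quasi-isometry argument may need, I would apply the same argument to each component of $X\smallsetminus\Gamma^0$, where nodes play the role of cusps with $h_0(\wabstract)$-horocyclic neighborhoods.

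The main obstacle I expect is the bookkeeping at curves of $\Gamma$ whose truncation width $\wabstract(\ell)$ is $0$, for instance $\wtrunc$ for long curves. In that case the boundary of $X^{\wabstract}_{\Gamma}$ is the geodesic itself, and one must confirm that the truncated arc is still the full common perpendicular. This is immediate, but the argument needs to be stated uniformly for both kinds of boundary.
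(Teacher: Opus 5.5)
There is a genuine gap in your first step, and your closing remark about it is wrong. You assert that $a^{w}_{\Gamma}=a\cap X^{w}_{\Gamma}$ is ``by construction'' a single geodesic segment meeting $\partial X^{w}_{\Gamma}$ only at its two endpoints. You then attribute this to the hypothesis $\interior(a)\cap\Gamma=\emptyset$ and conclude that simplicity is not needed.

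Disjointness from $\Gamma$ only prevents $a$ from \emph{crossing} a core curve. It does not prevent $a$ from entering the collar $\mathcal{C}_{w(\ell_X(\gamma))}(\gamma)$ from one side and leaving on the same side. Nor does it prevent $a$ from dipping into a cusp neighbourhood $N_{h_0(w)}(c)$ and coming back out. When that happens, $a\cap X^{w}_{\Gamma}$ has several components and $\ell^{w}_{X,\Gamma}(a)$ is the sum of their lengths. That sum no longer controls the distance between the two boundary lifts joined by a lift of $a$, so your counting step collapses.

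Finiteness in fact fails without simplicity. Suppose $\gamma_1,\gamma_2,\gamma_3\in\Gamma$ bound a pair of pants and $w(\ell_X(\gamma_1))>0$. Consider the orthogeodesics from $\gamma_2$ to $\gamma_3$ inside the pants that wind $k$ times around $\gamma_1$. Their interiors are disjoint from $\Gamma$. For large $k$, however, their winding part lies inside the collar of $\gamma_1$, so their truncated lengths stay bounded as $k\to\infty$. The width-zero bookkeeping you flag as the main obstacle is harmless; this re-entry issue is the real one.

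The missing idea is exactly where the paper uses simplicity, in the Claim inside its proof.
\begin{itemize}
\item Cusps: if a lift $\tilde a$ enters the horoball $\{\operatorname{Im} z\geq 1/h_0(w)\}$ without ending at $\infty$, it is a semicircle of radius $R>1/h_0(w)\geq 1/2$. It therefore meets its translate under $z\mapsto z+1$.
\item Collars: if $\tilde a$ enters the $w(\ell)$-neighbourhood of the axis $i\mathbb{R}_{>0}$ without ending on it, let $0<u<v$ be its endpoints and $d$ its distance to the axis. Then $\cosh d<\cosh w^{\mathrm{std}}(\ell)=\coth(\ell/2)$ forces $v>e^{\ell}u$. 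Hence $\tilde a$ and its image under $z\mapsto e^{\ell}z$ are linked.
\end{itemize}
In either case $a$ would self-intersect. So for simple $a$ the removed part consists of exactly two terminal pieces, and $a^{w}_{\Gamma}$ is a single segment.

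Once you insert this step, the rest of your argument (compact fundamental domain, local finiteness of boundary lifts, uniqueness of common perpendiculars) is essentially the paper's count. The paper normalizes lifts to start in a compact fundamental domain $F_k\subset\partial R_k$. It then bounds $\#\{g\in G: d(F_k,gF_l)\leq L\}$ using discreteness of $G$.
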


\begin{proof}
    Recall that the length spectrum of simple geodesic is discrete, see for instance~\cite[Lemma 12.4]{PrimerBook}. The length spectrum of orthogeodesics on surfaces with geodesic boundary is discrete, see~\cite[Theorem 1.1]{Basmajian1993Spectrum}. We use similar arguments to prove the statement of the lemma.
    Let $X\in \teich_{g,n}$ with $(c_i)_{i=0}^n$ its collection of cusps. Then $X=\Hh^2/G$, where $G\subset \PSL_2(\R)$ is a discrete torsion-free group. Recall that the $(\Gamma, \wabstract)$-truncation of $a$ is defined as: 
    \[ 
        a_{\Gamma}^{\wabstract} = a \cap X_{\Gamma}^{\wabstract}.  
    \]

    \begin{claim}\label{claim:proof_truncated_length_spectrum_discrete}
        The portion of $a$ removed by the truncation, denoted by
        \[
            a^t \coloneq a \cap \left( \bigcup_{i=1}^n \interior\left( N_{h_0(\wabstract)}(c_i)\right) \cup \bigcup_{\gamma\in\Gamma} \interior\left( \Cs_{\wabstract(\ell_X(\gamma))}(\gamma) \right) \right),
        \] 
        consists of exactly two connected components. Each component of $a^t$ is either a single terminal ray inside a cusp neighborhood $N_{h_0(\wabstract)}(c_i)$ or a single geodesic segment inside a collar neighborhood $\Cs_{\wabstract(\ell_X(\gamma))}(\gamma)$.
    \end{claim}
    \begin{proofclaim}
        First assume that $a\cap \interior(N_{h_0(\wabstract)}(c_i)) \neq \emptyset$ for some cusp $c_i$. Consider a lift $\tilde{a}$ of $a$ to  $\Hh^2$ such that the cusp $c_i$ lifts to $\infty$ and its cusp neighborhood $N_{h_0(\wabstract)}(c_i)$ corresponds to $\{z \in \Hh^2 : \Im(z) \geq 1/h_0 \}$. We can assume the stabilizer of this cusp is generated by the parabolic element $T:z \mapsto z+1$. Recall that by the definition of a truncation function, we have $h_0(\wabstract) \leq 2$. \begin{itemize}
            \item[-] If $\tilde{a}$ has $\infty$ as an endpoint, then it is a vertical line. 
            In this case, $\tilde{a} \cap \{z \in \Hh^2 : \Im(z) > 1/h_0 \}$ is a single geodesic ray, and thus its projection $a \cap \interior(N_{h_0(\wabstract)}(c_i))$ is a single terminal ray. 
            \item[-] Otherwise, $\tilde{a}$ does not have $\infty$ as an endpoint, meaning it is a Euclidean semi-circle of radius $R$. 
            Because $\tilde{a} \cap \{z \in \Hh^2: \Im(z) > 1/h_0 \} \neq \emptyset$, its maximum height is strictly greater than $1/h_0$. 
            We must therefore have $R > 1/h_0 \geq 1/2$, and thus $2R > 1$. 
            The translated geodesic $T\tilde{a}$ is another lift of $a$ that is distinct from $\tilde{a}$. 
            Hence, $\tilde{a}$ and $T\tilde{a}$ are both semi-circles with radius $R$ and centers lying on the real axis $\partial \Hh^2$ at distance $1$. 
            Since the distance between their centers is strictly less than $2R$, they must intersect in $\Hh^2$. 
            This implies that the projection $a$ has a self-intersection on $X$, which is a contradiction since $a$ is simple. 
            Thus, any intersection $a\cap \interior(N_{h_0(\wabstract)}(c_i))$ must be a terminal ray. 
        \end{itemize} 
        
        Now assume that there exists some $\gamma \in \Gamma$ such that $a\cap \interior\left( \Cs_{\wabstract(\ell_X(\gamma))}(\gamma) \right) \neq \emptyset$. 
        Consider a lift $\tilde{\gamma}$ of $\gamma$ to the imaginary axis $i\R_{>0}$ in $\Hh^2$.  
        The collar neighborhood $ \Cs_{\wabstract(\ell_X(\gamma))}(\gamma)$ lifts to an infinite circle sector bounded by two straight euclidean rays through the origin.
        \[ 
            S = \{ z \in \Hh^2 : \dhyp(z,i\R_{>0}) \leq  \wabstract(\ell) \}. 
        \] 
        The stabilizer of $\tilde{\gamma}$ is generated by $T: z \mapsto e^\ell z$, where $\ell = \ell_X(\gamma)$.
        Recall that by the definition of a truncation function, $\wabstract(\ell) \leq w^{std}(\ell)$. 
        Furthermore, the standard collar of width $w^{std}(\ell)$ is an embedded cylinder.
        \begin{itemize}
            \item[-] If $\tilde{a}$ has an endpoint on $\tilde{\gamma}$, then because $a$ is an orthogeodesic arc, $\tilde{a}$ is orthogonal to $\tilde{\gamma}$. 
            In this upper half-plane model, $\tilde{a}$ is a Euclidean semi-circle centered at the origin. 
            Its intersection with the sector $S$ is a single connected geodesic segment, which projects to a single geodesic segment inside the collar neighborhood. 
            \item[-]If $\tilde{a}$ does not intersect $\tilde{\gamma}$, then $\tilde{a}$ is a Euclidean semi-circle contained entirely in the right (or left) half-plane. 
            Let $u$ and $v$ be the endpoints of $\tilde{a}$ on the real axis, with $0 < u < v$. 
            The hyperbolic distance $d$ from $\tilde{a}$ to the imaginary axis is achieved for $\cosh(d) = \frac{v+u}{v-u}$. 
            Because $\tilde{a}$ intersects the interior of the collar, we must have $d < \wabstract(\ell) \le w^{std}(\ell)$. 
            Recall that $w^{std}(\ell) = \operatorname{arcsinh}\left(\frac{1}{\sinh(\ell/2)}\right)$. 
            This implies that $\cosh(w^{std}(\ell)) = \coth(\ell/2) = \frac{e^\ell+1}{e^\ell-1}$. 
            From $\cosh(d) < \cosh(w^{std}(\ell))$, we obtain:
            \[
                \frac{v+u}{v-u} < \frac{e^\ell+1}{e^\ell-1},
            \]
            which simplifies to $v > e^\ell u$. 
            The translated lift $T\tilde{a}$ is a semi-circle with endpoints $e^\ell u$ and $e^\ell v$. 
            Since $0 < u < e^\ell u < v < e^\ell v$, the intervals $(u,v)$ and $(e^\ell u, e^\ell v)$ overlap but are not contained in one another. 
            This means that the semi-circles $\tilde{a}$ and $T\tilde{a}$ intersect in $\Hh^2$, which again implies that $a$ self-intersects on $X$, contradicting that $a$ is simple. 
        \end{itemize}
        Therefore, $a$ can only enter a $\wabstract$-cusp or a $\wabstract$-collar neighborhood around a curve in $\Gamma$ if it terminates there. 
        Recall that by the Collar Lemma (see Theorem~\ref{CollarThm}), the $\wabstract$-collar and $\wabstract$-cusp neighbourhoods are disjoint. Because $a$ has exactly two endpoints (which lie on $\Gamma$ or in the cusps) and $\interior(a)$ does no intersects $\Gamma$, the removed portion $a^t$ consists of exactly two connected components, each being a terminal ray in a cusp neighborhood or a geodesic segment in a collar neighborhood.
    \end{proofclaim}
    To unify notation, let $(b_k)$ be the collection of all cusps $c_i$ and curves $\gamma_j \in \Gamma$. Choose one lift for each $b_k$, denoted by $\tilde{b}_k$. Specifically, let $\xi_i \in \partial \Hh^2$ be the lift of $c_i$, and $\tilde{\gamma}_j \subset \Hh^2$ be the lift of $\gamma_j$. 
    
    Let $R_k$ be the lifted truncation region corresponding to $\tilde{b}_k$. This region is either a horoball $B_i$ bounded by the lift of $\partial N_{h_0(\wabstract)}(c_i)$ at $\xi_i$, or an equidistant region $W_j$ bounded by the lift of $\partial \Cs_{\wabstract(\ell_X(\gamma_j))}(\gamma_j)$ around $\tilde{\gamma}_j$. Let $e_k \in G$ be the generator of the stabilizer of $\tilde{b}_k$. This generator is a parabolic element $g_i$ for $\xi_i$, or a hyperbolic element $f_j$ for $\tilde{\gamma}_j$. Let $F_k \subset \partial R_k$ be a compact fundamental domain for the action of $\langle e_k \rangle$ on $\partial R_k$.

    Fix a pair of boundary elements $b_k$ and $b_l$. Consider the set of all oriented simple orthogeodesic arcs $a$ from $b_k$ to $b_l$ whose interiors are disjoint from $\Gamma$ and whose truncated lengths satisfy $\ell_{X,\Gamma}^{\wabstract}(a) \le L$. Lift such an arc $a$ to a geodesic $a'$ in $\Hh^2$ so that it originates at $\tilde{b}_k$ and its intersection with $\partial R_k$ lies inside $F_k$. Such a lift shall be refered to as the \emph{normalized lift} of $a$. The terminal endpoint of this lift $a'$ lies on a translated region $g R_l$ for some element $g \in G$. By applying an element of the stabilizer $\langle g e_l g^{-1} \rangle$, we can further assume that the intersection of $a'$ with $g \partial R_l$ lies in the translated fundamental domain $g F_l$. This process associates to each such arc $a$ a unique element $g_a\in G$. 
    \[
        \begin{aligned}
            \left\{
                \begin{array}{c}
                    a \text{ is an oriented simple orthogeodesic (o.s.o.) arc}\\
                    \text{on } X,\;\interior(a)\cap\Gamma=\varnothing,\;
                    a \text{ from }b_k \text{ to } b_l
                \end{array}
            \right\}
            &\longrightarrow G,\\[4pt]
            a&\longmapsto g_a .
        \end{aligned}
    \]
    This map is injective because an oriented orthogeodesic arc on $X$ is uniquely determined by the endpoints of its normalized lift in $\Hh^2$. By Claim \ref{claim:proof_truncated_length_spectrum_discrete}, $a_{\Gamma}^{\wabstract}$ is a single compact geodesic segment, whose normalized lift has one endpoint $\tilde{x}$ on $F_k$ and another endpoint $\tilde{y}$ on $F_l$. Its truncated length $\ell_{X,\Gamma}^{\wabstract}(a)$ then equals the hyperbolic distance $d_{\Hh^2}(\tilde{x},\tilde{y})$. Hence the following map is injective:
    \[
        \left\{
            \begin{array}{c}
                a \text{ is an o.s.o. arc on } X,\\
                \interior(a)\cap\Gamma=\varnothing,\;
                a \text{ from }b_k \text{ to } b_l
            \end{array}
            \;\middle|\;
            \ell_{X,\Gamma}^{\wabstract}(a) \leq L
        \right\}
        \longrightarrow
        \left\{
            g_a \in G
            \;\middle|\;
            \dhyp(F_k,g_a F_l)\leq L
        \right\}.
    \]
    Since the group $G \subset \Isom^+(\Hh^2)$ is discrete and because $F_k$ and $F_l$ are compact, the set 
    \[
        \left\{ g \in G \;\middle|\; d_{\Hh^2}(F_k, g_a F_l) \le L \right\}
    \]
    is finite. Since the map is injective, the number of such arcs from $b_k$ to $b_l$ is finite. Summing over the finitely many pairs $(k,l)$ (with $k=l$ allowed), and up to a factor of two (as orthogeodesic arcs are unoriented), yields a finite number, which completes the proof.
\end{proof}

Let $L'>L$ and $L_A'>L_A$. Let $\mathcal{H}=(\Gamma, \As)$. Define \[V_{L',L_A'}(H)=\left\{ X \in \overline{\teich}_{g,n} \mid \ell_X(\gamma) \leq L',\; \ell_{X,\Gamma}^{\wtrunc}(a)\leq L_A' \right\}.\]

\begin{lemma}\label{CountSHortHExagonDecomp}
    Let $L'\geq L$, $L_A'\geq L_A$ and let $\Sigma$ be a surface of type $(g,n)$ with $2g-2+n>0$. There is a constant $N=N(L',L_A',g,n)$ such that the following property holds: for any $X\in\overline{\teich}_{g,n}$, there are at most $N$-many $(L',L_A',\wtrunc)$-short hexagon decompositions on $X$.
\end{lemma}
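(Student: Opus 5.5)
The plan is to show that the number of $(L',L_A',\wtrunc)$-short hexagon decompositions on $X$ is bounded by a quantity depending only on $(g,n)$ and the two length bounds. The count splits into two steps: first bound the number of possible multicurves $\Gamma$, then, for each $\Gamma$, bound the number of possible arc systems $\As$.

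\emph{Step 1: the multicurve.} Every component of $\Gamma$ has length at most $L'$, and $\Gamma$ is a disjoint union of at most $3g-3+n$ curves. So it suffices to bound, uniformly in $X$, the number of simple closed geodesics of length at most $L'$ that can appear in a multicurve. I would not use discreteness of the simple length spectrum on $X$ alone, since that gives a bound depending on $X$. Instead I would argue as follows. Complete $\Gamma$ to a pants decomposition $\Ps$ with all curves of length at most $\max\{L',B_{g,n}\}$, using Bers' theorem applied on $X\smallsetminus\Gamma$; in the stratum case, work with the noded surface.
\begin{itemize}
\item A curve of length $\leq L'$ meets each curve of $\Ps$ at most $C(L')$ times, by the collar lemma: a curve of length $\ell_\gamma$ has a collar of width $\wstd(\ell_\gamma)$, and each crossing of that collar costs at least twice this width.
\item The curves whose intersection numbers with $\Ps$ are bounded, and whose twists are bounded by the length bound, form finitely many mapping-class-independent combinatorial types. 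Twists are controlled because each full twist around a curve of length $\ell$ adds at least about $\ell\cdot\cosh(\cdot)$ to the length, and this is bounded below uniformly for curves of length $\leq \max\{L',B_{g,n}\}$.
\item The number of choices of $\Ps$ itself up to this data is also bounded, since the pants graph is locally finite and the mapping class group acts cofinitely on such short configurations.
\end{itemize}
A cleaner route, which I would try first, is a compactness argument. The function $X\mapsto \#\{\text{short decompositions on }X\}$ is mapping-class-group invariant. Short decompositions persist with slightly larger constants under small perturbations, by the continuity of lengths and of truncated lengths (Lemma~\ref{lem:truncated_length_continuous} and Proposition~\ref{HexagonalCoordinates}). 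The quotient $\overline{\teich}_{g,n}/\Mod$ is compact.

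\emph{Step 2: the arc system, for fixed $\Gamma$.} Each $a\in\As$ has $\ell^{\wtrunc}_{X,\Gamma}(a)\le L_A'$, and $|\As|=6g-6+3n$. The number of arcs is finite by Lemma~\ref{TruncOrthoSpectrum}, but again I need a bound uniform in $X$. The compactness argument handles this as well. Near any $Y\in\overline{\teich}_{g,n}$, say $Y\in S(\Gamma^0)$, a $(L',L_A')$-short decomposition on a nearby $X$ is $(L'+1,L_A'+1)$-short on $Y$, in the sense of truncated lengths on the stratum. Lemma~\ref{TruncOrthoSpectrum}, applied on each component of $Y$ (whose cusps include the nodes), then bounds the number of candidate arcs and curves by some $N_Y$.

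\emph{Conclusion and main obstacle.} Cover a compact fundamental set for the action of $\Mod$ on $\overline{\teich}_{g,n}$ (Mumford compactness of the augmented moduli space) by finitely many such neighbourhoods, and take $N$ to be the maximum of the $N_Y$. The main obstacle is the persistence statement near a noded surface. If a curve $\gamma\in\Gamma$ is pinched in $Y$ but not in $X$, the truncated length of an arc ending on $\gamma$ must converge. This is exactly what Lemma~\ref{lem:truncated_length_continuous} gives, but it is a pointwise statement for each fixed arc. I need a uniform version: every short arc on $X$ near $Y$ is near-short on $Y$. I would obtain it by rerunning the bilipschitz lasso argument of that lemma, with the $(K,N)$-neighbourhoods chosen independently of the arc, since the estimates there depend only on $K$ and $N$.

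The other delicate case is a curve of $\Gamma$ that is short on $X$ but not among the curves pinched in $Y$; there, only finitely many curves have length at most $L'+1$ on $Y$, so it causes no trouble. In the opposite case, a curve pinched in $Y$ could be absent from $\Gamma$ on $X$. Then arcs of $\As$ would have to cross its very thin collar, which forces large truncated length by the collar lemma, as in the proof of Proposition~\ref{HexagonalCoordinates}. This excludes that case for $X$ close enough to $Y$.
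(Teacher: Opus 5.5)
Your compactness route is correct and is essentially the paper's argument. The paper packages the same idea as a continuous, $\Mod$-invariant bump-function count $F(X)=\sum_{(\Gamma,\As)}f_{L'}(\Gamma)\,f_{L_A'}(\As)$, bounded on the compact quotient $\overline{\teich}_{g,n}/\Mod(\Sigma)$; the uniform local finiteness near noded surfaces that you single out as the main obstacle is exactly what the paper's claim that $F$ is continuous tacitly needs, while the combinatorial alternative in Step~1 should be dropped, since it relies on the false assertion that the pants graph is locally finite.
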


\begin{proof}
    First, we define a continuous bump function that will allow to count hexagon decompositions in a continuous manner. For $L>0$, let
    \[
        \begin{aligned}
           f_L\colon \R_{\geq 0}\cup\{\infty\} &\longrightarrow \R_{\geq 0} \\
            \ell &\longmapsto \begin{cases}
                1 & \text{if $\ell \leq L$},\\
                -\ell+L+1 & \text{if $L \leq \ell \leq L+1$},\\
                0 & \text{if $\ell \geq L+1$}.
            \end{cases}
        \end{aligned}
    \]
    In particular, $f_L(\infty)=0$.

    Let $X\in \overline{\teich}_{g,n}$, for any hexagon decomposition $(\Gamma, \As)$ of $X$, define  
    \[
        f_L(\Gamma) \coloneq \prod_{\gamma\in \Gamma} f_L(\ell_X(\gamma)) \quad \text{and} \quad f_L(\mathcal{A}) \coloneq \prod_{a\in \mathcal{A}} f_L\left(\ell_{X,\Gamma}^{\wtrunc}(a)\right).
    \]
    Consider the following function defined over the set of all hexagon decompositions on $X$:
    \[
        \begin{aligned}
           F\colon \overline{\teich}_{g,n} &\longrightarrow \R_{\geq 0} \\
            X &\longmapsto \sum_{(\Gamma, \As)} f_{L'}(\Gamma)f_{L_A'}(\As).
        \end{aligned}
    \]
    Since the length spectrum is discrete, see for instance~\cite[Lemma 12.4]{PrimerBook}, there are only finitely many multicurves $\Gamma$ such that $f_{L'}(\Gamma)>0$. For each such multicurve $X\smallsetminus \Gamma$ is a collection of subsurfaces with geodesic boundaries and/or cusps. By Lemma~\ref{TruncOrthoSpectrum}, the truncated length spectrum of simple orthogeodesics is discrete as well. Therefore, there are only finitely many maximal collections of orthogeodesic arcs $\As$ such that $f_{L_A'}(\As)>0$. Consequently $F$ is well-defined.

    The truncated length functions are continuous by Lemma~\ref{lem:truncated_length_continuous} and so are length functions on the augmented Teichmüller space. Thus $F$ is continuous.
    
    Moreover, $F$ is invariant under the action of the mapping class group so it descends to a continuous function on the compactified moduli space. Hence $F$ is bounded from above by some constant $N$ depending only on the topological type $(g,n)$.
    
    In particular if a hexagon decomposition $(\Gamma, \As)$ is $(L',L_A',\wtrunc)$-short on X, then $X\in V(\Gamma, \As)$. By definition, this guarantees $\ell_X(\gamma)\leq L'$ and $\ell_{X,\Gamma}^{\wtrunc}(a)\leq L_A'$
    for all $\gamma \in \Gamma$ and $a\in \As$. For every such decomposition $f_{L'}(\Gamma)f_{L_A'}(\As)=1$, hence the total number of short hexagon decompositions on X is bounded above by $F(X)$ which is itself bounded by $N$:
    \[ \sharp \left\{ (\Gamma, \As) \mid X \in V_{L',L_A'}(\Gamma, \As) \right\} \leq N, \]
    where $\sharp$ denotes cardinality. 
\end{proof}

\begin{lemma}\label{QILemma1}
    Let $L'\geq L$, $L_A'\geq L_A$. Let $\mathcal{H},\mathcal{H}'$ be two hexagon decompositions in $\mathscr{H}_{g,n}$ for which 
    \[
        V_{L',L_A'}(\mathcal{H})\cap V_{L',L_A'}(\mathcal{H}') \neq \emptyset.
    \]
    Then, there exists a constant $D(g,n)$ such that 
    \[
        \dhex(\mathcal{H}, \mathcal{H}') \leq D(g,n).
    \]
\end{lemma}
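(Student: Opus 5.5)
This follows Brock's argument: compactness of moduli space plus the finiteness results just proved. Consider the set
\[
    \mathcal{P}=\left\{(\mathcal{H},\mathcal{H}')\in\mathscr{H}_{g,n}^2 \;\middle|\; V_{L',L_A'}(\mathcal{H})\cap V_{L',L_A'}(\mathcal{H}')\neq\emptyset\right\}.
\]
The mapping class group $\Mod(\Sigma)$ acts diagonally on $\mathcal{P}$. This action preserves $\dhex$, since $\Mod(\Sigma)$ acts on $\mathscr{H}_{g,n}$ by graph automorphisms. It also preserves the regions $V_{L',L_A'}$, because lengths and truncated lengths are natural under change of marking. It therefore suffices to show that $\mathcal{P}/\Mod(\Sigma)$ is finite. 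One can then take $D(g,n)$ to be the maximum of $\dhex$ over a finite set of representatives, which is finite because $\mathscr{H}_{g,n}$ is connected \cite{GultepeParlier2025}.

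To prove finiteness, take $(\mathcal{H},\mathcal{H}')\in\mathcal{P}$ and a point $X$ in the intersection. Then both $\mathcal{H}$ and $\mathcal{H}'$ are $(L',L_A',\wtrunc)$-short on $X$. Compactness enters as follows: the compactified moduli space $\overline{\teich}_{g,n}/\Mod(\Sigma)$ is compact. Fix the finite cover of $\overline{\teich}_{g,n}$ by the sets $V_{L,L_A}(\mathcal{H}_0)$, taken modulo $\Mod(\Sigma)$; this is possible by Corollary~\ref{ShortHexagonDecompositionExistence}. Since there are only finitely many topological types of hexagon decompositions, finitely many $\mathcal{H}_1,\dots,\mathcal{H}_r$ suffice up to the mapping class group action. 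After applying a mapping class, $X$ lies in some $V_{L,L_A}(\mathcal{H}_j)$.

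I would rather avoid this cover and argue directly with Lemma~\ref{CountSHortHExagonDecomp}. Fix $\mathcal{H}$ up to $\Mod(\Sigma)$; there are finitely many choices, one per topological type. It remains to show that, for fixed $\mathcal{H}$, only finitely many $\mathcal{H}'$ can occur. To do this I would pass to the closure
\[
    K(\mathcal{H})=V_{L',L_A'}(\mathcal{H})\subset\overline{\teich}_{g,n}.
\]
By Proposition~\ref{HexagonalCoordinates}, $\Hex^{\wtrunc}_{\mathcal{H}}$ is a closed embedding of $\bigcup_{\Gamma^0\subseteq\Gamma}S(\Gamma^0)$. The region $V_{L',L_A'}(\mathcal{H})$ is the preimage of the set where lengths are at most $L'$ and truncated lengths are at most $L_A'$, with twists unrestricted. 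The twists around $\Gamma$ are unbounded, but $\mathcal{H}$ is fixed by the twists $T_\gamma$, $\gamma\in\Gamma$, up to a reparametrisation of spiralling. So I would quotient by the twist subgroup $\mathrm{Stab}(\mathcal{H})\supseteq\langle T_\gamma\rangle$ and restrict twists to a fundamental interval $[0,\ell_X(\gamma)]$. This produces a compact set $K_0$ that meets every $\mathrm{Stab}(\mathcal{H})$-orbit in $V_{L',L_A'}(\mathcal{H})$. Since $\mathrm{Stab}(\mathcal{H})$ also preserves $\dhex(\mathcal{H},\cdot)$, it suffices to bound $\dhex(\mathcal{H},\mathcal{H}')$ for $X\in K_0$.

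On $K_0$ I would use the counting function $F$ from the proof of Lemma~\ref{CountSHortHExagonDecomp}, with a local finiteness upgrade. Around each point $Y\in K_0$ there is a neighbourhood $U_Y$ such that the set of hexagon decompositions that are $(L',L_A',\wtrunc)$-short somewhere on $U_Y$ is finite. This follows from continuity of lengths and of truncated lengths (Lemma~\ref{lem:truncated_length_continuous}), together with discreteness of the truncated spectrum (Lemma~\ref{TruncOrthoSpectrum}) applied at $Y$ with the threshold slightly enlarged. Covering $K_0$ by finitely many $U_Y$ then gives a finite list of possible $\mathcal{H}'$, and hence the bound $D(g,n)$.

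The main obstacle is this local finiteness near a noded point $Y$. There, arcs that are short at nearby points $X$ must be compared with arcs on $Y\smallsetminus\Gamma_Y$, while the multicurve $\Gamma'$ of $\mathcal{H}'$ need not contain the curves pinched at $Y$. I would handle this with the bilipschitz neighbourhoods of Abikoff used in the proof of Lemma~\ref{lem:truncated_length_continuous}, which transport short arcs and curves into a fixed compact part of $Y$.
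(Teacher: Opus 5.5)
Your proposal is correct in outline, but it takes a different and more careful route than the paper. The paper's proof is one step. It fixes $X$ in the intersection, invokes Lemma~\ref{CountSHortHExagonDecomp} to get at most $N$ decompositions $\mathcal{H}_i$ short on $X$, and sets $D=\max_i\dhex(\mathcal{H},\mathcal{H}_i)$. As written, that maximum depends on $X$ and on $\mathcal{H}$. A bound on the \emph{number} of short decompositions gives no bound on their mutual $\dhex$-distance. So the uniformity that is actually used later (the constant $E(g,n)$ in the proof of Theorem~\ref{MainTheorem3}) is not justified there; for a single fixed pair the statement is trivial by connectivity.

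Your reduction supplies exactly that uniformity. You reduce to finiteness of $\mathcal{P}/\Mod(\Sigma)$ using:
\begin{itemize}
\item $\Mod(\Sigma)$-invariance of $\dhex$ and of the regions $V_{L',L_A'}$;
\item finiteness of topological types of hexagon decompositions;
\item compactness of $V_{L',L_A'}(\mathcal{H})/\Twist(\Gamma)$, which is the claim the paper proves inside Lemma~\ref{QILemma2}.
\end{itemize}
The cost is the local finiteness statement, which carries all the weight. It is also what is implicitly needed for the counting function $F$ of Lemma~\ref{CountSHortHExagonDecomp} to be continuous, so the paper relies on it too without proving it.

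The obstacle you flag at a noded point $Y\in S(\Gamma_Y)$ largely disappears. Shrink $U_Y$ so that every curve of $\Gamma_Y$ has length $\delta$ on each $X\in U_Y$ with $2\wstd(\delta)>\max(L',L_A')$. Then any $(L',L_A',\wtrunc)$-short $(\Gamma',\As')$ on such an $X$ satisfies $\Gamma'\supseteq\Gamma_Y$, for two reasons:
\begin{itemize}
\item No curve of $\Gamma'$ can cross $\Gamma_Y$, by the collar theorem.
\item A curve $\gamma\in\Gamma_Y\smallsetminus\Gamma'$ would be essential and non-peripheral in $\Sigma\smallsetminus\Gamma'$. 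Since the complementary regions are disks, it would be crossed by some arc of $\As'$. The truncated part of that arc would then contain a full crossing of the untruncated standard collar of $\gamma$, of length at least $2\wstd(\delta)>L_A'$, a contradiction.
\end{itemize}
Hence all of $\mathcal{H}'$ lives in $\Sigma\smallsetminus\Gamma_Y$, and the twist parameters at the nodes play no role. Abikoff's bilipschitz neighbourhoods, together with the lasso estimate from the proof of Lemma~\ref{lem:truncated_length_continuous}, then transfer the curves and arcs to $Y$ with uniformly bounded lengths and truncated lengths. Finiteness follows from discreteness of the simple length spectrum and Lemma~\ref{TruncOrthoSpectrum}, applied on each component of $Y$.
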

 
\begin{proof}
    Fix $X \in V_{L',L_A'}(\mathcal{H}) \cap V_{L',L_A'}(\mathcal{H}')$, by Lemma~\ref{CountSHortHExagonDecomp} there exists only finitely many hexagon decompositions $(\mathcal{H}_i)_{i=1}^{N}$ so that $\mathcal{H}_i=(\Gamma_{\mathcal{H}_i}, \mathcal{A}_{\mathcal{H}_i})$ is short on $X$. 
    Let 
    \[
        D(g,n)=\max_{i=1,\dots,n}\dhex( \mathcal{H}, \mathcal{H}_i),
    \] 
    thus $\dhex(\mathcal{H}, \mathcal{H}') \leq D(g,n)$.
\end{proof}

\begin{lemma}\label{QILemma2}
        Given $L'> L$ and $L_A'> L_A$, there exists an integer $J>0$ such that if $(X_t)_{t\in [0,1]}$ is a unit length Weil–Petersson geodesic in $\overline{\teich}_{g,n}$, then there exist hexagon decompositions $\mathcal{H}_1,\dots \mathcal{H}_J$, such that $(X_t)_{t\in [0,1]}$ lies in the union \[V_{L',L_A'}(\mathcal{H}_1) \cup \dots \cup V_{L',L_A'}(\mathcal{H}_J).\]
\end{lemma}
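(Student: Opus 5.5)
We follow Brock's strategy for the analogous statement about the sets $V_{B_{g,n}}(P)$. The argument is a compactness argument in the compactified moduli space $\overline{\mathcal M}_{g,n}=\overline{\teich}_{g,n}/\Mod(\Sigma)$, which is compact.

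First we construct open thickenings of the regions. For a hexagon decomposition $\mathcal{H}=(\Gamma,\As)$ we consider
\[
    U(\mathcal{H})=\left\{X\in\bigcup_{\Gamma^0\subseteq\Gamma}S(\Gamma^0)\;\middle|\;\ell_X(\gamma)<L',\ \ell^{\wtrunc}_{X,\Gamma}(a)<L_A'\right\}.
\]
By the hexagonal coordinates (Proposition~\ref{HexagonalCoordinates}) and the continuity of the truncated lengths (Lemma~\ref{lem:truncated_length_continuous}), this set is relatively open in $\bigcup_{\Gamma^0\subseteq\Gamma}S(\Gamma^0)$. We then check that it is an open neighbourhood in $\overline{\teich}_{g,n}$ of the closed region $V(\mathcal{H})=V_{L,L_A}(\mathcal{H})$. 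The domain $\bigcup_{\Gamma^0\subseteq\Gamma}S(\Gamma^0)$ is itself open in $\overline{\teich}_{g,n}$, since near a point of $S(\Gamma^0)$ only curves of $\Gamma^0$ can be pinched. Moreover $U(\mathcal{H})\subset V_{L',L_A'}(\mathcal{H})$, and the strict inequalities $L'>L$, $L_A'>L_A$ give $V(\mathcal{H})\subset U(\mathcal{H})$. By Corollary~\ref{ShortHexagonDecompositionExistence}, the sets $V(\mathcal{H})$ cover $\overline{\teich}_{g,n}$, so the sets $U(\mathcal{H})$ form an open cover. This cover is $\Mod(\Sigma)$-equivariant, since $\phi\cdot U(\mathcal{H})=U(\phi\mathcal{H})$.

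Next we extract a Lebesgue number. Since $\overline{\mathcal M}_{g,n}$ is compact, finitely many orbits of the sets $U(\mathcal{H})$ suffice to cover it. For each $X\in\overline{\teich}_{g,n}$, set $r(X)=\sup_{\mathcal{H}}\sup\{r\mid B_{\WP}(X,r)\subset U(\mathcal{H})\}$. This function is positive, because $U(\mathcal{H})$ is open and $\dwp$ induces the topology of $\overline{\teich}_{g,n}$. It is $1$-Lipschitz, by the triangle inequality, and it is $\Mod(\Sigma)$-invariant. It therefore descends to a positive continuous function on the compact space $\overline{\mathcal M}_{g,n}$, and so attains a minimum $r_0=r_0(g,n,L',L_A')>0$. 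Consequently every WP ball of radius $r_0/2$ lies in some $U(\mathcal{H})\subset V_{L',L_A'}(\mathcal{H})$.

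We now conclude with $J=\lceil 2/r_0\rceil+1$. We subdivide the unit-length geodesic $(X_t)_{t\in[0,1]}$ into $J$ consecutive subsegments of WP length at most $r_0/2$. Each subsegment lies in the ball of radius $r_0/2$ around its initial point. Hence each subsegment lies in some $V_{L',L_A'}(\mathcal{H}_j)$, which gives the statement.

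The main obstacle is the topological step: showing that $U(\mathcal{H})$ is open in $\overline{\teich}_{g,n}$ near boundary strata, and that the WP metric topology agrees with the Abikoff topology used in Lemma~\ref{lem:truncated_length_continuous}. For the latter we would invoke the standard fact that the WP completion is homeomorphic to the augmented Teichmüller space. A secondary point is that Corollary~\ref{ShortHexagonDecompositionExistence} gives shortness at $(L,L_A)$ on each stratum, including pinched points. This is why the thickened constants $L'>L$ and $L_A'>L_A$ are needed to absorb the boundary effects.
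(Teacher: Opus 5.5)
Your argument is correct, but it takes a different route from the paper's. The paper never passes to the whole compactified moduli space. It first shows that $V_{L',L_A'}(\mathcal{H})/\Twist(\Gamma)$ is compact, using the hexagonal coordinates of Proposition~\ref{HexagonalCoordinates}. From this it deduces that the WP distance $I_{L',L_A'}(\mathcal{H})$ between $\partial V(\mathcal{H})$ and $\partial V_{L',L_A'}(\mathcal{H})$ is positive. It then makes this bound uniform, $\delta_0=\min I_{L',L_A'}>0$, using finiteness of $\mathscr{H}_{g,n}/\Mod(\Sigma)$. Finally it covers $[0,1]$ by $\delta_0$-intervals around parameters $t$ with $X_t\in V(\mathcal{H}_i)$. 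You instead take a Lebesgue number for the $\Mod(\Sigma)$-equivariant open cover $\{U(\mathcal{H})\}$, through the invariant $1$-Lipschitz function $r$ on the compact space $\overline{\mathcal{M}}_{g,n}$.

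The paper's route gives a stronger ``same-$\mathcal{H}$'' conclusion: the $\delta_0$-neighbourhood of $V(\mathcal{H})$ lies in $V_{L',L_A'}(\mathcal{H})$ for every $\mathcal{H}$. This is what lets the proof of Theorem~\ref{MainTheorem3} take $\mathcal{H}_1=\mathcal{H}$ and $\mathcal{H}_l=\mathcal{H}'$. With your version you recover this by prepending $\mathcal{H}$, since $X\in V(\mathcal{H})\cap U(\mathcal{H}_1)$ forces consecutive regions to meet.

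Your route avoids the twist-quotient compactness claim and the finiteness of the modular hexagon graph. It also makes explicit why the thickened regions are open near boundary strata, a point the paper leaves implicit. In exchange it relies on compactness of the Deligne--Mumford compactification and on the WP-completion topology agreeing with Abikoff's. The paper also needs that identification implicitly, to pass from compactness in hexagonal coordinates to a positive WP distance.
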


\begin{proof}
    Let $\Sigma$ be a hyperbolic surface of type $(g,n)$ with $2g-2+n>0$. Let $\mathcal{H}=(\Gamma, \mathcal{A})$ be a hexagon decomposition of $\Sigma$. Let $\Twist(\Gamma)$ denote the subgroup of the mapping class group $\Mod(\Sigma)$ generated by Dehn twists about the curves in $\Gamma$. 
    
    \begin{claim*}
        The quotient space $\faktor{V_{L',L_A'}(\mathcal{H})}{\Twist(\Gamma)}$ is compact.
    \end{claim*}
    
    \begin{proofclaim}
        Let $(Y_m)_{m\in \N}$ be a sequence in $V_{L',L_A'}(\mathcal{H})$. For every $a\in\As$, we have $\ell_{Y_m,\Gamma}^{\wtrunc}(a) \in [\epsilon, L_A']$. Here, $\epsilon > 0$ since standard collars around simple closed geodesics are disjoint. By Lemma~\ref{lem:truncated_length_continuous}, the truncated length function is continuous. By passing from one subsequence to another successively for each arc $a\in \As$, we obtain a subsequence where all truncated lengths converge simultaneously:
        \[ \ell_{Y_m,\Gamma}^{\wtrunc}(a) \longrightarrow \ell^{\wtrunc}(a) \in [\epsilon, L_A']. \]
        
        Similarly, there exists a subset of curves $\Gamma^0 \subseteq \Gamma$ (which may be empty) and a constant $\delta > 0$ such that up to subsequence, for all $\gamma \in \Gamma^0$, $\ell_{Y_m}(\gamma) \longrightarrow 0$, and for all $\gamma \in \Gamma \smallsetminus \Gamma^0$, $\ell_{Y_m}(\gamma) \longrightarrow \ell(\gamma) \in [\delta, L']$.

        Thus, up to a subsequence, there exists a sequence of twist maps $(D_m)_{m\in \N}$ in $\Twist(\Gamma)$ such that for all $\gamma \in \Gamma \smallsetminus \Gamma^0$, the twist coordinates converge: \[\tau_{D_m Y_m}(\gamma) \longrightarrow \tau(\gamma) \in [0,\ell(\gamma)).\]

        By Proposition~\ref{HexagonalCoordinates}, the parameters $\ell^{\wtrunc}(a)$, $\ell(\gamma)$, and $\tau(\gamma)$ determine a unique surface $Y \in \overline{\teich}_{g,n}$ and such that $Y \in V_{L',L_A'}(\mathcal{H})$. Consequently, the quotient $\faktor{V_{L',L_A'}(\mathcal{H})}{\Twist(\Gamma)}$ is compact.
    \end{proofclaim}

    Next, we consider the boundaries $\partial V_{L',L_A'}(\mathcal{H})$ and $\partial V(\mathcal{H})$. Because $L' > L$ and $L_A' > L_A$, the two sets $ \faktor{\partial V_{L',L_A'}(\mathcal{H})}{\Twist(\Gamma)}$ and $ \faktor{\partial V(\mathcal{H})}{\Twist(\Gamma)}$ are closed subsets of the compact set $\faktor{V_{L',L_A'}(\mathcal{H})}{\Twist(\Gamma)}$ and thus they are compact by the claim above.
    
    Define the distance function:
    \[ I_{L',L_A'}(\mathcal{H}) \coloneq \inf_{X\in \partial V(\mathcal{H})} \dwp\bigl(X, \partial V_{L',L_A'}(\mathcal{H})\bigr).\]
    
    The two subsets $ \faktor{\partial V_{L',L_A'}(\mathcal{H})}{\Twist(\Gamma)}$ and $ \faktor{\partial V(\mathcal{H})}{\Twist(\Gamma)}$ are disjoint and compact in $\faktor{V_{L',L_A'}(\mathcal{H})}{\Twist(\Gamma)}$, this implies $I_{L',L_A'}(\mathcal{H}) > 0$. 
    
    Furthermore, the function $\mathcal{H} \longmapsto I_{L',L_A'}(\mathcal{H})$ is $\Mod(\Sigma)$-invariant. Therefore, it descends to a function: 
    \[ \tilde{I}_{L',L_A'} \colon \mathscr{H}_{g,n} / \Mod(\Sigma) \longrightarrow \R_{> 0}. \]
    By~\cite[Lemma 3.2]{Parlier2016}, the quotient $\mathscr{H}_{g,n} / \Mod(\Sigma)$ is finite. Hence, we can define:
    \[ \delta_0 \coloneq \min_{[\mathcal{H}] \in \mathscr{H}_{g,n} / \Mod(\Sigma)} \tilde{I}_{L',L_A'}([\mathcal{H}]) > 0. \]

    By Corollary~\ref{ShortHexagonDecompositionExistence} the sets $V(\mathcal{H})$ for $\mathcal{H}\in \mathscr{H}_{g,n}$ cover $\overline{\teich}_{g,n}$. Let $t_0 \in [0,1]$, There exists some hexagon decomposition $\mathcal{H}_i$ such that $X_{t_0} \in V(\mathcal{H}_i)$. By construction, for all $t \in (t_0 -\delta_0, t_0 +\delta_0)$, the surface $X_t$ lies in $V_{L',L_A'}(\mathcal{H}_i)$. 
    
    Since the unit interval $[0,1]$ is compact, it can be covered by a finite number of such $\delta_0$-intervals of the form $(t - \delta_0, t + \delta_0)$. Let $J = \lceil 1/\delta_0 \rceil$. We can select a finite number of points $t_1, \dots, t_J \in [0,1]$ such that their corresponding $\delta_0$-neighborhoods cover $[0,1]$ then we can select $\mathcal{H}_1,\dots, \mathcal{H}_J$ so that \[X_t \in V_{L',L_A'}(\mathcal{H}_1) \cup \dots \cup V_{L',L_A'}(\mathcal{H}_J).\] for each $t\in [0,1]$.
\end{proof}

\begin{theorem}\label{MainTheorem3}
    Let $g,n$ be non-negative integers such that $2g-2+n >0$. The map $Q$ is a quasi-isometry from the hexagon graph $\mathscr{H}_{g,n}$, endowed with the combinatorial metric, to $\left(\overline{\teich}_{g,n}, \dwp\right)$.
\end{theorem}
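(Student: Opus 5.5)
The proof follows Brock's strategy and uses the three ingredients already in place. Coarse surjectivity is Proposition~\ref{WidthHexagonModel}: every $X\in\overline{\teich}_{g,n}$ lies within WP distance $W\lesssim\sqrt{g+n}\log(g+n)$ of some $Q(\Gamma,\As)$. The upper quasi-isometric inequality comes from Proposition~\ref{QLipschitz}. Given two vertices at combinatorial distance $d$, choose an edge path of length $d$ between them in $\Hs_{g,n}$. Each edge is either a flip or a curve addition/removal, so by Lemmas~\ref{FlipMoveWPDist} and~\ref{LIpschADdingcurve} it costs at most $K(g,n)$ in WP distance. The triangle inequality then gives
\[
\dwp\bigl(Q(\mathcal H),Q(\mathcal H')\bigr)\leq K(g,n)\,\dhex(\mathcal H,\mathcal H').
\]

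The real work is the lower bound
\[
\dhex(\mathcal H,\mathcal H')\leq C_1\,\dwp\bigl(Q(\mathcal H),Q(\mathcal H')\bigr)+C_2 .
\]
Fix $L'>L$ and $L_A'>L_A$. By Lemma~\ref{Shear0}, $Q(\mathcal H)\in V(\mathcal H)\subset V_{L',L_A'}(\mathcal H)$, and likewise for $\mathcal H'$. The space $\overline{\teich}_{g,n}$ is a complete CAT(0) space (Wolpert, Yamada), so we may join $Q(\mathcal H)$ to $Q(\mathcal H')$ by a WP geodesic of length $D=\dwp(Q(\mathcal H),Q(\mathcal H'))$. Split it into $\lceil D\rceil$ subsegments of length at most one.

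By Lemma~\ref{QILemma2}, each subsegment is covered by at most $J$ regions $V_{L',L_A'}(\mathcal H_j)$. Chain these regions along the geodesic: consecutive regions in the chain share a point of the geodesic, so they intersect, and Lemma~\ref{QILemma1} shows the corresponding hexagon decompositions are at $\dhex$-distance at most $D(g,n)$. At the two ends, $V_{L',L_A'}(\mathcal H)$ meets the first region at $Q(\mathcal H)$, and similarly for $\mathcal H'$. Summing along the chain gives
\[
\dhex(\mathcal H,\mathcal H')\leq D(g,n)\bigl(J\lceil D\rceil+2\bigr),
\]
which is the desired linear lower bound with $C_1=J\,D(g,n)$ and an additive constant $C_2$.

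The main obstacle is making the chaining rigorous. Adjacent covering sets must actually share a point of the geodesic, not merely cover it. I would handle this using the Lebesgue-number form of the argument in Lemma~\ref{QILemma2}: with a uniform $\delta_0$, the points $t_i$ are spaced less than $\delta_0$ apart, so the geodesic point at $t_{i+1}$ lies in both $V_{L',L_A'}(\mathcal H_i)$ and $V_{L',L_A'}(\mathcal H_{i+1})$.

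A second point to check is that geodesics may pass through boundary strata. The sets $V_{L',L_A'}$ were defined on all of $\overline{\teich}_{g,n}$, and Lemma~\ref{QILemma2} was stated there, so this should cause no extra difficulty. Finally, the quasi-isometry constants depend on $(g,n)$ through $N$, $J$ and $D(g,n)$, which is allowed by the statement.
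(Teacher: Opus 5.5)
Your proposal is correct and follows the paper's proof essentially step for step: coarse surjectivity comes from Proposition~\ref{WidthHexagonModel}, the upper bound from Proposition~\ref{QLipschitz}, and the lower bound from covering a WP geodesic by $\lceil d/\delta_0\rceil$ regions $V_{L',L_A'}(\mathcal{H}_i)$ via Lemma~\ref{QILemma2} and chaining them with Lemma~\ref{QILemma1}. You are slightly more careful than the paper on two points it leaves implicit: you use Lemma~\ref{Shear0} to place $Q(\mathcal{H})$ in $V(\mathcal{H})$, and you use the $\delta_0$-spacing of the points $t_i$ to guarantee that consecutive regions actually intersect.
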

\begin{proof}
    By Proposition~\ref{WidthHexagonModel} the map $Q$ is coarsely surjective. By Proposition~\ref{QLipschitz} it is also $K(g,n)$- Lipschitz, thus given $\mathcal{H},\mathcal{H}'\in \mathscr{H}_{g,n}$ we have
    \[
        \dwp(Q(\mathcal{H}),Q(\mathcal{H}'))\leq K(g,n)\dhex(\mathcal{H},\mathcal{H}').
    \]
    To establish the lower bound, let $L'=2L$ and $L_A'=2L_A$, and let $X,Y\in \overline{\teich}_{g,n}$ such that $X$ lies in $V(\mathcal{H})$ and $Y$ lies in $V(\mathcal{H}')$. By Lemma~\ref{QILemma2} we can cover the Weil–Petersson geodesic $(X_t)_{t\in [0,d]}$ of length $d=\dwp(X,Y)$ joining $X$ and $Y$ with a collection 
    \[V_{L',L_A'}(\mathcal{H}_1) \cup \dots \cup V_{L',L_A'}(\mathcal{H}_l),\]
    where \[l=\left\lceil\frac{d}{\delta_0}\right\rceil \leq \frac{d}{\delta_0} +1 \leq J(d+1),\]
    $\mathcal{H}_1=\mathcal{H}$, $\mathcal{H}_l=\mathcal{H}'$ and $J = \lceil 1/\delta_0 \rceil$ is the constant from Lemma~\ref{QILemma2}. Moreover for successive hexagon decompositions we have $V_{L',L_A'}(\mathcal{H}_i)\cap V_{L',L_A'}(\mathcal{H}_{i+1}) \neq \emptyset$ for all $i=1,\dots ,l-1$. Hence by Lemma~\ref{QILemma1} we obtain:
    \[
        \dhex(\mathcal{H}_i, \mathcal{H}_{i+1}) \leq E(g,n).
    \]
    thus
    \[
        \dhex(\mathcal{H}, \mathcal{H}') \leq l \cdot E(g,n) \leq J(d+1) E(g,n).
    \]
    Consequently, we conclude that 
    \[
       \frac{\dhex(\mathcal{H},\mathcal{H}')}{E(g,n)J} - 1 \leq \dwp(Q(\mathcal{H}), Q(\mathcal{H}')).
    \]
\end{proof}

\section{Ideal triangulations and the thick-part of Teichmüller space}\label{sec:flipqi}

In this section we consider hyperbolic surfaces with genus $g$ and at least one cusp $n\geq 1$. Fix $\epsilon<\epsilon_0$, we study the map
\[
    Q:\Fs_{g,n}\longrightarrow \teich^{\epsilon}_{g,n}.
\]
Recall that $Q$ assigns to each ideal triangulation $\mathcal{T}$ in the flip graph of triangulations the hyperbolic surface $X_{\mathcal{T}}$ whose shearing coordinates with respect to $\mathcal{T}$ are trivial. 

First we prove that $X_{\mathcal{T}}$ lies in the thick part of Teichmüller space.

\begin{lemma}\label{SysShearZero} 
    Let $\Sigma$ be a hyperbolic surface of type $(g,n)$ with $2g-2+n>0$, $n\geq 1$ and let $\mathcal{T}$ be an ideal triangulation of $\Sigma$, then 
    \[ \sys(X_{\mathcal{T}}) \geq \log\left(\sqrt{3}\right). \]
\end{lemma}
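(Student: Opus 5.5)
Let $\alpha$ be any essential simple closed geodesic on $X_{\mathcal{T}}$. The plan is to show $\ell(\alpha)\geq\log\sqrt3$ by combining the shear-zero structure near the cusps with the geometry of ideal triangles.

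First, decorate $X_{\mathcal{T}}$ with the standard cusp neighbourhoods (horocycles of length $2$). By Lemma~\ref{Shear0-HorocyclicSegments}, every horocyclic segment at a cusp $c$ has $h$-length $2/v(c)$. Inside each ideal triangle $\Delta$ of $\mathcal{T}$, shear zero means that the shear points on the two sides of every edge coincide. Hence the horocyclic arcs through the shear points glue across edges to closed horocycles. These closed curves bound embedded cusp regions $U_c$ whose intersection with each triangle is exactly the corner of $\Delta$ cut off by the contact triangle. So $X_{\mathcal{T}}\smallsetminus\bigcup_c U_c$ is the union of the contact triangles $\Delta_0$, one per ideal triangle.

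Next, a closed geodesic $\alpha$ cannot lie entirely in the cusp regions $U_c$, since these are horoball quotients and contain no closed geodesics. Moreover $\alpha$ cannot enter and leave a cusp region while staying short unless it crosses the triangulation. The key step is to show that $\alpha$ crosses at least two edges of $\mathcal{T}$ and that its length is at least the sum of the corresponding distances. Since $\alpha$ is essential and $\mathcal{T}$ is an ideal triangulation, $\alpha$ meets the edges of $\mathcal{T}$. Cut $\alpha$ at its intersection points with the edges into arcs $\alpha_1,\dots,\alpha_k$. Each $\alpha_j$ is a geodesic segment inside an ideal triangle joining two distinct sides, because a geodesic cannot enter and exit through the same side.

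It then suffices to bound $\sum_j\ell(\alpha_j)\geq\log\sqrt3$. I would lift to $\Hh^2$ and consider consecutive pieces $\alpha_j,\alpha_{j+1}$ crossing an edge $\lambda$. If all pieces stayed inside the cusp region of a single cusp $c$, then $\alpha$ would be homotopic into a neighbourhood of $c$, contradicting essentiality. So at least one piece must exit the cusp region corners, and thus pass through the contact triangle region. Equivalently, it must travel from one corner region to another, or wind around the full cusp horocycle.

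For the lower bound, I would argue with horocycle heights. In the upper half-plane normalisation of a triangle $(0,1,\infty)$, the contact horocycle at $\infty$ sits at height $1$ and the shear point of the side $\{\Re z=0\}$ is $i$. A geodesic segment that leaves the region $\{\Im z\geq1\}$ of one cusp and later re-enters the corner region of a cusp must travel vertically at least $\log$ of a fixed ratio. Alternatively, I would use the fact that $\alpha$ must traverse the whole circle around a cusp, of horocyclic length $2$ at height $\geq$ the contact level. Combining these via the distance formula $\cosh d=1+\frac{|z-z'|^2}{2\Im z\Im z'}$ should give the explicit constant $\log\sqrt3=\tfrac12\log 3=2\varrho$, which is the diameter-type constant $2\varrho$ of the inscribed circle. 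This suggests the cleanest route: show that $\alpha$ must cross the inscribed disc region, or pass between two shear points on a common edge, at distance at least $2\varrho$.

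The main obstacle is this last case analysis. It requires ruling out a short geodesic that never enters a contact triangle but winds around a cusp at low height. The fix I would try first is to show that such winding forces a horocyclic-type length of at least $2$ at the contact level, which exceeds $\log\sqrt3$.
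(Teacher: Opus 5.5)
There is a genuine gap. You never actually prove a lower bound for $\ell(\alpha)$, and the dichotomy your candidate arguments rest on is false.

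Your decomposition is correct, and it can be sharpened. With vanishing shears the lift of $\mathcal{T}$ can be normalised to be the Farey tessellation, and the lifts of the regions $U_c$ are then exactly the Ford horoballs, pairwise tangent at the shear points. But a closed geodesic need not enter the interior of a contact triangle, travel a definite distance between cusp regions, or wind around a cusp. It can pass from one horoball straight into the next through a tangency point.

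For instance, take the axis of $\bigl(\begin{smallmatrix}2&1\\1&1\end{smallmatrix}\bigr)$, the semicircle $(x-\frac12)^2+y^2=\frac54$. It passes through the shear points $\frac{-1+i}{2}$, $i$, $1+i$, and between consecutive ones it is a chord of a single horoball, of length $\arccosh\frac32$. This matrix lies in the commutator subgroup of $\PSL(2,\Z)$. So on the zero-shear once-punctured torus its axis projects to a systole that is contained in $\bigcup_c\overline{U_c}$ and meets the contact triangles only at their vertices.

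Thus the configuration you call the main obstacle is the extremal case, not one to be ruled out. In that case:
\begin{enumerate}
\item the vertical-travel heuristic gives nothing, since the axis leaves one horoball and enters the next at the same point;
\item the full-turn heuristic gives nothing, since the axis crosses each horoball along a chord of horocyclic width $1$;
\item the alternative of two distinct shear points on one edge cannot occur when all shears vanish;
\item the claim that $\alpha$ must cross an inscribed disc along a chord of length at least $2\varrho$ is at least as strong as the lemma, and you leave it unproved.
\end{enumerate}

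The missing ingredient is global. Zero shear across an edge forces the adjacent triangle to be the Farey neighbour: across $\{\Re z=0\}$ from $(0,1,\infty)$ the third vertex must be $-1$. By induction along the dual tree, the lifted triangulation is the Farey tessellation. The deck group preserves it and hence lies in $\PSL(2,\Z)$. A hyperbolic element of $\PSL(2,\Z)$ has integer trace of absolute value at least $3$. So every closed geodesic on $X_{\mathcal{T}}$ has length at least $2\arccosh\frac32=2\log\frac{3+\sqrt5}{2}\approx1.92>\log\sqrt3$, and the example above shows this is sharp.

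The paper argues differently. It quotes Jiang's Lemma~5.1, a lower bound for the shear along an edge in terms of the systole, and sets the shear equal to zero. Either route supplies the quantitative input that the decomposition into cusp regions and contact triangles does not give by itself.
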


\begin{proof}
   Let $a \in \mathcal{T}$, by the assumption, the shear parameter along $a$ on $X_{\mathcal{T}}$ is trivial. Hence by~\cite[Lemma 5.1]{ManmanJiang2021} we obtain 
    \begin{equation*}
        0=S_{X_\mathcal{T}}^{\mathcal{T}}(a) \geq 2\log\left(\frac{1}{\sys(X_{\mathcal{T}})}\right)-2\log\left(\frac{1}{\log\left(\sqrt{3}\right)}\right),
    \end{equation*}
    therefore, 
    \[ \sys(X_{\mathcal{T}}) \geq \log\left(\sqrt{3}\right). \] 
   
\end{proof}

\begin{lemma}\label{TeichshearingIDEAL}
    Let $\epsilon \leq \epsilon_0$ and let $g,n$ be integers such that $2g-2+n>0$ and $n\geq 1$. Then for any point $X$ in $\teich^{\epsilon}_{g,n}$ there exists an ideal triangulation $\mathcal{T}$ such that 
    \[ 
       \dteich(X,Q(\mathcal{T})) \lesssim
       \frac{g+n}{\epsilon}.
    \]
\end{lemma}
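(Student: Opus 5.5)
The plan is to combine Proposition~\ref{IdealBershear} with Proposition~\ref{PropTeichShear}. Given $X\in\teich^{\epsilon}_{g,n}$, Proposition~\ref{IdealBershear} gives an ideal triangulation $\mathcal{T}$ with
\[
\shear_{\mathcal{T}}(X)\lesssim \frac{g+n}{\epsilon}.
\]
We set $Y=Q(\mathcal{T})=X_{\mathcal{T}}$, so that $\shear_{\mathcal{T}}(Y)=0$. Proposition~\ref{PropTeichShear} then bounds $\dteich(X,Y)$ by $\log(g+n)+G(g,n)$. Here $G(g,n)$ is any upper bound for the lengths $\ell_X(\hat a)$ of the arcs of $\mathcal{T}$ truncated along the standard cusp neighbourhoods (horocycles of length $2$). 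The whole proof therefore reduces to showing that bounded shears force bounded truncated arc lengths, with $G\lesssim (g+n)/\epsilon$.

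To bound $\ell_X(\hat a)$, I would work with lambda-lengths and $h$-lengths for the decoration by horocycles of length $2$ (equivalently $1/2$, changing $\ell(\hat a)$ only by an additive constant). By~(\ref{delta-truncation}), the truncated length of an edge is $\log(1/h_j)+\log(1/h_l)$, where $h_j,h_l$ are the $h$-lengths at its two ends, all measured in a single adjacent triangle. So it suffices to bound every $h$-length from below.

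Fix a cusp $c$ with arc-ends $a_1,\dots,a_{v(c)}$ in cyclic order. The computation in the proof of Lemma~\ref{Shear0-HorocyclicSegments} shows that consecutive $h$-lengths at $c$ satisfy $h_{i+1}/h_i=e^{\pm s_{i+1}}$, where $s_{i+1}$ is the shear along the separating arc. The identity $h_i=h_{i-1}$ there is exactly the case $s=0$; in general the ratio picks up the factor $e^{s}$ from the formula $s=\log(\lambda_b\lambda_d/\lambda_a\lambda_c)$. Hence
\[
\frac{h_i}{h_1}=\exp\Bigl(\sum_{r\le i}\pm s_r\Bigr),
\]
a partial cusp sum of shears. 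Since $\sum_i h_i=2$ and some $h_i\ge 2/v(c)$, we get
\[
\min_i h_i\ \ge\ \frac{2}{v(c)}\,e^{-\mathcal{O}_{\mathcal{T}}(X)}.
\]

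The main obstacle is that the oscillation norm $\mathcal{O}_{\mathcal{T}}(X)$ is a sum of up to $v(c)\lesssim g+n$ shears. Bounding it crudely by $v(c)\cdot\shear_{\mathcal{T}}(X)$ gives $G\lesssim (g+n)^2/\epsilon$, which is too weak for the claimed bound. To avoid this, I would instead use the geometric input behind Proposition~\ref{IdealBershear} directly. By Remark~\ref{RmkBersIdealTriangulation}, the triangulation is built so that shear points avoid cusp neighbourhoods of area $\delta_1'$. I would then argue as in Theorem~\ref{BershearRmk2} that each truncated arc $\hat a$ on $X$ has length comparable to the shear bound itself, i.e.
\[
\ell_X(\hat a)\lesssim \frac{g+n}{\epsilon}.
\]
Concretely, the segment of $\hat a$ between its two shear points has length at most $|S_a|$, and each remaining piece, from a shear point to the length-$2$ horocycle, has length bounded in terms of the depth of shear points inside the cusps, which is controlled by $\log(1/\delta_1')$.

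With $G(g,n)\lesssim (g+n)/\epsilon$, Proposition~\ref{PropTeichShear} gives
\[
\dteich\bigl(X,Q(\mathcal{T})\bigr)\lesssim \log(g+n)+\frac{g+n}{\epsilon}\lesssim \frac{g+n}{\epsilon},
\]
which is the statement.
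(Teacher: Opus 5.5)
There is a genuine gap in the step meant to avoid the $(g+n)^2/\epsilon$ loss. Two parts of your argument are correct: the reduction to a bound $G(g,n)$ on $\ell_X(\hat a)$, and the relation $h_{i+1}/h_i=e^{\pm s_{i+1}}$. The claim that $\ell_X(\hat a)$ is at most $|S_a|$ plus a constant controlled by $\log(1/\delta_1')$ is false.

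By (\ref{delta-truncation}), $\ell_X(\hat a)=\log(1/h_j)+\log(1/h_l)$. The piece of $\hat a$ running from a length-$2$ horocycle out to the shear point of an adjacent triangle has length $\log(1/h)$, where $h$ is the $h$-length of that corner, so you need to bound $\log(1/h)$ from \emph{above}. The fact that shear points avoid the cusp neighbourhood of area $\delta_1'$ only bounds $\log(1/h)$ from \emph{below}: the shear point does not sit too deep inside the cusp. That is the direction used in Theorem~\ref{BershearRmk2} to go from bounded lengths to bounded shears, and it says nothing about the converse you need.

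Already at zero shear, Lemma~\ref{Shear0-HorocyclicSegments} gives $\ell_X(\hat a)=\log\bigl(v(c_j)/2\bigr)+\log\bigl(v(c_l)/2\bigr)$ while $S_a=0$. In general, as your own computation shows, the $h$-lengths at a cusp are governed by partial cusp sums of shears, i.e.\ by $\mathcal{O}_{\mathcal{T}}(X)$. The maximal shear controls this only through $\mathcal{O}_{\mathcal{T}}(X)\le v(c)\,\shear_{\mathcal{T}}(X)$. So once you pass through Proposition~\ref{IdealBershear}, the information you need is lost, and the fallback does not recover it.

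The missing idea is to skip the shear bound altogether. The paper takes from the construction in \cite[Section 3.2]{Bershear} (Remark~\ref{RmkBersIdealTriangulation}), the same one underlying Proposition~\ref{IdealBershear}, a triangulation $\mathcal{T}$ satisfying directly
\[
\ell_X(\hat a)\le \frac{2\pi(2g-2+n)}{\min(\epsilon,0.27)}+\log\bigl(\tfrac{2}{0.27}\bigr)\quad\text{for every } a\in\mathcal{T}.
\]
This is an area-over-$\epsilon$ estimate. The shear bound there is obtained from this length bound together with the depth constraint on shear points, not conversely. Using this as $G(g,n)$ in Proposition~\ref{PropTeichShear} gives $\dteich(X,Q(\mathcal{T}))\lesssim\log(g+n)+\frac{g+n}{\epsilon}\lesssim\frac{g+n}{\epsilon}$ at once.
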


\begin{proof}
    Let $X \in \teich^{\epsilon}_{g,n}$, by Remark~\ref{RmkBersIdealTriangulation} and~\cite[Section 3.2]{Bershear} there exists an ideal triangulation $\mathcal{T}$ of $X$ such that  for all ideal arc $a\in \mathcal{T}$,
    \[\ell_X(\hat{a}) \leq \mathcal{L}\coloneq \frac{2\pi(2g-2+n)}{\min(\epsilon, 0.27)} + \log\left(\frac{2}{0.27}\right)\]
    where $\hat{a}$ denotes the subarc from $a$ obtained by removing the portions contained in standard cusp neighborhood. 
    Thus, by Proposition~\ref{PropTeichShear} we obtain: 
    \[ 
       \dteich(X,Q(\mathcal{T})) \lesssim
       \frac{g+n}{\epsilon}. 
    \]
\end{proof}

\begin{lemma}\label{Flip-Lipschitz}
    Let $\mathcal{T}$ and $\mathcal{T}'$ be two ideal triangulations related by one flip move in the flip graph of triangulations $\Fs_{g,n}$, then 
    \[ \dteich(Q(\mathcal{T}), Q(\mathcal{T}')) \lesssim \log(g+n)\]
\end{lemma}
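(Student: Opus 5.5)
The plan is to apply Proposition~\ref{PropTeichShear} with $X=Q(\mathcal{T})$ and $Y=Q(\mathcal{T}')$, taking $\mathcal{T}'$ as the common ideal triangulation. By construction $\shear_{\mathcal{T}'}(Q(\mathcal{T}'))=0$. It then suffices to show that every arc $a\in\mathcal{T}'$, truncated along the standard cusp neighborhoods of $Q(\mathcal{T})$, has length $\ell_{Q(\mathcal{T})}(\hat a)\lesssim \log(g+n)$. Given that bound, Proposition~\ref{PropTeichShear} yields $\dteich(Q(\mathcal{T}),Q(\mathcal{T}'))\lesssim \log(g+n)+G(g,n)\lesssim\log(g+n)$.

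\textbf{Arcs common to both triangulations.} For every arc of $\mathcal{T}$ I would first bound its truncated length on $Q(\mathcal{T})$. All shears of $Q(\mathcal{T})$ along $\mathcal{T}$ vanish, so Lemma~\ref{Shear0-HorocyclicSegments} applies with the decoration by horocycles of length $\eta=2$ (or $1/2$). Every $h$-length at a cusp $c$ equals $\eta/v(c)\geq \eta/(12g-12+6n)$. Relations~(\ref{lambda_h_relation}) and~(\ref{delta-truncation}) then give the truncated length of each arc of $\mathcal{T}$ as
\[
\log\frac{1}{h_j}+\log\frac{1}{h_l}\leq 2\log\bigl(6(2g-2+n)\bigr)+O(1),
\]
exactly as in Lemma~\ref{Shear0}. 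This covers every arc of $\mathcal{T}'$ except the new diagonal.

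\textbf{The new diagonal.} Let $s$ be the flipped diagonal of the quadrilateral $\texttt{Q}$, and $s'$ the new one. I would compute $\ell(\hat s')$ on $Q(\mathcal{T})$ from lambda-lengths via the Ptolemy relation $\lambda_s\lambda_{s'}=\lambda_1\lambda_3+\lambda_2\lambda_4$. This gives
\[
\log\lambda_{s'}\leq \log 2+\max(\log\lambda_1\lambda_3,\log\lambda_2\lambda_4)-\log\lambda_s.
\]
Since every $\lambda\geq1$ for horocycles of length at most $2$ (by the embeddedness used in Proposition~\ref{PropTeichShear}), and $2\log\lambda_i$ equals the truncated length of side $i$, the truncated length of $s'$ is at most $\log 4$ plus twice the maximal truncated length of a side. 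That is again $\lesssim\log(g+n)$.

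An alternative route uses Lemma~\ref{FlipMoveShear0}: the shears of $Q(\mathcal{T})$ along $\mathcal{T}'$ lie in $\{0,\pm\log2\}$, so the same estimate goes through with bounded shears. The main obstacle is purely bookkeeping. One has to make sure that the truncation in Proposition~\ref{PropTeichShear} (standard neighborhoods of length $2$, then extended to length $1/2$) matches the decoration used in the $h$-length computation. A change of horocycle length only shifts truncated lengths by an additive constant $2\log 4$, so this affects only the constants hidden in $\lesssim$.
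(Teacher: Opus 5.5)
Your proposal is correct and follows the paper's proof. Like the paper, you apply Proposition~\ref{PropTeichShear} with $X=Q(\mathcal{T})$, $Y=Q(\mathcal{T}')$ and common triangulation $\mathcal{T}'$, and bound the arcs shared with $\mathcal{T}$ by the equal $h$-lengths of the zero-shear surface, as in Lemma~\ref{Shear0}; the only difference is that you bound the new diagonal through the Ptolemy relation, whereas the paper bounds it by the perimeter of the truncated ideal quadrilateral, and both give $\lesssim\log(g+n)$.
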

\begin{proof}
    Let $\mathcal{T}$ and $\mathcal{T}'$ differ by one flip move in $\Fs_{g,n}$. We want to estimate the Teichmüller distance between $X_{\mathcal{T}}$ and $X_{\mathcal{T}'}$. Denote by $e$ the flipped arc in $\mathcal{T}$ and by $e'$ the corresponding arc in $\mathcal{T}'$. The shearing coordinates of $X_{\mathcal{T}}$ with respect to $\mathcal{T}$ are all trivial, hence by the proof of Lemma~\ref{Shear0}, for any $a \in \mathcal{T}$ we have
    \[
        \ell_{X_\mathcal{T}}^{\wtrunc}(a) \leq \log(6g-3+3n).
    \]
    Moreover, on $X_{\mathcal{T}}$, the truncated length of $e'$ is bounded by the perimeter of the truncated ideal quadrilateral and so 
    \[
        \ell_{X_\mathcal{T}}(\hat{e}')
        \leq (2\epsilon_T + 2\log(6g-3+3n))\lesssim \log(g+n).
    \]
    Consequently, the truncated length of any  ideal arc $ a \in \mathcal{T}'$ is bounded on $X_\mathcal{T}$ by \[ \ell_{X_\mathcal{T}}(\hat{a})\lesssim \log(g+n),\] where $\hat{a}$ denotes the subarc of $a$ leaving the standard cusp neighborhoods. In addition, by definition 
    \[
        \shear_{\mathcal{T}'}(X_{\mathcal{T}'})=0.
    \]
    Therefore by Proposition~\ref{PropTeichShear} we obtain:
    \[ 
        \dteich(X_\mathcal{T}, X_{\mathcal{T}'}) \lesssim \log(g+n).
    \]
\end{proof}

The thick part of Teichmüller space is not always convex with respect to the Teichmüller metric~\cite{LiuShigaSun2014}, however, it is path-connected for $\epsilon$ sufficiently small. Therefore, we consider the induced path metric; that is, for $X,Y \in \teich^{\epsilon}_{g,n}$, let
\[
    \dteichpath(X,Y) \coloneq \inf_{c} \sup \left\{ \sum_{i=1}^{m} \dteich\bigl(c(t_{i-1}),c(t_{i})\bigr) \;\middle|\; \substack{0=t_0\le \dots \le t_m=1, \\ m\in\mathbb{N}} \right\},
\]
where the infimum is taken over all continuous paths $c\colon[0,1]\to \teich^{\epsilon}_{g,n}$ with $c(0)=X$ and $c(1)=Y$. In particular, one has $\dteich \leq \dteichpath$.

\begin{proposition}\label{MainTheorem4}
    Let $0<\epsilon\leq\log\sqrt3$, and let $g,n$ be integers with $2g-2+n>0$ and $n\geq1$. Then the map $Q\colon\Fs_{g,n}\to\teich^{\epsilon}_{g,n}$ satisfies:
    \begin{enumerate}[label=(\roman*)]
        \item\label{flip-cost} For all $\mathcal{T},\mathcal{T}'\in\Fs_{g,n}$,
        \[
            \dteich\left(Q(\mathcal{T}),Q(\mathcal{T}')\right)\;\lesssim\log(g+n)\,d_{\Fs}(\mathcal{T},\mathcal{T}').
        \]
        \item\label{flip-width} The width of the flip graph model satisfies
        \[
            \sup_{\teich^{\epsilon}_{g,n}}
            \inf_{\mathcal{T}\in\Fs_{g,n}}
            \dteich(X,Q(\mathcal{T}))\leq \frac{g+n}{\epsilon}.
        \]
        \item\label{flip-qi} The map $Q$ is a quasi-isometry from $\left(\Fs_{g,n},d_{\Fs}\right)$ to $\left(\teich^{\epsilon}_{g,n},\dteichpath\right)$.
    \end{enumerate}
\end{proposition}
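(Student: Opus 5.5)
Parts \ref{flip-cost} and \ref{flip-width} are essentially already in hand. For \ref{flip-cost}, I would first observe that $Q$ indeed lands in $\teich^{\epsilon}_{g,n}$: by Lemma~\ref{SysShearZero}, $\sys(Q(\mathcal{T}))\geq\log\sqrt3\geq\epsilon$. Then, given $\mathcal{T},\mathcal{T}'$ at flip distance $d$, I would choose a geodesic path $\mathcal{T}=\mathcal{T}_0,\dots,\mathcal{T}_d=\mathcal{T}'$ in $\Fs_{g,n}$ and apply Lemma~\ref{Flip-Lipschitz} to each consecutive pair. The triangle inequality then gives the bound $\lesssim\log(g+n)\,d$. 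For \ref{flip-width}, Lemma~\ref{TeichshearingIDEAL} gives the statement directly, up to the implicit universal constants.

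For \ref{flip-qi}, I would pass to the path metric $\dteichpath$. The key point for the upper bound is that the cost of a single flip should be controlled in $\dteichpath$, not merely in $\dteich$. I would revisit the proof of Lemma~\ref{Flip-Lipschitz} and join $Q(\mathcal{T})$ to $Q(\mathcal{T}')$ by a path staying in the thick part. My first attempt would be the shear path used in Lemma~\ref{WPshearingpath}, namely linear interpolation of the shears of $Q(\mathcal{T})$ with respect to $\mathcal{T}'$. By Lemma~\ref{FlipMoveShear0}, these shears are $(\log2,-\log2,\log2,-\log2)$ on one quadrilateral and $0$ elsewhere. Along such a path every shear is bounded by $\log 2$, so every intermediate surface has uniformly bounded truncated arc lengths. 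Two consequences follow:
\begin{itemize}
\item Proposition~\ref{PropTeichShear} bounds the Teichmüller length of the path, since nearby points differ by small shears and the bi-Lipschitz constants tend to $1$.
\item A systole bound analogous to Lemma~\ref{SysShearZero} (via~\cite{ManmanJiang2021} with bounded shear) keeps the path in $\teich^{\epsilon'}_{g,n}$.
\end{itemize}
If the resulting thickness constant is smaller than $\epsilon$, I would instead appeal to the coarse equivalence, for fixed $(g,n)$, of $\dteichpath$ on different thick parts. This equivalence follows from cocompactness of the $\Mod(\Sigma)$-action. This yields $\dteichpath(Q(\mathcal{T}),Q(\mathcal{T}'))\leq K\,d_{\Fs}(\mathcal{T},\mathcal{T}')$.

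For the lower bound, I would follow the Švarc--Milnor-type argument mirroring Lemmas~\ref{CountSHortHExagonDecomp}, \ref{QILemma1} and~\ref{QILemma2}.
\begin{enumerate}
\item Set $U(\mathcal{T})=\{X\in\teich^\epsilon_{g,n}: \ell_X(\hat a)\leq \mathcal{L}\ \forall a\in\mathcal{T}\}$, with $\mathcal{L}$ as in Lemma~\ref{TeichshearingIDEAL}. These sets cover the thick part, and each contains $Q(\mathcal{T})$ by the proof of Lemma~\ref{Shear0}.
\item Enlarge to $U'(\mathcal{T})$ with threshold $2\mathcal{L}$. By Lemma~\ref{TruncOrthoSpectrum} and continuity, only $N(g,n)$ triangulations can have $X\in U'(\mathcal{T})$. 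Hence $U'(\mathcal{T})\cap U'(\mathcal{T}')\neq\emptyset$ forces $d_{\Fs}(\mathcal{T},\mathcal{T}')\leq D(g,n)$.
\item Since $U'(\mathcal{T})$ is compact (it lies in the thick part, which is cocompact), there is a uniform $\delta_0>0$ separating $\partial U$ from $\partial U'$ in $\dteich\leq\dteichpath$. The quotient $\Fs_{g,n}/\Mod(\Sigma)$ is finite, so $\delta_0$ is uniform over $\mathcal{T}$.
\item Take a path in the thick part of $\dteich$-length close to $\dteichpath(Q(\mathcal{T}),Q(\mathcal{T}'))$, and cover it by at most $d/\delta_0+1$ consecutive sets $U'(\mathcal{T}_i)$. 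This gives $d_{\Fs}\leq D(d/\delta_0+1)$.
\end{enumerate}
Coarse surjectivity in $\dteichpath$ follows from \ref{flip-width} combined with the fact that every point of the thick part lies in some $U(\mathcal{T})$ that contains $Q(\mathcal{T})$. On the compact set $U(\mathcal{T})/\!$stabilizer, $\dteichpath$ diameter is bounded.

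The main obstacle is the comparison between $\dteich$ and $\dteichpath$. Width and Lipschitz estimates are stated in $\dteich$, while the quasi-isometry is in $\dteichpath$. The cleanest way around this is to lean on cocompactness, so that the explicit constants are lost but the quasi-isometry holds. The step I expect to need most care is showing that the bounded-shear interpolation path remains $\epsilon$-thick.
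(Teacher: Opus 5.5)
Your treatment of \ref{flip-cost} and \ref{flip-width} agrees with the paper. These parts are Lemmas~\ref{Flip-Lipschitz} and~\ref{TeichshearingIDEAL}, and Lemma~\ref{SysShearZero} with $\epsilon\le\log\sqrt3$ guarantees that $Q$ lands in $\teich^{\epsilon}_{g,n}$.

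For \ref{flip-qi} you take a different and much longer route. The paper observes that $Q$ is $\Mod(\Sigma)$-equivariant, because shears are natural: $S^{\varphi\mathcal{T}}_{\varphi a}(\varphi X)=S^{\mathcal{T}}_{a}(X)$. It also uses that $\Mod(\Sigma)$ acts properly and cocompactly by isometries on both $\Fs_{g,n}$ and $(\teich^{\epsilon}_{g,n},\dteichpath)$. By \v{S}varc--Milnor both are then quasi-isometric to $\Mod(\Sigma)$ via orbit maps, so an equivariant map between them is a quasi-isometry.

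You instead rebuild Brock's covering argument, as in Lemmas~\ref{CountSHortHExagonDecomp}--\ref{QILemma2}. Every uniform constant you need is itself obtained from cocompactness: $N(g,n)$, $D(g,n)$, $\delta_0$, the comparison between thick parts, and the diameter of $U(\mathcal{T})$. So you use the same inputs as \v{S}varc--Milnor and gain no explicit constants. What your route buys is a proof parallel to the hexagon-graph case, which could become quantitative if those compactness steps were replaced by estimates.

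Equivariance also removes your main worry. Any path in the path-connected set $\teich^{\epsilon}_{g,n}$ from $Q(\mathcal{T})$ to $Q(\mathcal{T}')$ has finite length. Since flips fall into finitely many $\Mod(\Sigma)$-orbits, the supremum of $\dteichpath(Q(\mathcal{T}),Q(\mathcal{T}'))$ over flips is a maximum of finitely many numbers. You do not need the interpolation path or its thickness at all.

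Three local justifications need fixing.
\begin{itemize}
\item Proposition~\ref{PropTeichShear} does not give bi-Lipschitz constants tending to $1$. It requires one surface with all shears zero and gives a bound $\lesssim\log(g+n)+G$ that does not shrink, so it cannot bound the length of your shear path. That length is finite simply because the path is smooth and $\dteich$ is a continuous Finsler metric.
\item Compactness of $U'(\mathcal{T})$ does not follow from lying in the cocompact thick part. It holds because bounded truncated lengths of all arcs of a fixed ideal triangulation bound Penner's lambda lengths, hence all shears.
\item A bound $N(g,n)$ on the number of triangulations short at a single point does not by itself bound $d_{\Fs}(\mathcal{T},\mathcal{T}')$. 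You need that pairs with $U'(\mathcal{T})\cap U'(\mathcal{T}')\neq\emptyset$ fall into finitely many $\Mod(\Sigma)$-orbits. This follows from the compactness just mentioned together with discreteness of the truncated spectrum.
\end{itemize}
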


\begin{remark}\label{rem1}
    Statements~\ref{flip-cost} and~\ref{flip-width} are the content of Lemma~\ref{Flip-Lipschitz} and Lemma~\ref{TeichshearingIDEAL}, respectively. Then $\Mod(\Sigma)$ acts on $\Fs_{g,n}$ and on $\left(\teich^{\epsilon}_{g,n},\dteichpath\right)$, properly and cocompactly by isometries, hence it follows from the Schwarz--Milnor lemma that they are both quasi-isometric to $\Mod(\Sigma)$~\cite{DisarloParlier2019,LackenbyPurcell2024}. 

    The map $Q$ is $\Mod(\Sigma)$-equivariant. Indeed, for every $\varphi\in\Mod(\Sigma)$ and every arc $a\in\mathcal{T}$, we have $S^{\varphi\mathcal{T}}_{\varphi a}(\varphi\cdot X)=S^{\mathcal{T}}_{a}(X)$. Hence the surface $\varphi\cdot Q(\mathcal{T})$ has vanishing shear parameters along $\varphi\cdot\mathcal{T}$. That is,
    \[
        Q(\varphi\cdot\mathcal{T})=\varphi\cdot Q(\mathcal{T}) \qquad \text{for all } \varphi\in\Mod(\Sigma).
    \]
    Fix a vertex $\mathcal{T}\in\Fs_{g,n}$, write $X_{\mathcal{T}}=Q(\mathcal{T})$, and let $T_\gamma\in\Mod(\Sigma)$ be the Dehn twist along an essential simple closed curve $\gamma$. By equivariance of $Q$, we have $Q(T_\gamma^N\cdot\mathcal{T})=T_\gamma^N\cdot X_{\mathcal{T}}$ for all $N\in\Z$.

    On one hand, cyclic subgroups generated by Dehn twists are undistorted in the mapping class group~\cite{FarbLubotzkyMinsky2001}, so the quasi-isometry between the flip graph and $\Mod(\Sigma)$ endowed with the word metric yields
    \[
        d_{\Fs}\left(\mathcal{T},\,T_\gamma^N\cdot\mathcal{T}\right)\;\geq\; c_1 |N| - c_2
    \]
    for some constants $c_1, c_2 >0$ independent of $N$.

    On the other hand, the twist $T_\gamma$ acts by isometry on $\left(\overline{\teich}_{g,n},\dwp\right)$. Since it fixes the stratum $S(\gamma)$ pointwise, we have
    \[
        \dwp\left(X_{\mathcal{T}},\,T_\gamma^N\cdot X_{\mathcal{T}}\right)
        \leq 2\,\dwp\left(X_{\mathcal{T}},\,S(\gamma)\right),
    \]
    a bound independent of $N$. Hence, by Theorem~\ref{MainTheorem3}, $\dhex(\mathcal{T},\,T_\gamma^N\cdot\mathcal{T})$ is uniformly bounded, independently of $N$, while $d_{\Fs}(\mathcal{T},\,T_\gamma^N\cdot\mathcal{T})$ grows linearly. Therefore, we cannot deduce Theorem~\ref{MainTheorem4} from Theorem~\ref{MainTheorem3}: even though $\Fs_{g,n}\subset\mathscr{H}_{g,n}$ is the subgraph on the vertices with $\Gamma=\emptyset$, this inclusion is not a quasi-isometric embedding.

    The same example explains why Theorem~\ref{MainTheorem4} requires the path metric $\dteichpath$. Let $A\subset X_{\mathcal{T}}$ be an embedded annulus with core curve $\gamma$ and modulus $m$, identified with $(\R/\Z)\times[0,m]$. Identify $A$ as the cylinder $A\cong(\R/\Z)\times[0,m]$ with coordinates $z=x+iy$. Let $K \geq 1$, we construct a path connecting $X_{\mathcal{T}}$ to $T_\gamma^N \cdot X_{\mathcal{T}}$ in three steps:
    \begin{itemize}
        \item[-] Define $g_K(x+iy) = x + iKy$ on $A$ and the identity outside $A$. This quasiconformal map has maximal dilatation $K$ and yields a surface $X_K$ where the annulus $A$ is stretched to a modulus of $Km$. The Teichmüller distance is $\dteich(X_{\mathcal{T}}, X_K) \leq \frac{1}{2}\log K$.
        \item[-] On $X_K$, define the affine twist $h_N(x+iy) = \left(x + \frac{N y}{Km}\right) + iy$ on $A$ and the identity outside. This map is isotopic to $T_\gamma^N$. Its maximal dilatation is $e^{2\arcsinh(|N|/2Km)}$, so $\dteich(X_K, T_\gamma^N \cdot X_K) \leq \arcsinh\left(\frac{|N|}{2Km}\right)$.
        \item[-] Apply $g_K^{-1}$ to return the conformal structure outside the annulus to that of $X_{\mathcal{T}}$. The distance is again bounded by $\frac{1}{2}\log K$.
    \end{itemize}
    The total Teichmüller distance is bounded by the length of this path:
    \[
        \dteich\bigl(X_{\mathcal{T}},\,T_\gamma^N\cdot X_{\mathcal{T}}\bigr) \leq \log K + \arcsinh\left(\frac{|N|}{2Km}\right).
    \]
    Choosing $K = \max\bigl(1, \frac{|N|}{2m}\bigr)$, the twist term $\arcsinh(1)$ becomes a constant, yielding
    \[
        \dteich\bigl(X_{\mathcal{T}},\,T_\gamma^N\cdot X_{\mathcal{T}}\bigr) \leq \log |N| + C,
    \]
    where $C$ depends only on $m$ (and thus on $X_{\mathcal{T}}$ and $\gamma$), but not on $N$. However, for $|N|\geq 2m$ the intermediate surface $X_K$ contains an annulus of modulus $Km=|N|/2$ with core $\gamma$, so $\ell_{X_K}(\gamma)\leq \pi/(Km) \leq 2\pi/|N|$: this short path leaves the thick part $\teich^{\epsilon}_{g,n}$ for $|N|$ large. Within the thick part, $\dteichpath\bigl(X_{\mathcal{T}},\,T_\gamma^N\cdot X_{\mathcal{T}}\bigr)$ is forced to grow linearly in $N$.
\end{remark}
\color{black}
\begin{theorem}\label{bigCOR}
    Let $X\in \overline{\teich}_{g,n}$. Then there exists a hexagon decomposition $(\Gamma, \mathcal{A})$ of $X$ and a noded surface $X_\infty$ such that  
    \begin{enumerate}
        \item\label{end_cor_one} $X_\infty$ lies in the $\epsilon_T'$-thick part of the stratum $S(\Gamma)$, where $\epsilon_T'\approx 0.0191$ is a universal constant, independent of the topology.
        \item\label{end_cor_one_bis} the shearing coordinates of $X_\infty$ along $\As_\infty$ satisfy
        \[
            \shear_{\As_\infty}(X_\infty)\;\leq\;28\log\bigl(8\pi(2g-2+n)\bigr)+247;
        \]
        \item\label{end_cor_two} $\dwp(X,X_{\infty})\leq 5\sqrt{2\pi(2g-2+n)\log(8\pi(2g-2+n))}$.
        \item $\dteich(X_{\infty}, Q(\Gamma, \mathcal{A})) \lesssim \log\left(g+n\right).$
    \end{enumerate}
\end{theorem}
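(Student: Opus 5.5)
Items (1)--(3) come directly from Proposition~\ref{PropProjectThickStartum}. Given $X\in\overline{\teich}_{g,n}$, that proposition provides a hexagon decomposition $(\Gamma_1,\As_1)$ and a noded surface $X_\infty\in S(\Gamma_1)$ with three properties: $X_\infty$ is $\epsilon_T'$-thick in the stratum, its shear along ${\As_1}_\infty$ is at most $28\log(8\pi(2g-2+n))+247$, and $\dwp(X,X_\infty)\le 5\sqrt{2\pi(2g-2+n)\log(8\pi(2g-2+n))}$. We set $(\Gamma,\As)\coloneq(\Gamma_1,\As_1)$. Strictly speaking, this decomposition is produced on the grafted surface $X_{t_0}$. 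Since hexagon decompositions are topological objects, however, it is equally a hexagon decomposition of $X$ in the sense of the statement. The only remaining work is item (4).

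For item (4), compare $X_\infty$ and $Q(\Gamma,\As)=N_{\Gamma,\As}$, which lie in the same stratum $S(\Gamma)$. By the Remark following Proposition~\ref{PropTeichShear}, that proposition applies to the surface $\Sigma\smallsetminus\Gamma$ with the ideal triangulation $\As_\infty$. We take $Y=N_{\Gamma,\As}$, which has $\shear_{\As_\infty}(Y)=0$ by construction. It then suffices to bound, on $X_\infty$, the length of each arc of $\As_\infty$ outside the standard cusp and node neighbourhoods by some $G(g,n)\lesssim\log(g+n)$; the conclusion is $\dteich\lesssim\log(g+n)+G\lesssim\log(g+n)$.

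To obtain $G$, follow the proof of Proposition~\ref{WidthHexagonModel}. Inequality~\eqref{boundArcTruncatedCVStratum} gives, for $t$ large,
\[
\ell^{\wtrunc}_{X_t,\Gamma}(a)\le L_A''+2\Delta_{\wtrunc},\qquad L_A''=4L_A+8\log L+32+\epsilon_T,
\]
which is $\lesssim\log(g+n)$. Lemma~\ref{lem:truncated_length_continuous} transfers this bound to $X_\infty$. On the stratum, the $\wtrunc$-truncation removes horocyclic neighbourhoods of length $h_0(\wtrunc)=\epsilon_T$ around nodes and cusps. The standard neighbourhoods have boundary length $2$, so passing from one truncation to the other changes each arc end by $\log(2/\epsilon_T)$, and $G$ exceeds the $\wtrunc$-bound by at most $2\log(2/\epsilon_T)$.

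The main obstacle is checking that the quantitative dependence in Proposition~\ref{PropTeichShear} actually produces a logarithmic bound. That proof has $h_{\min}=\min\{4e^{-G_1},h'_{\min}\}$ with $h'_{\min}\sim 1/(g+n)$. Since $G_1\lesssim\log(g+n)$, we get $1/h_{\min}$ polynomial in $g+n$. Then $M$, $C$, and the constants $C_F$ and $C_\Psi$ are all polynomial in $g+n$, so the logarithm of each is $O(\log(g+n))$. A secondary point is that Proposition~\ref{PropTeichShear} was stated for surfaces in $\teich_{g,n}$. Since $S(\Gamma)$ is a product of Teichmüller spaces of punctured surfaces, the hexagon-by-hexagon construction goes through unchanged, with nodes treated as cusps, and the Teichmüller distance on the product is the maximum over the components.
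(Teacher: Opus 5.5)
Your proposal is correct and follows the paper's proof. Items (1)--(3) are read off from Proposition~\ref{PropProjectThickStartum}, and item (4) comes from passing the bound~\eqref{boundArcTruncatedCVStratum} to $X_\infty$ and applying Proposition~\ref{PropTeichShear} with $Y=Q(\Gamma,\As)$, which has zero shears; your extra checks only make explicit what the paper leaves implicit, namely the continuity transfer, the change of truncation (which actually only shortens the arcs, since $\epsilon_T<2$), and the polynomial dependence of $C_F$ and $C_\Psi$ on $g+n$.
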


\begin{proof}
    The properties~(\ref{end_cor_one}),~(\ref{end_cor_one_bis}) and~(\ref{end_cor_two}) follow directly from Proposition~\ref{PropProjectThickStartum}. Next by inequality~(\ref{boundArcTruncatedCVStratum}) from the proof of Proposition~\ref{PropProjectThickStartum} the truncated lengths of the ideal arcs in $\As_{\infty}$ are uniformly bounded on $X_\infty$ by $\lesssim \log(g+n)$. Since by definition \[ \shear_{\As_{\infty}}(Q(\Gamma, \mathcal{A}))=0\] we conclude by applying Proposition~\ref{PropTeichShear}.
\end{proof}

\section{Interpolation with the Thurston metric}\label{sec:thurston}

The Teichmüller metric is comparable to another metric on $\teich_{g,n}$ called the Thurston asymmetric metric~\cite{Thurston1998}. The asymmetric Thurston metric, measures the minimal amount by which lengths of curves must be stretched to deform one hyperbolic surface into another. It is defined by
\[
d_{\mathrm{Th}}(X,Y) = \log \inf_{f \sim \mathrm{id}} L(f),
\]
where the infimum is taken over all Lipschitz homeomorphisms $f : X \to Y$ isotopic to the identity and $L(f)$ is the Lipschitz constant of $f$. 
Thurston proved that this metric can equivalently be expressed as
\[
d_{\mathrm{Th}}(X,Y) = \log \sup_{\alpha \in \mathcal{S}} 
\frac{\ell_Y(\alpha)}{\ell_X(\alpha)},
\]
where $\mathcal{S}$ denotes the set of simple closed curves on $X$~\cite{Thurston1998}. 
The symmetrized version,
\[
d_{\mathrm{L}}(X,Y) = \max\{ d_{\mathrm{Th}}(X,Y),\, d_{\mathrm{Th}}(Y,X) \},
\]
is called the \emph{Lipschitz metric}.

Choi and Rafi~\cite{ChoiRafi2007} showed that on the $\epsilon$-thick part of the Teichmüller space, the Teichmüller and Lipschitz metrics are comparable up to an additive constant which depends only on $\epsilon$ and on the topological type of the surface.

A \emph{stretch path} is a geodesic for the Thurston metric constructed by stretching along the edges of an ideal triangulation. While arbitrary points in $\teich_{g,n}$ generally cannot be connected by a single stretch segment, the following section shows that we can always find a stratum $S(\Gamma)$ where such a move is possible.

\addtocontents{toc}{\SkipTocEntry}
\subsection*{Thurston stretch and Weil--Petersson projection}
We now briefly recall the notion of a \emph{stretch path}. Given a maximal geodesic lamination $\lambda$ on a hyperbolic surface $X$, the stretch path $(S_t)_{t \ge 0}$ along $\lambda$ is a path in $\overline{\teich}_{g,n}$ defined by 
\[t \in \R_{>0} \mapsto \stret(X,\lambda, t) \quad \text{where} \quad \shear(\stret(X,\lambda, t))=e^t \shear(X).\] 
Thurston showed that stretch paths are geodesics for the Thurston metric and satisfy\[d_{\mathrm{Th}}( \stret(X,\lambda, s),  \stret(X,\lambda, t)) = t-s \quad \forall s<t.\]
The following result provides a way of combining Weil--Petersson projections with Thurston stretch geometry, and may be of independent interest in understanding the coarse interaction between the two metrics. 
\begin{proposition}\label{MainTheorem}
  Let $X \in \overline{\teich}_{g,n}$, then there exists a hexagon decomposition $(\Gamma, \mathcal{A})$ and two noded surfaces $X_r$ and $X_{r'}$ in the stratum $S(\Gamma)$ such that
  \begin{enumerate}
    \item\label{MainTheorem-step1} their shearing coordinates satify   \[ 
        \shear_{\mathcal{A}_\infty}(X_r) \leq 7\log(8\pi(2g-2+n))+ 80
    \]
    and
    \[
        \shear_{\mathcal{A}_\infty}(X_{r'}) \leq \frac{1}{\log(8\pi(2g-2+n))};
    \]
    \item\label{MainTheorem-step2} the surfaces $X_r$ and $X_{r'}$ are close, respectively, to $X$ and the noded surface $N_{\Gamma, \mathcal{A}}$ with respect to the Weil--Petersson metric:
    \[
        \dwp(X, X_r) \leq 5 \sqrt{2\pi(2g-2+n)\log(8\pi(2g-2+n))},
    \]
    \[    
        \dwp(X_{r'}, N_{\Gamma, \mathcal{A}}) \leq D \sqrt{2\pi(2g-2+n)}
    \]
    where $D$ is a uniform constant independent from the topology
    \item $X_r$ and $X_{r'}$ lie on one geodesic stretch path, therefore the Thurston distance between them is given by: 
    \[
        d_{\mathrm{Th}}(X_{r'}, X_r)= \log\left(8\pi(2g-2+n)\right) + \log\left(7\log(8\pi(2g-2+n))+80\right).
    \]
  \end{enumerate}
\end{proposition}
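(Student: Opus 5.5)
The plan is to take $X_r$ to be the endpoint $X_\infty$ furnished by Proposition~\ref{TravellingToStratum}, and then to slide it along a stretch path toward $N_{\Gamma,\mathcal{A}}$. Concretely, I would start from a short hexagon decomposition $(\Gamma,\mathcal{A})$ of $X$ given by Corollary~\ref{ShortHexagonDecompositionExistence}, so that $X\in V(\Gamma,\mathcal{A})$. Proposition~\ref{TravellingToStratum} then produces $X_r\coloneq X_\infty\in S(\Gamma)$. This surface satisfies $\shear_{\mathcal{A}_\infty}(X_r)\leq 7\log(8\pi(2g-2+n))+80$ and $\dwp(X,X_r)\leq 5\sqrt{2\pi(2g-2+n)\log(8\pi(2g-2+n))}$. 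This gives the first inequality of item~(1) and the first inequality of item~(2).

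Next I would use the stretch path along the maximal lamination given by the ideal triangulation $\mathcal{A}_\infty$ of $\Sigma\smallsetminus\Gamma$, run backwards from $X_r$. By definition, the shears along this path scale as $e^{-t}\shear(X_r)$. The admissibility conditions (the cusp sums vanish) are linear, so they are preserved under scaling, and the path stays in $S(\Gamma)$. I would set $X_{r'}\coloneq\stret(X_r,\mathcal{A}_\infty,-T)$ with $e^{T}=\log(8\pi(2g-2+n))\cdot(7\log(8\pi(2g-2+n))+80)$. Then $\shear_{\mathcal{A}_\infty}(X_{r'})\leq 1/\log(8\pi(2g-2+n))$. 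By Thurston's formula $d_{\mathrm{Th}}(\stret(\cdot,s),\stret(\cdot,t))=t-s$, we get $d_{\mathrm{Th}}(X_{r'},X_r)=T$, which is exactly the stated sum of logarithms. I would also need a short remark that the stretch segment through an arbitrary point with these shears exists in the stratum. This follows because shearing coordinates along $\mathcal{A}_\infty$ globally parametrize $S(\Gamma)$.

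It remains to bound $\dwp(X_{r'},N_{\Gamma,\mathcal{A}})$. Since $N_{\Gamma,\mathcal{A}}$ has zero shears, I would apply Lemma~\ref{WPshearingpath} on each factor of the stratum, using the remark that these estimates extend to strata. This requires a bound on the oscillation norm $\mathcal{O}_{\mathcal{A}_\infty}(X_{r'})$ that is independent of the topology. This is the main obstacle. The naive bound is the sum of $|s|$ over the consecutive arc-ends at a cusp, which gives valence$\cdot\max|s|$. Valence can be of order $g+n$, so this would spoil uniformity.

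I would first try to exploit that the shears at $X_{r'}$ are the uniform rescaling $e^{-T}$ of those at $X_r$, so that every partial sum at $X_{r'}$ is $e^{-T}$ times the corresponding partial sum at $X_r$. It then suffices to show that partial sums of shears around a cusp of $X_r$ are bounded by $O(\log(g+n))\cdot\max|s|$. If that fails, I would choose $T$ larger, with an extra $\log(12(2g-2+n))$ absorbing the valence. This would give $\mathcal{O}\leq 1$ at the cost of altering the precise Thurston distance, and I would then check whether the stated formula still holds up to this adjustment. With $\mathcal{O}_{\mathcal{A}_\infty}(X_{r'})\leq M$ uniform, Lemma~\ref{WPshearingpath} gives $\dwp(X_{r'},N_{\Gamma,\mathcal{A}})\leq D\sqrt{2\pi(2g-2+n)}$, since the areas of the factors sum to the area of $\Sigma$.
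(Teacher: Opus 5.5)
Your outline is the paper's: take $X_r$ from Proposition~\ref{TravellingToStratum}, rescale its $\mathcal{A}_\infty$-shears to get $X_{r'}$, bound $\dwp(X_{r'},N_{\Gamma,\mathcal{A}})$ by Lemma~\ref{WPshearingpath}, and read $d_{\mathrm{Th}}$ off the stretch path. The gap is the rescaling factor you chose. Write $m=2g-2+n$. Your choice $e^{T}=\log(8\pi m)\,(7\log(8\pi m)+80)$ gives $T=\log\log(8\pi m)+\log(7\log(8\pi m)+80)$, but the statement requires $\log(8\pi m)+\log(7\log(8\pi m)+80)$. So your claim that $T$ is ``exactly the stated sum'' is false, and item~(3) is not obtained. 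The same choice is what makes the oscillation norm look out of reach. With $|s'|\le 1/\log(8\pi m)$ and up to $6m$ arc-ends at a cusp, you only get $\mathcal{O}\le 6m/\log(8\pi m)$. Neither fallback closes this as written. The first is a homogeneous inequality you give no argument for. The second deliberately moves $T$ away from the stated value and still carries the $\log\log$ term.

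The missing observation is that the stated Thurston distance already encodes the fix. Take $e^{T}=8\pi m\,(7\log(8\pi m)+80)$, i.e.\ $r'_i=r_i/\bigl(8\pi m(7\log(8\pi m)+80)\bigr)$, as the paper does. Then $|r'_i|\le 1/(8\pi m)\le 1/\log(8\pi m)$, which gives item~(1). Every cusp or node of $S(\Gamma)$ carries at most $2|\mathcal{A}|=6m$ arc-ends, so every run of consecutive shears sums in absolute value to at most $6m/(8\pi m)=3/(4\pi)<1$. Hence $\mathcal{O}_{\mathcal{A}_\infty}(X_{r'})<1$ uniformly, and Lemma~\ref{WPshearingpath} gives the second half of item~(2). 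With this choice $T$ equals the stated value exactly.

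Your first idea is also salvageable, but in absolute rather than homogeneous form. A run of consecutive shears at a cusp is, up to sign, the logarithm of a ratio of two horocyclic segments there. The truncated-length bound $G(g,n)$ at $X_r$, from the proof of Proposition~\ref{WidthHexagonModel}, confines these segments to $[e^{-G},\epsilon_T]$, so $\mathcal{O}_{\mathcal{A}_\infty}(X_r)\le G\lesssim\log(g+n)$. That would not repair item~(3), however.

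Finally, the equality $d_{\mathrm{Th}}(X_{r'},X_r)=T$, in your argument and in the paper's, rests entirely on the quoted formula $d_{\mathrm{Th}}(\stret(X,\lambda,s),\stret(X,\lambda,t))=t-s$. That formula cannot hold at every point when $\lambda$ is an ideal triangulation: a surface whose shears all vanish is fixed by the scaling. So that step is only as strong as the paper's citation.
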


\begin{proof}
By Corollary~\ref{ShortHexagonDecompositionExistence} choose $(\Gamma, \mathcal{A})$ such that $X \in V(\Gamma, \mathcal{A})$ and denote by $X_r$ the noded surface obtained by applying Proposition~\ref{TravellingToStratum}. Let $r=(r_i)_{i=1}^{6g-6+3n}$ be the tuple of shearing coordinates of $X_r$ with respect to the ideal triangulation $\mathcal{A}_{\infty}$ thus by construction for each $i=1,\dots,6g-6+3n$
\[
    |r_i| \leq 7\log(8\pi(2g-2+n))+80.
\] 
Let $r'=(r_i')_{i=1}^{6g-6+3n}$ be the tuple defined by 
\[
    r'_i = \frac{r_i}{8\pi(2g-2+n)(7\log(8\pi(2g-2+n))+80)},
\]
hence $|r'_i|\leq \sfrac{1}{8\pi(2g-2+n)} <1$. Let $X_{r'}$ be the noded surface with shearing coordinates $r'$ with respect to $\mathcal{A}_{\infty}$, and so by construction we have~(\ref{MainTheorem-step1}). 

By Lemma~\ref{WPshearingpath}, since $\mathcal{O}_{\mathcal{A}_{\infty}}(X_{r'}) = 
\mathcal{O}(1)$, we can move away from the noded surface $X_0\coloneq N_{\Gamma, \mathcal{A}}$ to $X_{r'}$ along a shear path of Weil--Petersson length \[ \dwp(X_0, X_{r'} )\leq D\sqrt{2\pi(2g-2+n)}\] where $D$ is a uniform constant independent from the topology. Together with the distance estimate from Proposition~\ref{TravellingToStratum} this gives~(\ref{MainTheorem-step2}) 

Observe that the shearing coordinates of $X_r$  and $X_{r'}$ satisfy 
\[ 
    \shear(X_r) = e^{\log\left(\;2\pi(2g-2+n)(7\log(8\pi(2g-2+n))+80)\;\right)}\shear(X_{r'}).
\] 
Since the shearing coordinates along $\mathcal{A}_{\infty}$ parametrize $S(\Gamma)$ we identify 
\[ 
    X_r = \stret\left(X_{r'}, \mathcal{A}_{\infty}, t\right), 
\] where $t=\log\left(8\pi(2g-2+n)(7\log(8\pi(2g-2+n))+80)\;\right)$. Therefore there exists a stretch path from $X_{r'}$ to $X_r$ satisfying
\[
    d_{\mathrm{Th}}(X_{r'}, X_r)= \log\left(8\pi(2g-2+n)\right) + \log\left(7\log(8\pi(2g-2+n))+80\right).
\]
\end{proof}

\bibliographystyle{alpha}
\bibliography{bib}

@article{Parlier2016,
    AUTHOR = {Parlier, Hugo},
     TITLE = {Interrogating surface length spectra and quantifying
              isospectrality},
   JOURNAL = {Math. Ann.},
  FJOURNAL = {Mathematische Annalen},
    VOLUME = {370},
      YEAR = {2018},
    NUMBER = {3-4},
     PAGES = {1759--1787},
      ISSN = {0025-5831,1432-1807},
   MRCLASS = {32G15 (11F72 30F10 30F60 53C22 58J53)},
  MRNUMBER = {3770180},
MRREVIEWER = {Benjamin\ Linowitz},
       DOI = {10.1007/s00208-017-1571-x},
       URL = {https://doi.org/10.1007/s00208-017-1571-x},
}

@book{Buser,
    AUTHOR = {Buser, Peter},
     TITLE = {Geometry and spectra of compact {R}iemann surfaces},
    SERIES = {Progress in Mathematics},
    VOLUME = {106},
 PUBLISHER = {Birkh\"auser Boston, Inc., Boston, MA},
      YEAR = {1992},
     PAGES = {xiv+454},
      ISBN = {0-8176-3406-1},
   MRCLASS = {58G25 (30F99)},
  MRNUMBER = {1183224},
MRREVIEWER = {Robert\ Brooks},
}

@book{Penner2012,
    AUTHOR = {Penner, Robert C.},
     TITLE = {Decorated {T}eichm\"uller theory},
    SERIES = {QGM Master Class Series},
      NOTE = {With a foreword by Yuri I. Manin},
 PUBLISHER = {European Mathematical Society (EMS), Z\"urich},
      YEAR = {2012},
     PAGES = {xviii+360},
      ISBN = {978-3-03719-075-3},
   MRCLASS = {30-02 (30F60 32G15)},
  MRNUMBER = {3052157},
MRREVIEWER = {Anna\ Wienhard},
       DOI = {10.4171/075},
       URL = {https://doi.org/10.4171/075},
}

@article{Wolpert1975,
    AUTHOR = {Wolpert, Scott},
     TITLE = {Noncompleteness of the {W}eil-{P}etersson metric for
              {T}eichm\"uller space},
   JOURNAL = {Pacific J. Math.},
  FJOURNAL = {Pacific Journal of Mathematics},
    VOLUME = {61},
      YEAR = {1975},
    NUMBER = {2},
     PAGES = {573--577},
      ISSN = {0030-8730,1945-5844},
   MRCLASS = {32G15 (30A58)},
  MRNUMBER = {422692},
MRREVIEWER = {K.\ Strebel},
       URL = {http://projecteuclid.org/euclid.pjm/1102868050},
}

@article {hensel2008,
    AUTHOR = {Hensel, Sebastian W.},
     TITLE = {Iterated grafting and holonomy lifts of {T}eichm\"uller space},
   JOURNAL = {Geom. Dedicata},
  FJOURNAL = {Geometriae Dedicata},
    VOLUME = {155},
      YEAR = {2011},
     PAGES = {31--67},
      ISSN = {0046-5755,1572-9168},
   MRCLASS = {57M50 (30F60 32G15 37F30)},
  MRNUMBER = {2863893},
MRREVIEWER = {Fr\'ed\'eric\ Palesi},
       DOI = {10.1007/s10711-011-9577-0},
       URL = {https://doi.org/10.1007/s10711-011-9577-0},
}

@article {DumasWolf2008,
    AUTHOR = {Dumas, Emily and Wolf, Michael},
     TITLE = {Projective structures, grafting and measured laminations},
   JOURNAL = {Geom. Topol.},
  FJOURNAL = {Geometry \& Topology},
    VOLUME = {12},
      YEAR = {2008},
    NUMBER = {1},
     PAGES = {351--386},
      ISSN = {1465-3060,1364-0380},
   MRCLASS = {30F60 (30F10 30F40 32G15 57M50)},
  MRNUMBER = {2390348},
MRREVIEWER = {Athanase\ Papadopoulos},
       DOI = {10.2140/gt.2008.12.351},
       URL = {https://doi.org/10.2140/gt.2008.12.351},
}

@article {DiazKim2012,
    AUTHOR = {D\'iaz, Raquel and Kim, Inkang},
     TITLE = {Asymptotic behavior of grafting rays},
   JOURNAL = {Geom. Dedicata},
  FJOURNAL = {Geometriae Dedicata},
    VOLUME = {158},
      YEAR = {2012},
     PAGES = {267--281},
      ISSN = {0046-5755,1572-9168},
   MRCLASS = {57M50 (30F60 32G15 51M10)},
  MRNUMBER = {2922715},
MRREVIEWER = {Joan\ Porti},
       DOI = {10.1007/s10711-011-9632-x},
       URL = {https://doi.org/10.1007/s10711-011-9632-x},
}

@misc{KahnMarkovic2008,
      title={Random ideal triangulations and the {W}eil-{P}etersson distance between finite degree covers of punctured {R}iemann surfaces}, 
      author={Jeremy Kahn and Vladimir Markovic},
      year={2008},
      eprint={0806.2304},
      archivePrefix={arXiv}
}

@article {SaricWangWolfram2024,
    AUTHOR = {{\v{S}}ari\'{c}, Dragomir and Wang, Yilin and Wolfram, Catherine},
     TITLE = {Circle homeomorphisms with square summable diamond shears},
   JOURNAL = {Int. Math. Res. Not. IMRN},
  FJOURNAL = {International Mathematics Research Notices. IMRN},
      YEAR = {2024},
    NUMBER = {17},
     PAGES = {12219--12268},
      ISSN = {1073-7928,1687-0247},
   MRCLASS = {30C62 (30F60 32G15 53A31)},
  MRNUMBER = {4795001},
MRREVIEWER = {T.\ M.\ Gendron},
       DOI = {10.1093/imrn/rnae155},
       URL = {https://doi.org/10.1093/imrn/rnae155},
}

@article {Brock2003,
    AUTHOR = {Brock, Jeffrey F.},
     TITLE = {The {W}eil-{P}etersson metric and volumes of 3-dimensional
              hyperbolic convex cores},
   JOURNAL = {J. Amer. Math. Soc.},
  FJOURNAL = {Journal of the American Mathematical Society},
    VOLUME = {16},
      YEAR = {2003},
    NUMBER = {3},
     PAGES = {495--535},
      ISSN = {0894-0347,1088-6834},
   MRCLASS = {32G15 (30F40 30F60 37F30)},
  MRNUMBER = {1969203},
MRREVIEWER = {Edward\ C.\ Taylor},
       DOI = {10.1090/S0894-0347-03-00424-7},
       URL = {https://doi.org/10.1090/S0894-0347-03-00424-7},
}

@article {CavendishParlier2012,
    AUTHOR = {Cavendish, William and Parlier, Hugo},
     TITLE = {Growth of the {W}eil-{P}etersson diameter of moduli space},
   JOURNAL = {Duke Math. J.},
  FJOURNAL = {Duke Mathematical Journal},
    VOLUME = {161},
      YEAR = {2012},
    NUMBER = {1},
     PAGES = {139--171},
      ISSN = {0012-7094,1547-7398},
   MRCLASS = {32G15},
  MRNUMBER = {2872556},
MRREVIEWER = {Ioannis\ D.\ Platis},
       DOI = {10.1215/00127094-1507312},
       URL = {https://doi.org/10.1215/00127094-1507312},
}

@misc{Bershear,
      title={Bounding shears of spiralling triangulations on hyperbolic surfaces}, 
      author={Marie Abadie},
      year={2025},
      eprint={2512.09818},
      archivePrefix={arXiv}
}

@article{Roger2013,
    AUTHOR = {Roger, Julien},
     TITLE = {Factorization rules in quantum {T}eichmuller theory},
      NOTE = {Thesis (Ph.D.)--University of Southern California},
 PUBLISHER = {ProQuest LLC, Ann Arbor, MI},
      YEAR = {2010},
     PAGES = {60},
      ISBN = {978-1124-16198-3},
   MRCLASS = {99-05},
  MRNUMBER = {2782317},
       URL =
              {http://gateway.proquest.com/openurl?url_ver=Z39.88-2004&rft_val_fmt=info:ofi/fmt:kev:mtx:dissertation&res_dat=xri:pqdiss&rft_dat=xri:pqdiss:3418158},
}

@article{Abikoff1977,
    AUTHOR = {Abikoff, William},
     TITLE = {Degenerating families of {R}iemann surfaces},
   JOURNAL = {Ann. of Math. (2)},
  FJOURNAL = {Annals of Mathematics. Second Series},
    VOLUME = {105},
      YEAR = {1977},
    NUMBER = {1},
     PAGES = {29--44},
      ISSN = {0003-486X},
   MRCLASS = {32G15 (14H15)},
  MRNUMBER = {442293},
MRREVIEWER = {C.\ Earle},
       DOI = {10.2307/1971024},
       URL = {https://doi.org/10.2307/1971024},
}

@book{Abikoff1980,
    AUTHOR = {Abikoff, William},
     TITLE = {The real analytic theory of {T}eichm\"uller space},
    SERIES = {Lecture Notes in Mathematics},
    VOLUME = {820},
 PUBLISHER = {Springer, Berlin},
      YEAR = {1980},
     PAGES = {vii+144},
      ISBN = {3-540-10237-X},
   MRCLASS = {32G15 (30F99 57N05)},
  MRNUMBER = {590044},
MRREVIEWER = {L.\ Keen},
}

@incollection{Bers1974,
    AUTHOR = {Bers, Lipman},
     TITLE = {Spaces of degenerating {R}iemann surfaces},
 BOOKTITLE = {Discontinuous groups and {R}iemann surfaces ({P}roc. {C}onf.,
              {U}niv. {M}aryland, {C}ollege {P}ark, {M}d., 1973)},
    SERIES = {Ann. of Math. Stud.},
    VOLUME = {No. 79},
     PAGES = {43--55},
 PUBLISHER = {Princeton Univ. Press, Princeton, NJ},
      YEAR = {1974},
   MRCLASS = {30A46},
  MRNUMBER = {361051},
MRREVIEWER = {C.\ Earle},
}

@incollection{EarleMarden2012,
    AUTHOR = {Earle, Clifford J. and Marden, Albert},
     TITLE = {Holomorphic plumbing coordinates},
 BOOKTITLE = {Quasiconformal mappings, {R}iemann surfaces, and
              {T}eichm\"uller spaces},
    SERIES = {Contemp. Math.},
    VOLUME = {575},
     PAGES = {41--52},
 PUBLISHER = {Amer. Math. Soc., Providence, RI},
      YEAR = {2012},
      ISBN = {978-0-8218-5340-5},
   MRCLASS = {30F60 (32G20)},
  MRNUMBER = {2933892},
MRREVIEWER = {Zongliang\ Sun},
       DOI = {10.1090/conm/575/11411},
       URL = {https://doi.org/10.1090/conm/575/11411},
}

@article{McMullen1998,
    AUTHOR = {McMullen, Curtis T.},
     TITLE = {Complex earthquakes and {T}eichm\"uller theory},
   JOURNAL = {J. Amer. Math. Soc.},
  FJOURNAL = {Journal of the American Mathematical Society},
    VOLUME = {11},
      YEAR = {1998},
    NUMBER = {2},
     PAGES = {283--320},
      ISSN = {0894-0347,1088-6834},
   MRCLASS = {32G15 (20H10 30F60 57N05)},
  MRNUMBER = {1478844},
MRREVIEWER = {Darryl\ McCullough},
       DOI = {10.1090/S0894-0347-98-00259-8},
       URL = {https://doi.org/10.1090/S0894-0347-98-00259-8},
}

@incollection{ChekhovPenner2007,
    AUTHOR = {Chekhov, Leonid O. and Penner, Robert C.},
     TITLE = {On quantizing {T}eichm\"uller and {T}hurston theories},
 BOOKTITLE = {Handbook of {T}eichm\"uller theory. {V}ol. {I}},
    SERIES = {IRMA Lect. Math. Theor. Phys.},
    VOLUME = {11},
     PAGES = {579--645},
 PUBLISHER = {Eur. Math. Soc., Z\"urich},
      YEAR = {2007},
      ISBN = {978-3-03719-029-6},
   MRCLASS = {32G81 (32G15)},
  MRNUMBER = {2349681},
MRREVIEWER = {Lee-Peng\ Teo},
       DOI = {10.4171/029-1/15},
       URL = {https://doi.org/10.4171/029-1/15},
}

@incollection {Wolpert2003,
    AUTHOR = {Wolpert, Scott A.},
     TITLE = {Geometry of the {W}eil-{P}etersson completion of
              {T}eichm{\"u}ller space},
 BOOKTITLE = {Surveys in differential geometry, {V}ol.\ {VIII} ({B}oston,
              {MA}, 2002)},
    SERIES = {Surv. Differ. Geom.},
    VOLUME = {8},
     PAGES = {357--393},
 PUBLISHER = {Int. Press, Somerville, MA},
      YEAR = {2003}
}

@article {Linch1974,
      TITLE = {A comparison of metrics on {T}eichm{\"u}ller space},
      AUTHOR = {Linch, Michele},
      JOURNAL =  {Proceedings of the American Mathematical Society},
      VOLUME = {43},
      YEAR = {1974},
      PAGES = {349--352}
}

@article {ChoiRafi2007,
    AUTHOR = {Choi, Young-Eun and Rafi, Kasra},
     TITLE = {Comparison between {T}eichm{\"u}ller and {L}ipschitz metrics},
   JOURNAL = {Journal of the London Mathematical Society. Second Series},
    VOLUME = {76},
      YEAR = {2007},
    NUMBER = {3},
     PAGES = {739--756}
}

@incollection {Thurston1998,
    AUTHOR = {Thurston, William P.},
     TITLE = {Minimal stretch maps between hyperbolic surfaces},
 BOOKTITLE = {Collected works of {W}illiam {P}. {T}hurston with commentary.
              {V}ol. {I}. {F}oliations, surfaces and differential geometry},
     PAGES = {533--585},
      NOTE = {1998 preprint},
 PUBLISHER = {Amer. Math. Soc., Providence, RI},
      YEAR = {2022}
}

@article{Parlier2023,
    AUTHOR = {Parlier, Hugo},
     TITLE = {A shorter note on shorter pants},
   JOURNAL = {Bull. Lond. Math. Soc.},
  FJOURNAL = {Bulletin of the London Mathematical Society},
    VOLUME = {56},
      YEAR = {2024},
    NUMBER = {4},
     PAGES = {1483--1487},
      ISSN = {0024-6093,1469-2120},
   MRCLASS = {57K20 (30F60 32G15)},
  MRNUMBER = {4743819},
MRREVIEWER = {Athanase\ Papadopoulos},
       DOI = {10.1112/blms.13007},
       URL = {https://doi.org/10.1112/blms.13007},
}

@article{Bonahon1996,
    AUTHOR = {Bonahon, Francis},
     TITLE = {Shearing hyperbolic surfaces, bending pleated surfaces and
              {T}hurston's symplectic form},
   JOURNAL = {Ann. Fac. Sci. Toulouse Math. (6)},
  FJOURNAL = {Toulouse. Facult\'e{} des Sciences. Annales. Math\'ematiques.
              S\'erie 6},
    VOLUME = {5},
      YEAR = {1996},
    NUMBER = {2},
     PAGES = {233--297},
      ISSN = {0240-2955},
   MRCLASS = {57M50 (53C15 57N05 57N10)},
  MRNUMBER = {1413855},
MRREVIEWER = {Athanase\ Papadopoulos},
       URL = {http://www.numdam.org/item?id=AFST_1996_6_5_2_233_0},
}

@misc{fock1998,
      title={Dual {T}eichm{\"u}ller spaces}, 
      author={Vladimir Fock},
      year={1998},
      eprint={dg-ga/9702018},
      archivePrefix={arXiv}
}

@article{GultepeParlier2025,
    AUTHOR = {G\"{u}ltepe, Funda and Parlier, Hugo},
     TITLE = {The coarse geometry of hexagon decomposition graphs},
   JOURNAL = {Canad. J. Math.},
  FJOURNAL = {Canadian Journal of Mathematics. Journal Canadien de
              Math\'ematiques},
    VOLUME = {78},
      YEAR = {2026},
    NUMBER = {3},
     PAGES = {867--885},
      ISSN = {0008-414X,1496-4279},
   MRCLASS = {57K20 (32G15 57M15)},
  MRNUMBER = {5078261},
       DOI = {10.4153/S0008414X24000853},
       URL = {https://doi.org/10.4153/S0008414X24000853},
}

@incollection{Keen1974,
    AUTHOR = {Keen, Linda},
     TITLE = {Collars on {R}iemann surfaces},
 BOOKTITLE = {Discontinuous groups and {R}iemann surfaces ({P}roc. {C}onf.,
              {U}niv. {M}aryland, {C}ollege {P}ark, {M}d., 1973)},
    SERIES = {Ann. of Math. Stud.},
    VOLUME = {No. 79},
     PAGES = {263--268},
 PUBLISHER = {Princeton Univ. Press, Princeton, NJ},
      YEAR = {1974}
}

@article {Tanigawa1997,
    AUTHOR = {Tanigawa, Harumi},
     TITLE = {Grafting, harmonic maps and projective structures on surfaces},
   JOURNAL = {J. Differential Geom.},
  FJOURNAL = {Journal of Differential Geometry},
    VOLUME = {47},
      YEAR = {1997},
    NUMBER = {3},
     PAGES = {399--419}
}

@article {ChoiDumasRafi2012,
    AUTHOR = {Choi, Young-Eun and Dumas, Emily and Rafi, Kasra},
     TITLE = {Grafting rays fellow travel {T}eichm{\"u}ller geodesics},
   JOURNAL = {Int. Math. Res. Not. IMRN},
  FJOURNAL = {International Mathematics Research Notices. IMRN},
      YEAR = {2012},
    NUMBER = {11},
     PAGES = {2445--2492}
}

@misc{CremaschiGiovannini2025,
    title={Behaviour of the {S}chwarzian derivative on long complex projective tubes}, 
    author={Tommaso Cremaschi and Viola Giovannini},
    year={2025},
    eprint={2502.10071},
    archivePrefix={arXiv}
}

@article {BridgemanBrockBromberg2019,
    AUTHOR = {Bridgeman, Martin and Brock, Jeffrey and Bromberg, Kenneth},
     TITLE = {Schwarzian derivatives, projective structures, and the
              {W}eil-{P}etersson gradient flow for renormalized volume},
   JOURNAL = {Duke Math. J.},
  FJOURNAL = {Duke Mathematical Journal},
    VOLUME = {168},
      YEAR = {2019},
    NUMBER = {5},
     PAGES = {867--896}
}

@article {RafiTao2013,
    AUTHOR = {Rafi, Kasra and Tao, Jing},
     TITLE = {The diameter of the thick part of moduli space and
              simultaneous {W}hitehead moves},
   JOURNAL = {Duke Math. J.},
  FJOURNAL = {Duke Mathematical Journal},
    VOLUME = {162},
      YEAR = {2013},
    NUMBER = {10},
     PAGES = {1833--1876}
}

@article {Ahlfors1961,
    AUTHOR = {Ahlfors, Lars V.},
     TITLE = {Some remarks on {T}eichm{\"u}ller's space of {R}iemann surfaces},
   JOURNAL = {Ann. of Math. (2)},
  FJOURNAL = {Annals of Mathematics. Second Series},
    VOLUME = {74},
      YEAR = {1961},
     PAGES = {171--191}
}

@article {Chu1976,
    AUTHOR = {Chu, Tienchen},
     TITLE = {The {W}eil-{P}etersson metric in the moduli space},
   JOURNAL = {Chinese J. Math.},
  FJOURNAL = {Chinese Journal of Mathematics},
    VOLUME = {4},
      YEAR = {1976},
    NUMBER = {2},
     PAGES = {29--51}
}

@article {Masur1976,
    AUTHOR = {Masur, Howard},
     TITLE = {Extension of the {W}eil-{P}etersson metric to the boundary of
              {T}eichmuller space},
   JOURNAL = {Duke Math. J.},
  FJOURNAL = {Duke Mathematical Journal},
    VOLUME = {43},
      YEAR = {1976},
    NUMBER = {3},
     PAGES = {623--635}
}

@article {Wolpert1987,
    AUTHOR = {Wolpert, Scott A.},
     TITLE = {Geodesic length functions and the {N}ielsen problem},
   JOURNAL = {J. Differential Geom.},
  FJOURNAL = {Journal of Differential Geometry},
    VOLUME = {25},
      YEAR = {1987},
    NUMBER = {2},
     PAGES = {275--296}
}

@article {Tromba1986,
    AUTHOR = {Tromba, A. J.},
     TITLE = {On a natural algebraic affine connection on the space of
              almost complex structures and the curvature of {T}eichm{\"u}ller
              space with respect to its {W}eil-{P}etersson metric},
   JOURNAL = {Manuscripta Math.},
  FJOURNAL = {Manuscripta Mathematica},
    VOLUME = {56},
      YEAR = {1986},
    NUMBER = {4},
     PAGES = {475--497}
}

@article {Wolpert1986a,
    AUTHOR = {Wolpert, Scott A.},
     TITLE = {Chern forms and the {R}iemann tensor for the moduli space of
              curves},
   JOURNAL = {Invent. Math.},
  FJOURNAL = {Inventiones Mathematicae},
    VOLUME = {85},
      YEAR = {1986},
    NUMBER = {1},
     PAGES = {119--145}
}

@inproceedings {Royden1975,
    AUTHOR = {Royden, H. L.},
     TITLE = {Intrinsic metrics on {T}eichm{\"u}ller space},
 BOOKTITLE = {Proceedings of the {I}nternational {C}ongress of
              {M}athematicians ({V}ancouver, {B}.{C}., 1974), {V}ol. 2},
     PAGES = {217--221},
 PUBLISHER = {Canad. Math. Congr., Montreal, QC},
      YEAR = {1975}
}

@article {DisarloParlier2019,
    AUTHOR = {Disarlo, Valentina and Parlier, Hugo},
     TITLE = {The geometry of flip graphs and mapping class groups},
   JOURNAL = {Trans. Amer. Math. Soc.},
  FJOURNAL = {Transactions of the American Mathematical Society},
    VOLUME = {372},
      YEAR = {2019},
    NUMBER = {6},
     PAGES = {3809--3844}
}

@article {DisarloParlier2018,
    AUTHOR = {Disarlo, Valentina and Parlier, Hugo},
     TITLE = {Simultaneous flips on triangulated surfaces},
   JOURNAL = {Michigan Math. J.},
  FJOURNAL = {Michigan Mathematical Journal},
    VOLUME = {67},
      YEAR = {2018},
    NUMBER = {3},
     PAGES = {451--464}
}

@article {LackenbyPurcell2024,
    AUTHOR = {Lackenby, Marc and Purcell, Jessica S.},
     TITLE = {The triangulation complexity of fibred 3-manifolds},
   JOURNAL = {Geom. Topol.},
  FJOURNAL = {Geometry \& Topology},
    VOLUME = {28},
      YEAR = {2024},
    NUMBER = {4},
     PAGES = {1727--1828}
}

@article {MasurMinsky1999,
    AUTHOR = {Masur, Howard A. and Minsky, Yair N.},
     TITLE = {Geometry of the complex of curves. {I}. {H}yperbolicity},
   JOURNAL = {Invent. Math.},
  FJOURNAL = {Inventiones Mathematicae},
    VOLUME = {138},
      YEAR = {1999},
    NUMBER = {1},
     PAGES = {103--149}
}

@article {SleatorTarjanThurston1988,
    AUTHOR = {Sleator, Daniel D. and Tarjan, Robert E. and Thurston, William
              P.},
     TITLE = {Rotation distance, triangulations, and hyperbolic geometry},
   JOURNAL = {J. Amer. Math. Soc.},
  FJOURNAL = {Journal of the American Mathematical Society},
    VOLUME = {1},
      YEAR = {1988},
    NUMBER = {3},
     PAGES = {647--681}
}

@article {ParlierPetri2018,
    AUTHOR = {Parlier, Hugo and Petri, Bram},
     TITLE = {The genus of curve, pants and flip graphs},
   JOURNAL = {Discrete Comput. Geom.},
  FJOURNAL = {Discrete \& Computational Geometry. An International Journal
              of Mathematics and Computer Science},
    VOLUME = {59},
      YEAR = {2018},
    NUMBER = {1},
     PAGES = {1--30}
}

@article {Pournin2014,
    AUTHOR = {Pournin, Lionel},
     TITLE = {The diameter of associahedra},
   JOURNAL = {Adv. Math.},
  FJOURNAL = {Advances in Mathematics},
    VOLUME = {259},
      YEAR = {2014},
     PAGES = {13--42}
}

@article {BrockFarb2006,
    AUTHOR = {Brock, Jeffrey and Farb, Benson},
     TITLE = {Curvature and rank of {T}eichm\"uller space},
   JOURNAL = {Amer. J. Math.},
  FJOURNAL = {American Journal of Mathematics},
    VOLUME = {128},
      YEAR = {2006},
    NUMBER = {1},
     PAGES = {1--22}
}

@article {HatcherThurston1980,
    AUTHOR = {Hatcher, A. and Thurston, W.},
     TITLE = {A presentation for the mapping class group of a closed
              orientable surface},
   JOURNAL = {Topology},
  FJOURNAL = {Topology. An International Journal of Mathematics},
    VOLUME = {19},
      YEAR = {1980},
    NUMBER = {3},
     PAGES = {221--237}
}

@article {Margalit2004,
    AUTHOR = {Margalit, Dan},
     TITLE = {Automorphisms of the pants complex},
   JOURNAL = {Duke Math. J.},
  FJOURNAL = {Duke Mathematical Journal},
    VOLUME = {121},
      YEAR = {2004},
    NUMBER = {3},
     PAGES = {457--479}
}

@article {Hatcher1991,
    AUTHOR = {Hatcher, Allen},
     TITLE = {On triangulations of surfaces},
   JOURNAL = {Topology Appl.},
  FJOURNAL = {Topology and its Applications},
    VOLUME = {40},
      YEAR = {1991},
    NUMBER = {2},
     PAGES = {189--194}
}

@book {PrimerBook,
    AUTHOR = {Farb, Benson and Margalit, Dan},
     TITLE = {A primer on mapping class groups},
    SERIES = {Princeton Mathematical Series},
    VOLUME = {49},
 PUBLISHER = {Princeton University Press, Princeton, NJ},
      YEAR = {2012}
}

@article {Basmajian1993Spectrum,
    AUTHOR = {Basmajian, Ara},
     TITLE = {The orthogonal spectrum of a hyperbolic manifold},
   JOURNAL = {Amer. J. Math.},
  FJOURNAL = {American Journal of Mathematics},
    VOLUME = {115},
      YEAR = {1993},
    NUMBER = {5},
     PAGES = {1139--1159}
}

@article {ManmanJiang2021,
    AUTHOR = {Jiang, Manman},
     TITLE = {Ideal triangulation and minimal shearing of punctured
              hyperbolic surfaces},
   JOURNAL = {J. Math. Anal. Appl.},
  FJOURNAL = {Journal of Mathematical Analysis and Applications},
    VOLUME = {495},
      YEAR = {2021},
    NUMBER = {2}
}

@article {Mondello2009,
    AUTHOR = {Mondello, Gabriele},
     TITLE = {Triangulated {R}iemann surfaces with boundary and the
              {W}eil-{P}etersson {P}oisson structure},
   JOURNAL = {J. Differential Geom.},
  FJOURNAL = {Journal of Differential Geometry},
    VOLUME = {81},
      YEAR = {2009},
    NUMBER = {2},
     PAGES = {391--436},
      ISSN = {0022-040X,1945-743X},
   MRCLASS = {32G15 (30F60 53C22 53D17 57M50)},
  MRNUMBER = {2472178},
MRREVIEWER = {Athanase\ Papadopoulos},
       URL = {http://projecteuclid.org/euclid.jdg/1231856265},
}

@article {BowditchEpstein1988,
    AUTHOR = {Bowditch, B. H. and Epstein, D. B. A.},
     TITLE = {Natural triangulations associated to a surface},
   JOURNAL = {Topology},
  FJOURNAL = {Topology. An International Journal of Mathematics},
    VOLUME = {27},
      YEAR = {1988},
    NUMBER = {1},
     PAGES = {91--117},
      ISSN = {0040-9383},
   MRCLASS = {57M99 (32G15)},
  MRNUMBER = {935529},
MRREVIEWER = {N.\ V.\ Ivanov},
       DOI = {10.1016/0040-9383(88)90008-0},
       URL = {https://doi.org/10.1016/0040-9383(88)90008-0},
}

@misc{Budd2025,
      title={A tree bijection for the moduli space of genus-0 hyperbolic surfaces with boundaries}, 
      author={Timothy Budd and Thomas Meeusen and Bart Zonneveld},
      year={2025},
      eprint={2512.09722},
      archivePrefix={arXiv}
}

@article {KorkmazPapadopoulos2012,
    AUTHOR = {Korkmaz, Mustafa and Papadopoulos, Athanase},
     TITLE = {On the ideal triangulation graph of a punctured surface},
   JOURNAL = {Ann. Inst. Fourier (Grenoble)},
  FJOURNAL = {Université de Grenoble. Annales de l'Institut Fourier},
    VOLUME = {62},
      YEAR = {2012},
    NUMBER = {4},
     PAGES = {1367--1382},
      ISSN = {0373-0956,1777-5310},
   MRCLASS = {30F10 (20F38 32G15)},
  MRNUMBER = {3025746},
MRREVIEWER = {Milagros\ Izquierdo},
       DOI = {10.5802/aif.2725},
       URL = {https://doi.org/10.5802/aif.2725},
}

@article {Mosher1995,
    AUTHOR = {Mosher, Lee},
     TITLE = {Mapping class groups are automatic},
   JOURNAL = {Ann. of Math. (2)},
  FJOURNAL = {Annals of Mathematics. Second Series},
    VOLUME = {142},
      YEAR = {1995},
    NUMBER = {2},
     PAGES = {303--384},
      ISSN = {0003-486X,1939-8980},
   MRCLASS = {57M07 (20F10 20F32)},
  MRNUMBER = {1343324},
MRREVIEWER = {Athanase\ Papadopoulos},
       DOI = {10.2307/2118637},
       URL = {https://doi.org/10.2307/2118637},
}

@article {Yamada2004,
    AUTHOR = {Yamada, Sumio},
     TITLE = {On the geometry of {W}eil-{P}etersson completion of
              {T}eichm\"uller spaces},
   JOURNAL = {Math. Res. Lett.},
  FJOURNAL = {Mathematical Research Letters},
    VOLUME = {11},
      YEAR = {2004},
    NUMBER = {2-3},
     PAGES = {327--344},
      ISSN = {1073-2780},
   MRCLASS = {32G15 (30F60)},
  MRNUMBER = {2067477},
MRREVIEWER = {Samuel\ Grushevsky},
       DOI = {10.4310/MRL.2004.v11.n3.a5},
       URL = {https://doi.org/10.4310/MRL.2004.v11.n3.a5},
}

@article {DaskalopoulosWentworth2003,
    AUTHOR = {Daskalopoulos, Georgios and Wentworth, Richard},
     TITLE = {Classification of {W}eil-{P}etersson isometries},
   JOURNAL = {Amer. J. Math.},
  FJOURNAL = {American Journal of Mathematics},
    VOLUME = {125},
      YEAR = {2003},
    NUMBER = {4},
     PAGES = {941--975},
      ISSN = {0002-9327,1080-6377},
   MRCLASS = {32G15 (30F60 37D20 37F30 57M50 57N05)},
  MRNUMBER = {1993745},
MRREVIEWER = {Athanase\ Papadopoulos},
       URL =
              {http://muse.jhu.edu/journals/american_journal_of_mathematics/v125/125.4daskalopoulos.pdf},
}

@article {BBB2023,
    AUTHOR = {Bridgeman, Martin and Brock, Jeffrey and Bromberg, Kenneth},
     TITLE = {The {W}eil-{P}etersson gradient flow of renormalized volume
              and 3-dimensional convex cores},
   JOURNAL = {Geom. Topol.},
  FJOURNAL = {Geometry \& Topology},
    VOLUME = {27},
      YEAR = {2023},
    NUMBER = {8},
     PAGES = {3183--3228},
      ISSN = {1465-3060,1364-0380},
   MRCLASS = {32G15 (30F40 30F60 32Q45 51P05)},
  MRNUMBER = {4668096},
       DOI = {10.2140/gt.2023.27.3183},
       URL = {https://doi.org/10.2140/gt.2023.27.3183},
}

@article {ScannelWolf2002,
    AUTHOR = {Scannell, Kevin P. and Wolf, Michael},
     TITLE = {The grafting map of {T}eichm\"uller space},
   JOURNAL = {J. Amer. Math. Soc.},
  FJOURNAL = {Journal of the American Mathematical Society},
    VOLUME = {15},
      YEAR = {2002},
    NUMBER = {4},
     PAGES = {893--927},
      ISSN = {0894-0347,1088-6834},
   MRCLASS = {32G15 (30F10 30F40 30F60 57M50)},
  MRNUMBER = {1915822},
MRREVIEWER = {James\ W.\ Anderson},
       DOI = {10.1090/S0894-0347-02-00395-8},
       URL = {https://doi.org/10.1090/S0894-0347-02-00395-8},
}

@article {FarbLubotzkyMinsky2001,
    AUTHOR = {Farb, Benson and Lubotzky, Alexander and Minsky, Yair},
     TITLE = {Rank-1 phenomena for mapping class groups},
   JOURNAL = {Duke Math. J.},
  FJOURNAL = {Duke Mathematical Journal},
    VOLUME = {106},
      YEAR = {2001},
    NUMBER = {3},
     PAGES = {581--597},
      ISSN = {0012-7094,1547-7398},
   MRCLASS = {20F34 (57M07)},
  MRNUMBER = {1813237},
MRREVIEWER = {Richard\ Kenyon},
       DOI = {10.1215/S0012-7094-01-10636-4},
       URL = {https://doi.org/10.1215/S0012-7094-01-10636-4},
}

@article {LiuShigaSun2014,
    AUTHOR = {Liu, LiXin and Shiga, Hiroshige and Sun, ZongLiang},
     TITLE = {Convex hull of set in thick part of {T}eichm\"uller space},
   JOURNAL = {Sci. China Math.},
  FJOURNAL = {Science China. Mathematics},
    VOLUME = {57},
      YEAR = {2014},
    NUMBER = {9},
     PAGES = {1799--1810},
      ISSN = {1674-7283,1869-1862},
   MRCLASS = {32G15 (30F60)},
  MRNUMBER = {3249392},
MRREVIEWER = {Athanase\ Papadopoulos},
       DOI = {10.1007/s11425-014-4871-6},
       URL = {https://doi.org/10.1007/s11425-014-4871-6},
}

\end{document}